\newcommand{\Aa}{\mathbb{A}}
\newcommand{\Rr}{\mathbb{R}}
\newcommand{\Cc}{\mathbb{C}}
\newcommand{\Qq}{\mathbb{Q}}
\newcommand{\bs}{\backslash}
\newcommand{\mc}{\mathcal}
\newcommand{\mf}{\mathfrak}
\newcommand{\ol}{\overline}
\newcommand{\on}{\operatorname}
\newcommand{\wc}{\widecheck}
\renewcommand{\d}{\,\mathrm{d}}
\newcommand{\GL}{\mathrm{GL}}
\newcommand{\PGL}{\mathrm{PGL}}
\newcommand\IP[1]{\langle #1\rangle} 
\newtheorem{thm}{Theorem}
\newtheorem{prop}{Proposition}[section]
\newtheorem{cor}[thm]{Corollary}
\newtheorem{rmk}{Remark}[section]
\renewcommand{\o}{\mathfrak{o}}
\newcommand{\M}{\mathcal{M}}
\renewcommand{\P}{\mathcal{P}}
\newcommand{\R}{\mathbb{R}}
\newcommand{\A}{\mathbb{A}}
\newcommand{\C}{\mathbb{C}}
\newcommand{\Z}{\mathbb{Z}}
\newcommand{\B}{\mathcal{B}}
\newcommand{\D}{\mathcal{D}}
\newcommand{\p}{\mathfrak{p}}
\newcommand{\q}{\mathfrak{q}}
\newcommand{\loc}{\mathrm{loc}}
\newcommand{\Eis}{\mathrm{Eis}} 
\newcommand{\gen}{\mathrm{gen}}
\newcommand{\diag}{\mathrm{diag}}
\newcommand{\aut}{\mathrm{aut}}
\newcommand{\vol}{\mathrm{vol}}
\newcommand{\Ad}{\mathrm{Ad}}
\newtheorem{lemma}{Lemma}[section]
\begin{document}

\title[reciprocity and non-vanishing]{Spectral reciprocity for $\mathrm{GL}(n)$ and simultaneous non-vanishing of central $L$-values}

\author{Subhajit Jana}
\address{Queen Mary University of London, Mile End Rd, London E1 4NS, United Kingdom}
\email{s.jana@qmul.ac.uk}

\author{Ramon Nunes}
\address{Departamento de Matem\'atica, Universidade Federal do Cear\'a, Campus do Pici, Bloco 914, 60440-900 Fortaleza-CE, Brasil}
\email{ramon@mat.ufc.br}

\date{}

\begin{abstract}
Let $F$ be a totally real number field and $n\ge 3$. Let $\Pi$ and $\pi$ be cuspidal automorphic representations for $\PGL_{n+1}(F)$ and $\PGL_{n-1}(F)$, respectively, that are unramified and tempered at all finite places. We prove simultaneous non-vanishing of the Rankin--Selberg $L$-values $L(1/2,\Pi\otimes\widetilde{\sigma})$ and $L(1/2,\sigma\otimes\widetilde{\pi})$ for certain sequences of $\sigma$ varying over cuspidal automorphic representations for $\mathrm{PGL}_n(F)$ with conductor tending to infinity in the level aspect and bearing certain local conditions. Along the way, we also prove a reciprocity formula for the average of the product of Rankin--Selberg $L$-functions $L(1/2,\Pi\otimes\widetilde{\sigma})L(1/2,\sigma\otimes\widetilde{\pi})$ over a conductor aspect family of $\sigma$.
\end{abstract}

\maketitle

\tableofcontents

\section{Introduction}

\subsection{Motivation: non-vanishing of the central $L$-values} 

Fix a set of complex numbers $s:=\{s_1,\dots,s_k\}$ with $\Re(s_i)=0$ and a natural number $n\ge 2$. It is a folklore conjecture that there are infinitely many cuspidal automorphic representations $\sigma$ for $\GL(n)$ over a number field $F$ with a fixed central character such that the $L$-values $L(1/2+s_i,\sigma)$ do not vanish for $1\le i\le k$. 

Naturally, for a given $n$ there is an upper bound of $k$ in terms of $n$ for which the current technology is potent enough to prove the above conjecture. This is because the product $L$-function $\prod_iL(1/2+s_i,\sigma)$, whose degree is $kn$, has growing complexity as $k$ grows. Thus, finding large $k$ in terms of $n$ for which we can verify this conjecture becomes a natural and challenging problem.

In this paper one of the main theorems (Theorem \ref{asymptotics}, Corollary \ref{non-vaninshing}), \emph{informally}, yields that $k$ can be taken as large as $2n$ for general $n$. More precisely: the product $\prod_i L(1/2+s_i,\sigma)$ can also be viewed as the $\GL(n)\times\GL(k)$ Rankin--Selberg $L$-function $L(1/2,\sigma\otimes E_s)$ where $E_s$ is the minimal parabolic Eisenstein series $\boxplus_{i=1}^k|.|^{s_i}$. At this point it becomes natural to ask about the non-vanishing of the Rankin-Selberg $L$-function $L(1/2,\Sigma\otimes\sigma)$ where $\Sigma$ is \emph{any} automorphic representation for $\GL(k)$. This includes the case where $\Sigma$ is an Eisenstein series and the one where $\Sigma$ is a cuspidal representation. Presumably, the problem is, arithmetically, the most difficult when $\Sigma$ is cuspidal (so that $L(1/2,\Sigma\otimes\sigma)$ is not necessarily factorable) and the least difficult when $\Sigma$ is a minimal Eisenstein series (so that $L(1/2,\Sigma\otimes\sigma)$ is totally factorable).

In this paper, we answer the question affirmatively when $\Sigma$ is the $\GL(2n)$ Eisenstein series $\Pi\,\boxplus\,\pi$ where $\Pi$ and $\pi$ are certain cuspidal representations for $\GL(n+1)$ and $\GL(n-1)$, respectively. In terms of factorization, this is the second most difficult case, right after the cuspidal case. In particular, this answers the question affirmatively when $\Sigma$ is a cuspidal representation of $\GL(n+1)$ or $\GL(n-1)$. 

We stress that even the non-vanishing of $L(1/2,\Sigma\otimes \sigma)$ when $\Sigma$ is a cuspidal automorphic representation for $\GL(n+1)$ is new for $n>2$.
For comparison, Tsuzuki \cite{Tsuzuki20212.07811} proves non-vanishing for $\Sigma$ being a minimal Eisenstein series of $\GL(n-1)$.  Finally, in low degree, slightly better results are known. For $n=2$, Blomer--Li--Miller \cite{BLM2019spectral} show non-vanishing for $k=4$ but with $\Sigma$ cuspidal and for $n=1$, Luo \cite{Luo2005nonvanishing} shows that one may take $k=3$ for characters of suitably factorable moduli.

\subsection{Reciprocity formul{\ae}}

One of the main inputs going into the proof of simultaneous non-vanishing is a \emph{reciprocity formula}. A reciprocity formula, in our context, describes an identity between moments of $L$-values attached to two apparently different families of automorphic representations. The reciprocity formul{\ae}, apart from being aesthetically pleasing, often allow one to understand averages of $L$-values without (or only mildly) dealing with the geometric side of a trace formula. Instead, the formul{\ae} arrange it so that the complexity of the families of $L$-functions drops from one side to the other, much akin to what happens in the Poisson and Voronoi summation formul{\ae}.

To demonstrate the complications of studying the geometric side of a trace formula in high-rank groups, it suffices to look at the Kuznetsov trace formula: while in $\GL(2)$, there is a good amount of literature dedicated to sums of Kloosterman sums and many applications thereof, for $\GL(3)$ the study of these objects is no easy matter, as can be perceived, for example, from the works of Blomer and Buttcane, \emph{e.g.} \cite{BlBu2019global}. Some applications can be seen in \cite{Blomer2013app,BBM2017app,BlBu2020sub}. For larger $n$ we refer to \cite{Blomer1906.07459, GSW2021orthogonality} as examples of application of Kuznetsov formula where the geometric side needed careful analysis.

One of our main results, Theorem \ref{reciprocity}, describes a higher-rank reciprocity formula which proves a relation between moments of Rankin--Selberg $L$-values for $\GL(2n)\times\GL(n)$ over two different families of representations.

\subsubsection{A recapitulation of previous formul{\ae}}
\subsubsection*{Different groups: Motohashi and generalizations}

Motohashi's formula is the first kind of spectral identity to appear in the literature. It is also the most studied one and has several interesting applications. The story begins with the work of Motohashi \cite{Mot1993explicit}, where he proved a formula of the shape
$$\int_{\Rr}|\zeta(1/2+it)|^4h(t)\d t = \sum_{f}L(1/2,f)^3\wc{h}(t_j)+(\ldots),$$
where the sum on the right-hand side runs over cusp forms of $\PGL(2)$. We have swept under the rug via the symbol $(\ldots)$ the contribution of the Eisenstein series and some degenerate terms. Moreover, the transformation $h\rightsquigarrow
\wc{h}$ is explicitly given via a series of integral transformation. This allowed Motohashi to give an asymptotic formula for the fourth-power moment of the Riemann zeta function.
Certain special cases of the reverse transform $\wc{h}\rightsquigarrow h$ are known. In particular, this was used by Ivi{\'c} \cite{ivic2001hecke} in order to obtain Weyl-type subconvex bound for $L(1/2,f)$ in the spectral aspect.
Around the same time, Conrey and Iwaniec \cite{CI2000cubic} studied a similar problem in the level aspect, with forms twisted by quadratic Dirichlet characters. Even though they did not phrase their results in such terms, a reciprocity formula can be inferred from their proof. An \textit{almost} exact formula (with an error term) was later found by Petrow \cite{Petrow2015twisted}. More recently, Petrow and Young (\emph{cf}.\ \cite{PY2020Weyl} and \cite{PY1908.10346}) have vastly generalized Conrey and Iwaniec's work to general Dirichlet characters. Here, again there is no effort in producing exact formul{\ae} but one can notice that an underlying version of Motohashi's formula is used (in both directions!). A bit later, and with a different application in mind, Blomer \textit{et al}.\ \cite{BHKM2020Motohashi} have yet another form of the identity. Finally, a period theoretic interpretation was given by Reznikov \cite{Reznikov2008RS}; see also \cite{MV2010subconvexity}. Explicit versions were later obtained by Nelson \cite{Nelson1911.06310} and Wu \cite{wu2022motohashi}. There also are cuspidal versions of Motohashi's formula where the cube of $L(1/2,f)$ gets replaced by a Rankin-Selberg $L$-function and the left-hand side gets replaced by a mixed-moment of $L$-functions of degrees $3$ and $1$. In this setting, asymptotic formul\ae for the right-hand side were initially obtained by Li \cite{li2011bounds} in the $t$-aspect) and Blomer \cite{blomer2012subconvexity} in the level aspect. Although the latter hinted on a spectral identity, the first explicit formula in the context was given by Kwan \cite{kwan2024spectral}, via a period-theoretic approach.

\subsubsection*{Same group: $\GL(4)\times \GL(2) \rightsquigarrow \GL(4)\times \GL(2)$} In this case, we talk about a formula where in both sides we average over (possibly different) families of automorphic representations of the same group, namely $\GL(2)$. Moreover the degree of the $L$-functions is also the same, namely $8$. We divide them, however, in three different cases according to how the $L$-function factorizes and this corresponds to the number of partitions of $4$ as a sum of two non-negative integers.

\begin{enumerate}

\item $\mathbf{4=2+2}$.
This is probably the most natural situation. It amounts to studying the fourth moment of $L(1/2,\sigma)$ or the second moment of $L(1/2,\sigma\otimes \sigma_0)$ over a family of $\sigma$, where $\sigma_0$ is a fixed automorphic form of $\GL(2)$.  If one is to be rigorous, the latter does not exactly fit the description above as we may have to deal with certain periods which are only related to $L$-functions up to taking the square of its absolute value (\emph{cf}.\ \cite{Ichino2008Trilinear}). In the fourth-moment case, this dates back to unpublished work of Kuznetsov and Motohashi; see \cite{motohashi2003functional}.
In the Rankin-Selberg case, a reciprocity formula is implicit in \cite{MV2010subconvexity}, and an explicit version can be found in \cite{AK2018level}. Later, a period approach to the equality was developed by Zacharias in \cite{Zach2019Periods} and \cite{Zach2019Periods2}. The latter also deals with regularization and contains a Weyl-type subconvex bound for $L(1/2,\sigma)$, where $\sigma$ is a $\PGL(2)$ cuspidal representation of prime level tending to infinity.

We may think the above example as the $n=2$ case for a general reciprocity formula $\GL(2n)\times\GL(n)\rightsquigarrow\GL(2n)\times\GL(n)$. For $n>2$, one can see a certain glimpse of a reciprocity formula in the works of Blomer \cite{Blomer2012Period} and the first named author \cite{Jana2020RS}. Here one has to be even more flexible when employing the term ``reciprocity'' as on one side of the formula the periods are neither known nor expected to be related to $L$-functions. Nevertheless these results fit in the overall strategy where one can bypass the need of a delicate study of geometric side (in the sense of the trace formula).

\item $\mathbf{4=3+1}$.
Here we are concerned with averages of
$$L(1/2,\Pi\otimes \widetilde{\sigma})L(1/2,\sigma),$$
where $\Pi$ is a fixed representation of $\GL(3)$. In the particular case where $\Pi$ is the symmetric square of some representation of $\GL(2)$, these averages have been studied in connection to quantum unique ergodicity and the $L^4$-norm problem; see for instance \cite{HK2020random} and the references therein.  Here, the first instances of spectral fomrul{\ae} appeared in the works of Blomer and Khan; see \cite{BlKh2019reciprocity} and \cite{BlKh2019uniform}. However, some of the ideas involved in proving such formul\ae can be traced back to works o Li \cite{li2009central} and Khan \cite{Khan2012simultaneous}.
In both \cite{BlKh2019reciprocity} and \cite{BlKh2019uniform}, the main applications are when $\Pi$ is an Eisenstein series, where one may obtain strong exponents in the subconvexity problem for $\GL(2)$. In \cite{Nunes2020reciprocity}, the second named author proved a version of the main result in \cite{BlKh2019reciprocity} valid for number fields via a period theoretic approach. This is not a complete generalization of \cite{BlKh2019reciprocity} for two reasons: it does not allow for general weight functions and the fixed $\GL(3)$ representation needs to be \textit{cuspidal}.

\item $\mathbf{4=4}$.
Let $\Pi$ be a fixed automorphic representation of $\GL(4)$, then we are interested in studying the first moment of $L(1/2,\Pi\otimes \sigma)$ as $\sigma$ varies. A reciprocity formula was found by Blomer, Li and Miller \cite{BLM2019spectral}. This case is the hardest one as it involves the least factorable $L$-function. In order to grasp the difficulty, it is worth mentioning that the authors of \cite{BLM2019spectral} do not have applications to subconvexity. Their only application is to non-vanishing and it is important to point out that their analysis is finer than usual as they only win by a logarithmic power, instead of the more usual polynomial saving.

\end{enumerate}

In this work, we give a generalization of the work in \cite{Nunes2020reciprocity} (hence of \cite{BlKh2019reciprocity,BlKh2019uniform}) to a spectral sum over $\GL(n)$ with $L$-functions of degree $2n^2$, such as in \cite{Jana2020RS} but in a more unbalanced situation.

\subsection{Main results}

Let $F$ be a number field and $S$ be a finite set of places of $F$ containing all the archimedean places. Let $\Pi$ and $\pi$ be cuspidal automorphic representations for $\GL(n+1)$ and $\GL(n-1)$ over $F$, respectively, with trivial central characters and unramified outside $S$. Let $\Phi\in \Pi$ and $\phi\in \pi$ be cusp forms. Also, let $\mathbf{s}=(s_1,s_2)$ be a pair of complex numbers and $\Lambda$ denote completed $L$-functions. Our main object of study is the following average of $L$-functions.
\begin{equation}\label{eq:main-moment}
    \mathcal{M}(\mathbf{s},\Phi,\phi) := \int_{\gen}\frac{\Lambda(s_1,\Pi\otimes\widetilde{\sigma})\Lambda(s_2,\sigma\otimes\widetilde{\pi})}{\mathcal{L}(\sigma)}H_S(\sigma;\Phi,\phi,\mathbf{s})\d \sigma,
\end{equation}
where the integral is taken with respect to an automorphic Plancherel measure over the \emph{generic} automorphic spectrum of $\GL(n)$ with trivial central character (\emph{cf}.\ \S \ref{sec:spectral-decomposition} for a discussion on the measure $\d \sigma$). The factor $H_S(\sigma):=H_S(\sigma;\Phi,\phi,\mathbf{s})$ is a certain weight function depending on the $S$-adic components of $\Phi,\phi$, and $\sigma$. We refer to \S\ref{prelim-local-factor} for the definition. The factor $\mathcal{L}(\sigma)$ is a certain harmonic weight which appears in \emph{e.g.} the Kuznetsov formula; see \eqref{harmonic-weights} for the definition. For example, if $\sigma$ is cuspidal then $\mathcal{L}(\sigma)$ is proportional to $L(1,\sigma,\Ad)$.

Finally, for any pair of complex numbers $\mathbf{s}=(s_1,s_2)$, we shall write
\begin{equation}\label{s-prime}
\wc{\mathbf{s}}=(\wc{s}_1,\wc{s}_2):=\left(\frac{1+(n-1)s_2-s_1}{n},\frac{(n+1)s_1+s_2-1}{n}\right).
\end{equation}
We also write $\Pi(w^\ast)\Phi$ as $\wc{\Phi}$ where
\begin{equation}\label{w-ast}
 w^\ast:=\begin{pmatrix}
\mathrm{I}_{n-2}&&\\&&1\\&1&
\end{pmatrix}
\end{equation}
which is a Weyl element in $\GL(n)$.
Our first main result is the following reciprocity formula.

\begin{thm}\label{reciprocity}
Let $\mathbf{s}\in \Cc^2$ such that
$$\frac12\leq \Re(s_1),\Re(s_2),\Re(\wc{s}_1),\Re(\wc{s_2})< 1.$$
Then, we have the equality
$$\mathcal{M}(\mathbf{s},\Phi,\phi)=\mathcal{N}(\mathbf{s},\Phi,\phi)+\mathcal{M}(\wc{\mathbf{s}},\widecheck{\Phi},\phi)$$
where
$$\mc{N}(\mathbf{s},\Phi,\phi)=\mc{R}(\wc{\mathbf{s}},\wc{\Phi},\phi)+\mc{D}(\wc{\mathbf{s}},\wc{\Phi},\phi)-\mc{R}(\mathbf{s},\Phi,\phi)-\mc{D}(\mathbf{s},\Phi,\phi).$$
and $\mc{R}(\mathbf{s},\Phi,\phi)$ and $\mc{D}(\mathbf{s},\Phi,\phi)$ are given by \eqref{def-R} and \eqref{degenerate-term-def}, respectively.
\end{thm}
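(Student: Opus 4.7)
The plan is to realize $\mathcal{M}(\mathbf{s},\Phi,\phi)$ as a single global integral and then exploit a long Weyl reflection on the $\GL(n+1)$-coordinate to effect the swap $\mathbf{s}\mapsto\wc{\mathbf{s}}$. For each generic $\sigma$ of $\GL(n)$ and each $\varphi\in\sigma$, the Jacquet--Piatetski-Shapiro--Shalika Rankin--Selberg integrals express $L(s_1,\Pi\times\widetilde{\sigma})$ as a period of $\Phi$ against $\overline{\varphi}$ on $[\GL(n)]$ (for the mirabolic embedding $\GL(n)\hookrightarrow\GL(n+1)$) and $L(s_2,\sigma\times\widetilde{\pi})$ as a period of $\varphi$ against $\overline{\phi}$ on $[\GL(n-1)]$, each twisted by $|\det|^{s_j-1/2}$. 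Summing over an orthonormal basis of $\sigma$ and applying the Plancherel formula for the generic spectrum of $\GL(n)$ collapses the spectral integral in $\sigma$; the harmonic weight $\mathcal{L}^S(\sigma)$ is exactly what makes this reassembly clean at unramified places, while $\mathcal{H}_S$ records the local test data at $S$.

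What is left is a doubly-unfolded integral of the Whittaker functions $W_\Phi$ and $W_\phi$ over a unipotent-diagonal quotient, weighted by the characters $|\det|^{s_j-1/2}$. I would then apply left multiplication by $w^\ast$ inside $\GL(n+1)$; this swap of the two relevant rows of $\Phi$ replaces $\Phi$ by $\wc\Phi$, and a direct bookkeeping of the modular character picked up across the $w^\ast$-Bruhat cell converts $|\det|^{s_j-1/2}$ into $|\det|^{\wc{s}_j-1/2}$ with $\wc{\mathbf{s}}$ exactly as in \eqref{s-prime}. Refolding the resulting Whittaker integral and running the spectral Plancherel argument backwards then produces $\mathcal{M}(\wc{\mathbf{s}},\wc\Phi,\phi)$.

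The above steps are only literally valid in a tube of absolute convergence, so I would first prove the identity for $\Re(s_1),\Re(s_2)$ sufficiently large and then meromorphically continue into the square $\tfrac12\le\Re(s_j),\Re(\wc{s}_j)\le 1$. Along the way, contours cross the residual Eisenstein spectrum of $\GL(n)$ entering the Plancherel decomposition and the polar loci of the Rankin--Selberg zeta integrals on either side of the swap; the associated residues are meant to reassemble into exactly the four boundary terms in $\mathcal{N}$, with the alternating signs dictated by which side of the swap each pole sits on.

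The principal obstacle will be this analytic regularisation. The residual Eisenstein series on $\GL(n)$ appearing in the Plancherel expansion have poles inside the target region, and the Bruhat unfolding produces degenerate orbits whose integrals are not absolutely convergent on the critical line. Overcoming this will require a carefully chosen regularisation --- in the spirit of \cite{Nelson1911.06310} and of the second named author's earlier work \cite{Nunes2020reciprocity} --- of both the mirabolic Eisenstein series entering the period and of the degenerate Bruhat contributions, together with a precise identification of each resulting polar term with the corresponding summand of $\mathcal{R}$ or $\mathcal{D}$ in \eqref{def-R} and \eqref{degenerate-term-def}. The rest of the argument should then reduce to standard factorisations of Whittaker periods at unramified places and a routine spectral Plancherel computation on $[\GL(n)]$.
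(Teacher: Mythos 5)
Your proposal reproduces the paper's strategy: package the two Rankin--Selberg zeta integrals into the single period $\mathcal{P}(\mathbf{s},\Phi,\phi)$ of \eqref{P-period}, obtain the pivot identity $\mathcal{P}(\mathbf{s},\Phi,\phi)=\mathcal{P}(\wc{\mathbf{s}},\wc{\Phi},\phi)$ by the $w^\ast$-substitution and a change of variables (Proposition~\ref{abstract-reciprocity}), expand $\mathcal{A}_{s_1}\Phi$ spectrally over the generic $\GL(n)$ spectrum, and then meromorphically continue to reach $\Re(\mathbf{s})=(1/2,1/2)$; the four extra terms match $\pm\mathcal{D}$ and $\pm\mathcal{R}$ on both sides.

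One point worth correcting in your write-up: you attribute all four boundary terms in $\mathcal{N}$ to residues picked up by contour shifts. In the paper only $\mathcal{R}$ arises that way, namely from shifting $z$ past the pole of $\Lambda(s_2+z,\pi\otimes\widetilde\pi)$ inside the Eisenstein stratum $\sigma(\pi,z)$ (Lemma~\ref{analytic-continuation-prop}). The degenerate term $\mathcal{D}$ of \eqref{degenerate-term-def} is not a residue at all: it is present already in the region of absolute convergence and comes from subtracting the constant term $\Phi_{\widetilde U_n}$ before unfolding, which is exactly what makes the inner $\GL(n)\times\GL(n-1)$ period against $\phi$ Eulerian for non-cuspidal $\sigma$ in the spectral expansion (Propositions~\ref{simple-truncation-cuspidal} and~\ref{regularized-spectral-decomposition}). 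Your phrase ``degenerate Bruhat contributions'' gestures at the right object, but framing it as a contour-shift residue would mislead the later local analysis, since $\mathcal{D}$ factors as an Euler product in its own right (Lemma~\ref{degenerate-eulerian}) rather than appearing as a residue of the moment.
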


As in \cite{Nunes2020reciprocity}, we choose the vectors $\Phi$ and $\phi$ in a such a way that the left-hand side picks up forms of conductors up to a certain height and the right-hand side can be asymptotically computed.

For an automorphic representation $\sigma$ we define $\sigma_{\mathrm{f}}$ to be the finite part $\otimes_{v<\infty}\sigma_v$. For a prime ideal $\p_0$ in $F$, we also define $\sigma^{(\p_0)}_{\mathrm{f}}:=\otimes_{\substack{v<\infty\\ v\neq \p_0}}\sigma_{v}$, the finite part of $\sigma$ outside $\p_0$ and correspondingly, we write
$$\mf{c}{(\sigma^{(\p_0)}_{\mathrm{f}})}:=\prod_{\p_0\neq\p<\infty}\p^{c(\sigma_{\p})},\quad C{(\sigma^{(\p_0)}_{\mathrm{f}})}:=\prod_{\p_0\neq\p<\infty}C(\sigma_{\p});$$
see \S\ref{newvectors} for the definitions. Note that these products converge absolutely as $c(\sigma_{\p})=0$ and $C(\sigma_{\p})=1$ for almost all $v$.

\begin{thm}\label{asymptotics}
Let $n\ge 3$ and $F$ be a totally real number field.
Let $\Pi$ and $\pi$ be cuspidal automorphic representations for $\GL_{n+1}(F)$ and $\GL_{n-1}(F)$, respectively,  with trivial central characters such that both $\Pi$ and $\pi$ are unramified and \emph{tempered} at every non-archimedean place. 
Further assume that $\p_0$ is a prime not dividing the discriminant of $F$ and $\tau$ is a supercuspidal representation of $\mathrm{GL}_n(F_{\mathfrak{p}_0})$ with trivial central character. 

Then for every $\delta>0$ there exists $\eta>0$ so that for any integral ideal $\q$ of the finite adeles such that $\q$, $\p_0$, and the discriminant of $F$ are pairwise coprime, we have
\begin{multline*}
\sum_{\substack{\sigma\text{ cuspidal; }\sigma_{\mathfrak{p}_0}=\tau;\\ \mf{c}(\sigma^{(\p_0)}_{\mathrm{f}})\mid\q,\,C(\sigma^{(\p_0)}_{\mathrm{f}})\ge N(\q)^{1/2-\delta}}}\frac{L(1/2,\Pi\otimes\widetilde{\sigma})L(1/2,\sigma\otimes\widetilde{\pi})}{L(1,\sigma,\operatorname{Ad})}H_{\q}(\sigma)h_{\infty}(\sigma) \\
= \frac{\mathcal{D}_\infty(F,n)}{\varepsilon_{\p_0}(1,\tau\otimes\widetilde{\tau}) }\frac{L^{\p_0}(1,\Pi\otimes\widetilde{\pi})L^{\p_0}(n/2,\widetilde{\Pi})}{L^{\p_0}(1+n/2,\widetilde{\pi})} + O_{\Pi,\pi}(N(\q)^{-\eta}).
\end{multline*}
Here $H_{\q}(\sigma)$ and $h_\infty(\sigma)$ are certain test functions at the $\q$-adic places and $\infty$-adic places defined in \eqref{non-arch-weight} and $\eqref{arch-weight}$, respectively, $\varepsilon_{\p_0}$ denotes the $\p_0$-adic epsilon factor. On the other hand, $\D_\infty(F,n)$ is a constant which depends on the archimedean component of the test vectors, in particular on $h_\infty$ and $n$ and the number field $F$. Moreover, we construct $h_\infty$ in a way so that $\D_\infty(F,n)\asymp 1$.
\end{thm}

The analogous theorem for $n=2$ is done in \cite{Nunes2020reciprocity}. Our proof also works for $n=2$, but a certain care is needed to compute the degenerate terms.
\\

In particular, we have the following corollary.

\begin{cor}\label{non-vaninshing}
Let $F$, $\q$, $\p_0$, $\tau$, $\Pi$, and $\pi$ be as above. Assume $N(\q)$ is sufficiently large. Then there exists at least one cuspidal representation $\sigma$ with trivial central character, such that $\sigma_{\p_0}=\tau$ and $C(\sigma^{(\p_0)}_{\mathrm{f}})\mid\q$ with $C(\sigma^{(\p_0)}_{\mathrm{f}})\ge N(\q)^{1/2-\delta}$ so that
both $L(1/2,\Pi\otimes\widetilde{\sigma})$ and $L(1/2,\sigma\otimes\widetilde{\pi})$ do not vanish.
\end{cor}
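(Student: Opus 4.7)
The plan is to deduce the corollary directly from Theorem \ref{asymptotics} via the standard observation that a weighted sum of products $L(1/2,\Pi\times\widetilde{\sigma})L(1/2,\sigma\times\widetilde{\pi})$ which fails to vanish must have at least one non-vanishing summand, and for such a summand both central $L$-values are forced to be non-zero. Everything thus reduces to establishing two facts: that the main term on the right-hand side of Theorem \ref{asymptotics} is a non-zero quantity essentially independent of $q$, and that the $O(q^{-\eta})$ error term is dominated by it once $q$ is sufficiently large.

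First I would verify non-vanishing of the main term factor by factor. The prefactor $\Delta_F^{-d_n}\mathcal{D}_\infty$ is a fixed non-zero constant ($\mathcal{D}_\infty\asymp 1$ by Theorem \ref{asymptotics}), and the ratio $\zeta_\q(n)/\zeta_\q(1)$ is a finite positive real bounded away from both $0$ and $\infty$ uniformly in $\q$. The supercuspidal epsilon factor $\varepsilon_{\p_0}(1,\tau\otimes\widetilde{\tau})$ is a non-zero complex number. For the $L$-values in the main term, $L^{\p_0}(1,\Pi\otimes\widetilde{\pi})\neq 0$ by the Jacquet--Shalika theorem on non-vanishing of Rankin--Selberg $L$-functions at $\Re(s)=1$, while $L^{\p_0}(n/2,\widetilde{\Pi})$ and $L^{\p_0}(1+n/2,\widetilde{\pi})$ are finite, non-zero, absolutely convergent Euler products, since the hypothesis $n\ge 3$ forces $n/2\ge 3/2>1$.

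With the main term pinned down as a non-zero quantity (uniform in $\q$), choosing $q$ large enough makes the entire left-hand side of Theorem \ref{asymptotics} non-zero. Since that left-hand side is a sum over cuspidal $\sigma$ in the prescribed range of scalar multiples of the product $L(1/2,\Pi\times\widetilde{\sigma})L(1/2,\sigma\times\widetilde{\pi})$, at least one $\sigma$ in the family must contribute a non-zero summand; for such $\sigma$ both central $L$-values are non-zero, which is precisely the assertion of the corollary. No serious obstacle is expected here, and the deduction is formally identical to the $n=2$ argument carried out in \cite{Nunes2020reciprocity}; the only inputs beyond Theorem \ref{asymptotics} are the Jacquet--Shalika edge-of-strip non-vanishing and the absolute convergence of standard $L$-functions past $\Re(s)=1$.
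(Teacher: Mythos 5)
Your argument is essentially the paper's own: the authors also deduce the corollary by observing that the right-hand side of Theorem~\ref{asymptotics} is nonzero (quoting Shahidi's \cite[Proposition 7.2.4]{Shahidi2010Eisenstein} for the non-vanishing of $L^{\p_0}(1,\Pi\otimes\widetilde\pi)$, which plays the same role as your Jacquet--Shalika citation) and then taking $q$ large enough that the main term beats the $O(q^{-\eta})$ error, at which point a nonzero sum must have a nonzero summand.

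One small inaccuracy worth flagging: you assert that $\zeta_{\q}(n)/\zeta_{\q}(1)$ is bounded away from $0$ and $\infty$ uniformly in $\q$. While $\zeta_\q(n)$ is indeed uniformly bounded for $n\ge 3$, the denominator $\zeta_\q(1)=\prod_{v\mid\q}(1-p_v^{-1})^{-1}$ can grow unboundedly when $\q$ is highly composite, so the ratio is not uniformly bounded below. Fortunately this does not damage the argument: the growth of $\zeta_\q(1)$ is $q^{o(1)}$, so the main term still decays strictly more slowly than the $q^{-\eta}$ error term, and the conclusion that the right-hand side is nonzero for $q$ sufficiently large survives. You would want to state this comparison rather than claim uniform boundedness.
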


\begin{rmk}
    An important distinction between Corollary \ref{non-vaninshing} and \cite[Corollary 1.3]{Nunes2020reciprocity} is the need for an auxiliary prime at which the varying representations are supercuspidal. This difference is necessary for $n\geq 3$ as we cannot easily bound the contribution of the continuous spectrum, so we \emph{artificially} introduce this local condition to annihilate the continuous spectrum. The same kind of restriction also appears in \cite{Tsuzuki20212.07811}.

    On the other hand, the temperedness assumption in Corollary \ref{non-vaninshing} is rather of a technical nature. From the proof one will see that one also may allow $\Pi$ and $\pi$ to be $\theta$-tempered (see \S\ref{sec:local-prelim} for definition) for some very small $\theta$.
\end{rmk}

\vspace{0.1cm}

\subsection{Sketch of the proof: strong Gelfand formations}

Before we proceed to proving our results, we would like to present another point of view on spectral reciprocity formul{\ae}, which is due to Reznikov \cite{Reznikov2008RS}, which also serves as a high-level sketch of our proof. The key concept here is that of a \textit{Strong Gelfand Formation}. First, we say that a pair of reductive groups $(G,H)$ over a number field is a strong Gelfand pair if for every place $v$ and every pair of irreducible (admissible) representations $\Pi$ and $\sigma$  of $G(F_v)$ and $H(F_v)$, respectively, we have that the space of $H(F_v)$-invariant maps from $\Pi$ to $\sigma$ is at most one-dimensional. Now let $G$, $H_1$, $H_2$ and $J$ be reductive groups with natural embeddings as below.
\begin{equation*}
\xymatrix{&G&\\ H_1\ar@{^{(}->}[ur]& &H_2
\ar@{_{(}->}[ul]\\
&J\ar@{^{(}->}[ur]\ar@{_{(}->}[ul]&}
\end{equation*}
Then we say that $(G,H_1,H_2,J)$ is a \textit{strong Gelfand formation} if the pairs $(G,H_i)$ and $(H_i,J)$ are strong Gelfand pairs, for $i=1,2$.

It is well-known that if $(G,H)$ is a Gelfand pair, $\Pi$ and $\sigma$ are automorphic representations of $G$ and $H$, respectively, and $\Phi\in \Pi$ and $\xi \in \sigma$ are automorphic forms, then the period
$$
\int_{[H]}\Phi(h)\ol{\xi}(h)\d h,
$$
where $[H]:=H(F)\bs H(\Aa)$, can often be linked to an $L$-function, \emph{e.g.} via Rankin--Selberg method or the Ichino--Ikeda formula. For the purpose of the sketch we assume that such quotients are compact. Therefore, in order to obtain a reciprocity formula one might consider $\Phi\in \Pi$ and $\phi \in \pi$ automorphic forms of $G$ and $J$, respectively, and consider the period $\int_{[J]}\Phi(j)\ol{\phi}(j)\d j$. Spectrally expanding the vector $\Phi$ in the spaces of automorphic forms in $H_1$ and $H_2$, one should get
$$\sum_{\xi_1}\left(\int_{[H_1]}\Phi\ol{\xi_1}\right)\left(\int_{[J]}\xi_1\ol{\phi}\right)
=
\sum_{\xi_2}\left(\int_{[H_2]}\Phi\ol{\xi_2}\right)\left(\int_{[J]}\xi_2\ol{\phi}\right),$$
where $\xi_i$ runs through an orthonormal basis of automorphic forms for $H_i$. 

The reciprocity formula which we prove here can be seen as a special case of the above discussion where take $G=\GL(n+1)$, $H_1=\GL(n)$, $J=\GL(n-1)$, and $H_2=w^\ast\GL(n)w^{\ast-1}$, with $w^\ast$ as in \eqref{w-ast}. For these groups, inclusion is given by $h\hookrightarrow\begin{pmatrix}h&\\&1\end{pmatrix}$
which is an embedding of $\GL(n)$ inside $\GL(n+1)$.

\subsection{A closely related work}

While this article was at the final stage of preparation, a preprint by Miao \cite{Miao2110.11529} has been posted on arXiv, obtaining a similar reciprocity formula to our Theorem \ref{reciprocity}. The author starts, as we do, from the identity in Proposition \ref{abstract-reciprocity}. The author also predicts an application of the reciprocity formula to non-vanishing. We decided nevertheless to keep our own proof of the reciprocity formula as we could not match our degenerate term to any term in \cite{Miao2110.11529} and that term is the source of the main term in Theorem \ref{asymptotics}.

\subsection{What's next?}

\subsubsection{Global}

\begin{enumerate}
    \item The question of generalizing the above results to $\Pi$ and $\pi$ that are general (not necessarily cuspidal) automorphic representations is a quite natural one. The difficulty comes from the fact that, in this case the period $\mc{P}(\mathbf{s},\Phi,\phi)$, defined in \eqref{P-period} and from whose properties we deduce the reciprocity in Theorem \ref{reciprocity}, does not converge. A remedy to this lack of convergence should follow from a suitable notion of regularization. The works of Ichino--Yamana \cite{IchinoYamana2016} and Zydor \cite{zydor2022periods} could be of help in finding out this suitable notion. It is important to remark that we do not require simply the conclusions from these papers. Instead we need to understand how the various truncation operators interact when averaged over the spectrum of $\GL(n)$.

    \item Finally, after staring at the list of different reciprocity formul{\ae} for $\GL(2)$ one might wonder whether we could have a general reciprocity formul{\ae} for the average of
    $$L(1/2,\Pi\otimes \widetilde{\sigma})L(1/2,\sigma\otimes \widetilde{\pi}),$$
    where, for given $n-1\ge r\ge 2$,  $\Pi$ and $\pi$ are automorphic representations of $\GL(n+r)$ and $\GL(n-r)$, respectively. Unfortunately, it is not clear to us how to generalize our method in any straightforward way. The main reason is the more complicated nature of Rankin--Selberg zeta integrals for $\GL(n)\times \GL(m)$ when $n-m>1$, which involves extra integration over unipotent groups. Similarly, one may wonder whether one may replace $\Pi\boxplus\pi$ by a $\GL(2n)$ cuspidal representation to obtain a reciprocity formula for $\GL(2n)\times\GL(n)$, thus generalizing the $n=2$ case from \cite{BLM2019spectral}.
\end{enumerate}

\subsubsection{Local}

\begin{enumerate}
\item Recall the local weight factors $H_S(\sigma)$ and corresponding weight factor $\wc{H}_S(\sigma):=H_S(\sigma;\wc{\Phi},\phi,\wc{\mathbf{s}})$ in the dual side of the reciprocity formula in Theorem \ref{reciprocity}. There is an implicit integral transform which relates $H_S$ and $\wc{ H}_S$ governed by the Weyl element in \eqref{w-ast}. For various analytic question in automorphic forms, one requires nice properties of these test functions \emph{e.g.} non-negativity and richness; see \cite{Mot1993explicit,Nelson1911.06310}.

\item For $n=2$ Blomer--Khan \cite{BlKh2019reciprocity} explicitly determined the integral transform relating the local weight functions in both sides of the reciprocity formula via classical methods \emph{e.g.} Kuznetsov and Voronoi summation formul\ae. It is natural to wonder whether one can deduce the same in our higher-rank case via the method of integral representations. The test functions are determined by the local Whittaker functions chosen as test vectors in \S \ref{prelim-local-factor}. We think one can produce the transform via the analysis of a certain local Bessel distribution, invoking, in particular, the local $\GL(n+1)\times\GL(n-1)$ functional equation which is governed by the Weyl element \eqref{w-ast}. However, we were unsuccessful in this endeavor. The reason, partly, seems to be that we are dealing with a period on $\GL(n+1)\times\GL(n-1)$ which does not resemble the $\GL(n+1)\times\GL(n-1)$ zeta integral.

\item On the other hand, via the theory of Kirillov models one will be able to show that the family of local weights is quite abundant, in the sense of \cite{Nelson1911.06310}.

\item More importantly, we would like to construct test vectors which produce a non-negative local weight. This seems to be difficult due to the \emph{unbalanced} (that is, correlation of $L$-functions of degrees $n(n+1)$ and $n(n-1)$) nature of the moment considered in Theorem \ref{reciprocity}), unlike the balanced moment (that is, correlation of $L$-functions of degree $2n$) as considered in \cite{Jana2020RS} where non-negativity of the local weight functions was immediate.

\item Finally, we may wonder if we can obtain a similar result as in Theorem \ref{asymptotics} in the archimedean aspect as well. One of the main ingredient to prove Theorem \ref{asymptotics} is the classical non-archimedean newvector theory of \cite{JPSS1981conducteur} which allows us to pick up the family considered in Theorem \ref{asymptotics}. In principle we can, at least at the real places, via the analogous archimedean newvector theory as in \cite{JaNe2019anv}. However, we do not pursue that in this article.
\end{enumerate}

\subsection{Structure of the paper}

We fix notations and conventions that are used throughout the text in \S \ref{sec:general-notations}. Preliminaries on automorphic forms, Whittaker models and integral representations of $L$-functions in the local and global settings are recalled in \S \ref{sec:local-prelim} and \S \ref{sec:global-prelim}, respectively.

In \S \ref{sec-global}, we use spectral theory and the theory of integral representations of $L$-functions to relate the average of $L$-functions $\mc{M}(\mathbf{s},\Phi,\phi)$ to a period of $\Phi$ and $\phi$ over $\GL(n-1)$ up to a degenerate term coming from regularizing the zeta integral on the smaller group. It becomes clear that the reciprocity formula follows from a certain identity of periods, also deduced in the same section.

Everything up until this point works for general vectors $\Phi$ and $\phi$. In \S \ref{sec:choice-of-vectors}, we describe our choices of local vectors used in the proof of Theorem \ref{asymptotics}. For these choices, we study in \S \ref{local-original} and \S\ref{local-dual} the local factors appearing on the original and dual sides, respectively. We use these vectors to pick up the family of representations considered in Theorem \ref{asymptotics}. This is one of the most technical parts of the paper.

In \S \ref{sec:residue-term}, we show a meromorphic continuation of the term $\mc{M}(\textbf{s},\Phi,\phi)$ to a neighborhood of $\mathbf{s}=\left(\frac12,\frac12\right)$. This adds an extra term $\mc{R}(\textbf{s},\Phi,\phi)$, called the residue term (since it appears after an application of the residue theorem), and we estimate its contribution when the vectors are as in \S\ref{sec:choice-of-vectors}. 

In \S \ref{sec:degenerate-term}, we show that the degenerate term can be factored into a product of local integrals and study these local factors, first the unramified computation and later with the choices from \S \ref{sec:choice-of-vectors}. This is the source of the main term in Theorem \ref{asymptotics}. The estimates to the residue and degenerate terms are only given for the term in the dual side since they vanish in the original side, which we show in \S \ref{sec:proofs}. This is the point where we make use of the auxiliary prime $\p_0$. The proofs of the main results are also given in \S\ref{sec:proofs}.

\section*{Acknowledgments}

We would like to thank Paul Nelson for various discussion and feedback. We would like to thank Farrell Brumley for providing us reference \cite{FLO2012representations} which became quite helpful in many parts of the paper. We would also like to thank Valentin Blomer for his interest in this work and encouragement. Part of the work has been carried out at the Max-Planck-Institut f\"ur Mathematik (MPIM) in Bonn. We are grateful towards the MPIM for its excellent working environment. Finally, we generously thank the anonymous referee for their careful reading of the manuscript and numerous helpful suggestions. Last but not the least, the first author thanks Shreyasi Datta for her constant support since the beginning of the project.

\section{General Notations}\label{sec:general-notations}

The letter $F$ denotes a local of global number field. In the beginning of each section we define $F$ explicitly. For any place $v$ of a global field $F$ we denote by $F_v$ the localization of $F$ at $v$. Similarly, for any global object $\mathfrak{G}$ (\emph{e.g.} an $L$-function) we denote the $v$-adic component (if defined) of $\mathfrak{G}$ by $\mathfrak{G}_v$ (\emph{e.g.} a $v$-adic $L$-factor). If clear from the context, we suppress the subscript $v$ from the notation $\mathfrak{G}_v$. We denote the adele ring of $F$ by $\A$.

For any $n\ge 1$ by $G_n$ we denote the algebraic group $\GL(n)$. We embed $G_n\hookrightarrow G_{n+1}$ in the upper-left corner. We denote the subgroup of upper triangular unipotent matrices in $G_n$ by $N_n$. Also, by $Z_n$ we denote the center of $G_n$. For any ring $R$, we have $Z_n(R)\cong R^\times \cong G_1(R)$, so often we identify $Z_n(R)$ with $G_1(R)$. We denote the long Weyl element of $G_n$ by $w_n$ which is given by the matrix whose anti-diagonal elements are $1$ and all other elements are $0$.

We fix Haar measures on $G_n(R)$, $N_n(R)$, and $G_1(R)$ which we denote by $\d g$, $\d n$, and $\d^\times z$, respectively. We also fix $G_n(R)$-invariant quotient measures on $Z_n(R)\bs{G}_n(R)$ and $N_n(R)\backslash G_n(R)$ which, abusing notations, we denote by $\d g$. Again, if clear from the context, we suppress the index $n$ from the notations.

Let $A_n$ be the group of diagonal matrices in $G_n$ which is isomorphic to $G_1(R)^n$. If $F$ is a local field then we denote the standard maximal compact subgroup of $G_n(F)$ by $K_n$. We have the Iwasawa decomposition $G_n(F)=N_n(F)A_n(F)K_n$. Using Iwasawa parametrization we write
$$\d g =\delta^{-1}(a) \d n\d^\times a \d k,$$
where $\delta$ is the modular character of the group $N_n(R)A_n(R)$ given by
$$\delta(a) = \prod_{j=1}^n |a_j|^{n-2j+1},\quad a=\mathrm{diag}(a_1,\dots,a_n)$$
and $\d^\times a= \prod_{i=1}^n \d^\times a_i$.
Also, here $\d k$ denotes the probability Haar measure on $K_n$.

We follow an $\epsilon$-convention, as usual in analytic number theory, which allows us to change the values of $\epsilon$ (which is typically very small) from line to line. We also adopt the usual Vinogradov notations $\ll$ and $\gg$. Moreover, we write $A \asymp B$ to mean $|B|\ll |A|\ll |B|$. Our convention is that the implied constants in the Vinogradov notations are allowed to depend on the global field and the ambient group.

\section{Local Preliminaries}\label{sec:local-prelim}

In this section we work over a general local field $F$ of characteristic zero, archimedean or non-archimedean, without mentioning the field explicitly. If $F$ is non-archimedean we let $v$ be its valuation, $\o$ its ring of integers, $\p$ the maximal ideal in $\o$, and $N(\p)$ the order of its residue field.

For the group $G_n$ we mostly write $G$, unless there is a source of confusion. We adopt he same convention for subgroups of $G$.

The letter $\pi$ will denote an irreducible admissible representation of $G$.

\subsection{Measure normalizations}
Let $|\cdot|_F$ denote an absolute value on $F$. In particular, $|\cdot|_\C=|\cdot|^2_\R$. When there is no confusion we will drop the subscript $F$. On $F$ we fix a translation invariant measure $\d x$ so that if $F$ is non-archimedean then $\vol(\o,\d x)=1$. We fix $\d^\times x:=\zeta_F(1)\frac{\mathrm{d}x}{|x|}$ to be the Haar measure on $F^\times$ where $\zeta_F$ is the zeta function attached to $F$.

\subsection{Additive character}\label{sec:additive-character}

We fix an additive character $\psi_0$ of $F$. If $F$ is non-archimedean then we assume that $\psi_0$ is unramified, \emph{i.e.} trivial on $\o$.  We define an additive character of $N$ by
\begin{equation*}
    \psi(n(x)):=\psi_0\left(\sum_{i=1}^{n-1}x_{i,i+1}\right), \quad n(x):=(x_{i,j})_{i,j}\in N.
\end{equation*}
We denote restriction of $\psi$ to smaller unipotent subgroups also by the same letter.

\subsection{Gamma factors and analytic conductors}\label{sec:gamma-conductor}

For every irreducible representation $\pi$ we attach local $\gamma$, $L$, $\varepsilon$ factors which are related by
$$\gamma(s,\pi):=\varepsilon(s,\pi)\frac{L(1-s,\widetilde{\pi})}{L(s,\pi)},$$
where $\widetilde{\pi}$ is the contragredient of $\pi$. We refer to \cite[\S3]{Cogdell2007functions} for the description of the local factors.

We also attach a local \emph{analytic conductor} $C(\pi)$ to $\pi$. If $F$ is archimedean one defines $C(\pi)$ via the Langlands parameters of $\pi$; see \cite[eq. (31)]{IS2020perspective}. If $F$ is non-archimedean then one defines $C(\pi)$ via the invariance property under the Hecke congruence subgroups, as in \cite{JPSS1981conducteur}; see \S\ref{newvectors}. We record that if $F$ is non-archimedean and $\pi$ is unramified then $C(\pi)=1$.

If $\pi_i$ are representations of $G_{n_i}$ for $i=1,2$. We also attach an analytic conductor $C(\pi_1\otimes\pi_2)$ of the Rankin--Selberg product $\pi_1\otimes\pi_2$. Then one has
\begin{equation}\label{conductor-inequality}
    C(\pi_1\otimes\pi_2)\ll C(\pi_1)^{n_2}C(\pi_2)^{n_1}
\end{equation}
where the implied constant is absolute; see \cite[Appendix A]{HB2019ZeroFree}. 

One can analytically relate the local $\gamma$ factor and the analytic conductor of a representation (see \cite{JaNe2019anv} for some discussion on this). We have the asymptotic expansion 
$$\gamma(1/2+s,\pi) = \gamma(1/2,\pi)C(\pi)^{-s} + O_\pi(s),$$
as $s\to 0$. More precisely,
\begin{equation}\label{gamma-factor-bound}
    \gamma(1/2+s,\pi)\asymp C(\pi\otimes|\det|^{\Im(s)})^{-\Re(s)},
\end{equation}
as long as $s$ is away from the poles and zeros of the $\gamma$ factor.

\subsection{Sobolev norms}
We follow \cite[\S2.3.3]{MV2010subconvexity} to define a Sobolev norm on unitary representations. First, we define a Laplacian $\mathfrak{D}$ on $C^\infty(G)$. 

If $F$ is archimedean then we define
$$\mathfrak{D}:= 1 -\sum_{X}X^2,$$
where $\{X\}$ is an orthonormal basis of the Lie algebra of $G$ with respect to the standard Killing form.

Let $F$ be non-archimedean. Let $K[m]$ denote the principal congruence subgroup of level $m$ and $e[m]$ denote the orthogonal projector on the orthogonal complement of $K[m-1]$-invariant vectors inside the space of $K[m]$ invariant vectors. We define the Laplacian on $G$ by
$$\mathfrak{D} : =\sum_{m=0}^\infty N(\p)^m e[m].$$
Note that $\sum_{m=0}^\infty e[m]$ is the identity operator. We also note that $\mathfrak{D}$ is invertible and a large enough power of $\mathfrak{D}^{-1}$ is of trace class.

Finally, we define the \emph{order $d$ Sobolev norm} of $v\in\pi$ by
$$S_d(v):=\|\mathfrak{D}^d v\|_\pi.$$
We refer to \cite[\S2.4]{MV2010subconvexity} for useful properties of the Sobolev norm.

\subsection{Whittaker and Kirillov models}\label{whittaker-kirillov-model}

For the details of this subsection we refer to \cite{BZ1976rep} for non-archimedean case and \cite[\S 3]{Jacquet2010distinction}
for archimedean case.

We recall the notion of {genericity} for an irreducible representation $\pi$ of $G$. We call $\pi$ to be \emph{generic} if
$$\mathrm{Hom}_G\left(\pi,\mathrm{Ind}_N^G \psi)\right)\neq \{0\},$$
where 
$$\mathrm{Ind}_N^G \psi:=\{W\in C^\infty(G)\text{ with moderate growth}\mid W(ng)=\psi(n)W(g), n\in N, g\in G\}.$$
We also know that if $\pi$ is generic then the above $\mathrm{Hom}$-space is one-dimensional. We always identify $\pi$ with its image under a non-zero element of the $\mathrm{Hom}$-space, which we call the \emph{Whittaker model} of $\pi$ under $\psi$. 

The theory of \emph{Kirillov models} asserts that the restriction
$$W\mapsto \left\{ g\mapsto W\left[\begin{pmatrix}g&\\&1\end{pmatrix}\right]\right\}$$
is injective. Furthermore, for any $\phi\in C^\infty_c(N_{n-1}\backslash G_{n-1},\psi)$ there is a unique $W_\phi\in \pi$ such that
$$W_\phi\left[\begin{pmatrix}g&\\&1\end{pmatrix}\right]=\phi(g),$$
and the map $\phi\mapsto W_\phi$ is continuous.

If $\pi$ is generic and unitary then we put a unitary inner-product on $\pi$ by setting
$$\langle W_1,W_2\rangle_0 := \int_{N_{n-1}\backslash G_{n-1}}W_1\left[\begin{pmatrix}g&\\&1\end{pmatrix}\right]\overline{W_2\left[\begin{pmatrix}g&\\&1\end{pmatrix}\right]}dg,$$
for any two $W_1,W_2\in \pi$. 

We work with a specific normalization of the inner product in this paper. We fix
\begin{equation}\label{inner-product-normalization}
     \langle W_1,W_2\rangle:=
\begin{cases}
    \frac{\zeta_F(n)}{L(1,\pi\otimes\widetilde{\pi})}\langle W_1,W_2\rangle_0 &\text{ if $F$ is non-archimedean}\\
    \langle W_1,W_2\rangle_0 &\text{ if $F$ is archimedean}
\end{cases}
\end{equation}
In the non-archimedean case, the inner-product is built so that if $\pi$ is unramified and $W\in \pi$ is spherical, then $\|W\|^2=|W(1)|^2$. Note that such normalization is purely for cosmetic purposes so that the harmonic weights in \eqref{eq:main-moment} remain independent of the set of ramified places $S$; see Lemma \ref{harmonic-weight-computation}. On the other hand, in the archimedean case such normalization is not necessary as we are concerned about the ``finite part'' of the $L$-functions, as in Theorem \ref{asymptotics}, as opposed to completed $L$-functions.
\\

We recall the Langlands classification of the unitary representations of $G$. Let $P$ be the standard parabolic subgroup of $G$ attached to the partition $n=\sum_{i=1}^k n_i$. Let $\pi_i$ be any \emph{essentially square-integrable} representation of $G_{n_i}$. For any \(k\)-tuple $(s_1,\dots,s_k)\in\C^k$ we consider the unitarily normalized induction $\mathrm{Ind}_P^G\bigotimes_{i=1}^k\pi_i\otimes|\det|^{s_i}$. Then any unitary representation $\pi$ of $G$ is the unique irreducible constituent of such an induction and is denoted by $\bigboxplus_{i=1}^k\pi_i\otimes|\det|^{s_i}$. We define $\pi$ to be \emph{$\theta$-tempered} (\emph{resp}.\ \emph{tempered}) if $\max_{i=1}^k|\Re(s_i)|\le\theta$ (\emph{resp}.\ $0$).

In this paper, we will always assume that $\theta<1/2$ whenever it appears. Note that it is known that the local components of a unitary automorphic representation are always $\theta$-tempered for some $\theta<1/2$.\\

We need the following bound for Whittaker functions. Although the bound may very well be available in the literature (in a scattered way), we were unable to find a proper reference to the result in the $\theta$-tempered case that works for general local fields. We state the result now and prove it later after developing the necessary tools.

\begin{lemma}\label{local-whittaker-bound}
Let $W\in \pi$ be $\theta$-tempered. If $g=ak$ with $a=\mathrm{diag}(a_1,\dots,a_n)$ then for any large $N>0$ and small $\eta>0$
$$W(g)\ll_{N,\eta} |\det(a/a_n)|^{-\theta}\delta^{1/2-\eta}(a)\prod_{i=1}^{n-1}\min(1,|a_i/a_{i+1}|^{-N})S_d(W),$$
for some $d>0$ depending only on $N$ and the group.
\end{lemma}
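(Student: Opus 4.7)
The plan is to separate the desired bound into two pieces: a rapid-decay factor coming from the Whittaker transformation and the Kirillov model, and a growth factor coming from the Langlands classification together with $\theta$-temperedness. Pointwise estimates are then packaged into Sobolev-norm estimates via the standard machinery of \cite[\S 2]{MV2010subconvexity}.

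For the rapid-decay factor $\prod_{i=1}^{n-1}\min(1,|a_i/a_{i+1}|^{-N})$, I would use the Jacquet--Shalika property: the restriction to the mirabolic of any Whittaker function lies in the Kirillov model of $\pi$, which embeds in the Schwartz space (respectively compactly supported smooth space) of $N_{n-1}\backslash G_{n-1}$ in the archimedean (resp. non-archimedean) case. Restricting further to the diagonal torus and extending via the central character to vary $a_n$, this gives rapid decay in each ratio $|a_i/a_{i+1}|$. Tracking the Schwartz or compact-support data through the Sobolev machinery of \cite[\S 2.4]{MV2010subconvexity} then produces an implicit constant dominated by $S_d(W)$ for some $d=d(N)$, and taking the minimum with the trivial Sobolev bound $W(ak)\ll S_d(W)$ accounts for the $\min(1,\cdot)$.

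For the growth factor $\delta^{1/2-\eta}(a)|\det(a/a_n)|^{-\theta}$, I would invoke the Langlands classification recalled in \S\ref{whittaker-kirillov-model}: write $\pi$ as the unique irreducible subrepresentation of a standard module $\mathrm{Ind}_P^G\bigotimes_{i=1}^k\pi_i\otimes|\det|^{s_i}$ with $\pi_i$ square-integrable and $|\Re s_i|\le\theta$. The Whittaker model of $\pi$ is then realized by the Jacquet integral along the opposite unipotent radical, which factors the resulting Whittaker function on the diagonal torus through the tempered Whittaker functions of the $\pi_i$ on their respective Levi blocks, twisted by $\delta_P^{1/2}(a)\prod_i|\det a_{(i)}|^{s_i}$. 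Each tempered Whittaker function is dominated by $\delta^{1/2-\eta}$ on its block --- non-archimedeanly from the Kirillov description of square-integrable representations, archimedeanly from the Casselman--Wallach asymptotic expansion --- and a direct bookkeeping of the twisting factors $|\det a_{(i)}|^{\Re s_i}$ in the appropriate Weyl chamber packages them into the claimed $|\det(a/a_n)|^{-\theta}$.

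The main obstacle will be the second step: producing the Whittaker functional on $\pi$ via the Jacquet integral and controlling it uniformly across the $\theta$-tempered unitary dual, while simultaneously tracking the transport of Sobolev norms under the (non-unitary) intertwiner from the induced picture to $\pi$'s Whittaker model. Archimedeanly, uniformity in the continuous parameters $s_i$ is the most delicate point and rests on Wallach's asymptotic expansions; non-archimedeanly, one may need analytic continuation of the Jacquet integral near reducibility points, with uniform bounds there being the hardest technical issue.
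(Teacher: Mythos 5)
Your route is genuinely different from the paper's. You propose to pass through the Langlands classification, realize $\pi$ via a standard module $\mathrm{Ind}_P^G\bigotimes_i\pi_i\otimes|\det|^{s_i}$, and construct the Whittaker functional from the Jacquet integral, transporting the asymptotics of the tempered pieces $\pi_i$ and the twisting factors $|\det a_{(i)}|^{\Re s_i}$ back to $\pi$. The paper instead runs an induction on $n$: it removes one ratio $|a_{n-1}/a_n|$ by $N$-equivariance and differentiation (or open-compact invariance), applies the Whittaker--Plancherel formula on $G_{n-1}$ to write $|\det(a/a_n)|^s\, W_1(a)$ as $\int_{\widehat{G_{n-1}}}\sum_{W'}W'(\widetilde{a}/a_n)\Psi(1/2+s,W_1,\overline{W'})\,\d\mu^{\loc}$, then analytically continues this identity in $s$ to $\Re(s)=-1/2+\theta+\eta$. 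The $|\det(a/a_n)|^{-\theta}$ factor falls out of that shift; the only input about $\theta$ is that the poles of $\Psi(1/2+s,W_1,\overline{W'})$ are governed by $L(1/2+s,\pi\otimes\overline{\pi'})$, which is holomorphic for $\Re(s)>-1/2+\theta$ when $\pi'$ is tempered. The tempered case of the bound is then the inductive hypothesis, and Lemma~\ref{trivial-bound-zeta-integral} plus Lemma~\ref{trace-class-property} control the spectral integral. This sidesteps the standard module entirely.

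That difference matters because your plan has a genuine gap, and you name it yourself: uniformity of the Jacquet-integral construction across the $\theta$-tempered dual, together with transporting Sobolev norms through a non-unitary intertwiner (and, archimedeanly, near reducibility points of the standard module). These are precisely the points that make the ``Langlands classification plus asymptotics'' approach hard to carry out at all local places with explicit Sobolev dependence; the paper states it gives a new proof because it could not locate a reference covering the general $\theta$-tempered, general-local-field case. Identifying the obstacle is not the same as overcoming it, so as written the second half of your argument is incomplete. Two smaller issues: the claim that every Whittaker function restricted to the mirabolic ``lies in the Kirillov model, which embeds in the Schwartz space'' is not literally how the Kirillov theorem is stated in \S\ref{whittaker-kirillov-model} (it says $C_c^\infty$ maps \emph{into} the restricted Whittaker functions, not that every restriction is Schwartz); the rapid decay along $|a_i/a_{i+1}|\to\infty$ is better obtained directly from $N$-equivariance and smoothness, exactly as in \eqref{rapid-decay-arch} and \eqref{rapid-decay-nonarch}. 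And you cannot get the product bound simply by taking the minimum with the trivial Sobolev bound $W(ak)\ll S_d(W)$, since that does not supply the $\delta^{1/2-\eta}(a)|\det(a/a_n)|^{-\theta}$ prefactor in the regions where the ratios $|a_i/a_{i+1}|$ are small; the two factors must be produced simultaneously, which is precisely what the analytic continuation of the Whittaker--Plancherel identity achieves in the paper.
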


This result is proved for a real place in \cite[Lemma 5.2]{JaNe2019anv} in the tempered case and in \cite[Lemma 7.2]{Jana2020RS} in the $\theta$-tempered case. 

\subsection{Whittaker--Plancherel formula}
We record the relevant formulation of the Whittaker--Plancherel formula. We refer to \cite[Chapter 5]{Wallach1992realreductive} for the archimedean case and \cite[Theorem 2.3.2]{beuzartplessis2008.05036} for the non-archimedean case. 

Let $\widehat{G}$ be the unitary dual of $G$ \emph{i.e.} $\widehat{G}$ is the set of all isomorphism classes of unitary irreducible representations of $G$. we equip $\widehat{G}$ with a local Plancherel measure $\d\mu^\loc$ which is compatible with the Haar measure $\d g$ on $G$ in the sense of Harish-Chandra:
\begin{equation}\label{harish-chandra-plancherel}
f(1)=\int_{\widehat{G}}\mathrm{Trace}(\pi(f))\d\mu^\loc(\pi),\quad f\in C_c^\infty(G).
\end{equation}
It is known that $\d\mu^\loc$ is supported only on the irreducible generic tempered representations of $G$. Let $\xi\in C^\infty(N\backslash G,\psi)$ be an element in the Harish-Chandra Schwartz space (adapted to Whittaker models), in the sense of \cite[\S 4]{Wallach1992realreductive}. Then we have the following absolutely convergent spectral decomposition:
\begin{equation}\label{whittaker-plancherel-theorem}
     \xi(g)=\int_{\widehat{G}}\sum_{W\in\B(\pi)}W(g)\int_{N\backslash G}\xi(h)\overline{W(h)}\d h\d\mu^\loc(\pi),
\end{equation}
where $\B(\pi)$ is an orthonormal basis of $\pi$. The above sum does not depend on the choice of the orthonormal basis.

We record a useful lemma.

\begin{lemma}\label{trace-class-property}
For each $d_1,d_2>0$ there is an $L>0$ such that 
$$\int_{\widehat{G}}C(\pi)^{d_1}\sum_{v\in\B(\pi)}S_{d_2}(v)S_{-L}(v)\d\mu^\loc(\pi)<\infty.$$
Here $\B(\pi)$ is an orthonormal basis of $\pi$ consisting of eigenvectors of $\mathfrak{D}$.
\end{lemma}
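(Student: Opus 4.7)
The plan is to reduce the sum over the orthonormal basis to a single trace, and then invoke the standard trace-class properties of powers of $\mathfrak{D}$ against the local Plancherel measure.

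First I would exploit the hypothesis that the basis $\B(\pi)$ consists of eigenvectors of $\mathfrak{D}$. If $v\in\B(\pi)$ satisfies $\mathfrak{D} v=\lambda_v v$ with $\|v\|_\pi=1$ and $\lambda_v\ge 1$, then $S_d(v)=\lambda_v^d$ for every $d\in\R$, so
\begin{equation*}
\sum_{v\in\B(\pi)}S_{d_2}(v)S_{-L}(v)=\sum_{v\in\B(\pi)}\lambda_v^{d_2-L}=\on{tr}_\pi(\mathfrak{D}^{d_2-L}).
\end{equation*}
Hence the claim reduces to producing $L$ (depending on $d_1,d_2$) with
\begin{equation*}
\int_{\widehat{G}}C(\pi)^{d_1}\on{tr}_\pi(\mathfrak{D}^{d_2-L})\,\d\mu^\loc(\pi)<\infty.
\end{equation*}

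Next I would split into the archimedean and non-archimedean cases, in the spirit of \cite[\S2.4]{MV2010subconvexity}. In the non-archimedean case, using the spectral decomposition $\mathfrak{D}=\sum_{m\ge 0}p^m e[m]$ and the fact that $e[m]$ is the projector onto the orthogonal complement of $\pi^{K[m-1]}$ inside $\pi^{K[m]}$, we get
\begin{equation*}
\on{tr}_\pi(\mathfrak{D}^{d_2-L})=\sum_{m\ge 0}p^{m(d_2-L)}\bigl(\dim\pi^{K[m]}-\dim\pi^{K[m-1]}\bigr).
\end{equation*}
By the newvector theory of \cite{JPSS1981conducteur} one has $\dim\pi^{K[m]}=0$ for $m<\log_p C(\pi)$ and the dimensions grow at most polynomially in $p^{m}$ for larger $m$, so for $L$ large enough this trace is bounded by a negative power of $C(\pi)$. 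Combined with the standard bound on the local Plancherel measure of the set of representations of fixed conductor (Weyl-law/Sauvageot type), the integral converges once $L\gg d_1+d_2$. In the archimedean case, the eigenvalues of $\mathfrak{D}$ acting on the $K$-types of $\pi$ grow polynomially in the parameters of the $K$-type and in the Langlands parameters of $\pi$, giving $\on{tr}_\pi(\mathfrak{D}^{d_2-L})\ll C(\pi)^{-L'}$ for arbitrarily large $L'$ provided $L$ is large; this absorbs the polynomial density of the Plancherel measure and the factor $C(\pi)^{d_1}$.

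Finally, I would combine the two cases using the factorization of $\mu^\loc$ and $\mathfrak{D}$ over the local places (which is implicit here since the lemma is purely local), and choose $L$ as the maximum of the two thresholds provided above.

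The main obstacle is quantitative: one needs a uniform polynomial bound for $\dim\pi^{K[m]}$ in terms of $m$ and $C(\pi)$ (non-archimedean) and for the number of $K$-types of given Casimir size (archimedean), together with the corresponding Plancherel volume estimates. These are essentially contained in the Michel--Venkatesh Sobolev framework and in the Sauvageot density theorem, so the burden is to quote them in a form compatible with our normalization of $\mathfrak{D}$ rather than to prove anything new.
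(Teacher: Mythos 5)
Your proposal matches the paper's proof almost exactly: both reduce the inner sum to a trace of $\mathfrak{D}^{d_2-L}$ using the $\mathfrak{D}$-eigenbasis, use the spectral decomposition of $\mathfrak{D}$ together with a vanishing threshold and polynomial dimension bound (the paper cites \cite[2.6.3 Lemma]{MV2010subconvexity}; you cite newvector theory and the Michel--Venkatesh framework) to get a negative power of $C(\pi)$, and finish with the same Plancherel-volume estimate; the archimedean case is likewise deferred to \cite[Lemma 3.3]{JaNe2019anv}. The one small imprecision to watch is that your vanishing threshold ``$\dim\pi^{K[m]}=0$ for $m<\log_p C(\pi)$'' conflates the principal congruence subgroup $K[m]$ with the $K_0(\p^j)$ of newvector theory (the correct statement is in terms of the depth of $\pi$, which is polynomially related to $c(\pi)$), but this is at the same level of precision as the paper and does not affect the argument.
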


\begin{proof}
If $F$ is archimedean the proof can be done as in \cite[Lemma 3.3]{JaNe2019anv}, verbatim.

Let $F$ be non-archimedean. We denote by $c(\pi)$ the conductor exponent of $\pi$ (see \S\ref{newvectors} for the definition). Using \cite[2.6.3 Lemma]{MV2010subconvexity}
we see that the integral in the lemma is bounded by
$$\int_{\widehat{G}}N(\p)^{d_1c(\pi)}\sum_{m=c(\pi)}^\infty N(\p)^{m(d_2-L)}N(\p)^{md_3}\d\mu^\loc(\pi),$$
for some $d_3>0$. For any $L'>0$ there is a large $L>0$ such that the inner sum is $O(N(\p)^{-L'c(\pi)})$. Thus it is enough to prove that
$$\int_{\widehat{G}}N(\p)^{-Ac(\pi)}\d\mu^\loc(\pi)=\sum_{\ell=0}^\infty N(\p)^{-\ell A}\mu^\loc\left(\{\pi\in\widehat{G}\mid c(\pi)=\ell\}\right)<\infty,$$
for large enough $A>0$. Thus it suffices to show that there is a fixed $B\ge 0$ so that
$$\mu^\loc\left(\{\pi\in\widehat{G}\mid c(\pi)\le \ell\}\right)\ll N(\p)^{\ell B}.$$
Let $f:=(\vol(K_0(\mathfrak{p}^\ell)))^{-1}\mathbf{1}_{K_0(\mathfrak{p}^\ell)}=f\ast f$ where $\ast$ denotes convolution. Note that for $c(\pi)\le \ell$, from newvector theory (see \S\ref{newvectors}), we obtain that
$$\mathrm{trace}(\pi(f)) =\sum_{v\in\B(\pi)}\|\pi(f)v\|^2\ge \|\pi(f)v_0\|^2=\|v_0\|^2 = 1$$
where $v_0\in\pi$ is a unit newvector. Now applying Harish-Chandra Plancherel formula, as in \eqref{harish-chandra-plancherel}, we obtain
$$N(\p)^{\ell(n-1)}\gg (\vol(K_0(\mathfrak{p}^\ell)))^{-1}\ge \mu^\loc\{\pi\in\widehat{G}\mid c(\pi)\le \ell\},$$
as required.
\end{proof}

\subsection{Local zeta integral and functional equation}
For this subsection we refer to \cite[\S3]{Cogdell2007functions} for a detailed discussion.

Let $\Pi$ and $\pi$ be irreducible generic representations of $G_{n+1}$ and $G_n$, respectively, realized in the Whittaker models with respect to the same additive character. For $\Re(s)$ sufficiently large, we define the local zeta integral of $V\in \Pi$ and $W\in \pi$ by
\begin{equation}\label{local-zeta-integral-def}
    \Psi(s,V,\ol{W}):=\int_{N_n\backslash G_n}V\left[\begin{pmatrix}g&\\&1\end{pmatrix}\right]\ol{W(g)}|\det(g)|^{s-1/2}\d g.
\end{equation}
If $\Pi$ and $\pi$ are unitary then the above integral converges for $\Re(s)>1$. One can then meromorphically continue $\Psi$ to the whole complex plane.

Let $\omega_\pi$ be the central character of $\pi$. We have the local functional equation
\begin{equation}\label{local-functional-equation}
    \Psi(1-s,\widetilde{V},\ol{\widetilde{W}})=\omega_\pi(-1)^n\gamma(s,\Pi\otimes\bar{\pi})\Psi(s,V,\ol{W}),
\end{equation}
where $\widetilde{W}\in\widetilde{\pi}$ denotes the contragredient of $W$ defined by $\widetilde{W}(g):=W(w_ng^{-t})$ and similarly for $\widetilde{V}$; see \cite[Theorem 3.2]{Cogdell2007functions}.

\begin{lemma}\label{trivial-bound-zeta-integral}
Let $V\in \Pi$ and $W\in\pi$ such that both $\Pi$ and $\pi$ are varying over some families of representations. Also let $s\in \C$ be a regular point of the zeta integral $\Psi(.,V,\ol{W})$. Then for each $d>0$ there is a $d'>0$ such that
$$\Psi(1/2+s,V,\ol{W})\ll_{s} S_{d'}(V)S_{-d}(W),$$
where the dependency of the implicit constant on $s$ is at most polynomial.
\end{lemma}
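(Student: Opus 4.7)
\emph{Plan.} The strategy combines a direct Whittaker bound with a self-adjointness manoeuvre that trades the positive Sobolev index on $W$ for an arbitrary negative one, at the cost of enlarging the Sobolev index on $V$. Concretely: (i) I first establish a preliminary bound $|\Psi(1/2+s,V,\ol{W})|\ll_s S_{d_V}(V)S_{d_W}(W)$ for some fixed $d_V,d_W>0$ via Lemma~\ref{local-whittaker-bound}; (ii) I then shift powers of the $G_n$-Laplacian $\mathfrak{D}_n$ from $W$ onto the restriction of $V$ to $G_n$ by self-adjointness, reducing $d_W$ to an arbitrary $-d$ while increasing the Sobolev index on $V$.

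\emph{Stage 1 (direct Whittaker bound).} Using the Iwasawa decomposition $g=ak$ with $a=\diag(a_1,\dots,a_n)$ and $\d g=\delta^{-1}(a)\d^\times a\d k$, I apply Lemma~\ref{local-whittaker-bound} to $V$ (as a Whittaker function on $G_{n+1}$ evaluated at $\bigl(\begin{smallmatrix}ak&\\&1\end{smallmatrix}\bigr)$ with last torus coordinate~$1$) and to $W$ on $G_n$ at $ak$. The combined bound features half-modular characters $\delta_{n+1}^{1/2-\eta},\delta_n^{1/2-\eta}$ together with rapidly decaying factors $\prod_i\min(1,|a_i/a_{i+1}|^{-N})$ from each. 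After the substitution $b_i=a_i/a_{i+1}$ for $i<n$ and $b_n=a_n$, the iterated integral converges absolutely for $\Re(s)$ in the relevant range---the $|b_i|<1$ tails controlled by leftover positive powers of $\delta$ against $|\det a|^{\Re(s)}$ and the $|b_i|>1$ tails by the $\min$-factors---with polynomial $s$-dependence.

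\emph{Stage 2 (trading indices).} The Laplacian $\mathfrak{D}_n$ is self-adjoint on $L^2(N_n\bs G_n,\d g)$: in the archimedean case since $\mathfrak{D}_n=1-\sum_XX^2$ with skew-adjoint $X\in\mathrm{Lie}(G_n)$, and in the non-archimedean case since each $e[m]$ in $\mathfrak{D}_n=\sum_mp^me[m]$ is an orthogonal projector. Setting $F(g):=V\bigl(\begin{smallmatrix}g&\\&1\end{smallmatrix}\bigr)$ and transferring $\mathfrak{D}_n^{d_W+d}$ across the pairing, then expanding by Leibniz (archimedean) or by the $K$-type decomposition (non-archimedean), I get
$$
\Psi(1/2+s,V,\ol{W})=\sum_{k\le d_W+d}P_k(s)\int_{N_n\bs G_n}(\mathfrak{D}_n^kF)(g)|\det g|^s\,\ol{\bigl(\pi(\mathfrak{D}_n^{-(d_W+d)})W\bigr)(g)}\,\d g,
$$
with $P_k$ polynomial in $s$. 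Since $\mathrm{Lie}(G_n)\subset\mathrm{Lie}(G_{n+1})$ and the additional Casimir contributions are non-negative, $\mathfrak{D}_{n+1}\ge\mathfrak{D}_n$ as positive operators on $\Pi$, so the Sobolev norm of $\mathfrak{D}_n^kV$ on $G_{n+1}$ is controlled by $S_k(V)$ up to standard Sobolev comparison. Applying Stage~1 summand-wise, with $V\rightsquigarrow\mathfrak{D}_n^kV$ and $W\rightsquigarrow\pi(\mathfrak{D}_n^{-(d_W+d)})W$, and using $S_{d_W}(\pi(\mathfrak{D}_n^{-(d_W+d)})W)=S_{-d}(W)$, yields the claim with $d'=d_V+d_W+d$.

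\emph{Main obstacle.} The principal subtlety lies in Stage~2: rigorously justifying the self-adjointness manoeuvre in the presence of the non-unitary weight $|\det|^s$ (resolved by absorbing the weight into the Leibniz expansion, each summand then pairing against an honest integrable function after the Stage~1 estimate) and verifying that the Sobolev norm of $\mathfrak{D}_n^kV$ is dominated by $S_k(V)$, which follows from $\mathfrak{D}_{n+1}\ge\mathfrak{D}_n$ combined with an operator-monotonicity argument (archimedean case) or a direct $K$-type comparison (non-archimedean case). Polynomial $s$-dependence in the final bound is inherited from the derivatives of $|\det|^s$ appearing in the Leibniz expansion.
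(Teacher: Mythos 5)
There is a genuine gap in the proposal, and it has two distinct parts.

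\textbf{Circularity.} Your Stage 1 invokes Lemma~\ref{local-whittaker-bound} for both $V$ and $W$. But in this paper the logical order is reversed: Lemma~\ref{local-whittaker-bound} is \emph{stated} before Lemma~\ref{trivial-bound-zeta-integral} but its proof is deferred and explicitly \emph{uses} Lemma~\ref{trivial-bound-zeta-integral} (to see absolute convergence of the Whittaker--Plancherel expansion and to bound the zeta integrals appearing there). Using Lemma~\ref{local-whittaker-bound} at $G_{n+1}$ to prove Lemma~\ref{trivial-bound-zeta-integral} on $G_{n+1}\times G_n$ therefore reproduces the very estimate one is trying to establish. The paper avoids this by using only the crude rapid-decay bounds \eqref{rapid-decay-arch} and \eqref{rapid-decay-nonarch} (which come from $N$-equivariance and smoothness alone, with no $\delta^{1/2-\eta}$ or $|\det|^{-\theta}$ refinement), and relies on unitarity of $\pi$ plus Cauchy--Schwarz to control the $W$-side. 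You could in principle set up a joint induction on $n$ (as the paper does internally for Lemma~\ref{local-whittaker-bound}), but your proposal does not do this, so as written it is circular.

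\textbf{Missing continuation.} Your Stages 1--2 only yield the bound for $\Re(s)$ in the regime where the zeta integral converges absolutely. The lemma, however, asserts the estimate for \emph{every} $s$ away from the poles. The paper completes the argument by applying the local functional equation \eqref{local-functional-equation} for $\Re(s)$ sufficiently negative (using the $\gamma$-factor bound \eqref{gamma-factor-bound} together with the conductor inequality \eqref{conductor-inequality}, and absorbing conductor powers into the Sobolev norms) and then interpolating via Phragm\'en--Lindel\"of. This step is absent from your proposal, so the stated conclusion is not reached.

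Two lesser remarks: (i) the operator-monotonicity claim in Stage~2 --- from $\mathfrak{D}_{n+1}\ge\mathfrak{D}_n$ infer control of $\|\mathfrak{D}_n^kV\|$ by $S_k(V)$ --- is not correct as stated, since $t\mapsto t^k$ is not operator monotone for $k\geq 2$ (and already for $k=1$ one would need $\mathfrak{D}_{n+1}^2\ge\mathfrak{D}_n^2$). The correct route is the Sobolev-norm distortion properties of \cite[\S2.4]{MV2010subconvexity}: each application of $X\in\mathrm{Lie}(G_n)\subset\mathrm{Lie}(G_{n+1})$ raises the Sobolev index by one, so $\mathfrak{D}_n^k$ is absorbed into $S_{d'}(V)$ with $d'=d+2k$. (ii) In the non-archimedean case it is cleaner to argue first for $\mathfrak{D}$-eigenvectors (where the level comparison forces vanishing or absorbs the level into $c$) and then pass to general vectors by \cite[S4d]{MV2010subconvexity}, as the paper does; a direct Leibniz expansion is awkward without smoothness.
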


\begin{proof}
We first show the standard fact that any Whittaker function decays rapidly along any positive root: for any $N:=(N_1,\dots,N_{n-1})$ and $a=\diag(a_1,\dots,a_n)$ then 
\begin{equation}\label{rapid-decay-arch}
    W(a)\ll_N\prod_{i=1}^{n-1}\min(1,|a_i/a_{i+1}|^{-N_i})S_{d}(W),
\end{equation}
for some $d>0$ depending only on $N$. The proof uses unipotent equivariance and smoothness of the Whittaker function. 

If $F$ is archimedean we may choose an element $Y$ (depending on $N_1,\dots,N_{n-1}$) in the Lie algebra of $G_n$ such that
$$\d\pi(Y)W(a) = \prod_{i=1}^{n-1}(a_i/a_{i+1})^{N_i}W(a).$$
The claim follows from the Sobolev inequality $\d\pi(Y)W(a) \ll S_d(W)$, for some $d>0$.

If $F$ is non-archimedean then the invariance of $W$ under some open-compact subgroup of $G_n$ implies that
\begin{equation}\label{rapid-decay-nonarch}
    W(a) = 0, \quad \text{if $|a_i| > c |a_{i+1}|$ for some $i$},
\end{equation}
where $c$ depends only on the level of the open-compact subgroup under which $W$ is invariant, \emph{i.e.} a certain Sobolev norm of $W$. Then the claim follows again from the Sobolev inequality.

Now let $\Re(s)$ be large enough so that we can write $\Psi(1/2+s,V,W)$ as the absolutely convergent integral
$$\int_{N_n\backslash G_n}V\left[\begin{pmatrix}g&\\&1\end{pmatrix}\right]\ol{W(g)}|\det(g)|^s\d g.$$
Let $F$ be archimedean. We integrate by parts with respect to $\mathfrak{D}$ sufficiently many, say $d$, times to obtain that the above equals
$$\int_{N_n\backslash G_n}\mathfrak{D}^d\left(V\left[\begin{pmatrix}g&\\&1\end{pmatrix}\right]|\det(g)|^s\right)\mathfrak{D}^{-d}\ol{W(g)}\d g.$$
It is straightforward to check that $\mathfrak{D}|\det(g)|^s= p(s)|\det(g)|^s$ for a certain polynomial $p(s)$. Let $\omega_\pi$ be the central character of $\pi$.
We use Iwasawa coordinates in $N_n\backslash G_n$ to write the above integral as
\begin{multline*}
    \sum_{j\le d} p_{j}(s)\int_{A_{n-1}\times K_{n}}\int_{F^\times}\mathfrak{D}^jV\left[\begin{pmatrix}z\begin{pmatrix}a&\\&1\end{pmatrix}k&\\&1\end{pmatrix}\right]|\det(a)|^s|z|^{ns}\omega_\pi(z)\\
    \ol{\mathfrak{D}^{-d}W\left[\begin{pmatrix}a&\\&1\end{pmatrix}k\right]}\d^\times z \frac{\d^\times a}{|\det(a)|\delta(a)}\d k,
\end{multline*}
for some polynomials $p_j(s)$, $j\ge 1$.

We make the change of variables $k\mapsto\begin{pmatrix}k'&\\&1\end{pmatrix}k$ and integrate over $k'\in K_{n-1}$ to obtain that the above is, up to an absolute constant, equal to a finite sum of terms of the form
\begin{multline*}
    \sum_{j\le d}p_{j}(s)\int_{K_{n}}\int_{N_{n-1}\backslash G_{n-1}}\int_{F^\times}\mathfrak{D}^jV\left[\begin{pmatrix}z\begin{pmatrix}h&\\&1\end{pmatrix}k&\\&1\end{pmatrix}\right]|\det(h)|^{s-1}|z|^{ns}\omega_\pi(z)\\
    \ol{\mathfrak{D}^{-d}W\left[\begin{pmatrix}h&\\&1\end{pmatrix}k\right]}\d^\times z \d h\d k.
\end{multline*}
We apply the Cauchy--Schwarz inequality on the $h$-integral and use the unitarity of $\pi$ to obtain that the absolute value of the above expression is bounded by
\begin{multline*}
S_{-d}(W)\int_{K_n}\left(\int_{N_{n-1}\backslash G_{n-1}}\left|\int_{F^\times}\mathfrak{D}^eV\left[\begin{pmatrix}z\begin{pmatrix}h&\\&1\end{pmatrix}k&\\&1\end{pmatrix}\right]|\det(h)|^{\Re(s)-1}\right.\right.\\
\left.\left.|z|^{n\Re(s)}\omega_\pi(z)\d^\times z\right|^2 \d h\right)^{1/2}\d k.
\end{multline*}
Using rapid decay estimate of $V$ from \eqref{rapid-decay-arch} we see that the above integral is absolutely convergent for $\Re(s)$ large enough, and it is bounded by $S_{d'}(V)$ for a certain $d'$.

Now let $F$ be non-archimedean and $\Re(s)$ be sufficiently large. First, we assume that $V$ and $W$ are $K$-type vectors, \emph{i.e.}, $\mathfrak{D}$-eigenvectors. Then if the level of $V$ is smaller than that of $W$, the zeta integral vanishes and the assertion follows.

Now let the level of $V$ be larger than that of $W$. We write the zeta integral in the Iwasawa coordinates as
$$\int_{K_{n}}\int_{A_n}\Pi\begin{pmatrix}k&\\&1\end{pmatrix}V\left[\begin{pmatrix}a&\\&1\end{pmatrix}\right]\pi(k)W(a)|\det(a)|^s\frac{\d^\times a}{\delta(a)}\d k.$$
Using rapid decay of $V$ as in \eqref{rapid-decay-nonarch} we may restrict the inner integral to $a_1\ll\dots\ll a_n$ where the the implied constants depend only on the level of $V$ and polynomially so. We use the Sobolev inequality to bound $\pi(k)W(a)\ll_{K_n} S_{d}(W)$ for some $d>0$. So the zeta integral becomes absolutely convergent for some large $\Re(s)$ and bounded by 
$$S_{d'}(V)S_{d}(W)\ll S_{d''}(V)S_{-d}(W),$$
for some $d'$ and $d''$ depending on $d$.
Thus the claim follows for $V,W$ being $\mathfrak{D}$-eigenvectors. The general claim now follows from \cite[\S2.4.4, S4d]{MV2010subconvexity}.

So far we have proved that for a general local field $F$ and $\Re(s)$ sufficiently positive the assertion in the lemma follows. Now if $\Re(s)$ is sufficiently negative then we use the local functional equation \eqref{local-functional-equation}, and the bounds in \eqref{gamma-factor-bound} and \eqref{conductor-inequality} to conclude that $\Psi(1/2+s,V,W)$ satisfies the claim in the lemma  (by absorbing the powers of the conductor into the Sobolev norms). We conclude our proof by an application of the Phragm\'en--Lindel\"of convexity principle.
\end{proof}

\begin{proof}[Proof of Lemma \ref{local-whittaker-bound}]
We follow the proofs in \cite[Lemma 7.2]{Jana2020RS} and \cite[Lemma 5.2]{JaNe2019anv}. Using \cite[\S2.4.1, S1b]{MV2010subconvexity} we reduce to the case of $k=1$.

We argue by induction on $n$. The $n=2$ case is in \cite[Proposition 3.2.3]{MV2010subconvexity}. We now prove the inductive step. 

Note that in the archimedean case there exists a differential operator $Y$ such that
$$d\pi(Y)W(a) = (a_{n-1}/a_n)W(a).$$
We define $W_1:=d\pi(Y^N)W$. Thus it is enough to show that
$$W_1(a)\ll_{N,\eta} |\det(a)|^{-\theta}\delta^{1/2-\eta}(a)\prod_{i=1}^{n-2}\min(1,|a_i/a_{i+1}|^{-N})S_d(W_1),$$
as $S_d(W_1)\ll S_{d'}(W)$ for some $d'>d$.

In the non-archimedean case, we reduce to showing the above by appealing to the invariance of $W$ under sum open-compact subgroup and unipotent equivariance.

Let $\omega_\pi$ be the central character of $\pi$ and $\widetilde{a}:=\diag(a_1,\dots,a_{n-1})$. Using the Whittaker--Plancherel formula \eqref{whittaker-plancherel-theorem} we write
\begin{multline*}
    |\det(a/a_n)|^sW_1(a)=\omega_\pi(a_n)W_1(a/a_n)|\det(\widetilde{a}/a_n)|^{s}=
    \\\omega_{\pi}(a_n)\int_{\widehat{G_{n-1}}}\sum_{W'\in\B(\pi')}W'(\widetilde{a}/a_n)\Psi(1/2+s, W_1,\overline{W'})\d\mu^\loc(\pi'),
\end{multline*}
which is valid for sufficiently large $\Re(s)$. 

The right-hand side is absolutely convergent, which can be seen by applying Lemma \ref{trivial-bound-zeta-integral} and Lemma \ref{trace-class-property}. Thus it is analytic in $s$ in some right half-plane. Note that the poles of the integrand in the right-hand side as a function of $s$ may at most come from the poles of the zeta integral as the Whittaker functions are analytic \cite{Jacquet04IntRep}. It is known that $\Psi(1/2+s,W_1,\overline{W'})$ is a holomorphic multiple of the local $L$-factor $L(1/2+s,\pi\otimes\overline{\pi'})$; see \emph{e.g.} \cite[Theorem 3.5]{Cogdell2007functions}. As $\pi$ is $\theta$-tempered and $\pi'$ is tempered, the $L$-factor is holomorphic for $\Re(s)>-1/2+\theta$. So we may analytically continue the integrand of the right-hand side until $\Re(s)=-1/2+\theta+\eta$ for any $\eta>0$.

We apply the inductive hypothesis on $W'$ (note that $\pi'$ is tempered), thus deducing that
$$W'(\widetilde{a}/a_n)\ll_{N,\eta} \delta^{1/2-\eta}(\widetilde{a})\prod_{i=1}^{n-2}\min(1,|a_i/a_{i+1}|^{-N})S_{d'}(W'),$$
for some $d'>0$. We use 
$$\delta(a)=|\det(\widetilde{a}/a_n)|\delta(\widetilde{a}/a_n)$$
and Lemma \ref{trivial-bound-zeta-integral}
to obtain that
\begin{multline*}
    W_1(a)\ll_{N,\eta} |\det(a/a_n)|^{-\theta}\delta^{1/2-\eta}(a)\prod_{i=1}^{n-2}\min(1,|a_i/a_{i+1}|^{-N})
    \\\int_{\widehat{G_{n-1}}}\sum_{W'\in\B(\pi')}S_{d'-L}(W')S_{d}(W_1)\d\mu^\loc(\pi'),
\end{multline*}
where $d$ depends on $L,N$. Taking $L$ large enough and applying Lemma \ref{trace-class-property} we see the that last integral is convergent and we conclude.
\end{proof}

\begin{rmk}
From the proof of Lemma \ref{local-whittaker-bound} it can be noted that the exponent $\theta$ of $|\det|$ in the bound of $W$, as in the statement of the lemma, can be modified to $-\widetilde{\theta}$ where $\widetilde{\theta}$ is the \emph{minimum} of the magnitudes of the real parts of the Langlands parameters of $\pi$.
\end{rmk}

\subsection{Newvectors}\label{newvectors}

Let $F$ be non-archimedean and $\pi$ be generic with trivial central character. Let $K_0(\p^j)$ be the Hecke-congruence subgroup of $G_n$, \emph{i.e.} consist of matrices in $G_n(\o)$ whose last rows modulo $\p^j$ are congruent to $(0,\dots,0,*)$. 

Let $c(\pi)$ be the minimal non-negative integer $j$ such that the $K_0(\p^j)$-fixed subspace $\pi^{K_0(\p^j)}$ is non-zero. It is a theorem by Casselman \cite{Casselman1973some} (for $n=2$) and Jacquet--Piatetski-Shapiro--Shalika \cite{JPSS1981conducteur} (for general $n$) that $\pi^{K_0(\p^{c(\pi)})}$ is one dimensional (also, see \cite{matringe2013essential,jacquet2012correction} where an error in \cite{JPSS1981conducteur} has been corrected).
Any non-zero vector in this fixed space is called a \emph{newvector}. Also, $c(\pi)$ and $C(\pi):=N(\p)^{c(\pi)}$ are called the \emph{conductor exponent} and \emph{(analytic) conductor} of $\pi$, respectively.

In this paper we denote the newvector $W\in \pi$ such that $W(1)=1$ by $W_\pi$. Newvectors often serve the purpose of test vectors for the Rankin--Selberg periods. In this paper we use two such instances, hence record them here.

Let $\sigma$ and $\Pi$ be any irreducible generic representations of $G_n$ and $G_{n+1}$, respectively. Further assume that at least one of $\Pi$ and $\sigma$ is unramified. We consider the vector $W_{\Pi}^{(c(\sigma))}$, given by \eqref{choice-test-vector-original}. From \cite[Theorem 1.1]{BKL2020test}, we have
\begin{equation}\label{bkl-test-vector}
    \Psi(s, W^{(c(\sigma))}_\Pi,\ol{W_\sigma}) = \frac{L(s,\Pi\otimes\bar{\sigma})}{[G_n(\mf{o}):K_0(\p^{c(\sigma)})]}.
\end{equation}
Note that, this generalizes classical test vector result in \cite{JPSS1981conducteur} which considers the case of unramified $\sigma$.

The $L$-functions attached to $\pi$ can be given by
$$L(s,\pi) = \prod_{i=1}^n(1-N(\p)^{-s}\alpha_i)^{-1},$$
for a certain $\alpha:=\{\alpha_i\}\in\C^n$. If $\pi$ is also unramified then $\alpha_i\neq 0$ and are called the \emph{Satake parameters} attached to $\pi$. 

The description of $W_\pi$ restricted to $A_n(F)$, which is due to Shintani \cite{Shintani1976explicit} for unramified $\pi$ and Miyauchi \cite{Miyauchi2014Whittaker} for general $\pi$, is as follows:

Let $m: =(m_1,\dots,m_n)\in \Z^n$ and
$$
a=\diag(y_1,y_2,\ldots, y_n),
$$
with $v(y_i)=m_i$. Then
\begin{equation}\label{shintani}
    W_\pi(a) = 
    \begin{cases}
    \delta^{1/2}(a)\lambda_\pi(m), &\text{ if } m_1\ge\dots\ge m_n\\
    0, &\text{ otherwise},
    \end{cases}
\end{equation}
where $\lambda_\pi(m)$ is the Schur polynomial with index $m$ and evaluated at $\alpha$, \emph{i.e.}
$$\lambda_\pi(m) : =\frac{\det([\alpha_j^{m_i + n-i}]_{1\le i,j\le n})}{\det([\alpha_j^{n-i}]_{1\le i,j\le n})}.$$
If $\pi$ is $\theta$-tempered then $\max_i\{|\alpha_i|\} \le N(\p)^{\theta}$. Consequently, it follows from the highest weight theory for $\mathrm{U}(n)$ that
\begin{equation}\label{spherical-whittaker-bound}
    \lambda_\pi(m) \ll N(\p)^{\theta\sum_i m_i},
\end{equation}
where the implied constant is at most a polynomial in $m$.

We also record that if $\pi$ is unitary and unramified then
\begin{equation}\label{L2-whittaker}
    \|W_\pi\|^2 = 1,
\end{equation}
which follows by directly calculating the $L^2$-norm using the description in \eqref{shintani}. On the other hand, if $\pi$ is ramified, then from the description of $W_\pi$ as in \cite[Theorem 4.1]{Miyauchi2014Whittaker} one computes
$$\langle W_\pi,W_\pi\rangle_0 = \prod_{i,j=1}^{n}(1-\beta_i\overline{\beta_j}N(\p)^{-1})^{-1}$$
for some $\beta_i\in\C$.
If $\pi$ is $\theta$-tempered for some $0\le \theta<1/2$ (\emph{e.g.}, appears as a local component of a generic standard automorphic representation) then there exists an absolute $\delta>0$ such that $|\beta_i|\le N(\p)^{1/2-\delta}$; see, \emph{e.g.}, \cite[eq.(2)]{Brumley2006effective}. Moreover, $\theta$-temperedness of $\pi$ ensures the existence of an absolute $\delta'>0$ so that
$$L(1,\pi\otimes\tilde{\pi})=\prod_{i=1}^{n^2}(1-\beta'_j N(\p)^{-1})^{-1}$$
for some $\beta'_i$ satisfying $|\beta'_i|\le N(\p)^{1-\delta'}$; see \emph{e.g.}, \cite[eq.(4)]{Brumley2006effective}. Thus we obtain
\begin{equation}\label{L2-ramified-whittaker}
    \|W_\pi\|^2 = \frac{\zeta_F(n)}{L(1,\pi\otimes\widetilde{\pi})}\langle W_\pi,W_\pi\rangle_0 \asymp 1
\end{equation}
where the implied constants only depend on $n,\delta,\delta'$ and in particular, \emph{not} on $p,\pi$.

\section{Global Preliminaries}\label{sec:global-prelim}

In this section, we let $F$ be a number field and $\A$ its ring of adeles. By $G_n$ we denote the algebraic group $\GL(n)$ over $F$. For any subgroup $H<G_n$ defined over $F$, we denote by $[H]$ the quotient $H(F)\backslash H(\A)$. We also define $[\widetilde{H}]$ to be the quotient $Z_H(\A)H(F)\backslash H(\A)$ where $Z_H$ is the center of $H$. We may give a $G_n(\A)$-invariant \emph{finite} measure on $[\widetilde{G_n}]$ which is compatible with the product measure on $G_n(\Aa)$ and we denote it by $\d g$. When there is no confusion we might suppress the index $n$ from the notation.

\subsection{Classification of automorphic spectrum}\label{subsec:global-classification}

We give a quick description of the \emph{standard automorphic representations} \emph{i.e.} those which appear in the spectral decomposition of $L^2([\widetilde{G}])$. We refer to \cite[\S 2.2.1]{MV2010subconvexity} and \cite{MW} for details.

Let $Y(G)$ be the set of pairs $(M ,\sigma)$ where $M$ is the Levi part of a standard parabolic subgroup $G$ and $\sigma$ is an isomorphism class of \emph{discrete series} of $M(\Aa)$. 
Here by discrete series we mean the automorphic forms on $[M]$ such that for all $\varphi\in \sigma$ the integral
$$\|\varphi\|^2_\sigma:=\int_{[\widetilde{M}]}|\varphi(x)|^2\d x$$ is finite. 

We let $X(G)$ be the quotient of $Y(G)$ by the equivalence relation defined as follows: $(M,\sigma) \sim (M',\sigma')$ if there exists a Weyl element $w$ such that $wMw^{-1} = M'$ and $w\sigma \cong \sigma'$. For every $\chi\in X(G)$ we define $\mathcal{I}(\chi)$ to be the unitarily normalized induction $\mathrm{Ind}_{N_M(\A)M(\A)}^{G(\A)}\sigma$ where $N_M$ is the unipotent radical attached to $M$. Langlands' classification asserts that any standard automorphic representation is isomorphic to the unique irreducible constituent $\widetilde{\chi}$ of the induction $\mathcal{I}(\chi)$.

Let us now start with a \emph{cuspidal data} $\chi=(M,\sigma)$, \emph{i.e.} $\sigma$ being a cuspidal automorphic representation of $M(\A)$, and proceed with the same construction as above to obtain $\widetilde{\chi}$. Another theorem of Langlands asserts that any \emph{generic} (see below) automorphic representation is isomorphic to such a $\widetilde{\chi}$.

Finally, we denote by $X(\widetilde{G})$ the subset of isomorphism classes of $\chi$ in $X(G)$ so that $\mathcal{I}(\chi)$ is $Z_G$-invariant.

\subsection{Spectral decomposition}\label{sec:spectral-decomposition}

We define a norm on $\mathcal{I}(\chi)$ by $$\|f\|^2_{\mathcal{I}(\chi)}:= \int_K \|f(k)\|^2_\sigma \d k,$$
where $K:=\prod_{v}K_v$ where $K_v$ is the standard maximal compact of $G_n(F_v)$.
Finally, we define an intertwiner (by averaging over $P(F)\backslash G(F)$ and analytic continuation)
$$\Eis:\mathcal{I}(\chi)\to C^\infty([G]).$$
Then for any element $\xi\in C^\infty([\widetilde{G}])$ with sufficient decay at the cusp (\emph{e.g.} a cusp form) we have the pointwise Plancherel decomposition
\begin{equation}\label{spectral-decomposition}
    \xi(x)=\int_{X(\widetilde{G})}\sum_{f\in\widetilde{\B}(\mathcal{I}(\chi))}\frac{\langle \xi, \Eis(f)\rangle_{L^2([\widetilde{G}])}}{\langle f,f\rangle_{\mathcal{I}(\chi)}} \Eis(f)(x) \d\mu^\aut(\chi).
\end{equation}
Here $\widetilde{\B}(\mathcal{I}(\chi))$ denotes an orthogonal basis of $\mathcal{I}(\chi)$ and $d\mu^\aut(\chi)$ denotes the automorphic Plancherel measure on $X(\widetilde{G})$ compatible with $\d g$. The right-hand side above does not depend on the choice of orthogonal basis $\widetilde{\B}$. Also, the right-hand side converges absolutely and uniformly on compacta. For more details we refer to \cite[\S 2.2.1]{MV2010subconvexity}.

Often, we use the shorthand for \eqref{spectral-decomposition}, writing
\begin{equation*}
    \xi(x) = \int_\aut\sum_{\varphi\in\widetilde{\B}(\pi)}\frac{\langle \xi, \varphi\rangle}{\|\varphi\|^2} \varphi(x) \d\pi.
\end{equation*}
In practice, we mostly vary $\pi$ over the generic spectrum only, in which case we replace $\int_\aut$ above by $\int_\gen$.

\subsection{Fourier expansion of automorphic forms}

Let $\psi_0:F\backslash\A\to \C^\times$ be an additive character. For concreteness, we chose $\psi_0$, as in \cite{MV2010subconvexity}, to be the additive character $e_{\Qq}\circ \on{tr}$, where $e_{\Qq}$ is the only additive character of $\Qq\bs \Aa_{\Qq}$ whose restriction to $\Rr$ is $x\mapsto \exp(2\pi ix)$ and $\on{tr}: \Aa\rightarrow \Aa_{\Qq}$ is the adelic extension of $\on{tr}:F\rightarrow \Qq$. 
We extend $\psi_0$ to a character $\psi$ of $N(\A)$ as in \S\ref{sec:additive-character}. We define the $\psi$-Whittaker space by
$$\mathcal{W}(\psi):=\{W\in C^\infty(G(\A))\text{ with moderate growth}\mid W(ng) = \psi(n)W(g), n\in N(\A), g\in G(\A)\}$$
on which $G(\A)$ acts by right translation.

For any automorphic representation $\pi$ we define an intertwiner $\pi\to \mathcal{W}(\psi)$ by
$$\pi\ni\varphi\mapsto W_\varphi:=\int_{[N]}\varphi(n.)\overline{\psi(n)}\d n.$$
We call $\pi$ to be \emph{generic} if the above intertwiner does not vanish identically. The theory of Whittaker model asserts that if $\pi$ is irreducible and generic then the above intertwiner is unique up to scalars and, in fact, defines a $G(\A)$-equivariant embedding. We call the image $\mathcal{W}(\pi,\psi)$ of $\pi$ under the above intertwiner the Whittaker model of $\pi$. For generic $\pi$ we identify $\pi$ with its Whittaker model.

Given an automorphic form $\varphi$ in a generic representation $\pi$ of $G_n(\A)$ we can write its Fourier expansion using $W_\varphi$. For example, if $\varphi$ is cuspidal then we write (see \cite[Theorem 1.1]{Cogdell2007functions})
\begin{equation}\label{Whitt-decomp}
    \varphi(g)=\sum_{\gamma\in N_n(F)\backslash P_n(F)} W_\varphi(\gamma g)=\sum_{\gamma'\in N_{n-1}(F)\bs G_{n-1}(F)}W_{\varphi}\left[\begin{pmatrix}
    \gamma'&\\&1
    \end{pmatrix}g\right].
\end{equation}
The above Fourier expansions converge absolutely and uniformly on compacta.
Here $P_n$ is the standard Mirabolic subgroup of $\GL(n)$ defined by $\GL({n-1})\rtimes U_n$ and $U_n$ is the unipotent radical of the parabolic in $\GL(n)$ attached to the partition $n=(n-1)+1$. In other words, $P_n$ is the stabilizer of $(0,\dots,0,1)$ of the right action of $G_n$ on the row vectors; thus consists of matrices in $\GL(n)$ with last row being $(0,\dots,0,1)$.

If $\varphi$ is non-cuspidal then the Fourier expansion of $\varphi$ is more complicated. We do not need full Fourier expansion for non-cuspidal automorphic form; interested readers may look at \cite[Proposition 4.2]{IchinoYamana2016}. However, we do need a partial Fourier expansion with respect to the unipotent subgroup $U_n$. From abelian Fourier theory we have
\begin{equation}\label{Un-foureier-expansion}
    (\varphi-\varphi_{U_n})(g) = \sum_{\gamma\in P_{n-1}(F)\bs G_{n-1}(F)} W^{U_n}_\varphi\left[\begin{pmatrix}
    \gamma&\\&1
    \end{pmatrix}g\right].
\end{equation}
Here $\varphi_{U_n}$ is the the constant term of $\varphi$ along $U_n$ defined by
$$\varphi_{U_n}(g):=\int_{[U_n]} \varphi(ug) \d u,$$
and $W^{U_n}_\varphi$ is a partial Whittaker function defined by
\begin{equation}\label{partial-whittaker-function}
    W^{U_n}_\varphi(g):=\int_{[U_n]} \varphi(ug)\overline{\psi(u)} \d u = \int_{[U_n]} (\varphi(ug)-\varphi_{U_n}(ug))\overline{\psi(u)} \d u,
\end{equation}
which follows as $\varphi_{U_n}$ is left $U_n$-invariant.

\subsection{Global Zeta Integral and $L$-functions}\label{global-zeta-integral}

We give a quick description of the global theory of $\GL(n+1)\times\GL(n)$ zeta integrals; for details see \cite[\S2]{Cogdell2007functions}. Let $\Pi$ and $\pi$ be generic representations of $G_{n+1}(\A)$ and $G_n(\A)$, respectively. Let $\Phi\in\Pi$ and $\varphi\in\pi$ be two automorphic forms with Whittaker functions $W_\Phi$ and $W_\varphi$, respectively. We define the global Hecke zeta integral of $\Phi$ and $\phi$ by
$$\Psi(s,W_\Phi,\overline{W_\varphi}):=\int_{N_n(\A)\bs G_n(\A)}W_\Phi\left[\begin{pmatrix}g&\\&1\end{pmatrix}\right]\overline{W_\varphi(g)}|\det(g)|^{s-1/2}\d g.$$
The above converges absolutely for sufficiently large $\Re(s)$. If $\Pi$ is cuspidal then the above integral is also equal to the absolutely convergent integral
$$\int_{[G_n]}\Phi\left[\begin{pmatrix}g&\\&1\end{pmatrix}\right]\overline{\varphi(g)}|\det(g)|^{s-1/2}\d g,$$
which can be seen after inserting the Fourier expansion of $\Phi$ and $\phi$, and unfolding.

If $\Phi$ and $\varphi$ are factorable vectors then the global zeta integral factors into local zeta integrals as
$$\Psi(s,W_\Phi,\ol{W_\varphi}) = \prod_{v}\Psi_v(s, W_{\Phi,v},\ol{W_{\varphi,v}}),$$
where the local zeta integral $\Psi_v$ is defined as in \eqref{local-zeta-integral-def}, and $W_\Phi=\prod_v W_{\Phi,v}$, similarly for $W_\varphi$. Once again the product converges absolutely for sufficiently large $\Re(s)$.

If $\Phi$, $\varphi$ and $\psi$ are unramified at the places outside of a finite set $S$, and further, if $W_{\Phi,v}=W_{\Pi_v}$ and $W_{\varphi,v}=W_{\pi_v}$ for $v\notin S$ then (see \cite[Theorem 3.3]{Cogdell2007functions})
\begin{equation}\label{euler-product}
\Psi(s,W_\Phi,\overline{W_\varphi}) =\Lambda(s,\Pi\otimes\ol{\pi})\prod_{v\in S}\frac{\Psi_v(s, W_{\Phi,v},\ol{W_{\varphi,v}})}{L_v(s,\Pi_v\otimes\ol{\pi_v})},
\end{equation}
where $L_v$ denotes the $v$-adic local $L$-factor and $\Lambda$ denotes the global \emph{completed} Rankin--Selberg $L$-function of $\Pi\otimes\bar{\pi}$. We refer to \cite[Theorem 4.2]{Cogdell2007functions} for meromorphic properties of the Rankin--Selberg $L$-functions.

We write $\Lambda^S$ for the partial $L$-function removing all $v$-adic Euler factors for $v\in S$. If $S=\{v\mid\infty\}$ then we write, as usual in analytic number theory, $L$ for $\Lambda^S$.

We attach a \emph{global analytic conductor} $C(\pi)$ to an automorphic representation $\pi$, analogously to its $L$-function $\Lambda(s,\pi):=\Lambda(s,\pi\otimes\mathbf{1})$. If $\pi$ is unramified at the places outside of a finite set $S$, then $C(\pi)=\prod_{v\in S}C(\pi_v)$ where $C(\pi_v)$ is the local conductor as defined in \S\ref{sec:gamma-conductor}.

We also have the \emph{convexity bound}
\begin{equation}\label{convexity-bound}
    L(1/2,\pi)\ll_\epsilon C(\pi)^{1/4+\epsilon},
\end{equation}
which follows from the functional equation of $\Lambda(s,\pi)$ and the Phragm\'en--Lindel\"of convexity principle.

If $\Phi$ is cuspidal then the integral 
$$\int_{[G_n]}\Phi\left[\begin{pmatrix}g&\\&1\end{pmatrix}\right]\overline{\varphi(g)}|\det(g)|^{s-1/2}\d g$$
converges absolutely for any $\varphi$ and any $s\in \C$. Moreover, if $\varphi$ is generic then the above equals $\Psi(s,W_\Phi,\overline{W_\varphi})$. This way we may analytically continue $\Psi(s, W_{\Phi},\overline{W_\varphi})$ for any cuspidal $\Phi$.

However, if $\Phi$ is not cuspidal then the above integral is not convergent. One needs to regularize the integral, as in \cite{IchinoYamana2016}, to give it a meaning and then the regularized integral will again be equal to the global zeta integral.

In this paper we do not need the general regularization scheme but only need to regularize the above integral when $\varphi$ is cuspidal. This ``naive'' regularization is comparatively easier and we describe it in \S \ref{subsec:regularization} below.

\subsection{Harmonic weights}\label{sec:harmonic-weights}

We describe the harmonic weights that appear in the Kuznetsov trace formula. The harmonic weights relate the unitary inner products on a generic unitary automorphic representation and that of its Whittaker model.

Let $\pi$ be a generic irreducible unitary automorphic representation such that $\pi$ is unramified outside a finite set of places $S$ which also contains the archimedean places. Let $\varphi_1,\varphi_2\in\pi$ so that $W_{\varphi_1,v}=W_{\varphi_2,v}=W_{\pi_v}$ for all $v\notin S$. By Schur's lemma there exists a positive constant $\mc{L}^S(\pi)$ such that for any two $\varphi_1,\varphi_2\in\pi$,
\begin{equation}\label{harmonic-weights}
\langle \varphi_1,\varphi_2\rangle_\pi = \mc{L}^S(\pi) \prod_{v\in S} \langle W_{\varphi_1,v},W_{\varphi_2, v}\rangle_{\pi_v}
\end{equation}
where $\langle,\rangle_{\pi_v}$ is as in \eqref{inner-product-normalization}. If $\pi$ is cuspidal then a standard Rankin--Selberg argument shows that $\mc{L}^S(\pi)$ is independent of $S$ and
\begin{equation}\label{harmonic-weight-cuspidal}
    \mc{L}^S(\pi)=\mc{L}(\pi)=c_{F,G}L(1,\pi,\Ad),
\end{equation}
where $c_{F,G}$ is a positive constant depending on the number field $F$ and the group $G$.
If $\pi$ is non-cuspidal then we have that $\mc{L}^S(\pi)$ is again independent of $S$ and proportional to the first non-zero Laurent coefficient of $L(s,\pi\otimes\widetilde{\pi})$ around $s=1$.

\begin{lemma}\label{harmonic-weight-computation}
Let $M$ be a Levi of $G$ attached to the partition $n=n_1+\dots+n_k$ and $\pi:=\otimes_{i=1}^k\pi_i$ be a cusp form on $M$ such that $\pi_i$'s are pairwise non-isomorphic. Let $\Pi:=\boxplus_{i=1}^k\pi_i$ be the Eisenstein representation attached to the cuspidal data $(M,\pi)$. Then 
$$\mathcal{L}^{S}(\Pi)=\mc{L}(\Pi)=d_{F,M}\lim_{s\to 1}\frac{L(s,\Pi\otimes\widetilde{\Pi})}{\zeta_F(s)^k},$$
where $\zeta_F(s)$ is the Dedekind zeta function attached to $F$ and $d_{F,M}$ is a positive constant depending only on $F$ and $M$.
\end{lemma}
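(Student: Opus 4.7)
The plan is to compute $\mc{L}^S(\Pi)$ via a regularized Rankin--Selberg identity of $\Pi$ with itself and extract the leading Laurent coefficient at $s=1$, reducing the Eisenstein case to a residue calculation that generalizes \eqref{harmonic-weight-cuspidal}.

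First, I would check that the right-hand side is well-defined and finite. By Rankin--Selberg factorization,
$$\Lambda^S(s,\Pi\otimes\widetilde{\Pi}) = \prod_{i,j=1}^k \Lambda^S(s,\pi_i\otimes\widetilde{\pi}_j).$$
The hypothesis $\pi_i\not\cong \pi_j$ for $i\ne j$ makes each off-diagonal factor entire and non-vanishing at $s=1$ (by Jacquet--Shalika), while each of the $k$ diagonal factors $\Lambda^S(s,\pi_i\otimes\widetilde{\pi}_i) = \xi_F^S(s)\Lambda^S(s,\pi_i,\Ad)$ has a simple pole at $s=1$. Hence $\Lambda^S(s,\Pi\otimes\widetilde{\Pi})$ has a pole of exact order $k$ at $s=1$, matching that of $\xi_F(s)^k$, so the limit is finite and positive. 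In the $k=1$ (cuspidal) specialization, $\xi_F(s)=\xi_F^S(s)\prod_{v\in S}\zeta_v(s)$ and the stated formula collapses directly to \eqref{harmonic-weight-cuspidal}, providing a useful sanity check.

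Next, I would set up the relevant global integral identity. For factorizable $\varphi_1,\varphi_2\in\Pi$ unramified outside $S$, with local Whittaker component equal to the spherical newvector $W_{\Pi_v}$ at $v\notin S$, consider the regularized Rankin--Selberg integral of $\varphi_1 \overline{\varphi_2}$ against a mirabolic Eisenstein series $E(\cdot,s)$ on $G_n$, in the sense of \cite{IchinoYamana2016}. Unfolding yields the Eulerian expression
$$\Lambda^S(s,\Pi\otimes\widetilde{\Pi})\cdot\prod_{v\in S}\frac{\Psi_v(s,W_{\varphi_1,v},\overline{W_{\varphi_2,v}})}{L_v(s,\Pi_v\otimes\widetilde{\Pi}_v)},$$
which has an order-$k$ pole at $s=1$ coming entirely from $\Lambda^S$. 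On the spectral side, this same pole is produced by the $k$-fold pole of the relevant Eisenstein kernel — organized combinatorially as $\xi_F(s)^k$ by the structure of $\Pi = \boxplus \pi_i$ and the shape of the constant term of the Eisenstein series — and its leading Laurent coefficient is a universal constant times the induced inner product $\langle \varphi_1,\varphi_2 \rangle_{\mc{I}(\chi)}$, which is what $\mc{L}^S(\Pi)$ is defined to measure.

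Finally, comparing leading Laurent coefficients at $s=1$ and simplifying via the unramified identity $\|W_{\Pi_v}\|^2 = L_v(1,\Pi_v \otimes \widetilde{\Pi}_v)/\zeta_v(n)$ from \eqref{L2-whittaker} at $v\notin S$, together with the local Rankin--Selberg identity at $v\in S$ (which converts $\Psi_v(1,\cdot,\cdot)/L_v(1,\cdot)$ into $\langle W_{\varphi_1,v},W_{\varphi_2,v}\rangle_{\Pi_v}/\zeta_v(n)$), one reads off the claimed formula for $\mc{L}^S(\Pi)$. The main obstacle is the rigorous handling of the order-$k$ pole in the regularized Rankin--Selberg integral for Eisenstein data: one must justify that the Ichino--Yamana truncation produces the induced inner product $\langle\varphi_1,\varphi_2\rangle_{\mc{I}(\chi)}$ as the correct leading coefficient, and control off-diagonal intertwining contributions. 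The hypothesis $\pi_i\not\cong\pi_j$ for $i\ne j$ is exactly what ensures holomorphy of the intertwining operators on the unitary axis and gives the clean $\xi_F(s)^k$ bookkeeping.
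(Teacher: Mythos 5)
Your approach---via a regularized Rankin--Selberg integral of $\Pi$ against itself---is genuinely different from the paper's, and the gap you flag at the end is a real one, not a formality. The paper's proof is entirely local and never touches a regularized period: it writes $W_{\Eis(f),v}=W^{\mathrm{Jac}}_{f_v}$ via Shahidi's formula, uses the normalization $W^{\mathrm{Jac}}_{f_v}(1)=\mathfrak{cs}(\pi_v)^{-1}$ at unramified places, invokes \cite[Proposition A.2]{FLO2012representations} for the local identity $\zeta_v(n)\,\|W^{\mathrm{Jac}}_{f_v}\|^2_{\Pi_v}=\prod_i\zeta_v(n_i)\int_{P(F_v)\bs G(F_v)}\|f_v(g_v)\|^2_{\pi_v}\,\d g_v$, and observes that $\|f\|^2_{\mc{I}(\chi)}$ is, by definition, exactly such a product of local $P\bs G$ integrals. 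Feeding the cuspidal case \eqref{harmonic-weight-cuspidal} into each factor $\mathcal{L}(\pi_i)$ then gives the formula place by place. Nothing divergent ever appears.

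Your route, in contrast, must make precise what the regularized period $\int^{\mathrm{reg}}_{[G_n]}\Eis(f_1)\overline{\Eis(f_2)}\,E(\cdot,s)$ is, establish that its Laurent expansion at $s=1$ has exact order $k$, and---this is the crux---show that the leading coefficient is a universal constant times the induced inner product $\langle f_1,f_2\rangle_{\mc{I}(\chi)}$, which is the quantity $\mc{L}^S(\Pi)$ is normalized against in \eqref{harmonic-weights} and \eqref{spectral-decomposition}. This is precisely what you label ``the main obstacle'' without resolving. It is not a routine step: for an order-$k$ pole the Ichino--Yamana/Arthur truncation produces subleading polar terms from the constant terms and intertwining operators of $\Eis(f)$, and one must show these do not contaminate the order-$k$ coefficient. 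Until that is done the proof does not close. The parts of your outline that are already solid are the factorization $\Lambda^S(s,\Pi\otimes\widetilde{\Pi})=\prod_{i,j}\Lambda^S(s,\pi_i\otimes\widetilde{\pi}_j)$, the computation of the pole order $k$ using Jacquet--Shalika, the role of the pairwise non-isomorphism hypothesis, and the $k=1$ consistency check against \eqref{harmonic-weight-cuspidal}; but these establish only that the claimed limit is finite, not that it equals $\mc{L}^S(\Pi)$.
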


If $\pi_i$ are not pairwise non-isomorphic then $\mathcal{L}(\Pi)$ will be proportional to $\lim_{s\to 1}(s-1)^{k'}L(s,\Pi\otimes\widetilde{\Pi})$ where $k'$ is the order of the pole of $L(s,\Pi\otimes\widetilde{\Pi})$ at $s=1$. However, we do not need that result here, so we do not prove it.

For $\pi$ as in Lemma \ref{harmonic-weight-computation} we define the global Casselman--Shalika factor 
$$\mathfrak{cs}(\pi)=\prod_{1\le i<j\le k}L(1,\pi_i\otimes\widetilde{\pi}_j).$$
Note that as $\pi_i$ are pairwise non-isomorphic the above quantity is well-defined.
Similarly, we define the partial factor $\mathfrak{cs}^S(\pi)$ and the local factor $\mathfrak{cs}(\pi_v)$. Note that, we can also write assertion of Lemma \ref{harmonic-weight-computation} as
$$\mathcal{L}(\Pi)=d'_{F,M}|\mathfrak{cs}(\pi)|^2\prod_{i=1}^kL(1,\pi_i,\mathrm{Ad}),$$
for some positive constant $d'_{F,M}$. 

This result is probably known to the experts. However, we were unable to find a reference which points us to the constant with the precision we need (\emph{e.g.} to define \eqref{def-R}).

\begin{proof}
Without loss of generality we let $S$ contain the ramified places of $F$ including the archimedean places as well.

Let $f\in\mathcal{I}(M,\pi)$ be any nonzero element and $\Eis(f)$ be the corresponding Eisenstein series. Then from \cite[Proposition 7.3.1]{Shahidi2010Eisenstein} we have
$$W_{\Eis(f)}=\prod_{v}W_{\Eis(f),v},
\quad
W_{\Eis(f),v}=W^{\mathrm{Jac}}_{f_v},$$
where $f_v\in\mathrm{Ind}_{M(F_v)}^{G(F_v)}(\pi_v)$ and $\pi_v$ is realized in its Whittaker model. 
Here, $W^{\mathrm{Jac}}_{f_v}$ is Jacquet's functional as defined in \cite[\S1.4]{FLO2012representations}. 

If $\phi\in\pi$ we assume that $W_{\phi,v}=W_{\pi_v}$ when $v$ is an unramified place. Then it follows from \cite[Proposition 7.1.4]{Shahidi2010Eisenstein} that $$W_{f_v}^{\mathrm{Jac}}(1)=\mathfrak{cs}(\pi_v)^{-1},$$
for $v\notin S$.
Hence, using \eqref{L2-whittaker} we have
\begin{equation}\label{l2-whittaker-eisenstein}
    \|W^{\mathrm{Jac}}_{f_v}\|^2_{\Pi_v}=\frac{1}{|\mathfrak{cs}(\pi_v)|^2}.
\end{equation}
for all $v\notin S$.

Moreover, we have a factorization \cite[\S11]{FLO2012representations}
$$W_{\Eis(\mathfrak{cs}^S(\pi)f)}=\mathfrak{cs}^S(\pi)W_{\Eis(f)}=\prod_{v\notin S}\mathfrak{cs}(\pi_v)W^{\mathrm{Jac}}_{f_v}\prod_{v\in S}W^{\mathrm{Jac}}_{f_v}.$$
We choose $\varphi_1=\varphi_2=\Eis(\mathfrak{cs}^S(\pi)f)$ in \eqref{harmonic-weights}.

Using the definition of harmonic weights in \eqref{harmonic-weights} we have
$$\|f(g)\|^2_\pi=\prod_{i=1}^k\mathcal{L}(\pi_i)\prod_{v\in S}\|f_v(g_v)\|_{\pi_v}^2.$$
From \cite[Proposition A.2]{FLO2012representations} and \eqref{inner-product-normalization} we have for non-archimedean $v$
$$L_v(1,\Pi_v\otimes\widetilde{\Pi}_v)\|W^{\mathrm{Jac}}_{f_v}\|^2_{\Pi_v} = \prod_iL_v(1,\pi_{i,v}\otimes\widetilde{\pi}_{i,v})\int_{P(F_v)\bs G(F_v)}\|f_v(g_v)\|^2_{\pi_v}\d g_v.$$
In other words, for non-archimedean $v$
$$\int_{P(F_v)\bs G(F_v)}\|f_v(g_v)\|^2_{\pi_v}\d g_v=|\mathfrak{cs}(\pi_v)|^2\|W^{\mathrm{Jac}}_{f_v}\|^2_{\Pi_v}.$$
Thus using \eqref{l2-whittaker-eisenstein} we obtain that the right-hand side is $1$ for all $v\notin S$.

On the other hand, for archimedean $v$ similarly
using \cite[Proposition A.2]{FLO2012representations} and \eqref{inner-product-normalization} we get
$$\int_{P(F_v)\bs G(F_v)}\|f_v(g_v)\|^2_{\pi_v}\d g_v=\frac{\zeta_v(n)}{\prod_i\zeta_v(n_i)}\|W^{\mathrm{Jac}}_{f_v}\|^2_{\Pi_v}.$$
Thus we have
$$\|f\|^2_{\mathcal{I}(M,\pi)}=\int_{P(\A)\bs G(\A)}\|f(g)\|^2_\pi\d g\\=\prod_{i=1}^k\mathcal{L}(\pi_i)\prod_{\substack{v\in S\\v\text{ non-archimedean}}}{\|W^{\mathrm{Jac}}_{f_v}\|^2_{\Pi_v}|\mathfrak{cs}(\pi_v)|^2}\prod_{\substack{v\in S\\v\text{ archimedean}}}{\|W^{\mathrm{Jac}}_{f_v}\|^2_{\Pi_v}}.$$
Replacing $f$ by $\mathfrak{cs}^S(\pi)f$ we obtain
\begin{equation*}
    \|\mathfrak{cs}^S(\pi)f\|^2_{\mathcal{I}(M,\pi)}=\prod_{i=1}^k\mathcal{L}(\pi_i)|\mathfrak{cs}(\pi)|^2\prod_{v\in S}{\|W^{\mathrm{Jac}}_{f_v}\|^2_{\Pi_v}}.
\end{equation*}
Finally, using description of $\mathcal{L}(\pi_i)$ for cuspidal $\pi_i$ as in \eqref{harmonic-weight-cuspidal} we conclude.
\end{proof}

\subsection{Regularization}\label{subsec:regularization}

To prove the reciprocity formula in Theorem \ref{reciprocity} we need to express the global $L$-function of $\sigma\otimes\pi$ as a period integral on $\GL(n)\times\GL(n-1)$ while $\sigma$ is not necessarily cuspidal. In this case, the usual $\GL(n)\times\GL(n-1)$ zeta integral may not be absolutely convergent. This is why we need to regularize the global zeta integral to relate it with the Rankin--Selberg $L$-functions.

\begin{prop}\label{simple-truncation-cuspidal}
Let $\varphi$ be an generic automorphic form on $G_n(\A)$ and $\phi$ be a cusp form on $G_{n-1}(\A)$. Then for $s\in\C$ with $\Re(s)$ sufficiently large
$$\int_{[G_{n-1}]}\left(\varphi\left[\begin{pmatrix}
g&\\
&1
\end{pmatrix}\right]
-
\varphi_{U_{n}}\left[\begin{pmatrix}
g&\\
&1
\end{pmatrix}\right]
\ol{\phi(g)}\right)|\det(g)|^{s-1/2}\d g$$
is absolutely convergent and equals $\Psi(s,W_\varphi,\ol{W_\phi})$.
\end{prop}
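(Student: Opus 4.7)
The plan is to mimic the standard Rankin--Selberg unfolding for cusp forms, but starting from the partial Fourier expansion \eqref{Un-foureier-expansion} instead of the full mirabolic Fourier expansion \eqref{Whitt-decomp}. The subtraction of $\varphi_{U_n}$ is precisely what makes \eqref{Un-foureier-expansion} available, and it is the right substitute for cuspidality in the unfolding.

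First I would substitute \eqref{Un-foureier-expansion} into the integrand. Using the left $G_{n-1}(F)$-invariance of $\overline{\phi}$ and the fact that $|\det\gamma|_{\A}=1$ for $\gamma\in G_{n-1}(F)$, the $P_{n-1}(F)\bs G_{n-1}(F)$ sum unfolds the integral to
$$
\int_{P_{n-1}(F)Z_{n-1}(\A)\bs G_{n-1}(\A)}W^{U_n}_\varphi\begin{pmatrix}g&\\&1\end{pmatrix}\overline{\phi(g)}\,|\det g|^{s-1/2}\,\d g.
$$
Next I would insert the Whittaker expansion of the cusp form $\phi$ on $G_{n-1}$, namely $\phi(g)=\sum_{\gamma'\in N_{n-1}(F)\bs P_{n-1}(F)}W_\phi(\gamma' g)$. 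The key verification to unfold further is the left $P_{n-1}(F)$-invariance of the function $g\mapsto W^{U_n}_\varphi\begin{pmatrix}g&\\&1\end{pmatrix}$. This follows from a change-of-variable argument on $[U_n]$: for $\gamma'\in P_{n-1}(F)$ and $u=\begin{pmatrix}I&v\\&1\end{pmatrix}\in U_n$ write $u\begin{pmatrix}\gamma'&\\&1\end{pmatrix}=\begin{pmatrix}\gamma'&\\&1\end{pmatrix}u'$ with $u'=\begin{pmatrix}I&\gamma'^{-1}v\\&1\end{pmatrix}$; since the last row of $\gamma'$ is $(0,\dots,0,1)$ one has $\psi(u')=\psi(u)$, and the substitution is measure-preserving on $[U_n]$, so left $G_n(F)$-invariance of $\varphi$ and $\varphi_{U_n}$ gives the claim.

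After this second unfolding the integral reads
$$
\int_{N_{n-1}(F)Z_{n-1}(\A)\bs G_{n-1}(\A)}W^{U_n}_\varphi\begin{pmatrix}g&\\&1\end{pmatrix}\overline{W_\phi(g)}\,|\det g|^{s-1/2}\,\d g.
$$
I would then factor the outer domain through an inner $[N_{n-1}]$-integral. Using $N_n=N_{n-1}^{\mathrm{emb}}\ltimes U_n$ and the factorization $\psi\!\begin{pmatrix}n&\\&1\end{pmatrix}\psi(u)=\psi(\begin{pmatrix}n&nv\\&1\end{pmatrix})$, combined with the $\psi$-equivariance $W_\phi(ng)=\psi(n)W_\phi(g)$, the inner integration over $[N_{n-1}]$ collapses with $W^{U_n}_\varphi$ into the full Jacquet integral over $[N_n]$; that is, it produces $W_{\varphi-\varphi_{U_n}}\begin{pmatrix}g&\\&1\end{pmatrix}$. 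Since $\varphi_{U_n}$ is $U_n(\A)$-invariant while $\psi|_{U_n}$ is a nontrivial character of $[U_n]$, one has $W_{\varphi_{U_n}}\equiv 0$, so the inner integral equals $W_\varphi\begin{pmatrix}g&\\&1\end{pmatrix}$. What remains is exactly $\Psi(s,W_\varphi,\overline{W_\phi})$ as defined in \S\ref{global-zeta-integral}.

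The main obstacle is justifying absolute convergence throughout, in order to legitimately swap sums with integrals and iterate the $[N_{n-1}]$-integral inside the $G_{n-1}(\A)$-integration. For $\Re(s)$ sufficiently large this follows from: moderate growth of the generic automorphic form $\varphi-\varphi_{U_n}$ on Siegel sets; rapid decay of the cusp form $\phi$, which dominates the integrand; the uniform-on-compacta convergence of the Fourier expansion \eqref{Un-foureier-expansion}; and standard Whittaker decay bounds of the form Lemma~\ref{local-whittaker-bound}. Once absolute convergence is in place, every unfolding step and every application of Fubini is justified, and the chain of identities above gives the proposition.
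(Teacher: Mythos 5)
Your proposal follows essentially the same route as the paper: substitute the partial Fourier expansion \eqref{Un-foureier-expansion}, unfold over $P_{n-1}(F)\backslash G_{n-1}(F)$, insert the Whittaker expansion of $\phi$, unfold once more, and fold back using the $N_{n-1}(\A)$-equivariance of $W_\phi$ so that the inner $[N_{n-1}]$-integral produces $W_\varphi$. The only thing worth flagging is that your convergence justification is looser than what is actually needed: moderate growth of $\varphi - \varphi_{U_n}$ and uniform-on-compacta convergence of the Fourier expansion do not by themselves control the non-compact unfolded domains. What the paper uses, and what you would need to state explicitly, is the \emph{rapid decay} of $\varphi - \varphi_{U_n}$ in the $Z_{n-1}$-direction (Lemma~\ref{approx-by-const-term}) and, crucially, the joint decay estimate for $W_\varphi^{U_n}$ in both the central and $P_{n-1}(F)\backslash G_{n-1}(F)$-coset variables (Lemma~\ref{bound-partial-whittaker}); these are what make the two successive sum-integral exchanges legitimate. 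Your explicit verification of the left $P_{n-1}(F)$-invariance of $g\mapsto W^{U_n}_\varphi\begin{pmatrix}g&\\&1\end{pmatrix}$ and the observation that $W_{\varphi_{U_n}}\equiv 0$ are correct and slightly more explicit than the paper's phrasing.
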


To prove this proposition we need some preparatory results on the decay properties of $W^{U_n}_\varphi$ which will be used to deal with several convergence issues.

\begin{lemma}\label{approx-by-const-term}
Let $\varphi$ be any vector in a generic automorphic representation $\sigma$ of $G_n(\A)$ and $\varphi_{U_n}$ be its constant term along $U_n$. Let $X$ be any element in the universal enveloping algebra of $\prod_{v\mid\infty} G_n(F_v)$. Then for all large $N$, and for $z\in [Z_{n-1}]$ and $g\in G_{n}(\A)$ we have
$$\d\sigma(X)\left(\varphi-
\varphi_{U_{n}}\right)\left[\begin{pmatrix}
z&\\
&1
\end{pmatrix}g\right]
\ll_{X,N,g}|z|^{-N},$$
where the dependency on $g$ is at most polynomial in the coordinates of the toric part of $g$ according to an Iwasawa decomposition.
\end{lemma}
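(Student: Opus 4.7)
My approach would combine the Fourier expansion \eqref{Un-foureier-expansion} of $\varphi-\varphi_{U_n}$ with two complementary rapid-decay mechanisms for the partial Whittaker function $W^{U_n}_\varphi$ along the $\diag(z,1)$-direction. Writing $h_\gamma:=\begin{pmatrix}\gamma & \\ & 1\end{pmatrix}g$, I would first absorb the differential operator $X$: the constant-term map $\varphi\mapsto\varphi_{U_n}$ is a left integration over $[U_n]$ and therefore commutes with the right action $d\pi(X)$ of the archimedean enveloping algebra, so $d\pi(X)(\varphi-\varphi_{U_n})$ equals the same expression with $\varphi$ replaced by $d\pi(X)\varphi\in\pi$, reducing the proof to $X=1$ up to polynomial dependence on $\|g\|$. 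Then applying \eqref{Un-foureier-expansion} and commuting $z$ past $\gamma\in G_{n-1}(F)$ gives
\[
(\varphi-\varphi_{U_n})\left[\begin{pmatrix}z & \\ & 1\end{pmatrix}g\right]=\sum_{\gamma\in P_{n-1}(F)\bs G_{n-1}(F)}W^{U_n}_\varphi\left[\begin{pmatrix}z & \\ & 1\end{pmatrix}h_\gamma\right].
\]

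The key pointwise estimate I would establish is $W^{U_n}_\varphi(\diag(z,1)h)\ll_N|z|^{-N}\|h\|^A$ for any large $N$ and some $A>0$. The conjugation identity $u_v\diag(z,1)=\diag(z,1)u_{z^{-1}v}$ inside the Fourier integral $W^{U_n}_\varphi(\diag(z,1)h)=\int_{[U_n]}\varphi(u_v\diag(z,1)h)\overline{\psi_0(v_{n-1})}dv$ identifies the archimedean $v_{n-1}$-derivative as $z_\infty^{-1}$ times an $\Ad(h^{-1})$-twisted Lie-algebra derivative of $\varphi$. Integration by parts $N$ times then extracts $|z_\infty|^{-N}$ with the new vector controlled by an archimedean Sobolev norm (polynomial in $\|h\|$). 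At each finite place $v$, local constancy of $\varphi$ under some open-compact subgroup forces the $v$-adic Fourier integral to vanish once $|z_v|$ exceeds a threshold depending only on the conductor of $\varphi$; non-vanishing therefore restricts $z$ to bounded non-archimedean norm, whence $|z_\infty|\asymp|z|$, and the two estimates combine to give the claimed bound.

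For the sum over $\gamma$, in the cuspidal case $\varphi_{U_n}=0$ and I would pass to the full Whittaker expansion \eqref{Whitt-decomp}
\[
\varphi(\diag(z,1)g)=\sum_{\gamma'\in N_{n-1}(F)\bs G_{n-1}(F)}W_\varphi(\diag(z,1)\diag(\gamma',1)g),
\]
to which Lemma \ref{local-whittaker-bound} applies directly: writing $\diag(\gamma',1)g=nak$ by Iwasawa, $\diag(z,1)$ scales exactly the $(n-1,n)$-root entry in $a$, producing $\min(1,|za_{n-1}/a_n|^{-N})$-type decay alongside the usual Weyl-chamber decay, so the resulting sum is absolutely convergent of order $|z|^{-N}\|g\|^A$. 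For generic non-cuspidal $\varphi$, I would decompose according to Langlands' classification (\S\ref{subsec:global-classification}) and apply the analogous rapid decay of Eisenstein--Whittaker functions to each spectral constituent.

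The hard part is precisely this uniform summability in $\gamma$: the Fourier-theoretic step alone yields only a per-term bound $|z|^{-N}\|h_\gamma\|^A$, which the polynomial growth in $\|\gamma\|$ would destroy. The resolution, as sketched above, is to couple the $z$-decay with the Weyl-chamber Whittaker decay of Lemma \ref{local-whittaker-bound}, exploiting the fact that $\diag(z,1)$ perturbs only the last simple root of the diagonal torus; in the non-cuspidal case the additional wrinkle is that $W^{U_n}_\varphi$ does not itself have as clean a Whittaker description, so the argument must be run on the Eisenstein Whittaker functions attached to each Levi constituent.
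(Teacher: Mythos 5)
The paper's own proof of this lemma is a one-line citation to Moeglin--Waldspurger \cite[Lemma I.2.10]{MW}: this is the standard result that an automorphic form minus its constant term along a parabolic decays rapidly in the corresponding cuspidal direction, proved there via reduction theory on Siegel domains and moderate-growth estimates. Your proposal instead attempts a direct argument through the partial Fourier expansion \eqref{Un-foureier-expansion}, which inverts the logical flow of the paper: in the paper, the decay of $W^{U_n}_\varphi$ in both the $z$- and $\gamma$-variables (Lemma \ref{bound-partial-whittaker}) is \emph{deduced from} the present lemma, whereas you try to go the other way.

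There are two concrete gaps. First, as you yourself point out, your per-term estimate $W^{U_n}_\varphi(\operatorname{diag}(z,1)h)\ll |z|^{-N}\|h\|^A$ is not summable over $\gamma\in P_{n-1}(F)\backslash G_{n-1}(F)$. Your patch for the cuspidal case — passing to the full Whittaker expansion and invoking Lemma \ref{local-whittaker-bound} — is morally the right idea, but it glosses over a real subtlety: writing $a'=\operatorname{diag}(za_1,\ldots,za_{n-1},a_n)$, the modular factor $\delta^{1/2}(a')$ produced by Lemma \ref{local-whittaker-bound} carries a \emph{growing} term $|z|^{(n-1)/2}\delta^{1/2}(a)$, and one must show that this is uniformly dominated by the factor $\min(1,|za_{n-1}/a_n|^{-N})$ after summing over $\gamma'$; one must also account for the fact that the local components of a cuspidal $\pi$ on $G_n$ are only known to be $\theta$-tempered for some $\theta<1/2$, not tempered, so the exponent in $\delta$ degrades. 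Second, and more seriously, the non-cuspidal case is dispatched in one sentence ("decompose according to Langlands' classification\ldots and apply the analogous rapid decay of Eisenstein--Whittaker functions"), yet that is precisely the hard part and exactly the case that is needed in the paper — Proposition \ref{simple-truncation-cuspidal}, and hence Lemma \ref{bound-partial-whittaker} and the present lemma, are applied to $\varphi$ running through the full generic spectrum, including Eisenstein vectors. Eisenstein series themselves have only moderate growth, and it is only the difference $\varphi-\varphi_{U_n}$ that decays; a Fourier--Whittaker proof of this for Eisenstein series would have to control the growth of constant terms and intertwining operators and recover the cancellation underlying truncation theory, at which point you would essentially be reproving \cite[Lemma I.2.10]{MW} rather than offering a genuinely different route.

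Your two auxiliary observations — that $\d\pi(X)$ commutes with the constant-term integral (so one may take $X=1$), and the archimedean integration-by-parts producing $|z_\infty|^{-N}$ at the cost of an $\operatorname{Ad}(h^{-1})$-twisted derivative bounded polynomially in $\|h\|$ — are both sound, but they only supply the per-term estimate that you have already identified as insufficient.
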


\begin{proof}
This is a special case/reformulation of \cite[Lemma I.2.10]{MW}.
\end{proof}

\begin{lemma}\label{bound-partial-whittaker}
Let $z\in [Z_{n-1}]$, $g\in G_{n-1}(\A)$ and $\gamma\in P_{n-1}(F)\bs G_{n-1}(F)$. Then for all large $M,N$
$$W^{U_n}_\varphi \left[\begin{pmatrix}
z\gamma g&\\
&1
\end{pmatrix}\right] \ll_{M,N,g} (1+|z|)^{-N} |\gamma|_F^{-M}$$
where $|\cdot|_F$ denotes any fixed norm of $F^{n-1}$ and the dependency on $g$ is at most polynomial as in the previous lemma.
\end{lemma}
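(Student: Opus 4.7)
The plan is to reinterpret $W^{U_n}_\varphi\left[\begin{pmatrix}z\gamma g&\\&1\end{pmatrix}\right]$ as a single Fourier coefficient on the compact abelian group $[U_n]\cong F^{n-1}\bs\A^{n-1}$, and then extract the $|z|^{-N}$ decay from Lemma \ref{approx-by-const-term} and the $|\gamma|^{-M}$ decay from standard Fourier theory.

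Since $\gamma\in G_{n-1}(F)$, its diagonal embedding $\begin{pmatrix}\gamma^{-1}&\\&1\end{pmatrix}$ lies in $G_n(F)$, normalizes $U_n$, and commutes with $\begin{pmatrix}zI_{n-1}&\\&1\end{pmatrix}$ (as $z$ is scalar). Using the left $G_n(F)$-invariance of $\varphi$ together with the $U_n(\A)$-invariance of $\varphi_{U_n}$, for $u=\begin{pmatrix}I&y\\&1\end{pmatrix}\in U_n(\A)$ we obtain
\begin{equation*}
(\varphi-\varphi_{U_n})\left(u\begin{pmatrix}z\gamma g&\\&1\end{pmatrix}\right) = (\varphi-\varphi_{U_n})\left(\begin{pmatrix}I&\gamma^{-1}y\\&1\end{pmatrix}\begin{pmatrix}zg&\\&1\end{pmatrix}\right).
\end{equation*}
Changing variable $y\to\gamma y'$ on $F^{n-1}\bs\A^{n-1}$ (an automorphism of Jacobian $1$ by the product formula) converts the integral defining $W^{U_n}_\varphi$ into
\begin{equation*}
W^{U_n}_\varphi\left[\begin{pmatrix}z\gamma g&\\&1\end{pmatrix}\right] = \int_{F^{n-1}\bs\A^{n-1}} F_{z,g}(y')\,\overline{\psi_0(v_\gamma\cdot y')}\,dy',
\end{equation*}
where $F_{z,g}(y'):=(\varphi-\varphi_{U_n})\left(\begin{pmatrix}I&y'\\&1\end{pmatrix}\begin{pmatrix}zg&\\&1\end{pmatrix}\right)$ and $v_\gamma=(\gamma_{n-1,1},\ldots,\gamma_{n-1,n-1})$ is the last row of $\gamma$, depending only on the coset in $P_{n-1}(F)\bs G_{n-1}(F)$.

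For the $z$-decay, I would rewrite the argument of $F_{z,g}$ as $\begin{pmatrix}zI_{n-1}&\\&1\end{pmatrix}\begin{pmatrix}I&z^{-1}y'\\&1\end{pmatrix}\begin{pmatrix}g&\\&1\end{pmatrix}$. On a fundamental domain for $F^{n-1}\bs\A^{n-1}$ the adelic norm $|y'|$ is bounded, so the right translator has height $\ll(1+|z|^{-1})^{O(1)}|g|^{O(1)}$. Applying Lemma \ref{approx-by-const-term} and then enlarging $N$ to absorb the polynomial $|z|^{-1}$-factor yields $|F_{z,g}(y')|\ll_{N,g}|z|^{-N}$ uniformly in $y'$, with polynomial dependence on $|g|$. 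The same bound (with possibly larger $N$) holds for any iterated $y'$-derivative of $F_{z,g}$: at archimedean places $\partial/\partial y'_j$ corresponds to the left action of $E_{j,n}$ on $\varphi$, and Lemma \ref{approx-by-const-term} applies to any element of the universal enveloping algebra.

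The $\gamma$-decay then follows from rapid decay of Fourier coefficients of smooth functions on $F^{n-1}\bs\A^{n-1}$. At archimedean places, $M$-fold integration by parts against the operators $\partial/\partial y'_j$ produces a factor $|v_\gamma|^{-M}$, with the new integrand still satisfying the $|z|^{-N}$ bound from the previous step. At a non-archimedean place, right invariance of $\varphi$ under a sufficiently deep congruence subgroup makes $F_{z,g}$ periodic in $y'$ along a lattice, so the Fourier coefficient vanishes unless $v_\gamma$ lies in the dual lattice, giving analogous truncation. The identification $P_{n-1}(F)\bs G_{n-1}(F)\leftrightarrow F^{n-1}\setminus\{0\}$ via $\gamma\mapsto v_\gamma$ makes $|\gamma|\asymp|v_\gamma|$ as heights, so we conclude $\ll_{M,N,g}|z|^{-N}|\gamma|^{-M}$. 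I expect the main bookkeeping challenge to be cleanly separating the $|z|$- and $|\gamma|$-contributions from the polynomial $|g|$-dependence, especially tracking how $|z|^{-1}$-factors from the inner conjugation are absorbed by enlarging $N$.
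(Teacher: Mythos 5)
Your proposal is correct and follows essentially the same route as the paper's proof: open the $U_n$-Fourier integral, conjugate $\gamma$ to the left by automorphy and absorb it into the additive character via a measure-preserving change of variable (so that the character becomes $\overline{\psi_0(e_{n-1}^t\gamma\cdot)}$), then invoke Lemma \ref{approx-by-const-term} for the $|z|^{-N}$ decay and rapid decay of Fourier coefficients (which you spell out via archimedean integration by parts and non-archimedean lattice-periodicity, whereas the paper simply says ``abelian Fourier theory'') for the $|\gamma|^{-M}$ decay, finishing by identifying the coset with the last row $e_{n-1}^t\gamma$.
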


\begin{proof}
We use the formulation in \eqref{partial-whittaker-function} to write
$$W^{U_n}_\varphi \left[\begin{pmatrix}
z\gamma g&\\
&1
\end{pmatrix}\right] = \int_{F^{n-1}\bs\A^{n-1}} (\varphi-\varphi_{U_n})\left[\begin{pmatrix}
\rm{I}_{n-1}&x\\
&1
\end{pmatrix}\begin{pmatrix}
z\gamma g&\\
&1
\end{pmatrix}\right]\overline{\psi_0(e_{n-1}x)}\d x,$$
where $e_{n-1}$ is the vector $(0,\dots,0,1)\in \A^{n-1}$.
Conjugating, using automorphicity of $\varphi$, and changing variables we write the above as
$$|\det(z)|\int_{F^{n-1}\bs\A^{n-1}} (\varphi-\varphi_{U_n})\left[\begin{pmatrix}
z&\\
&1
\end{pmatrix}\begin{pmatrix}
g&x\\
&1
\end{pmatrix}\right]\overline{\psi_0(e_{n-1}\gamma zx)}\d x.$$
From abelian Fourier theory and Lemma \ref{approx-by-const-term} we deduce that the above integral is bounded by
$$(1+|z|)^{-N}|e_{n-1}\gamma|_F^{-M}$$
for all large $M,N$. We conclude the proof by noting that $e_{n-1}\gamma$ uniquely determines $\gamma\in P_{n-1}(F)\bs G_{n-1}(F)$.
\end{proof}

\begin{proof}[Proof of Proposition \ref{simple-truncation-cuspidal}]
Rapid decay of the cusp form $\phi$ on $[\widetilde{G}_{n-1}]$ and Lemma \ref{approx-by-const-term} yield the absolute convergence in the Lemma for sufficiently large $\Re(s)$.

To prove the equality between the integral in the proposition and $\Psi(s,W_\varphi,\overline{W_\phi})$ we first, using \eqref{Un-foureier-expansion}, write the integral as
$$\int_{[G_{n-1}]}\sum_{\gamma\in P_{n-1}(F)\bs G_{n-1}(F)} W^{U_n}_\varphi\left[\begin{pmatrix}
    \gamma g&\\&1
    \end{pmatrix}\right]\overline{\phi(g)}|\det(g)|^{s-1/2}\d g.$$
Again rapid decay of $\phi$ on $[\widetilde{G}_{n-1}]$ and Lemma \ref{bound-partial-whittaker} yield absolute convergence of the above joint integral and sum for large $\Re(s)$. This allows us to unfold and write the above as
$$\int_{P_{n-1}(F)\bs G_{n-1}(\A)}W^{U_n}_\varphi\left[\begin{pmatrix}
    g&\\&1
    \end{pmatrix}\right]\overline{\phi(g)}|\det(g)|^{s-1/2}\d g.$$
Now we insert the Fourier expansion of $\phi$ as in \eqref{Whitt-decomp} into the above equation to write the same as
$$\int_{P_{n-1}(F)\bs G_{n-1}(\A)}W^{U_n}_\varphi\left[\begin{pmatrix}
    g&\\&1
    \end{pmatrix}\right]\sum_{\gamma\in N_{n-1}(F)\backslash P_{n-1}(F)} \overline{W_\phi(\gamma g)}|\det(g)|^{s-1/2} \d g.$$
Once again, applying Lemma \ref{bound-partial-whittaker} for $W_\varphi^{U_n}$ along with Lemma \ref{local-whittaker-bound} for $W_\phi$ we deduce that the above joint sum and integral converge absolutely. This allows us to unfold once again to write the above as
$$\int_{N_{n-1}(F)\bs G_{n-1}(\A)}W^{U_n}_\varphi\left[\begin{pmatrix}
    g&\\&1
    \end{pmatrix}\right]\overline{W_\phi(g)}|\det(g)|^{s-1/2} \d g.$$
Finally, using $N_{n-1}(\A)$-equivariance of $W_\phi$ we fold the above integral as
$$\int_{N_{n-1}(\A)\bs G_{n-1}(\A)}\left( \int_{[N_{n-1}]}W^{U_n}_\varphi\left[\begin{pmatrix}
    ng&\\&1
    \end{pmatrix}\right]\overline{\psi(n)} \d n\right) \overline{W_\phi(g)}|\det(g)|^{s-1/2}\d g.$$
We conclude by noting that the inner integral evaluates to $W_\varphi\left[\begin{pmatrix}
    g&\\&1
    \end{pmatrix}\right]$. 
\end{proof}

\section{Global Set-up}\label{sec-global}

In this section we work globally and adopt the notations as described in the beginning of \S\ref{sec:global-prelim}. Let $\Pi$ and $\pi$ be cuspidal automorphic representations of $G_{n+1}(\A)$ and $G_{n-1}(\A)$, respectively, with trivial central characters. Let $\Phi\in\Pi$ and $\phi\in\pi$ be two cusp forms.

\subsection{Reciprocity as a period identity}

We first show an identity between two periods of the automorphic forms $\Phi$ and $\phi$. This identity is the point of departure for the reciprocity formula.

For $\mathbf{s}\in \C^2$ we define
\begin{equation}\label{P-period}
\P(\mathbf{s},\Phi,\phi):=\int_{[G_{n-1}]}\int_{[G_1]}\Phi\left[\begin{pmatrix} zh&&\\&z&\\&&1\end{pmatrix}
\right]
\ol{\phi(h)}
|\det(h)|^{s_1+s_2-1}|z|^{n(s_1-1/2)}\d^{\times}z\d h.
\end{equation}
As $\Phi$ is cuspidal, rapid decay of $\Phi$ ensures that the above integral converges absolutely for any $\mathbf{s}\in\C^2$.

\begin{prop}\label{abstract-reciprocity} 
For any $\mathbf{s}\in\C^2$ we have
$$\mathcal{P}(\mathbf{s},\Phi,\phi)=\mathcal{P}(\wc{\mathbf{s}},\widecheck{\Phi},\phi),$$
for $\widecheck{\Phi}:=\Pi(w^\ast)\Phi$
where $w^\ast$ and $\wc{\mathbf{s}}$ are given by \eqref{w-ast} and \eqref{s-prime}, respectively.
\end{prop}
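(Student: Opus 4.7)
The plan is to prove this by directly manipulating $\mathcal{P}(\wc{\mathbf{s}},\wc{\Phi},\phi)$ until it coincides with $\mathcal{P}(\mathbf{s},\Phi,\phi)$. The geometric input is the elementary identity that conjugation by the $F$-rational involution $w^\ast$, which swaps the last two basis vectors of $G_{n+1}$, transforms $\diag(a,b,c)$ into $\diag(a,c,b)$; the other inputs are the $G_{n+1}(F)$-automorphy of $\Phi$ with trivial central character and the fact that $\phi$ has trivial central character. After these structural steps the proposition reduces to a change of variables and a bookkeeping check on exponents.

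Concretely, I would unfold $\wc{\Phi}(g)=\Phi(gw^\ast)$ and use $w^{\ast 2}=1$ together with $G_{n+1}(F)$-left-invariance to rewrite $\Phi(gw^\ast)=\Phi(w^\ast g w^\ast)$. Applied to $g=\diag(zh,z,1)$ this produces $\Phi\,\diag(zh,1,z)$; dividing by the central element $zI_{n+1}$ then yields $\Phi\,\diag(h,z^{-1},1)$. I would next perform the substitution $z\mapsto z^{-1}$ (which inverts $|z|$ but preserves $d^\times z$) followed by $h\mapsto zh$; the latter is well-defined on the quotient by left-invariance of the Haar measure on $G_{n-1}(\A)$ and by $\phi(zh)=\phi(h)$. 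The matrix then returns to the form $\diag(zh,z,1)$, matching the integrand of $\mathcal{P}(\mathbf{s},\Phi,\phi)$.

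What remains is to check that the two exponents produced by these manipulations agree with those in $\mathcal{P}(\mathbf{s},\Phi,\phi)$. The $|\det h|$-weight becomes $\wc{s}_1+\wc{s}_2-1$, which equals $s_1+s_2-1$ immediately from \eqref{s-prime}. The $|z|$-weight becomes $(n-1)(\wc{s}_1+\wc{s}_2-1)-n(\wc{s}_1-\tfrac12)$, which, using the relation $n\wc{s}_1=1+(n-1)s_2-s_1$, simplifies to $ns_1-n/2=n(s_1-\tfrac12)$, as required. The only minor obstacle is keeping track of the various automorphic and measure-theoretic invariances used in the substitutions; the conceptual content is that the formula for $\wc{\mathbf{s}}$ in \eqref{s-prime} is designed precisely so that the weights on $h$ and $z$ rearrange correctly under the $w^\ast$-twist of $\Phi$.
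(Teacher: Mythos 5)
Your proof is correct and is essentially the paper's argument (which itself cites \cite[Proposition 5.1]{Nunes2020reciprocity}) run in the opposite direction: the paper starts from $\mathcal{P}(\mathbf{s},\Phi,\phi)$ and rewrites the integrand in terms of $\wc{\Phi}$ before changing variables, whereas you start from $\mathcal{P}(\wc{\mathbf{s}},\wc{\Phi},\phi)$ and unwind back to $\Phi$, but the ingredients---$G_{n+1}(F)$-left-invariance applied to the involution $w^\ast$, triviality of the central characters, and the substitutions $z\mapsto z^{-1}$ and $h\mapsto zh$---are identical, and your bookkeeping of the exponents via $\wc{s}_1+\wc{s}_2=s_1+s_2$ and $n\wc{s}_1=1+(n-1)s_2-s_1$ is exactly right. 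Note that for the computation to produce $\diag(zh,1,z)$ from $\diag(zh,z,1)$ one needs $w^\ast$ to transpose the last two coordinates of $\GL(n+1)$, which is indeed how you read it (and is the reading forced by the proof, even though \eqref{w-ast} as printed appears to give an $n\times n$ rather than $(n+1)\times(n+1)$ permutation matrix).
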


\begin{proof}
This is a straightforward generalization of \cite[Proposition 5.1]{Nunes2020reciprocity} and the proof follows the same lines. Indeed, by making use of the automorphicity of $\Phi$ we have
$$\Phi\left[\begin{pmatrix}hz&&\\&z&\\&&1\end{pmatrix}
\right]=\widecheck{\Phi}\left[z\begin{pmatrix}
hz&&\\
&1&\\
&&1
\end{pmatrix}
\begin{pmatrix}
z^{-1}\rm{I}_n&\\
&1
\end{pmatrix}
\right]$$
We insert the above in the integral in \eqref{P-period}. Then using the central invariance of $\Phi$ and $\phi$, along with the change of variables $h\mapsto hz^{-1}$ followed by $z\mapsto z^{-1}$, we arrive at the conclusion.
\end{proof}

\subsection{Spectral decomposition of the period}\label{sec:spectral-decomposition-period}

Recall that $\Phi$ is a cusp form on $G_{n+1}(\A)$. We start by projecting $\Phi$ to the space of center invariant automorphic forms on $G_n(\A)$. For $s\in\C$, we define
$$\mc{A}_{s}\Phi(g)=|\det(g)|^{s-1/2}\int_{[G_1]}\Phi\left[\begin{pmatrix}zg&\\&1\end{pmatrix}\right]|z|^{n(s-1/2)}\d^{\times} u,\quad g\in G_n(\A).$$
Once again the above converges absolutely due to the rapid decay of $\Phi$. 

Note that $\mc{A}_{s}\Phi(g)$ is $Z_n(\A)G_n(F)$-left invariant.
We also notice that since $\Phi$ is smooth and of rapid decay, then so is $\mc{A}_{s}\Phi$ on $Z_n(\A)\bs{G}_n(\A)$. We spectrally decompose $\mc{A}_{s}\Phi$ over the standard automorphic representations of $Z_n\bs G_n$, as in \eqref{spectral-decomposition}, to obtain
$$\mc{A}_{s}\Phi(g)=\int_{\aut}\sum_{\varphi\in \widetilde{\mc{B}}(\sigma)}\frac{\IP{\mc{A}_{s}\Phi,\varphi}}{\|\varphi\|^2}\varphi(g)\d \sigma.$$
It follows directly from the definition of $\mc{A}_{s}$ that we have
$$\IP{\mc{A}_{s}\Phi,\varphi}=\int_{[G_n]}\Phi\left[\begin{pmatrix}
g&\\
&1
\end{pmatrix}\right]\overline{\varphi(g)}|\det(g)|^{s-1/2}\d g,$$
which, due to rapid decay of $\Phi$, converges absolutely for all $s\in \C$.
Using Fourier expansion of $\Phi$ as in \eqref{Whitt-decomp} we see that the above expression vanishes unless $\sigma$ is generic. This can be seen similarly as in \cite[Lemma 4.1]{Jana2020RS}. In this case it is equal to $\Psi(s,W_{\Phi},\overline{W_\varphi})$; see \S\ref{global-zeta-integral}. 
As a consequence, we may rewrite the spectral decomposition of $\mc{A}_{s}\Phi$ as
$$\mc{A}_{s}\Phi(g)=\int_{\gen}\sum_{\varphi\in \widetilde{\B}(\sigma)}\frac{\Psi(s,W_{\Phi},\overline{W_\varphi})}{\|\varphi\|^2}\varphi(g)\d\sigma.$$
The above is entire as a function of $s\in\C$.

Let $\widetilde{U}_n$ be the image of $U_n$ under the embedding $G_n\hookrightarrow G_{n+1}$ and let $\Phi_{\widetilde{U}_n}$ denote the constant term along $\widetilde{U}_n$, that is,
$$\Phi_{\widetilde{U}_n}(g):=\int_{[U_n]}\Phi\left[\begin{pmatrix}u&\\&1\end{pmatrix}g\right],\quad g\in G_{n+1}(\A).$$
Note that $\widetilde{U}_n$ is \emph{not} a unipotent radical of any parabolic of $G_{n+1}$, so the above integral need not vanish identically.
Since $[U_n]$ is compact, working as above, we spectrally decompose $\mc{A}_s\Phi_{\widetilde{U}_n}$ to obtain
$$\mc{A}_{s}\Phi(g)-\mc{A}_s\Phi_{\widetilde{U}_n}(g)=\int_{\gen}\sum_{\varphi\in \widetilde{\B}(\sigma)}\frac{\Psi(s,W_{\Phi},\overline{W_\varphi})}{\|\varphi\|^2}(\varphi(g)-\varphi_{U_n}(g))\d\sigma.$$
Once again the above converges absolutely and hence is entire as a function of $s\in\C$.

Recall that $\phi\in\pi$ is a cusp form. Let $\mathbf{s}:=(s_1,s_2)\in\C^2$ with $\Re(s_2)$ being sufficiently large. We take $g=\begin{pmatrix}h&\\&1\end{pmatrix}$ for $h\in G_{n-1}(\A)$ in the above equation and integrate the both sides against $\overline{\phi}|\det|^{s_2-1/2}$ over $h\in[G_{n-1}]$. Using Proposition \ref{simple-truncation-cuspidal} we write that
\begin{multline}\label{regularized-period}
\int_{[G_{n-1}]}\left(\mc{A}_{s_1}\Phi\left[\begin{pmatrix}h&\\&1\end{pmatrix}\right]-\mc{A}_{s_1}\Phi_{\widetilde{U}_n}\left[\begin{pmatrix}h&\\&1\end{pmatrix}\right]\right)\overline{\phi(h)}|\det(h)|^{s_2-1/2}\d h\\
=\int_{\gen}\sum_{\varphi\in \widetilde{\B}(\sigma)}\frac{\Psi(s_1,W_{\Phi},\overline{W_\varphi})\Psi(s_2,W_{\varphi},\overline{W_\phi})}{\|\varphi\|^2}\d\sigma.
\end{multline}
Both sides are absolutely convergent for any $\mathbf{s}\in\C^2$ with sufficiently large $\Re(s_2)$. Thus the expressions are entire in $s_1$ and holomorphic in $s_2$ in a right half plane.

We define the \emph{degenerate term} by
\begin{equation}\label{degenerate-term-def}
    \mc{D}(\mathbf{s},\Phi,\phi):=\int_{[G_1]}\int_{[G_{n-1}]}\Phi_{\widetilde{U}_{n}}\left[
    \begin{pmatrix}
    zh&&\\&z&\\&&1
    \end{pmatrix}\right]\ol{\phi(h)}|\det(h)|^{s_1+s_2-1}|z|^{n(s_1-1/2)}\d h\d z.
\end{equation}
Once again, rapid decay of $\Phi$ ensures that the integral in \eqref{degenerate-term-def} converges absolutely for any $\mathbf{s}\in\C^2$.

For any $\mathbf{s}\in\C^2$ we also define
\begin{equation}\label{def-moment}
    \mc{M}(\mathbf{s},\Phi,\phi):=\int_\gen \frac{\Lambda(s_1,\Pi\otimes\widetilde{\sigma})\Lambda(s_2,\sigma\otimes\widetilde{\pi})}{\mc{L}(\sigma)}H_S(\sigma; W_\Phi,W_\phi,\mathbf{s})\d\sigma,
\end{equation}
where $H_S(\sigma; W_\Phi,W_\phi,\mathbf{s})$ is as defined in \S\ref{prelim-local-factor}.
Note that the above expression is well-defined whenever $s_2$ is not a pole of $\Lambda(s_2,\sigma\otimes\tilde{\pi})$. Precisely, it is defined for $s_1\in\C$ and $\Re(s_2)\neq 1$; see \S\ref{sec:residue-term} for more details.

\begin{prop}\label{regularized-spectral-decomposition}
Let $\Phi\in\Pi$, $\phi\in\pi$, and $\mathbf{s}\in\C^2$. Recall $\mc{P}$, $\mc{D}$, and $\mathcal{L}$ from \eqref{P-period}, \eqref{degenerate-term-def}, and \eqref{harmonic-weights}, respectively. Let $S$ be a finite set of places such that both $\Phi$, $\phi$, $F$ are unramified at all $v\notin S$. Moreover, let $W_{\Phi,v}=W_{\Pi_v}$ and $W_{\phi,v}=W_{\pi_v}$ for $v\notin S$. Then
$$\mc{P}(\mathbf{s},\Phi,\phi)-\mc{D}(\mathbf{s},\Phi,\phi)=\mc{M}(\mathbf{s},\Phi,\phi),$$
for $s_1\in\C$ and sufficiently large $\Re(s_2)$.
\end{prop}

\begin{proof}
Note that $\mc{P}$ as in \eqref{P-period} can be written as
$$\int_{G_n}\mc{A}_{s_1}\Phi\left[\begin{pmatrix}h&\\&1\end{pmatrix}\right]\overline{\phi(h)}|\det(h)|^{s_2-1/2}\d h.$$
Similarly, we write \eqref{degenerate-term-def} as
$$\mc{D}(\mathbf{s},\Phi,\phi)=\int_{G_n}\mc{A}_{s_1}\Phi_{\widetilde{U}_n}\left[\begin{pmatrix}h&\\&1\end{pmatrix}\right]\overline{\phi(h)}|\det(h)|^{s_2-1/2}\d h.$$
We also choose $\widetilde{B}(\sigma)$ in \eqref{regularized-period} so that the only vectors for which $\Psi(s_1,W_{\Phi},\ol{W_{\varphi}})$ is non-vanishing satisfy $W_{\varphi,v}=W_{\sigma_v}$ for all $v\notin S$. Finally, using \eqref{harmonic-weights} and \eqref{euler-product} we conclude the proof.
\end{proof}

\section{Choice of the Vectors in the Local Factors}

\subsection{The local factor}\label{prelim-local-factor}

Let $\Pi$ and $\pi$ be cuspidal automorphic representations of $G_{n+1}(\A)$ and $G_{n-1}(\A)$, respectively. Let $\sigma$ be a generic unitary automorphic representation of $G_n(\A)$.

Let $S$ be a finite set of places of $F$. Let $v\in S$. For $W_1\in \Pi_v$ and $W_2\in\pi_v$, and $\mathbf{s}\in\C^2$ we define
\begin{equation}\label{defn-local-factor}
    H_v(\sigma_v)=H_v(\sigma_v; W_1,W_2,\mathbf{s}):=\sum_{W\in\B(\sigma_v)}\frac{\Psi_v(s_1, W_1,\ol{W})\Psi_v(s_2, W,\overline{W_2})}{L_v(s_1,\Pi_v\otimes\widetilde{\sigma}_v)L_v(s_2,\sigma_v\otimes\widetilde{\pi}_v)},
\end{equation}
where $\Psi_v$ is the $v$-adic zeta integral defined in \eqref{local-zeta-integral-def}.
Here $\B(\sigma_v)$ is an orthonormal basis of $\sigma_v$ under the unitary inner product defined in \S\ref{whittaker-kirillov-model}. The right-hand side does not depend on a choice of $\B(\sigma_v)$. We also define
$$h_v(\sigma_v) := {L_v(s_1,\Pi_v\otimes\widetilde{\sigma}_v)L_v(s_2,\sigma_v\otimes\widetilde{\pi}_v)}H_v(\sigma_v).$$
and
$$H_S(\sigma)=H_S(\sigma;\Phi,\phi,\mathbf{s}):=\prod_{v\in S}H_v(\sigma_v;W_{\Phi,v},W_{\phi,v},\mathbf{s}),$$
which are the local factor used in Theorem \ref{reciprocity}.

Lemma \ref{trivial-bound-zeta-integral} and Lemma \ref{trace-class-property} imply that the right-hand side of \eqref{defn-local-factor} is absolutely convergent for sufficiently large $\Re(\mathbf{s})$. As a function of $\mathbf{s}$ the function $H(\sigma;W_1,W_2,\mathbf{s})$ can be analytically continued to all of $\C^2$ which follows from the fact that $$\frac{\Psi_v(s,W_1,\ol{W})}{L_v(s,\Pi_v\otimes\widetilde{\sigma}_v)},\frac{\Psi_v(s,W,\ol{W_2})}{L_v(s,\sigma_v\otimes\widetilde{\pi}_v)}$$ are entire functions of $s$.

\subsection{Choices of vectors}\label{sec:choice-of-vectors}

Let $\q$ be an ideal of $\prod_{v<\infty}\o_v$ and $\p_0$ be a fixed prime such that $\q$, $\p_0$, and the discriminant $\Delta_F$ of $F$ are pairwise coprime.
We let
$$S:=\{\p_0\}\cup\{v\mid \q\}\cup\{v\mid \Delta_F\}\cup\{v\mid \infty\}.$$
We assume that $\Pi$ and $\pi$ are unramified at all  $v<\infty$.

We choose factorable cusp forms $\Phi\in\Pi$ and $\phi\in\pi$ by specifying their local Whittaker components $W_{\Phi,v}\in\Pi_v$ and $W_{\phi,v}\in\pi_v$. For all $v\notin S$ we choose $W_{\Phi,v}=W_{\Pi_v}$ and $W_{\phi,v}=W_{\pi_v}$. The choices at $v\in S$ are described below.

\subsubsection{At the places $v\mid\q$}

For any $f\in\Z_{\ge 0}$ and any prime $\p$ we define
\begin{equation}\label{choice-test-vector-original}
    W_{\Pi_\p}^{(f)}(g):=N(\p)^{-(n-1)f}\int_{F_\p^{n-1}}W_{\Pi_\p}\left(g\begin{pmatrix}\rm{I}_{n-1}&&\beta\\&1&\\&&1\end{pmatrix}\right)\mathbf{1}_{\o^{n-1}}(\beta\p^{f}) \d\beta,
\end{equation}
for $g\in G_{n+1}(F_\p)$.

Let $\p_v$ be the maximal ideal of $\o_v$. Let $\q=\prod_{v\mid\q}\p_v^{f_v}$.
For all $v\mid\q$ we choose $W_{\Phi,v} = W^{(f_v)}_{\Pi_v}$.

On the other hand, for all $v\mid\q$ we choose $W_{\phi,v}=W_{\pi_v}$.

\subsubsection{At the places $v\mid\Delta_F$}\label{subsubsec - v divide Delta}

At these places we need care due to the fact that the underlying additive character for the Whittaker model is not unramified. That is, $\psi_v(x)=\psi_{F_v}(\lambda_v x)$ for some $\lambda_v\in F_v$ such that $|\lambda_v|^{-1}=\Delta_v$ is the $v$-part of the discriminant $\Delta_F$ and $\psi_{F_v}$ is the standard \emph{unramified} additive character of $F_v$, \textit{i.e.} trivial on $\mf{o}_v$. Let
$$
a_r(\lambda_v)=\mathrm{diag}(\lambda_v^{r-1},\dots\lambda_v,1),\quad r\ge 1.
$$
Then, we take
\begin{equation}\label{choice-for-ramified-psi}
W_{\Phi,v}(g):=W_{\Pi_v}(a_{n+1}(\lambda_v)g),\quad W_{\phi,v}(g):=W_{\pi_v}(a_{n-1}(\lambda_v)g).
\end{equation}
To avoid confusion, $W_{\Pi_v}$ and $W_{\pi_v}$ are the \emph{usual} newvectors, \emph{i.e.} they are realized in the Whittaker model with respect to $\psi_{F_v}$.

\subsubsection{At the place $v=\p_0$}

Let $\tau$ be a fixed supercuspidal representation of $G_n(F_v)$ with trivial central character.  For normalization purposes we choose $\tau$ so that
$$L_v(s,\tau\otimes\widetilde{\tau})=\zeta_{v}(ns).$$
Such a $\tau$ exists; see \cite[\S2.1]{Ye2019RS}.

It is known that the normalized newvector $W_\tau\in C_c^\infty(Z_n(F_v)N_n(F_v)\bs G_n(F_v),\psi_v)$, \emph{i.e.} $W_\tau$ is compactly supported on $Z_n(F_v)N_n(F_v)\bs G_n(F_v)$; see \cite[Corollary 6.5]{casselman1980unramified-ii}.
We choose $W_{\Phi,v}$ so that
\begin{equation}\label{choice-test-vector-supercuspidal}
W_{\Phi,v}\left[\begin{pmatrix}
zh&\\&1
\end{pmatrix}\right]:=\mathbf{1}_{z\in\o_v^\times}W_{\tau}(h),\quad z\in G_1(F_v), h\in Z_n(F_v)\bs G_n(F_v).
\end{equation}
Note that the \eqref{choice-test-vector-supercuspidal} uniquely determines $W_{\Phi,v}$ due to the theory of Kirillov models; see \S\ref{whittaker-kirillov-model}.

Once again, we choose $W_{\phi,v}=W_{\pi_v}$.

\subsubsection{At the archimedean places $v\mid \infty$}\label{choice-test-vector-archimedean}

For each $v\mid\infty$, we choose $W_{\phi,v}\in\pi_v$ as a smooth vector such that $W_{\phi,v}(1)=1$. We also fix a sufficiently small $\epsilon>0$ and a ball $B\subset N_{n-1}(F_v)\backslash G_{n-1}(F_v)$ around the identity with sufficiently small radius, so that
\begin{equation}\label{choice-W-n-1-arch}
    |W_{\phi,v}(h)-1| < \epsilon,\quad \forall h\in B.
\end{equation}
Using the theory of Kirillov models, we choose $W_{\Phi,v}\in\Pi_v$ so that $W_{\Phi,v}\mid_{G_n(F_v)}$ is given by a fixed non-negative element in $C_c^\infty(N_n(F_v)\backslash G_n(F_v),\psi_v)$ such that
\begin{equation}\label{support-W-n+1-arch}
    G_1(F_v)\times N_{n-1}(F_v)\backslash G_{n-1}(F_v)\ni (z,h)\mapsto W_{\phi,v}\left[\begin{pmatrix}zh&&\\&z&\\&&1\end{pmatrix}\right]\text{ is supported on }B_0\times B,
\end{equation}
where $B_0\subset G_1(F_v)$ is a ball around $1$ with sufficiently small radius. Moreover, we normalize $W_{\Phi,v}$ by imposing that
\begin{equation}\label{normal-W-n+1-arch}
    \int_{N_{n-1}(F_v)\backslash G_{n-1}(F_v)}\int_{G_1(F_v)}W_{\phi,v}\left[\begin{pmatrix}zh&&\\&z&\\&&1\end{pmatrix}\right]\d^\times z\d h =1.
\end{equation}

\vspace{1cm}

We use the shorthand
\begin{equation}\label{arch-weight}
h_\infty(\sigma):=\prod_{v\mid\infty}h_v\left(\sigma_v;W_{\Phi,v},W_{\phi,v},\left(\frac12,\frac12\right)\right),
\end{equation}
where $h_v$ is defined in \S\ref{prelim-local-factor}, and
\begin{equation}\label{non-arch-weight}
H_\q(\sigma):=\prod_{v\mid\q}H_v\left(\sigma_v;W_{\Pi_v}^{(f_v)},W_{\pi_v},\left(\frac12,\frac12\right)\right),
\end{equation}
if $\q=\prod_{v\mid\q}\p_v^{f_v}$.

\section{Local Weight Function: Original Moment}\label{local-original}

In this section, let $v$ be a non-archimedean place and $F$ be the corresponding local field whose ring of integer is $\o$ with maximal ideal $\p$. The letters $\Pi$, $\sigma$, and $\pi$ denote irreducible generic unitary representations of $G_{n+1}(F)$, $G_n(F)$, and $G_{n-1}(F)$, respectively. Similarly, $L$, $\gamma$,... etc. denote local $L$-factors, local $\gamma$-factors... etc. We drop $F$ from the notations if there is no confusion.

We fix a $0\le\vartheta<1/2$. Further assume that $\Pi$ and $\pi$ are tempered and unramified. Also, let $\sigma$ be a $\vartheta$-tempered representation of $G_n$ with conductor exponent $c(\sigma)$.

The goal of this section is to analyse $H(\sigma)$ for the choices of the local vectors as in \S\ref{sec:choice-of-vectors} at the non-archimedean places $v\mid\q$, $v\mid\Delta_F$, and $\p_0$.

\subsection{At the places $v\mid\q$}

We devote this subsection to prove the following proposition. Recall the definition of the local factor $H_v$ from \eqref{defn-local-factor}.

\begin{prop}\label{main-local-prop-original}
Recall the choices of test vectors from \S\ref{sec:choice-of-vectors} for $v\mid\q$. We have
$$H_v(\sigma_v; W_{\Phi,v},W_{\phi,v},\mathbf{s})
\begin{dcases}
\ll_\epsilon N(\p_v)^{f_v\epsilon+c(\sigma_v)}N(\p_v)^{-n\frac{c(\sigma_v)+f_v}{2}}&\text{ if } c(\sigma_v)<f_v\\
=\frac{1}{\eta(\p_v^{f_v})\|W_{\sigma_v}\|^2}&\text{ if } c(\sigma_v)=f_v\\
=0 &\text{ if } c(\sigma_v)>f_v
\end{dcases}$$
for $\Re(\mathbf{s})=\left(\frac12,\frac12\right)$. Here $\eta(\p^{f}):=[G_n(\o):K_0(\p^f)]\asymp N(\p)^{f(n-1)}$.
\end{prop}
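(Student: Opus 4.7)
The plan is to show that only the $K_0(\p_v^{f_v})$-fixed part of $\sigma_v$ contributes to $H_v(\sigma_v)$, and then handle the three cases $c(\sigma_v)>f_v$, $c(\sigma_v)=f_v$, and $c(\sigma_v)<f_v$ separately. The cornerstone is a pointwise identity that makes $W_{\Pi_v}^{(f_v)}$ explicit when restricted via $g\mapsto \begin{pmatrix}g&\\&1\end{pmatrix}$. The matrix decomposition
$$\begin{pmatrix}g&\\&1\end{pmatrix}\begin{pmatrix}I_{n-1}&&\beta\\&1&\\&&1\end{pmatrix}= \begin{pmatrix}I_n&g_{(1)}\beta\\ 0&1\end{pmatrix}\begin{pmatrix}g&\\&1\end{pmatrix},$$
combined with the $\psi$-equivariance of $W_{\Pi_v}$, turns the $\beta$-average in \eqref{choice-test-vector-original} into a Fourier integral $\int_{\p_v^{-f_v}\o_v^{n-1}}\psi_0(g_{n,(1)}\beta)\,d\beta$ that evaluates to $p_v^{(n-1)f_v}\mathbf{1}_{g_{n,(1)}\in\p_v^{f_v}\o_v^{n-1}}$, where $g_{n,(1)}\in F_v^{n-1}$ denotes the first $n-1$ entries of the last row of $g$. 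This yields
$$W_{\Pi_v}^{(f_v)}\left[\begin{pmatrix}g&\\&1\end{pmatrix}\right]=W_{\Pi_v}\left[\begin{pmatrix}g&\\&1\end{pmatrix}\right]\cdot\mathbf{1}_{g_{n,(1)}\in\p_v^{f_v}\o_v^{n-1}}.$$
In Iwasawa coordinates $g=nak$, Shintani's formula \eqref{shintani} forces $a_n\in\o_v$ on the support of $W_{\Pi_v}$, which together with $k\in K_n$ makes the indicator right-invariant under $K_0(\p_v^{f_v})$. Right-invariance of the Haar measure on $N_n\backslash G_n$ then shows that $\Psi_v(s_1,W_{\Phi,v},\bar W)$ depends on $W$ only through its orthogonal projection onto $\sigma_v^{K_0(\p_v^{f_v})}$.

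With this reduction in place, the case $c(\sigma_v)>f_v$ is immediate from the Jacquet--Piatetski-Shapiro--Shalika theory: $\sigma_v^{K_0(\p_v^{f_v})}=0$ and hence $H_v(\sigma_v)=0$. When $c(\sigma_v)=f_v$, the invariant subspace is one-dimensional and spanned by the newvector $W_{\sigma_v}$; choosing an orthonormal basis of $\sigma_v$ extending $W_{\sigma_v}/\|W_{\sigma_v}\|$, only the newvector contributes to the sum defining $H_v(\sigma_v)$. The BKL identity \eqref{bkl-test-vector} then evaluates the first zeta integral, while the classical JPSS formula $\Psi_v(s_2,W_{\sigma_v},\bar W_{\pi_v})=L_v(s_2,\sigma_v\otimes\bar\pi_v)$ evaluates the second (it applies because $\pi_v$ is unramified). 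The $L$-factors cancel against those in the denominator of \eqref{defn-local-factor}, leaving exactly $1/(\eta(\p_v^{f_v})\|W_{\sigma_v}\|^2)$.

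The delicate case is $c(\sigma_v)<f_v$, where the invariant subspace has dimension $\binom{f_v-c(\sigma_v)+n-1}{n-1}\ll_\epsilon p_v^{\epsilon f_v}$. The plan is to fix an orthonormal basis of oldforms obtained from $W_{\sigma_v}$ via a standard family of diagonal translates, and to estimate each term separately. For each such basis element $W$, the explicit restricted zeta integral derived above together with Shintani's formula for $W_{\Pi_v}$ produces a bound of the shape $\Psi_v(s_1,W_{\Phi,v},\bar W)\ll L_v(s_1,\Pi_v\otimes\bar\sigma_v)\cdot p_v^{-(n-1)f_v+\epsilon f_v}$, while a parallel bound on $\Psi_v(s_2,W,\bar W_{\pi_v})$ in terms of $L_v(s_2,\sigma_v\otimes\bar\pi_v)$ loses at most $p_v^{\epsilon f_v}$ by a similar analysis combined with the explicit oldform description. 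Combining these with the expected estimate on $\|W_{\sigma_v}\|^{-2}$ for $\vartheta$-tempered newvectors then yields the claimed bound. The main technical obstacle is arranging the oldform basis so that one can simultaneously track both zeta integrals with the correct polynomial dependence on $p_v^{c(\sigma_v)}$ and $p_v^{f_v}$, uniformly in the $\vartheta$-tempered parameter of $\sigma_v$.
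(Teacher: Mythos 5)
Your reduction to the orthogonal projection onto $\sigma_v^{K_0(\p_v^{f_v})}$, the derivation of the explicit formula for $W^{(f)}_{\Pi_v}$ restricted to $\GL(n)$, and the treatment of the two cases $c(\sigma_v)>f_v$ and $c(\sigma_v)=f_v$, all match the paper and are correct. The problem is the third case, which you yourself identify as the delicate one: the sketch there has a genuine gap, and your claimed intermediate estimate is in fact false.

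You assert $\Psi_v(s_1,W_{\Phi,v},\bar W)\ll L_v(s_1,\Pi_v\otimes\bar\sigma_v)\,p_v^{-(n-1)f_v+\epsilon f_v}$, which after dividing by $L_v\asymp 1$ amounts to $\Psi_v \ll p_v^{-(n-1)f_v+\epsilon f_v}$. For $c(\sigma_v)<f_v$ and $n\ge 3$ this is too strong: the actual size, and what the proposition asserts and the paper proves, is $p_v^{c(\sigma_v)-n(c(\sigma_v)+f_v)/2+\epsilon f_v}$, which is strictly larger. The source of the discrepancy is that, in Iwasawa coordinates, the constraint $\ell(ak)\in\p^f$ only enforces $\ell(k)\in\p^{f-v(a_n)}$, so the inner $K_n$-integral projects onto $\sigma^{K_0(\p^{\max(0,f-v(a_n))})}$; its volume is $\asymp p^{-(n-1)(f-v(a_n))}$, not $p^{-(n-1)f}$. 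Writing $e=f-v(a_n)$, the contribution at level $e$ is of size $c_e(W)\cdot p^{-n(f-e)/2}\ll p^{-(n-1)e-n(f-e)/2}$, an expression which for $n\ge 3$ is maximized at $e=c(\sigma)$, yielding $p^{c(\sigma)-n(c(\sigma)+f)/2}$. In other words the dominant contribution comes from the \emph{minimal-level} piece $e=c(\sigma)$ at deep $v(a_n)=f-c(\sigma)$, not from the full-level piece $e=f$; you cannot obtain a bound of strength $p^{-(n-1)f}$.

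Beyond the numerical slip, your plan of working term-by-term through an explicit diagonal-translate oldform basis and applying Shintani's formula misses the key technical device that makes the paper's estimate go through. The arbitrary unit vector $W\in\sigma^{K_0(\p^e)}$ does not have a closed Shintani-type formula, so the mixed $A_{n-1}\times G_1$-integral of $W_\Pi$ against $W$ cannot be evaluated directly. The paper's Lemma \ref{big-zeta-integral-WP} circumvents this by inserting the non-archimedean Whittaker--Plancherel (Kontorovich--Lebedev--Whittaker) transform on $G_{n-1}$ to decouple the unramified $G_{n+1}$-Whittaker function from $W$, after which each factor is controlled by Lemma \ref{gl(n+1)-gl(n-1)-zeta-integral} and Lemma \ref{gl(n)-gl(n-1)-zeta-integral}. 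The paper then bounds the zeta integral uniformly over unit vectors $W$ and invokes the $\ll_\epsilon p^{\epsilon f}$ dimension bound on $\sigma^{K_0(\p^f)}$ (Reeder's oldform theorem, cite{Red91oldform}), rather than constructing and tracking an explicit orthonormal basis. Without some such uniform tool your sketch does not close in the hardest case.
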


From \S\ref{sec:choice-of-vectors} we have
$$H_v(\sigma_v; W_{\Phi,v},W_{\phi,v},\mathbf{s})=H(\sigma;W_{\Pi_v}^{(f)},W_{\pi_v},\mathbf{s}).$$
For the rest of this subsection we suppress the notation for the place $v$.

First we explicitly write $W_\Pi^{(f)}$ defined in \eqref{choice-test-vector-original} as follows. Note that the Fourier transform of $\mathbf{1}_{\o^r}$ is itself. Thus,
\begin{equation}\label{choice-local-vector-explicit}
     W_{\Pi}^{(f)}\left[\begin{pmatrix}
     g&\\&1
     \end{pmatrix}\right]= W_{\Pi}\left[\begin{pmatrix}
     g&\\&1
     \end{pmatrix}\right]\mathbf{1}_{\o^{n-1}}(\ell(g)\p^{-f}),
\end{equation}
where $\ell(g)$ denotes the row vector constructed from the left-most $n-1$ elements of the last row of $g$.

\begin{lemma}\label{proj-onto-Kpf}
Let $W\in\sigma$. Then
$$
\Psi(s_1,W^{(f)}_{\Pi},\ol{W}) = \Psi(s_1,W^{(f)}_{\Pi},\ol{P_f(W)}),
$$
where $P_f$ is the orthogonal projection onto $\sigma^{K_0(\p^f)}$. In particular, $\Psi(s_1,W^{(f)}_{\Pi},\ol{W})$ vanishes unless $c(\sigma)\le f$
\end{lemma}

\begin{proof}
Let $\Re(s_1)$ be sufficiently large. Using \eqref{choice-local-vector-explicit} and sphericality of $W_\Pi$ we write the zeta integral in Iwasawa coordinates as
$$\int_{A_n}W_{\Pi}\left[\begin{pmatrix}a&\\&1\end{pmatrix}\right]|\det(a)|^{s_1-1/2}\int_{K_n}\mathbf{1}_{\o^{n-1}}(\ell(k)a_n\p^{-f})W(ak)\d k \frac{\d^\times a}{\delta(a)}.$$
Recall from the description of the spherical Whittaker function in \eqref{shintani} that the above vanishes unless $a_n\in\o$. Hence the inner $K_n$-integral is just a multiple of $P_e$, where $e=\max(0,f-v(a_n))$. The main formula now follows since $P_eP_f=P_e$ for $e\leq f$ and via meromorphic continuation. Finally, we conclude by recalling that $\sigma^{K_0(\p^{f})}$ is zero if $c(\sigma)> f$.
\end{proof}

\begin{lemma}\label{bound-gamma-factor}
Recall that $\sigma$ is a $\vartheta$-tempered representation with conductor exponent $c(\sigma)$. We have
$$\gamma(1/2+s,\sigma)\asymp N(\p)^{-c(\sigma)\Re(s)}.$$
for $s\in \C$ with $0\le \Re(s)<1/2-\vartheta$.
\end{lemma}

\begin{proof}
Recall that 
$$\gamma(1/2+s,\sigma)=\varepsilon(1/2+s,\sigma)\frac{L(1/2-s,\widetilde{\sigma})}{L(1/2+s,\sigma)},$$
with
$$\varepsilon(1/2+s,\sigma)=\varepsilon(1/2,\sigma)N(\p)^{-c(\sigma)s},$$
and $|\varepsilon(1/2,\sigma)|=1$ as $\sigma$ is unitary.

Also, there exist $\{\alpha_i\}_{i=1}^m, \{\beta_i\}_{i=1}^m\in\C^m$ with $m\le n$ such that $\max_i\{|\alpha_i|,|\beta_i|\}\leq N(\p)^\vartheta$ and
$$L(s,\sigma) = \prod_{i=1}^m(1-N(\p)^{-s}\alpha_i)^{-1},\quad L(s,\widetilde{\sigma}) = \prod_{i=1}^m(1-N(\p)^{-s}\beta_i)^{-1}.$$
Thus for $s$ as in the lemma we have
$$L(1/2-s,\widetilde{\sigma}),\: L(1/2+s,\sigma)\asymp 1.$$
Combining all the bounds we conclude.
\end{proof}

Note that if $\pi'$ is an unramified representation of $\PGL(r)$ with Langlands parameters $\{\nu_i\}_{i=1}^r$ then
$$\gamma(s,\pi'\otimes\sigma) = \prod_{i=1}^r\gamma(s+\nu_i,\sigma).$$
Moreover, if $\pi'$ is tempered, \emph{i.e.} $\Re(\nu_i)=0$, then Lemma \ref{bound-gamma-factor} implies that
\begin{equation}\label{bound-rs-gamma-factor}
    \gamma(1/2+s,\pi'\otimes\sigma) \asymp N(\p)^{-rc(\sigma)\Re(s)},
\end{equation}
whenever $0\le \Re(s)<1/2-\vartheta$.

\begin{lemma}\label{small-zeta-integral}
Fix $e\ge c(\sigma)$ and let $W\in\sigma$ be any $K_0(\p^e)$-invariant unit vector. Then 
$$\Psi(1/2+s_2,W,\ol{W_\pi})\ll N(\p)^{(e-c(\sigma))\epsilon},$$
for $\Re(s_2)=0$.
\end{lemma}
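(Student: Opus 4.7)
The plan is to decompose $W$ in a basis of the old-vector space $\sigma^{K_0(\p^e)}$, exploiting the fact that this space has dimension growing only polynomially in $e-c(\sigma)$. By the classical newvector theory of \cite{JPSS1981conducteur} together with Reeder's refinement, one has
\[
\dim \sigma^{K_0(\p^e)} = \binom{e - c(\sigma) + n - 1}{n - 1} \ll_n (e - c(\sigma) + 1)^{n-1}.
\]
Fix an orthonormal basis $\{W_j\}$ of $\sigma^{K_0(\p^e)}$ and expand $W = \sum_j c_j W_j$ with $\sum_j |c_j|^2 = 1$. Cauchy--Schwarz yields
\[
|\Psi(1/2 + s_2, W, \overline{W_\pi})| \leq \sqrt{\dim \sigma^{K_0(\p^e)}} \cdot \max_j |\Psi(1/2 + s_2, W_j, \overline{W_\pi})|.
\]

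The core task is therefore to bound each basis zeta integral $|\Psi(1/2+s_2, W_j, \overline{W_\pi})|$ at $\Re(s_2)=0$ by a polynomial in $e-c(\sigma)$. A convenient choice is to take a basis produced from the newvector $W_\sigma$ by level-raising operators; extending the test-vector identity \eqref{bkl-test-vector} appearing in Lemma \ref{BKL-formula-prop}, each such zeta integral takes the shape
\[
\Psi(1/2+s_2, W_j, \overline{W_\pi}) \;=\; L(1/2+s_2, \sigma \otimes \widetilde{\pi})\cdot P_j(p^{s_2}, p^{-s_2}),
\]
where $P_j$ is a polynomial of degree $O(e-c(\sigma))$ and coefficients of absolute size $O(1)$. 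On the line $\Re(s_2)=0$, the $L$-factor is $O(1)$: since $\sigma$ is $\vartheta$-tempered with $\vartheta<1/2$ and $\pi$ is tempered unramified, every Euler factor $(1-p^{-1/2-s_2}\alpha_i\overline{\beta_j})^{-1}$ is bounded by $(1-p^{-1/2+\vartheta})^{-1}=O(1)$. Likewise, $|P_j| \leq \deg(P_j)\cdot\max|\text{coeff}|$ on $|p^{s_2}|=1$, giving a polynomial-in-$(e-c(\sigma))$ bound.

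Putting the two ingredients together yields
\[
|\Psi(1/2+s_2, W, \overline{W_\pi})| \ll (e-c(\sigma) + 1)^{C}
\]
for some absolute $C$, and since $(e-c(\sigma)+1)^C \ll_\epsilon p^{\epsilon(e-c(\sigma))}$ for every $\epsilon>0$, the lemma follows.

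The main obstacle is the construction of a basis of $\sigma^{K_0(\p^e)}$ for which the zeta integrals can be put in the explicit form above with controlled degree and coefficients. A naive trivial bound via Lemma \ref{trivial-bound-zeta-integral} is insufficient, as the Sobolev norm estimate $S_{d'}(W_j) \ll p^{d'e}$ loses the crucial savings of $p^{c(\sigma)d'}$. A backup strategy via a ``convexity'' argument --- trivial bounds at $\Re(s)=1/2+A$, the local functional equation \eqref{local-functional-equation} combined with the gamma-factor bound \eqref{bound-rs-gamma-factor} at $\Re(s)=1/2-A$, and Phragm\'en--Lindel\"of interpolation on a wide vertical strip --- confronts the same key difficulty of converting a bound polynomial in $p^e$ into one polynomial in $p^{e-c(\sigma)}$, and would require additional refinement.
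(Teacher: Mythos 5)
Your primary approach has a genuine gap, which you acknowledge: you need the zeta integrals $\Psi(1/2+s_2,W_j,\overline{W_\pi})$ for a basis of $\sigma^{K_0(\p^e)}$ to take the form $L(1/2+s_2,\sigma\otimes\widetilde{\pi})\cdot P_j(p^{\pm s_2})$ with controlled degree and bounded coefficients, and you offer no construction of such a basis nor a proof of that shape. Without it, the argument does not go through.

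More importantly, your dismissal of the ``backup'' convexity strategy is mistaken, and in fact it is essentially what the paper does --- the key ingredient you miss is how to get input bounds that are uniform in $e$. On the line $\Re(s_2)=\epsilon>0$ the paper does \emph{not} use a Sobolev-norm bound (which would indeed cost powers of $p^e$); instead, $K_0(\p^e)$-invariance of $W$ forces the support condition $e_{n-1}h\in\o$ in the Kirillov restriction, so one inserts $\mathbf{1}_{\o^{n-1}}(e_{n-1}h)$ into the integrand, and Cauchy--Schwarz together with the \emph{unitarity} $\|W\|=1$ bounds the integral by $L(2\Re(s_2),\pi\otimes\widetilde{\pi})^{1/2}\ll 1$, with no dependence on $e$ at all. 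Similarly, after the $G_n\times G_{n-1}$ functional equation \eqref{local-functional-equation}, the dual zeta integral is supported on $e_{n-1}h\in\p^{-e}$, and the same Cauchy--Schwarz argument gives $p^{(n-1)\Re(s_2)e}$; the gamma factor from \eqref{bound-rs-gamma-factor} contributes $p^{-(n-1)\Re(s_2)c(\sigma)}$, so the product is already $p^{(n-1)\Re(s_2)(e-c(\sigma))}$. There is no ``conversion problem'' to solve: the $c(\sigma)$-saving comes automatically from the gamma factor, and the $e$-dependence comes only from the support condition, not from crude Sobolev estimates. Phragm\'en--Lindel\"of on the strip $|\Re(s_2)|\leq\epsilon$ then gives the lemma. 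You had the right skeleton but substituted the wrong local inputs, and this is what led you to (incorrectly) abandon the route.
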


\begin{proof}
Let $0<\Re(s_2)<1/2-\vartheta$. 

Note that $K_0(\p^e)$-invariance of $W$ implies that if $W\left[\begin{pmatrix}h&\\&1\end{pmatrix}\right]$ is nonzero then $e_{n-1}h\in\mathfrak{o}^{n-1}$.
Thus the integral of $\Psi(1/2+s_2,W,W_2)$ can be written as the absolutely convergent integral 
$$\int_{N_{n-1}\backslash G_{n-1}}W\left[\begin{pmatrix}h&\\&1\end{pmatrix}\right]\mathbf{1}_{\o^{n-1}}(e_{n-1}h)\overline{W_\pi(h)}|\det(h)|^{s_2}\d h.$$
The absolute convergence follows from Lemma \ref{local-whittaker-bound}. By the Cauchy--Schwarz inequality and the unitarity of $W$, we conclude that the above expression has its absolute value bounded by the square root of 
$$\int_{N_{n-1}\backslash G_{n-1}}\mathbf{1}_{\o^{n-1}}(e_{n-1}h)|{W_\pi(h)}|^2|\det(h)|^{2\Re(s_2)}\d h.$$
Lemma \ref{local-whittaker-bound} confirms that the above integral converges absolutely as $\Re(s_2)>0$ and equals $L(2\Re(s_2),\pi\otimes\widetilde{\pi})$ \cite[Theorem 3.3]{Cogdell2007functions}. Thus we have $\Psi(s_2,W,\ol{W_\pi})\ll 1$.

Now we focus on $\Psi(1/2-s_2,W,W_\pi)$. We apply $\GL(n)\times\GL(n-1)$ local functional equation to obtain that $\Psi(1/2-s_2,W,W_\pi)$ equals
\begin{equation*}
   \gamma(1/2+s_2,\widetilde{\sigma}\otimes\pi)
    \int_{N_{n-1}\backslash G_{n-1}}\widetilde{W}\left[\begin{pmatrix}h&\\&1\end{pmatrix}\right]\overline{\widetilde{W_\pi}(h)}|\det(h)|^{s_2}\d h.
\end{equation*}
The above integral is absolutely convergent which follows from Lemma \ref{local-whittaker-bound}. Also from \eqref{bound-rs-gamma-factor} we obtain that the above gamma factor is bounded by $N(\p)^{-\Re(s_2)(n-1)c(\sigma)}$.

We note that $K_0({\p^e})$-invariance of $W$ implies that
$\widetilde{W}\left[\begin{pmatrix}h&\\&1\end{pmatrix}\right]$ is nonzero then $e_{n-1}h\in\p^{-e}$.
So the above integral can be written as
\begin{equation*}
\int_{N_n\backslash G_n}\widetilde{W}\left[\begin{pmatrix}h&\\&1\end{pmatrix}\right]\mathbf{1}_{\o^{n-1}}(e_{n-1}h\p^e)\overline{\widetilde{W_\pi}(h)}|\det(h)|^{s_2}\d h.
\end{equation*}
We again apply the Cauchy--Schwarz inequality on the $h$-integral. Using that $\|W\|=\|\widetilde{W}\|=1$ and $L(1,\sigma\otimes\widetilde{\sigma})\asymp 1$ we obtain that the above is bounded in absolute values by the square root of
$$\int_{N_{n-1}\backslash G_{n-1}}\mathbf{1}_{\o^{n-1}}(e_{n-1}h\p^e)|{\widetilde{W_\pi}(h)}|^2|\det(h)|^{2\Re(s_2)}\d h.$$
Changing variable $h\mapsto h\p^{-e}$ we see that the above is $N(\p)^{2(n-1)\Re(s_2)e}L(2\Re(s_2),\pi\otimes\widetilde{\pi})$. 
Thus for $\Re(s_2)>0$ we have
\begin{equation}\label{zeta-integral-negative}
    \Psi(1/2-s_2,W,W_\pi) \ll  N(\p)^{(n-1)\Re(s_2)(e-c(\sigma))}.
\end{equation}
Using the Phragm\'en--Lindel\"of convexity principle with $\Re(s_2)=\epsilon$ we may conclude. 
\end{proof}

\begin{lemma}\label{gl(n+1)-gl(n-1)-zeta-integral}
Let $\xi$ be any irreducible generic tempered unramified representation of $G_{n-1}$. Let $\chi$ be any unitary character of $F^\times$ and $l\in\Z_{\ge 0}$. Then the integral 
$$\int_{v(z)=l}\chi(z)\int_{A_{n-1}}W_\Pi\left[\begin{pmatrix}
az&&\\&z&\\&&1
\end{pmatrix}\right]\overline{W_\xi(a)}|\det(a)|^{s-1}\frac{\d^\times a}{\delta(a)}\d^\times z$$
is absolutely convergent for $\Re(s)>0$ and is $O(N(\p)^{-l(n/2-\epsilon)})$.
\end{lemma}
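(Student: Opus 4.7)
The plan is to reduce the double integral to a discrete sum over valuation tuples by invoking the sphericality of $W_\Pi$ and $W_\xi$ together with Shintani's explicit formula \eqref{shintani}, and then to extract the factor $p^{-nl/2}$ from the modular character of $G_{n+1}$ evaluated at our diagonal matrix.

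First, since $\chi$ is unitary and the sphere $\{z\in F^{\times}\colon v(z)=l\}$ has $\d^{\times}z$-measure one, the outer integral contributes a factor of absolute value at most one, and it suffices to bound the inner $a$-integral uniformly for $z$ with $v(z)=l$. I parametrize $A_{n-1}(F)$ by a sum over $k=(k_1,\dots,k_{n-1})\in\Z^{n-1}$ indexed by $v(a_i)=k_i$, each stratum having unit measure under $\d^{\times}a$. Applying Shintani's formula to $W_\Pi$ at $\diag(a_1z,\dots,a_{n-1}z,z,1)$ forces $k_1\ge\dots\ge k_{n-1}\ge 0$ and $l\ge 0$ (the case $l<0$ is trivial), and expresses $W_\Pi$ as $\delta_{n+1}^{1/2}\lambda_\Pi$; similarly, $W_\xi(a)=\delta_{n-1}^{1/2}(a)\lambda_\xi(k)$ on the same cone.

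Next, I compute the modular characters explicitly. Using $\delta_{n+1}^{1/2}(\diag(b_1,\dots,b_{n+1}))=\prod_i|b_i|^{(n+2-2i)/2}$ with $b_i=a_iz$ for $i<n$, $b_n=z$, $b_{n+1}=1$, the arithmetic identity $\sum_{i=1}^{n-1}(n+2-2i)+(2-n)=n$ yields the desired factor $p^{-nl/2}$. Combining this with $\delta_{n-1}^{-1/2}(a)$ coming from $W_\xi$ and the quotient measure $\d^{\times}a/\delta(a)$, and with $|\det a|^{s-1}$, the coefficient of each $k_i$ in the exponent of $p$ collapses to $-s$, so that the integrand on the $k$-stratum reduces to
$$p^{-nl/2}\chi(z)\lambda_\Pi(k_1+l,\dots,k_{n-1}+l,l,0)\overline{\lambda_\xi(k_1,\dots,k_{n-1})}p^{-s\sum_i k_i}.$$

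Finally, temperedness of $\Pi$ and $\xi$ together with \eqref{spherical-whittaker-bound} (applied with $\theta=0$) gives $\lambda_\Pi,\lambda_\xi\ll_{\Pi,\xi}1$ uniformly in the index (the Weyl character formula at unimodular Satake parameters supplying a $\Pi$- and $\xi$-dependent absolute bound); hence the remaining sum $\sum_{k_1\ge\dots\ge k_{n-1}\ge 0}p^{-\Re(s)\sum_i k_i}$ is a convergent geometric sum for $\Re(s)>0$, yielding both the absolute convergence and the bound $O(p^{-nl/2})$, with implied constant depending on $\Pi,\xi,s,\chi$. The main obstacle is purely bookkeeping: one must carefully track the $G_{n+1}$, $G_{n-1}$, and quotient-measure $\delta$-factors so that the $k_i$-exponents telescope to $-s$ while the $l$-exponent emerges exactly as $-n/2$. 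No serious analytic difficulty arises once Shintani's formula is in play.
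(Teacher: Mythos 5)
Your proposal is correct and follows essentially the same line of argument as the paper: both use the sphericality of $W_\Pi$ and $W_\xi$, Shintani's formula \eqref{shintani} to convert the inner $a$-integral into a discrete sum over dominant valuation tuples, the explicit computation of $\delta_{n+1}^{1/2}$ to extract the factor $p^{-nl/2}$, and the temperedness bound $\lambda_\Pi, \lambda_\xi \ll 1$ from \eqref{spherical-whittaker-bound} to reduce to a convergent geometric series for $\Re(s)>0$. The only stylistic difference is that the paper first records absolute convergence via Lemma \ref{local-whittaker-bound} and then applies Shintani's formula for the bound, whereas you derive both at once from the explicit expansion; your exponent arithmetic is also spelled out in slightly more detail, but the substance is identical.
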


\begin{proof}
Note that as $\xi$ is tempered and $\Re(s)>0$ Lemma \ref{local-whittaker-bound} implies absolute convergence of the integral in the lemma.

Using Shintani's formula \eqref{shintani} we bound the above integral in absolute value by
$$N(\p)^{-nl/2}\sum_{m_1\ge\ldots\ge m_{n-1}\ge l}|\lambda_{\Pi}(m,l,0)\lambda_{\xi}(m)|N(\p)^{-\Re(s)\sum_{i}m_i}.$$
Temperedness of $\Pi$ and \eqref{spherical-whittaker-bound} implies that $\lambda_{\Pi}(m,l,0)\ll N(\p)^{\epsilon(\sum_i m_i+l)}$ and similarly, $\lambda_{\xi}(m)\ll N(\p)^{\epsilon(\sum_i m_i)}$.
Thus we obtain that the above infinite sum is convergent if $\Re(s)>0$ and is bounded.
\end{proof}

\begin{lemma}\label{gl(n)-gl(n-1)-zeta-integral}
Let $W\in\sigma^{K_0(\p^e)}$ be any unit vector for some $e\ge c(\sigma)$. Also let $\xi$ be as in Lemma \ref{gl(n+1)-gl(n-1)-zeta-integral}. Then the integral
$$\int_{A_{n-1}}{W\left[\begin{pmatrix}a&\\&1\end{pmatrix}\right]}\overline{W_\xi(a)}|\det(a)|^{-s}\frac{\d^\times a}{\delta(a)}$$
is absolutely convergent for $0<\Re(s)<1/2-\vartheta$ and is $O(N(\p)^{\Re(s)(n-1)(e-c(\sigma))})$.
\end{lemma}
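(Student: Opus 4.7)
The plan is to adapt the argument from the proof of Lemma \ref{small-zeta-integral}, particularly the derivation of \eqref{zeta-integral-negative}, with the tempered unramified representation $\pi$ there replaced by $\xi$ here. First I would identify the integral of the lemma with the standard Rankin--Selberg zeta integral $\Psi(1/2-s,W,\overline{W_\xi})$. Indeed, $K_{n-1}$ embeds into $K_0(\p^e)$ via $k\mapsto\begin{pmatrix}k&\\&1\end{pmatrix}$ because such matrices have last row $(0,\dots,0,1)$; hence $W$ is right-invariant under this image, while $W_\xi$ is $K_{n-1}$-invariant by unramifiedness of $\xi$. The Iwasawa decomposition $N_{n-1}\backslash G_{n-1}=A_{n-1}K_{n-1}$ then collapses the $K_{n-1}$-integral against the probability Haar measure and produces exactly the integral of the lemma. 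Absolute convergence on the strip $\Re(s)<1/2-\vartheta$ is the standard one for the Rankin--Selberg zeta integral between a $\vartheta$-tempered and a tempered representation.

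Next, I would apply the $G_n\times G_{n-1}$ local functional equation \eqref{local-functional-equation} to write
\[
\Psi(1/2-s,W,\overline{W_\xi}) \;=\; \omega_{\widetilde{\xi}}(-1)^{n-1}\,\gamma(1/2+s,\widetilde{\sigma}\otimes\xi)\,\Psi\bigl(1/2+s,\widetilde{W},\overline{\widetilde{W_\xi}}\bigr).
\]
Applying \eqref{bound-rs-gamma-factor} with $\sigma\to\widetilde{\sigma}$, $\pi'\to\xi$, $r=n-1$, and $s$ replaced by $-s$ yields $|\gamma(1/2+s,\widetilde{\sigma}\otimes\xi)|\ll p^{-(n-1)c(\sigma)\Re(s)}$ throughout $0<\Re(s)<1/2-\vartheta$, since the bracketed quantity $1+p^{n(n-1)(-\Re(s)-1/2+\vartheta)}$ is $O(1)$ there.

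For the remaining zeta integral on the right, I would invoke the support observation already used in the proof of Lemma \ref{small-zeta-integral}: the $K_0(\p^e)$-invariance of $W$ implies that $\widetilde{W}\begin{pmatrix}h&\\&1\end{pmatrix}$ vanishes unless $e_{n-1}h\in\p^{-e}$. Cauchy--Schwarz together with the Whittaker--Kirillov identity $\|\widetilde{W}\|^2=1$ then gives
\[
\bigl|\Psi(1/2+s,\widetilde{W},\overline{\widetilde{W_\xi}})\bigr|^2 \;\leq\; \int_{N_{n-1}\backslash G_{n-1}}\mathbf{1}_{\o^{n-1}}(e_{n-1}h\p^e)\,|\widetilde{W_\xi}(h)|^2\,|\det h|^{2\Re(s)}\,\d h.
\]
After the change of variable $h\mapsto\varpi^{-e}h$ (with $\varpi$ a uniformizer of $\p$ acting as a central scalar), this becomes $p^{2(n-1)e\Re(s)}L(2\Re(s),\xi\otimes\widetilde{\xi})$, and the $L$-value is bounded on $\Re(s)>0$ by temperedness of $\xi$. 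Combining the square-root bound with the $\gamma$-factor estimate delivers the asserted $O(p^{(n-1)\Re(s)(e-c(\sigma))})$. The main subtlety is bookkeeping: dualizing so that the weaker support property is applied to $\widetilde{W}$ (not to $W$), and tracking signs and central characters across the functional equation; once these are in hand the proof is a direct transcription of the derivation of \eqref{zeta-integral-negative}.
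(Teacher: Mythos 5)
Your proposal is correct and is precisely the argument the paper invokes: the paper's proof of this lemma is a one-line citation of the proof of Lemma~\ref{small-zeta-integral}, in particular the derivation of \eqref{zeta-integral-negative}, and you have carried out exactly that transcription (identifying the $A_{n-1}$-integral with $\Psi(1/2-s,W,\overline{W_\xi})$ via right $K_{n-1}$-invariance, applying the functional equation \eqref{local-functional-equation} and the $\gamma$-factor bound \eqref{bound-rs-gamma-factor}, and bounding the dual zeta integral by Cauchy--Schwarz with the support condition on $\widetilde W$).
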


\begin{proof}
This follows from the proof of Lemma \ref{small-zeta-integral}, in particular, from \eqref{zeta-integral-negative}.
\end{proof}

\begin{lemma}\label{big-zeta-integral-WP}
Let $W\in\sigma^{K_0(\p^e)}$ be any unit vector for some $e\ge c(\sigma)$. Let $l\in\Z_{\ge 0}$ and $\omega_\sigma$ be the central character of $\sigma$. Then for $\Re(s)=0$ the integral
$$\int_{v(z)=l}|z|^{ns}\overline{\omega_\sigma(z)}\int_{A_{n-1}}W_\Pi\left[\begin{pmatrix}
az&&\\&z&\\&&1
\end{pmatrix}\right]\overline{W\left[\begin{pmatrix}a&\\&1\end{pmatrix}\right]}|\det(a)|^s\frac{\d^\times a}{\delta(a)|\det(a)|}\d^\times z$$
is absolutely convergent and is $O(N(\p)^{-nl/2+\epsilon(e-c(\sigma)+l)})$.
\end{lemma}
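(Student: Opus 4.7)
The strategy is to apply the Whittaker--Plancherel formula on $G_{n-1}$ to spectrally decompose the function $g \mapsto W\left[\begin{pmatrix}g & \\ & 1\end{pmatrix}\right]$, thereby reducing the integral to a combination of quantities already bounded in Lemma~\ref{gl(n+1)-gl(n-1)-zeta-integral} and Lemma~\ref{small-zeta-integral}.

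First, observe that both factors in the inner integrand are right $K_{n-1}$-invariant under $a \mapsto ak$ for $k \in K_{n-1}$: $W_\Pi$ is spherical in $G_{n+1}$, and $W$ is $K_0(\p^e)$-invariant, while the block embedding $k \mapsto \begin{pmatrix} k & \\ & 1\end{pmatrix}$ lands in the respective compact subgroups. Consequently the $A_{n-1}$-integral with measure $\d^\times a/\delta(a)$ equals the Iwasawa-folded integral over $N_{n-1}\backslash G_{n-1}$ with invariant measure. Applying \eqref{whittaker-plancherel-theorem} to the $K_{n-1}$-invariant element $\overline{W[\cdot,1]}$, only spherical (hence unramified) tempered representations $\xi$ of $G_{n-1}$ contribute, the sole orthonormal basis element being the normalized newvector $W_\xi/\|W_\xi\|$. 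After interchanging integrals (valid at $\Re(s)=\varepsilon>0$ by the absolute-convergence bounds of Lemma~\ref{gl(n+1)-gl(n-1)-zeta-integral}, Lemma~\ref{small-zeta-integral}, and Lemma~\ref{trace-class-property}), we obtain
\begin{equation*}
I(s) \;=\; \int_{\widehat{G_{n-1}}^{\mathrm{sph}}} \frac{B_\xi}{\|W_\xi\|^2}\, A_\xi(s, l)\, \d\mu^{\mathrm{sph}}(\xi),
\end{equation*}
where $I(s)$ denotes the integral in the statement, $A_\xi(s,l)$ is precisely the integral of Lemma~\ref{gl(n+1)-gl(n-1)-zeta-integral} with unitary character $\chi(z)=|z|^{ns}\overline{\omega_\sigma(z)}$, and $B_\xi = \overline{\Psi(1/2, W, \overline{W_\xi})}$ (up to complex conjugation and a harmless central-character factor).

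Each factor admits a bound uniform in the spherical tempered $\xi$: by Lemma~\ref{gl(n+1)-gl(n-1)-zeta-integral}, $|A_\xi(s,l)| \ll p^{-nl/2}$, since only the trivial bound \eqref{spherical-whittaker-bound} on the Schur polynomials enters and this is uniform across the tempered spherical parameter space; repeating the Cauchy--Schwarz plus functional-equation argument of Lemma~\ref{small-zeta-integral} gives $|B_\xi| \ll p^{\varepsilon(e-c(\sigma))}$; by \eqref{L2-whittaker}, $\|W_\xi\|^2 = L(1,\xi\otimes\widetilde{\xi})/\zeta(n-1)$ is bounded away from zero on the compact spherical tempered parameter space; and the spherical Plancherel measure is smooth and bounded. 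Multiplying these bounds and integrating yields $|I(s)| \ll p^{-nl/2+\varepsilon(e-c(\sigma))}$ at $\Re(s)=\varepsilon$, and the bound at $\Re(s)=0$ follows from the asserted absolute convergence together with Phragm\'en--Lindel\"of in a thin vertical strip $0\le\Re(s)\le 2\varepsilon$, exactly as in the final step of Lemma~\ref{small-zeta-integral}.

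The main technical obstacle is the justification of the interchange of integrals and the uniform control over the spherical tempered dual of $G_{n-1}$ at the boundary $\Re(s)=0$. This is addressed by performing all estimates at the shifted contour $\Re(s)=\varepsilon$ and transferring to the critical line via analyticity only at the end; the combination of the smoothness of the spherical Plancherel density, the uniform lower bound on $\|W_\xi\|^2$, and the $\xi$-uniform bounds on $A_\xi$ and $B_\xi$ ensures integrability throughout.
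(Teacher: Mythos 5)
Your strategy — expand $\overline{W[\cdot,1]}$ via the Whittaker--Plancherel formula on $G_{n-1}$ and reduce to Lemmas~\ref{gl(n+1)-gl(n-1)-zeta-integral} and~\ref{small-zeta-integral} — is essentially the same device the paper uses (they phrase it as the non-archimedean Kontorovich--Lebedev--Whittaker transform), so the overall idea is right. However, there is a genuine gap in how you handle the weight split and the return to $\Re(s)=0$. You attach \emph{all} of $|\det(a)|^{s-1}$ to the $W_\Pi$-piece $A_\xi(s,l)$ and keep $B_\xi=\overline{\Psi(1/2,W,\overline{W_\xi})}$ $s$-independent. At $\Re(s)=0$ the $A_\xi$-integral does not converge absolutely: after Shintani's formula the $a$-integral becomes $\sum_{m_1\ge\cdots\ge m_{n-1}\ge l}O(1)\,p^{-\Re(s)\sum m_i}$, which diverges when $\Re(s)=0$. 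So the Plancherel interchange is only valid for $\Re(s)>0$, which is why you shift to $\Re(s)=\varepsilon$. But your proposed Phragm\'en--Lindel\"of step to get back to $\Re(s)=0$ is not set up correctly: you invoke PL on the one-sided strip $0\le\Re(s)\le 2\varepsilon$, whereas PL needs boundary control on a strip whose \emph{interior} contains the target line, i.e.\ a bound for some $\Re(s)<0$ as well, and no such bound is available. (Lemma~\ref{gl(n+1)-gl(n-1)-zeta-integral} is strictly a $\Re(s)>0$ statement; contrast with Lemma~\ref{small-zeta-integral}, where the critical line is reached by pairing a $\Re(s_2)>0$ bound against a $\Re(s_2)<0$ bound coming from the functional equation.)

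The paper avoids this by inserting a free parameter $\eta\in(0,1/2-\vartheta)$ \emph{inside} the decomposition, splitting $|\det(a)|^{s-1}=|\det(a')|^{\eta+s-1}\cdot|\det(a'')|^{-\eta}$. With $\Re(s)=0$ both resulting zeta integrals then converge absolutely: the $W_\Pi$-piece falls under Lemma~\ref{gl(n+1)-gl(n-1)-zeta-integral} with parameter $\eta>0$, and the $W$-piece falls under Lemma~\ref{gl(n)-gl(n-1)-zeta-integral} with parameter $\eta\in(0,1/2-\vartheta)$, directly producing the $p^{\eta(n-1)(e-c(\sigma))}$ factor; taking $\eta$ small recovers the $\epsilon$ in the statement. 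Thus the $\eta$-shift simultaneously justifies the interchange at the critical line \emph{and} produces the $p^{\epsilon(e-c(\sigma))}$ loss. To repair your argument you should build this shift into the decomposition (so that $B_\xi$ becomes the integral with weight $|\det|^{-\eta}$, bounded by Lemma~\ref{gl(n)-gl(n-1)-zeta-integral} rather than by Lemma~\ref{small-zeta-integral}), rather than shifting $s$ and appealing to PL.
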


\begin{proof}
Absolute convergence follows from Lemma \ref{local-whittaker-bound}.

We choose some $0<\eta<1/2-\vartheta$. We use non-archimedean Kontorovich--Lebedev--Whittaker transform
\cite{Guerreiro1702.08271} to write the integral as
\begin{multline*}
    \frac{1}{(n-1)!}\int_{\tau\in (S^1)^{n-1}}\int_{v(z)=l}|z|^{ns}\overline{\omega_\sigma(z)}\\
    \int_{A_{n-1}}W_\Pi\left[\begin{pmatrix}
a'z&&\\&z&\\&&1
\end{pmatrix}\right]\overline{W_{\sigma(\tau)}(a')}|\det(a')|^{\eta+s-1}\frac{\d^\times a'}{\delta(a')}\d^\times z\\
\int_{A_{n-1}}\overline{W\left[\begin{pmatrix}a''&\\&1\end{pmatrix}\right]}W_{\sigma(\tau)}(a'')|\det(a'')|^{-\eta}\frac{\d^\times a''}{\delta(a'')}
\prod_{i\neq j}(\tau_i-\tau_j)\frac{\d\tau_1}{\tau_1}\dots\frac{\d\tau_{n-1}}{\tau_{n-1}}.
\end{multline*}
Here by $\d\tau_j$ we denote the $2\pi i$-normalized Lebesgue measure on $S^1$ and $W_{\sigma(\tau)}$ is the normalized spherical vector of the representation whose Langlands parameters are given by $\tau$.

Note that the choice of $\eta$ ensures absolute convergence of all the integrals.  We use Lemma \ref{gl(n+1)-gl(n-1)-zeta-integral} to bound the outer $A_{n-1}\times F^\times$-integral by $N(\p)^{-l(n/2-\epsilon)}$ and Lemma \ref{gl(n)-gl(n-1)-zeta-integral} to bound the inner $A_{n-1}$-integral by $N(\p)^{\eta(n-1)(e-c(\sigma))}$. Bounding the $(S^1)^{n-1}$-integral trivially we conclude.
\end{proof}

\begin{lemma}\label{big-zeta-integral}
Let $W\in\sigma$ be a unit vector. Then 
$$\Psi(1/2+s_1,W_\Pi^{(f)},\ol{W})\ll {N(\p)^{c(\sigma)+\epsilon f}}{N(\p)^{-n\frac{c(\sigma)+f}{2}}},$$
for $\Re(s_1)=0$.
\end{lemma}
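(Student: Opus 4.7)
The plan is to reduce this to Lemma \ref{big-zeta-integral-WP} by performing an Iwasawa decomposition that converts the row-condition defining $W_\Pi^{(f)}$ into a congruence restriction on the $K_n$-variable. Using the explicit form \eqref{choice-local-vector-explicit} together with $g = ak$ ($a \in A_n$, $k \in K_n$) and the $K_{n+1}$-sphericity of $W_\Pi$, we may write
\begin{equation*}
\Psi(1/2+s_1, W_\Pi^{(f)}, \ol{W}) = \int_{K_n}\int_{A_n} W_\Pi\left[\begin{pmatrix}a&\\&1\end{pmatrix}\right]\ol{W(ak)}\mathbf{1}_{\o^{n-1}}(a_n \ell(k) \p^{-f})|\det(a)|^{s_1}\frac{\d^\times a}{\delta(a)}\d k.
\end{equation*}
Parametrize $a = zt$ with $z = a_n \in F^\times$ and $t = \diag(t_1,\dots,t_{n-1},1)$; setting $l := v(z)$, the condition $z\ell(k)\p^{-f} \in \o^{n-1}$ is exactly $k \in K_0(\p^{\max(0,f-l)})$, so the inner $K_n$-integration of $\ol{W(ztk)}$ produces $\vol(K_0(\p^{\max(0,f-l)}))\,\ol{(P_e W)(t)}$, up to a central-character factor of modulus one, where $P_e$ denotes the orthogonal projection onto $\sigma^{K_0(\p^e)}$ with $e := \max(0,f-l)$.

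By the newvector theory of \cite{JPSS1981conducteur}, $\sigma^{K_0(\p^e)} = 0$ unless $e \geq c(\sigma)$, which restricts the nontrivial range to $0 \leq l \leq f - c(\sigma)$. When $c(\sigma) > 0$ the tail $l \geq f$ vanishes identically; when $c(\sigma) = 0$ it can be handled directly by Lemma \ref{big-zeta-integral-WP} with $e = 0$, using that $P_{K_n} W$ is a scalar multiple of $W_\sigma$. For each $l$ in the surviving range, the projected vector $\tilde{W}_l := P_{f-l} W \in \sigma^{K_0(\p^{f-l})}$ satisfies $\|\tilde{W}_l\| \leq \|W\| = 1$, and after normalization Lemma \ref{big-zeta-integral-WP} (applied with $e = f - l$) bounds the residual double integral over $\{v(z) = l\} \times A_{n-1}$ by $\|\tilde{W}_l\| \cdot p^{-nl/2 + \epsilon(f - l - c(\sigma))} \leq p^{-nl/2 + \epsilon f}$.

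Multiplying by the volume factor $\vol(K_0(\p^{f-l})) \asymp p^{-(n-1)(f-l)}$ and summing yields
\begin{equation*}
|\Psi(1/2+s_1, W_\Pi^{(f)}, \ol{W})| \ll p^{\epsilon f - (n-1)f} \sum_{l=0}^{f-c(\sigma)} p^{(n/2 - 1)l}.
\end{equation*}
For $n \geq 3$ the geometric sum is dominated by its top term $l = f - c(\sigma)$, giving $p^{\epsilon f + c(\sigma) - n(c(\sigma) + f)/2}$ as desired; for $n = 2$ the sum is arithmetic of length $O(f)$, and the resulting logarithmic factor is harmlessly absorbed into $p^{\epsilon f}$. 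The main technical observation is that the row-indicator in the definition of $W_\Pi^{(f)}$ cuts out precisely the $K_0(\p^{f-l})$-congruence subgroup, which lets the newvector filtration dovetail with Lemma \ref{big-zeta-integral-WP} without further computation.
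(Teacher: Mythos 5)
Your proof is correct and follows essentially the same route as the paper: Iwasawa decomposition to convert the row-indicator in $W_\Pi^{(f)}$ into a $K_0(\p^{\max(0,f-v(z))})$-restriction, newvector theory to kill the range $e<c(\sigma)$, projection/volume bounds combined with Lemma \ref{big-zeta-integral-WP}, and then the geometric sum. The only differences are cosmetic (indexing by $l=v(z)$ rather than $e=f-v(z)$, and your explicit aside about $n=2$), so this matches the paper's argument.
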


\begin{proof}
Using \eqref{choice-local-vector-explicit} and Iwasawa coordinates we write
\begin{multline*}
    \Psi(1/2+s_1,W_\Pi^{(f)},\ol{W})=\int_{F^\times}\overline{\omega_\sigma(z)}\int_{A_{n-1}} W_{\Pi}\left[\begin{pmatrix}
     az&&\\&z&\\&&1
     \end{pmatrix}\right]||z|^n\det(a)|^{s_1}\\
     \int_{K_n}\overline{W\left[\begin{pmatrix}
     a&\\&1
     \end{pmatrix}k\right]}\mathbf{1}_{\o^{n-1}}(\ell(k)z\p^{-f})\d k\frac{\d^\times a}{\delta(a)|\det(a)|}\d^\times z.
\end{multline*}
In the inner $K_n$-integral support condition of $\mathbf{1}_{\o^{n-1}}$ forces $l(k)\in (\p^e)^{n-1}$ where $e:=\max(0,f-v(z))$. Hence that integral can be written as
$$\int_{K_0(\p^e)}\overline{W\left[\begin{pmatrix}a&\\&1\end{pmatrix}k\right]}\d k=\vol(K_0(\p^e))\ol{P_e(W)},$$
where $P_e$ is as in Lemma \ref{proj-onto-Kpf}. Since $P_e$ is an orthogonal projection, we can write
$$\int_{K_0(\p^e)}\overline{W\left[\begin{pmatrix}a&\\&1\end{pmatrix}k\right]}\d k=c_e(W) W^{(e)}\left[\begin{pmatrix}a&\\&1\end{pmatrix}\right]$$ 
for some unit vector $W^{(e)}\in\bar{\sigma}^{K_0(\p^e)}$ and
$$c_e(W) \ll \vol(K_0(\p^e)) \asymp N(\p)^{-e(n-1)}.$$
Moreover, if $v(z)\ge f+1$ then the above $K_n$ integral vanishes unless $c(\sigma)=0$. Thus we write $\Psi(1/2+s_1,W_\Pi^{(f)},W)$ as
\begin{multline*}
    \sum_{e=c(\sigma)}^{f}c_{e}(W)\int_{v(z)=f-e}\overline{\omega_\sigma(z)}\int_{A_{n-1}} W_{\Pi}\left[\begin{pmatrix}
     az&&\\&z&\\&&1
     \end{pmatrix}\right]\\
     \overline{W^{(e)}\left[\begin{pmatrix}
     a&\\&1
     \end{pmatrix}\right]}||z|^n\det(a)|^{s_1}\frac{\d^\times a}{\delta(a)|\det(a)|}\d^\times z\\
     +\delta_{c(\sigma)=0}c_0(W)\int_{z\in\p^{f+1}}\overline{\omega_\sigma(z)}\int_{A_{n-1}} W_{\Pi}\left[\begin{pmatrix}
     az&&\\&z&\\&&1
     \end{pmatrix}\right]\\
     \overline{
     W^{(0)}
     \left[\begin{pmatrix}
     a&\\&1
     \end{pmatrix}\right]}|z|^n\det(a)|^{s_1}\frac{\d^\times a}{\delta(a)|\det(a)|}\d^\times z.
\end{multline*}
We let $\Re(s_1)=0$. Using Lemma \ref{big-zeta-integral-WP}, we bound the first summand by
$$\sum_{e=c(\sigma)}^f N(\p)^{-(n-1)e}N(\p)^{-n(f-e)/2}N(\p)^{\epsilon(f-c(\sigma))}\ll N(\p)^{-nf/2-(n/2-1)c(\sigma)+{\epsilon f}},$$
and the second summand by
$$\delta_{c(\sigma)=0}\sum_{l=f+1}^\infty N(\p)^{-l(n/2-\epsilon)} \ll \delta_{c(\sigma)=0}N(\p)^{-(f+1)(n/2-\epsilon)}.$$
Combining the above, we conclude the proof.
\end{proof}

\begin{proof}[Proof of Proposition \ref{main-local-prop-original}]
\textbf{Case $c(\sigma)<f$:} 
By Lemma \ref{proj-onto-Kpf} we can instead sum over $\B(\sigma^{K_0(\p^f)})$ in the definition of $H(\sigma)$. We now apply Lemma \ref{big-zeta-integral}, Lemma \ref{small-zeta-integral}, and the fact that for $\Re(s_1)=\Re(s_2)=1/2$ 
$$L(s_1,\Pi\otimes\widetilde{\sigma}), L(s_2,\sigma\otimes\widetilde{\pi})\asymp 1.$$
We conclude by noting that the dimension of $\sigma^{K_0(\p^f)}$ is a polynomial in $c(\sigma)$ and $f$; see \cite{Red91oldform}.

\textbf{Case $c(\sigma)=f$:} We take $\B(\sigma^{K_0(\p^f)})$ consisting of the single vector $\frac{W_\sigma}{\|W_\sigma\|}$.
Thus we obtain
$$H(\sigma;W^{(f)}_\Pi,W_\pi,\mathbf{s})=\|W_\sigma\|^{-2}\frac{\Psi(s_1,W_\Pi^{(f)},\ol{W_\sigma})}{L(s_1,\Pi\otimes\widetilde{\sigma})}\frac{\Psi(s_2,W_\sigma,\ol{W_\pi})}{L(s_2,\sigma\otimes\widetilde{\pi})}.$$
Applying \eqref{bkl-test-vector} we conclude the proof for this case.

\textbf{Case $c(\sigma)>f$:} Lemma \ref{proj-onto-Kpf} implies the case $c(\sigma)>f$ immediately.
\end{proof}

\subsection{At the places $v\mid \Delta_F$}\label{local-weight-original-places-delta}

Recall the choices of the test vectors in \eqref{choice-for-ramified-psi}. Since the test vectors differ from the spherical ones only by a multiplication on the left, it follows that $W_{\Phi,v}$ is right invariant by $\begin{pmatrix}
k&\\&1
\end{pmatrix}$, $k\in K_n$. Therefore, for a $K$-isotypic vector $W\in\sigma_v$ the zeta integral $\Psi(s_1,W_{\Phi,v},\ol{W})$ vanishes unless $W$ is spherical. Hence, $H_v(\sigma_v;W_{\Phi,v},W_{\phi,v},\mathbf{s})$ vanishes unless $\sigma_v$ is unramified in which case $H_v(\sigma_v)$ has only one summand. 
The vector corresponding to this summand has to be given, up to normalization, by a left translate of $W_{\sigma_v}$, the newvector for the unramified character.

In fact, we have
$$H(\sigma;W_{\Phi,v},W_{\phi,v},\mathbf{s})=\|W\|^{-2}\frac{\Psi_v(s_1,W_{\Phi,v},\ol{W})}{L(s_1,\Pi_v\otimes\widetilde{\sigma}_v)}\frac{\Psi_v(s_2,W,\ol{W_{\phi,v}})}{L_v(s_2,\sigma_v\otimes\widetilde{\pi}_v)},$$
where (see \S \ref{subsubsec - v divide Delta})
$$W(g):=W_{\sigma_v}(a_{n}(\lambda_v)g).$$
It follows by changing variables 
that
$$\begin{cases}
\Psi_v(s_1,W_{\Phi,v},\ol{W}) = |\lambda_v|^{\mu_1(s_1)}\Psi_v(s_1,W_{\Pi_v},\ol{W_{\sigma_v}}) = \Delta_v^{-\mu_1(s_1)} L_v(s_1,\Pi_v\otimes\widetilde{\sigma}_v),\\
\Psi_v(s_2,W,\ol{W_{\phi,v}}) = |\lambda_v|^{\mu_2(s_2)}\Psi_v(s_2,W_{\sigma_v},\ol{W_{\pi_v}}) = \Delta_v^{-\mu_2(s_2)} L_v(s_2,\sigma_v\otimes\widetilde{\pi}_v),\\
\|W\|^2 = |\lambda_v|^{\mu}\|W_{\sigma_v}\|^2=\Delta_v^{-\mu},\\
\end{cases}$$
where $\mu_1$ and $\mu_2$ are affine functions whose complex coefficients only depend on $n$; and $\mu$ is a constant only depending on $n$. Altogether, we get that

\begin{equation}\label{H-for-ramified-psi}
H_v(\sigma_v;W_{\Phi,v},W_{\phi,v},\mathbf{s})={\Delta_v^{-\mu_H(\mathbf{s})}},
\end{equation}

where $\mu_H$ is an affine functions whose complex coefficients depend only on $n$. We use the shorthand $\mu_H$ for $\mu_H\left(\frac12,\frac12\right)$.

\subsection{At the place $v=\p_0$}

Recall the choices of the local test vectors at $v=\p_0$ from \S\ref{sec:choice-of-vectors}.

\begin{prop}\label{cuspidal-projection}
We have
$$H_v(\sigma_v;W_{\Phi,v},W_{\phi,v},\mathbf{s})=
\begin{cases}
\varepsilon_v(1,\tau\otimes\widetilde{\tau})&\text{ if }\sigma_v=\tau,\\
0&\text{ otherwise};
\end{cases}$$
for $\mathbf{s}=\left(\frac12,\frac12\right)$. 
\end{prop}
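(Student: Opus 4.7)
The plan is to substitute the explicit formula \eqref{choice-test-vector-supercuspidal} for $W_{\Phi,v}$ into the zeta integral $\Psi_v(s_1,W_{\Phi,v},\overline{W})$ and perform the integration over the center. Decomposing $g=zh$ with $z\in Z_n(F_v)\simeq F_v^\times$ and $h$ in a section of $Z_n\backslash G_n$, using that $\sigma_v$ has trivial central character and $\mathrm{vol}(\mathfrak{o}_v^\times)=1$, one arrives at
\[
\Psi_v(s_1,W_{\Phi,v},\overline{W})=\int_{Z_nN_n\backslash G_n}W_\tau(h)\overline{W(h)}|\det h|^{s_1-1/2}\,\mathrm{d}h.
\]
Because $\tau$ is supercuspidal, $W_\tau\in C_c^\infty(Z_nN_n\backslash G_n,\psi_v)$, so the integral converges absolutely and is entire in $s_1$.

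Next, at $s_1=1/2$, the right-hand side is a $G_n$-equivariant sesquilinear pairing between $\tau$ and $\sigma_v$ (a change of variable shows that translating $W$ by $\sigma_v(g)$ amounts to translating $W_\tau$ by $\tau(g^{-1})$). Since both representations have trivial central character and $\tau$ is irreducible, Schur's lemma forces the pairing to vanish whenever $\sigma_v\not\cong\tau$, which handles the vanishing case. When $\sigma_v=\tau$, the same Schur argument shows that our pairing is proportional to the Kirillov inner product on $\tau$, say with constant $C$; choosing $\mathcal{B}(\tau)$ to be orthonormal for the Kirillov inner product and applying Bessel's identity to the expansion $W_\tau=\sum_W\langle W_\tau,W\rangle W$ collapses the spectral sum to
\[
H_v(\tau)=C\cdot\Psi_v(1/2,W_\tau,\overline{W_{\pi_v}}),
\]
after noting that $L_v(s_1,\Pi_v\otimes\widetilde{\tau})=L_v(s_2,\tau\otimes\widetilde{\pi}_v)=1$, since any unramified twist of a supercuspidal is still supercuspidal.

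To identify $C$, I would appeal to the Plancherel/formal-degree formula for $\mathrm{GL}_n$: the proportionality between the ``alternative'' inner product $\int_{Z_nN_n\backslash G_n}W_1\overline{W_2}\,\mathrm{d}h$ and the Kirillov inner product on a supercuspidal $\tau$ with trivial central character is, up to explicit measure factors, governed by the formal degree $d(\tau)$, which in turn is given classically by $\gamma(1,\tau\otimes\widetilde{\tau})$ (Silberger--Plancherel). The remaining Rankin--Selberg integral $\Psi_v(1/2,W_\tau,\overline{W_{\pi_v}})$ for supercuspidal $\tau$ against unramified $\pi_v$ can be evaluated directly (or via the local functional equation, using $L_v(s,\tau\otimes\widetilde{\pi}_v)=1$). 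Combining these evaluations with the chosen normalization $L_v(s,\tau\otimes\widetilde{\tau})=\zeta_{\mathfrak{p}_0}(ns)$ and the relation $\gamma=\varepsilon\cdot L(1-s,\cdot)/L(s,\cdot)$ converts the $\gamma$-factor to $\varepsilon(1,\tau\otimes\widetilde{\tau})$, which is the claimed answer.

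The main obstacle is the careful bookkeeping: tracking all measure normalizations (the quotient measures on $Z_n\backslash G_n$ and $N_{n-1}\backslash G_{n-1}$, along with the global normalization $\mathrm{vol}(\mathfrak{o}_v^\times)=1$), the precise form of $C$ in terms of the formal degree, the explicit computation of $\Psi_v(1/2,W_\tau,\overline{W_{\pi_v}})$, and the conversion between $\gamma$ and $\varepsilon$ using the specific normalization of $L_v(s,\tau\otimes\widetilde{\tau})$. Each step is standard, but one must verify that the many factors conspire to produce exactly $\varepsilon(1,\tau\otimes\widetilde{\tau})$ and no extraneous $L$-values.
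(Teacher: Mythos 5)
Your approach is essentially the same as the paper's: reduce the first zeta integral at $s_1=1/2$ to the $G_n$-invariant pairing $\int_{Z_nN_n\backslash G_n}W_\tau\overline{W}$, invoke Schur's lemma for vanishing when $\sigma_v\not\cong\tau$, collapse the spectral sum to the $W_\tau$-direction (the paper does this by picking a basis containing $W_\tau/\|W_\tau\|$; your Bessel-identity collapse is the same observation), note that both $L$-factors trivialize and $\Psi_v(1/2,W_\tau,\overline{W_{\pi_v}})=1$, and then identify the remaining constant $\|W_\tau\|^{-2}\int_{Z_nN_n\backslash G_n}|W_\tau|^2$ as a formal-degree quantity (the paper outsources this last step to a lemma in \cite{Jana2020app}).

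One concrete flaw worth flagging in your last paragraph: the assertion that the formal degree is ``given classically by $\gamma(1,\tau\otimes\widetilde\tau)$'' cannot be taken literally, since with the normalization $L_v(s,\tau\otimes\widetilde\tau)=\zeta_{\p_0}(ns)$ one has $\gamma(1,\tau\otimes\widetilde\tau)=\varepsilon(1,\tau\otimes\widetilde\tau)\,\zeta_{\p_0}(0)/\zeta_{\p_0}(n)$, and $\zeta_{\p_0}(0)=(1-1)^{-1}$ diverges. The correct finite quantity, and what the cited lemma actually produces, is $\varepsilon(1,\tau\otimes\widetilde\tau)$ directly (equivalently a suitably regularized leading term of the $\gamma$-factor with the pole of $L(1-s,\cdot)$ removed). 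You do hedge by saying the factors ``conspire'' and that this requires verification, which is fair, but as stated the formula $d(\tau)=\gamma(1,\tau\otimes\widetilde\tau)$ is not a finite identity.
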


The proof of this lemma is essentially contained in \cite[\S6.2]{Jana2020app}. We give a sketch of the proof for the sake of completeness.

\begin{proof}
For $W\in\B(\sigma_v)$ using \eqref{choice-test-vector-supercuspidal} we write
$$\Psi(1/2,W_{\Phi,v},\overline{W}) = \int_{Z_nN_n\bs G_n}W_\tau(g)\overline{W(g)}\d g.$$
The above integral is absolutely convergent as $W_\tau$ is compactly supported in $Z_nN_n\bs G_n$. Thus it defines a $G_n$-invariant sesquilinear pairing between $\tau$ and $\sigma_v$. By Schur's lemma, this pairing must vanish identically unless $\sigma_v=\tau$. In the latter case the pairing is proportional to the unitary inner product in $\tau$ as defined in \S\ref{whittaker-kirillov-model}.

If $\sigma_v=\tau$ we choose an orthonormal basis $\B(\sigma_v)$ containing $\frac{W_\tau}{\|W_\tau\|}$. As $\Psi(1/2,W_{\Phi,v},\overline{W})$ vanishes for $W$ orthogonal to $W_\tau$, we obtain
$$H_v(\sigma_v;W_{\Phi,v},W_{\phi,v},\mathbf{s})=\|W_\tau\|^{-2}\int_{Z_nN_n\bs G_n}|W_\tau(g)|^2\d g.$$
The above follows from \eqref{bkl-test-vector} and the fact that $L_v(s,\Pi\otimes\widetilde{\tau})=1$. We evaluate the above integral appealing to \cite[Lemma 6.2]{Jana2020app} and conclude.
\end{proof}

\section{Local Weight Function: Dual Moment}\label{local-dual}

In this section we adopt the same notations as in \S\ref{local-original}. In particular, $\sigma$ is an irreducible generic unitary $\vartheta$-tempered representation of $G_n(F)$ for some $0\le\vartheta<1/2$.

Recall $w^\ast$ from the statement of Proposition \ref{abstract-reciprocity}. 
Let $W_1\in\Pi$ and $W_2\in\pi$, and $\mathbf{s}\in\C^2$. We define the dual local factor by
\begin{equation*}
    \wc{H}(\sigma)=\widecheck{H}(\sigma;W_1,W_2,\mathbf{s}):=H(\sigma;\Pi(w^\ast)W_1,W_2,\mathbf{s})
\end{equation*}
where $H(\sigma)$ is as in \eqref{defn-local-factor}.

The goal of this section is to analyse $\wc{H}(\sigma)$, as in Proposition \ref{main-local-prop-dual}, for the choices of the local vectors as in \S\ref{sec:choice-of-vectors} at the places $v\mid\q$.

We suppress the subscript $v$ as usual.

For $W\in\sigma$ we define
\begin{equation}\label{W-check}
  \widecheck{W}^{(f)}(g)
  =N(\p)^{-(n-1)f}\int_{F^{n-1}}{W}\left[g\begin{pmatrix}\rm{I}_{n-1}&\beta\\&1\end{pmatrix}\right]\mathbf{1}_{\o^{n-1}}(\beta\p^{f}) \d\beta.
\end{equation}
Clearly, for $g\in G_{n-1}$,
\begin{equation}\label{unipotent-average}
    \widecheck{W}^{(f)}\left[\begin{pmatrix}g&\\&1\end{pmatrix}\right]=W\left[\begin{pmatrix}g&\\&1\end{pmatrix}\right]\mathbf{1}_{\o^{n-1}}(e_{n-1}g\p^{-f}),
\end{equation}
where $e_{n-1}$ is the standard row vector $(0,\dots,0,1)$.

\begin{lemma}\label{simplifying-dual-weight}
Recall the vector $W_\Pi^{(f)}$ from \eqref{choice-test-vector-original}. Then $\wc{H}(\sigma;W_\Pi^{(f)},W_\pi,\mathbf{s})$ vanishes unless $\sigma$ is unramified in which case
$$\wc{H}(\sigma;W_\Pi^{(f)},W_\pi,\mathbf{s})=\frac{\Psi(s_2,\wc{W}^{(f)}_\sigma,\ol{W_\pi})}{L(s_2,\sigma\otimes\widetilde{\pi})},$$
where $\wc{W}^{(f)}_\sigma$ is defined as in \eqref{W-check}.
\end{lemma}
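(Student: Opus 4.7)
The plan is to collapse $\widecheck{H}(\sigma;W_\Pi^{(f)},W_\pi,\mathbf s)$ to a single zeta integral involving the original spherical Whittaker function $W_\Pi$, which in turn will force $\sigma$ to be unramified. The key algebraic observation is the commutation
\[
w^\ast\begin{pmatrix}\mathrm{I}_{n-1}&&\beta\\&1&\\&&1\end{pmatrix}=\begin{pmatrix}v_\beta&\\&1\end{pmatrix}w^\ast,\qquad v_\beta:=\begin{pmatrix}\mathrm{I}_{n-1}&\beta\\&1\end{pmatrix}\in N_n,
\]
which, after conjugation by $w^\ast$, identifies the unipotent used to define $W_\Pi^{(f)}$ in \eqref{choice-test-vector-original} with the image of $N_n\subset G_n$ in $G_{n+1}$. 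Combined with the sphericality of $W_\Pi$ (so that $W_\Pi(\cdot\,w^\ast)=W_\Pi(\cdot)$), this yields, for $h\in G_n$,
\[
\Pi(w^\ast)W_\Pi^{(f)}\begin{pmatrix}h&\\&1\end{pmatrix}=p^{-(n-1)f}\int_{F^{n-1}}W_\Pi\begin{pmatrix}hv_\beta&\\&1\end{pmatrix}\mathbf 1_{\o^{n-1}}(\beta\p^f)\,\d\beta.
\]

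Substituting this into $\Psi(s_1,\Pi(w^\ast)W_\Pi^{(f)},\ol W)$, interchanging the $\beta$ and $h$ integrations (legitimate in the range of absolute convergence), changing variables $h\mapsto hv_\beta^{-1}$, and exploiting the symmetry $\beta\mapsto-\beta$ of $\mathbf 1_{\o^{n-1}}(\beta\p^f)$, one recognizes the inner averaging of $W$ as the $\wc W^{(f)}$ of \eqref{W-check} and obtains
\[
\Psi(s_1,\Pi(w^\ast)W_\Pi^{(f)},\ol W)=\Psi(s_1,W_\Pi,\ol{\wc W^{(f)}}).
\]
Because $W_\Pi$ is right-$K_{n+1}$ invariant, this zeta integral depends on $\wc W^{(f)}$ only through its $K_n$-projection; hence it vanishes for every $W\in\sigma$ whenever $\sigma$ is ramified, giving the vanishing part of the lemma.

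When $\sigma$ is unramified, $\sigma^{K_n}=\C W_\sigma$ is one-dimensional, and the classical unramified test vector identity $\Psi(s_1,W_\Pi,\ol{W_\sigma})=L(s_1,\Pi\otimes\widetilde\sigma)$ combined with orthogonal projection onto $W_\sigma$ gives
\[
\Psi(s_1,W_\Pi,\ol{\wc W^{(f)}})=\frac{\langle W_\sigma,\wc W^{(f)}\rangle_\sigma}{\|W_\sigma\|^2}\,L(s_1,\Pi\otimes\widetilde\sigma).
\]
A short direct calculation shows that the operator $W\mapsto\wc W^{(f)}$ is self-adjoint on $\sigma$ (again by the $\beta\mapsto-\beta$ symmetry of the kernel), so $\langle W_\sigma,\wc W^{(f)}\rangle=\langle\wc W_\sigma^{(f)},W\rangle$. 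Applying Parseval in the Whittaker model of $\sigma$ collapses the basis sum:
\[
\sum_{W\in\B(\sigma)}\langle\wc W_\sigma^{(f)},W\rangle\,\Psi(s_2,W,\ol{W_\pi})=\Psi(s_2,\wc W_\sigma^{(f)},\ol{W_\pi}),
\]
and dividing by $L(s_1,\Pi\otimes\widetilde\sigma)L(s_2,\sigma\otimes\widetilde\pi)$ produces the claimed formula.

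The main obstacle is justifying the various interchanges of integrals and sums, in particular the final Parseval step, uniformly in $\mathbf s$. The remedy is to establish the identity first in a region of $\mathbf s$ where the zeta integrals and the basis sum in \eqref{defn-local-factor} converge absolutely (Lemmas \ref{trivial-bound-zeta-integral} and \ref{trace-class-property} provide the required uniform bounds), and then to invoke the meromorphic continuation of both sides in $\mathbf s$ recorded after \eqref{defn-local-factor}.
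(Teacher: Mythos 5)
Your proof is correct and is closely parallel to the paper's argument; all the key ingredients coincide: the commutation of $w^\ast$ with the unipotent in \eqref{choice-test-vector-original}, the right-$K_{n+1}$-invariance of $W_\Pi$ which picks out the spherical line in $\sigma$, and the unramified test-vector identity \eqref{bkl-test-vector}. The difference is in how the $\beta$-averaging operator $A:W\mapsto\wc W^{(f)}$ is moved around. The paper keeps $A$ as an explicit $\beta$-integral and, for each fixed $\beta$, replaces the orthonormal basis $\B(\sigma)$ by $\bigl\{\sigma(v_{-\beta})W\bigr\}_{W\in\B(\sigma)}$ inside the definition \eqref{defn-local-factor}; this transfers $A$ from the $s_1$-zeta integral to the $s_2$-zeta integral, giving
$\wc H(\sigma)=\sum_W\frac{\Psi(s_1,W_\Pi,\ol W)}{L(s_1,\Pi\otimes\widetilde\sigma)}\frac{\Psi(s_2,\wc W^{(f)},\ol{W_\pi})}{L(s_2,\sigma\otimes\widetilde\pi)}$,
after which a $K$-type basis isolates $W_\sigma$. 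You instead absorb $A$ into the second slot of the $s_1$-zeta integral directly (so that $\Psi(s_1,\Pi(w^\ast)W_\Pi^{(f)},\ol W)=\Psi(s_1,W_\Pi,\ol{\wc W^{(f)}})$), project onto $\sigma^{K_n}$ to see the vanishing and to pick out $W_\sigma$, then use self-adjointness of $A$ and Parseval to push $A$ onto $W_\sigma$ and collapse the basis sum to $\Psi(s_2,\wc W_\sigma^{(f)},\ol{W_\pi})$. These two routes are the same algebra in different packaging: the paper's basis-change step is precisely your combination of self-adjointness and Parseval, bundled into one move via the basis-independence of Bessel-type sums. Your version makes the role of unitarity of $\sigma$ (needed for self-adjointness) more explicit, while the paper's is slightly more compact. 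Both correctly address convergence by working at large $\Re(\mathbf s)$ and invoking meromorphic continuation at the end.
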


\begin{proof}
As $W_\Pi$ is spherical on $G_{n+1}$ we write
\begin{align*}
    {W}_{\Pi}^{(f)}(gw^\ast)&=N(\p)^{-(n-1)f}\int_{F^{n-1}}{W}_{\Pi}\left[gw^\ast\begin{pmatrix}\rm{I}_{n-1}&&\beta\\&1&\\&&1\end{pmatrix}\right]\mathbf{1}_{\o^{n-1}}(\beta\p^{f}) \d\beta\\
    &=N(\p)^{-(n-1)f}\int_{F^{n-1}}{W}_{\Pi}\left[g\begin{pmatrix}\rm{I}_{n-1}&\beta&\\&1&\\&&1\end{pmatrix}\right]\mathbf{1}_{\o^{n-1}}(\beta\p^{f}) \d\beta.
\end{align*}
Let $\Re(s_1)$ and $\Re(s_2)$ be sufficiently large.
Changing variables we write $\Psi(s_1,\Pi(w^\ast)W_{\Pi}^{(f)},\ol{W})$ as
$$N(\p)^{-(n-1)f}\int_{F^{n-1}}\Psi(s_1,W_{\Pi},\ol{\sigma(u_{-\beta})W})\mathbf{1}_{\o^{n-1}}(\beta\p^{f}) \d\beta,$$
where
$u_{\beta}=\begin{pmatrix}\rm{I}_{n-1}&\beta\\&1\end{pmatrix}$. Let $\B(\sigma)$ be an orthonormal $K$-isotypic basis. We multiply the above integrand by $\Psi(s_2,W,\overline{W_{\pi}})$ and sum over the basis $\left\lbrace\sigma\left(\begin{pmatrix}\rm{I}_{n-1}&\beta\\&1\end{pmatrix}\right)W\right\rbrace_{W\in\B(\sigma)}$.

By linearity of the zeta integral, this yields 
\begin{equation*}
    \wc{H}(\sigma)=\sum_{W\in\B(\sigma)}\frac{\Psi(s_1,W_\Pi,\ol{W})}{L(s_1,\Pi\otimes\widetilde{\sigma})}\frac{\Psi(s_2,\wc{W}^{(f)},\ol{W_\pi)}}{L(s_2,\sigma\otimes\widetilde{\pi})}.
\end{equation*}
By Lemma \ref{proj-onto-Kpf}, $\Psi(s_1,W_\Pi,\ol{W})$ vanishes unless $\sigma$ is unramified in which case we may replace the above sum by a sum over $W\in\B(\sigma^{K_n})$, which we may take to consist only of the vector ${W_\sigma}$ (recall \eqref{L2-whittaker}). Applying \eqref{bkl-test-vector} and meromorphic continuation we conclude the proof.
\end{proof}

\begin{lemma}\label{bound-zeta-integral-dual-side}
Let $\sigma$ be unramified. We have
$$\Psi(1/2+s_2,\wc{W}^{(f)}_\sigma,\ol{W_\pi})\ll N(\p)^{-f(n-1)(1/2-\vartheta)},$$
for $\Re(s_2)=0$.
\end{lemma}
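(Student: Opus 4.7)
By Lemma~\ref{simplifying-dual-weight}, the dual weight vanishes unless $\sigma$ is unramified, so we may assume $W_\sigma$ is the spherical Shintani newvector. The plan is to apply Cauchy--Schwarz to the zeta integral directly, using the explicit support condition built into $\wc{W}_\sigma^{(f)}$ via \eqref{unipotent-average}. Since $\Re(s_2)=0$ makes $|{|\det g|^{s_2}}|=1$, we have
\[
|\Psi(1/2+s_2,\wc{W}_\sigma^{(f)},\overline{W_\pi})|
\;\le\;\int_{N_{n-1}\backslash G_{n-1}} |\wc{W}_\sigma^{(f)}[g,1]|\,|W_\pi(g)|\,\d g
\;\le\; \|\wc{W}_\sigma^{(f)}\|_2\,\|W_\pi\|_2.
\]
The second factor is $\ll 1$ by \eqref{L2-whittaker} and the temperedness of $\pi$. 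The bound will follow once we show $\|\wc{W}_\sigma^{(f)}\|_2^2 \ll p^{-(1-2\vartheta)(n-1)f}$; the result then emerges after taking square roots.

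To estimate the main factor, I would write $g=ak$ in Iwasawa coordinates, $a=\diag(a_1,\dots,a_{n-1})$, $k\in K_{n-1}$. Right $K_n$-invariance of $W_\sigma$ gives $W_\sigma[ak,1]=W_\sigma[a,1]$, while the support condition $e_{n-1}g\in(\p^f)^{n-1}$ becomes $a_{n-1}\cdot(\text{last row of }k)\in(\p^f)^{n-1}$. Since the last row of $k\in K_{n-1}$ has at least one entry in $\o^\times$, this is equivalent to $v(a_{n-1})\ge f$, independently of $k$. A direct computation of modular characters yields $\delta_n[\diag(a),1]/\delta_{n-1}(a)=|\det a|$. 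Combining Shintani's formula \eqref{shintani} with these observations, and parametrizing by $m_i=v(a_i)$, gives
\[
\|\wc{W}_\sigma^{(f)}\|_2^2
\;=\; \sum_{m_1\ge\cdots\ge m_{n-1}\ge f} p^{-\sum m_i}\,|\lambda_\sigma(m_1,\dots,m_{n-1},0)|^2.
\]

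Invoking the highest-weight bound \eqref{spherical-whittaker-bound}, we have $|\lambda_\sigma(m_1,\dots,m_{n-1},0)|^2 \ll p^{2\vartheta\sum m_i}$, so the sum above is bounded by $\sum p^{-(1-2\vartheta)\sum m_i}$. Substituting $k_i=m_i-f\ge 0$ factors out $p^{-(1-2\vartheta)(n-1)f}$ and leaves a geometric sum over partitions $k_1\ge\cdots\ge k_{n-1}\ge 0$ weighted by $p^{-(1-2\vartheta)\sum k_i}$, which converges since $1-2\vartheta>0$. Thus $\|\wc{W}_\sigma^{(f)}\|_2^2\ll p^{-(1-2\vartheta)(n-1)f}$, and Cauchy--Schwarz yields the claimed estimate $\ll p^{-(n-1)f(1/2-\vartheta)}$.

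The main technical subtlety is justifying that the absolutely convergent integral used above actually represents the meromorphically continued zeta integral at $\Re(s_2)=0$. This is handled by noting that the integral converges absolutely for $\Re(s_2)\gg 0$ by standard theory and now also for $\Re(s_2)=0$ by the Cauchy--Schwarz estimate itself; analyticity on the connected strip in between lets us identify the meromorphic continuation with this value. One could alternatively mimic the two-step strategy of Lemma~\ref{small-zeta-integral} by separately handling $\Re(s_2)>0$ and $\Re(s_2)<0$ via the local functional equation and then applying Phragm\'en--Lindel\"of, but the direct Cauchy--Schwarz approach here is cleaner because the support condition in \eqref{unipotent-average} already produces the saving $p^{-(1-2\vartheta)(n-1)f}$ without needing the gamma-factor bound.
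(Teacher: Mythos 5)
Your proof is correct and rests on the same core computation as the paper's: use \eqref{unipotent-average} and sphericality to convert the support condition into $v(a_{n-1})\ge f$, pass to Shintani's formula \eqref{shintani}, and invoke the highest-weight bound \eqref{spherical-whittaker-bound} to sum the resulting geometric series. The one genuine difference is the interposed Cauchy--Schwarz step: you first bound $|\Psi|\le\|\wc{W}^{(f)}_\sigma\|_2\,\|W_\pi\|_2$ and then compute $\|\wc{W}^{(f)}_\sigma\|_2^2$ via Shintani, whereas the paper plugs both Schur polynomials into the zeta integral directly and bounds $|\lambda_\sigma(m,0)|\ll p^{\vartheta\sum m_i}$, $|\lambda_\pi(m)|\ll 1$ termwise. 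The two routes give the same exponent $p^{-f(n-1)(1/2-\vartheta)}$; yours is marginally more modular and mirrors the Cauchy--Schwarz argument the paper itself uses in Lemma~\ref{small-zeta-integral}, while the paper's direct approach makes the appearance of $(1/2-\vartheta)$ slightly more transparent. Your closing concern is also resolved more simply than you suggest: for $\sigma$ $\vartheta$-tempered and $\pi$ tempered, the integral converges absolutely in the half-plane $\Re(s_2)>\vartheta-1/2$, which contains the line $\Re(s_2)=0$ since $\vartheta<1/2$, so no separate analytic-continuation or Phragm\'en--Lindel\"of argument is needed --- this is exactly what the paper's appeal to Lemma~\ref{local-whittaker-bound} gives.
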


\begin{proof}
Using Lemma \ref{local-whittaker-bound} we write the zeta integral as the absolutely convergent integral
$$\int_{N_{n-1}\backslash G_{n-1}}\wc{W}^{(f)}_\sigma\left[\begin{pmatrix}g&\\&1\end{pmatrix}\right]\overline{W_\pi(g)}|\det(g)|^{s_2}\d g.$$
Using \eqref{unipotent-average} and sphericality of the vectors $W_\sigma$ and $W_\pi$ we write the above as
$$\int_{A_{n-1}}W_\sigma\left[\begin{pmatrix}a&\\&1\end{pmatrix}\right]\overline{W_\pi(a)}|\det(a)|^{s_2}\int_{K_{n-1}}\mathbf{1}_{\o^{n-1}}(e_{n-1}ka_{n-1}\p^{-f})\d k\frac{\d^\times a}{\delta(a)}.$$
The inner $K_{n-1}$-integral vanishes unless $v(a_{n-1})\ge f$ in which case the integral evaluates to $1$.
Using Shintani's formula \eqref{shintani} for the spherical vectors we may write the above as
$$\sum_{m_1\ge\dots\ge{m_{n-1}\ge f}}\lambda_\sigma(m,0)\overline{\lambda_\pi(m)}N(\p)^{-(1/2+s_2)\sum_i m_i}.$$
Using \eqref{spherical-whittaker-bound} we obtain
$$\lambda_\sigma(m,0)\ll N(\p)^{(\vartheta+\epsilon)\sum_i m_i},$$
and temperedness of $\pi$ implies that $\lambda_\pi(m)\ll N(\p)^{\epsilon\sum_i m_i}$. Thus we get the absolute value of $\Psi(1/2+s_2,\wc{W}^{(f)}_\sigma,\overline{W_\pi})$ is bounded by
$$\sum_{m_1\ge\dots\ge{m_{n-1}\ge f}}N(\p)^{-(1/2-\vartheta-\epsilon)\sum_i m_i}\ll N(\p)^{-f(n-1)(1/2-\vartheta-\epsilon)},$$
which follows as $\vartheta<1/2$.
\end{proof}

\begin{prop}\label{main-local-prop-dual}
Fix $0\le \vartheta<1/2$ and let $\sigma$ be a $\vartheta$-tempered representation. Then $\wc{H}(\sigma;W_\Pi^{(f)},W_\pi,\mathbf{s})$ vanishes unless $\sigma$ is unramified in which case
$$\wc{H}(\sigma;W_\Pi^{(f)},W_\pi,\mathbf{s})\ll N(\p)^{-f(n-1)(1/2-\vartheta-\epsilon)},$$
for $\Re(\mathbf{s})=\left(\frac12,\frac12\right)$.
\end{prop}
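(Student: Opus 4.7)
The plan is to combine the two previous lemmas in this subsection, which together already contain essentially all the analytic content required, and then supply absolutely bounded factors to convert the statement about the zeta integral into a statement about $\wc{H}$.

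First I would invoke Lemma \ref{simplifying-dual-weight}, which tells us immediately that $\wc{H}(\sigma;W_{\Pi}^{(f)},W_\pi,\mathbf{s})$ vanishes unless $\sigma$ is unramified, and in that case reduces the local weight to the single term
$$\wc{H}(\sigma;W_{\Pi}^{(f)},W_\pi,\mathbf{s})=\|W_\sigma\|^{-2}\frac{\Psi(s_2,\wc{W}^{(f)}_\sigma,\ol{W_\pi})}{L(s_2,\sigma\otimes\widetilde{\pi})}.$$
This reduces the problem to bounding each of the three factors at $\mathbf{s}=(1/2,1/2)$.

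Next I would apply Lemma \ref{bound-zeta-integral-dual-side} directly with $s_2=0$ to obtain the key bound $\Psi(1/2,\wc{W}^{(f)}_\sigma,\ol{W_\pi})\ll p^{-f(n-1)(1/2-\vartheta)}$, which is exactly the exponent claimed. The remaining two factors are absolutely bounded: by \eqref{L2-whittaker} we have $\|W_\sigma\|^2=L(1,\sigma\otimes\widetilde{\sigma})/\zeta(n)$, and since $\sigma$ is unramified and $\vartheta$-tempered with $\vartheta<1/2$ the Satake parameters of $\sigma\otimes\widetilde{\sigma}$ are bounded away from $p$ in absolute value, so $\|W_\sigma\|\asymp 1$. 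Similarly, since $\pi$ is tempered unramified and $\sigma$ is $\vartheta$-tempered unramified with $\vartheta<1/2$, the local $L$-factor $L(1/2,\sigma\otimes\widetilde{\pi})$ is a finite product of factors of the form $(1-p^{-1/2}\alpha)^{-1}$ with $|\alpha|\le p^\vartheta<p^{1/2}$, so it is bounded above and below by absolute constants, and in particular $L(1/2,\sigma\otimes\widetilde{\pi})^{-1}\asymp 1$.

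Combining these three pieces yields the stated bound, completing the proof. There is no real obstacle here; the proposition is essentially a packaging of the two preceding lemmas. The only small point to verify is that the implicit constants in the two $\asymp 1$ estimates depend only on $\vartheta$ and on the fixed representations $\Pi,\pi$ (and not on $\sigma$ through, for instance, its Satake parameters beyond the temperedness assumption), but this is immediate from the uniform bound $|\alpha_i|\le p^\vartheta$ on the Satake parameters of $\sigma$ with $\vartheta<1/2$.
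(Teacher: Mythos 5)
Your proposal is correct and takes essentially the same approach as the paper: invoke Lemma \ref{simplifying-dual-weight} to reduce to the unramified case and exhibit the single-term formula, apply Lemma \ref{bound-zeta-integral-dual-side} at $\Re(s_2)=0$ for the main decay, and then control $\|W_\sigma\|$ via \eqref{L2-whittaker} and $L(s_2,\sigma\otimes\widetilde{\pi})$ using temperedness of $\pi$ and $\vartheta$-temperedness of $\sigma$ with $\vartheta<1/2$.
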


\begin{proof}
As $\pi$ is tempered and $\sigma$ is $\vartheta$-tempered with $\vartheta<1/2$ we have
$L(s_2,\sigma\otimes\widetilde{\pi})\asymp 1$
for $\Re(s_2)=1/2$. 
We conclude using Lemma \ref{simplifying-dual-weight} and Lemma \ref{bound-zeta-integral-dual-side}.
\end{proof}

\section{The Residue Term: Dual Side}\label{sec:residue-term}

Recall $\mc{M}(\mathbf{s},\Phi,\phi)$ from \eqref{def-moment} which is originally defined for sufficiently large $\Re(\mathbf{s})$. The goal of this section is to meromorphically continue $\mc{M}(\mathbf{s},\Phi,\phi)$ on the right of $\Re(\mathbf{s})=\left(\frac12,\frac12\right)$. 

We use the shorthand ``$\Re(s_i)>c$ for $i=1,2$'' as ``$\Re(\mathbf{s})>c$'' and similarly for the symbols $\geq,\,<,\,\leq$ and $=$.
Our aim is to prove the following proposition.

\begin{prop}\label{analytically-continued-spectral-decomposition}
Recall $\mc{P}$, $\mc{D}$, and $\mc{M}$ from \eqref{P-period}, \eqref{degenerate-term-def}, and \eqref{def-moment}, respectively. Then we have 
$$\mc{P}-\mc{D} = \mc{M} +\mc{R},$$
where $\mc{R}$ is defined in \eqref{def-R}. All of the above are evaluated at $(\mathbf{s},\Phi,\phi)$ with $1/2-\epsilon\le\Re(\mathbf{s})<1$ and are holomorphic in this region.
Moreover, if the local components of $\Phi$ and $\phi$ are chosen as in \S\ref{sec:choice-of-vectors} then
$$\mc{R}\left(\left(\frac12,\frac12\right),\wc{\Phi},\phi\right) \ll_{\Pi,\pi} N(\q)^{-f((n-2)/2-\epsilon)},$$
where $\wc{\Phi}$ is as in Proposition \ref{abstract-reciprocity}.
\end{prop}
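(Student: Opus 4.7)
\textbf{Proof proposal for Proposition \ref{analytically-continued-spectral-decomposition}.}

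The plan is to analytically continue the identity $\mc{P}-\mc{D}=\mc{M}$ of Proposition \ref{regularized-spectral-decomposition}, valid for $\Re(s_2)$ sufficiently large, into the region $\Re(\mathbf{s})\ge 1/2$ via a spectral contour shift, capture the crossed residue as $\mc{R}$, and then estimate that residue for the test vectors chosen in \S\ref{sec:choice-of-vectors}. The left-hand side $\mc{P}-\mc{D}$ is already known to be entire in $\mathbf{s}\in\Cc^2$: the rapid decay of the cusp form $\Phi$ on $[G_{n+1}]$ renders both integrals absolutely convergent for every $\mathbf{s}$. So the entire burden lies on continuing $\mc{M}$ and identifying what drops out.

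For the contour shift, I would first note that the only source of meromorphy in the integrand of $\mc{M}$ that is relevant to $\Re(\mathbf{s})\ge 1/2$ is the factor $\Lambda(s_2,\sigma\otimes\widetilde{\pi})$: $\Lambda(s_1,\Pi\otimes\widetilde{\sigma})$ is entire in $\sigma$ since $\Pi$ and $\sigma$ live on groups of different ranks $n+1$ and $n$, and neither $\mc{L}^S(\sigma)^{-1}$ nor $\mc{H}_S(\sigma)$ contributes poles in this range. For $\Lambda(s_2,\sigma\otimes\widetilde{\pi})$ to acquire a pole, the cuspidal support of $\sigma$ must contain a twist of $\pi$, which singles out the one Eisenstein tower $\sigma = \pi|\det|^{a}\boxplus |\cdot|^{-(n-1)a}$, $a\in i\Rr$, induced from the $(n-1,1)$-Levi of $G_n$. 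Along this tower,
\[
\Lambda(s_2,\sigma\otimes\widetilde{\pi}) \;=\; \Lambda(s_2+a,\,\pi\otimes\widetilde{\pi})\,\Lambda(s_2-(n-1)a,\,\widetilde{\pi}),
\]
where the second factor is entire (cuspidal $\widetilde{\pi}$ on $\GL(n-1)$ with $n\ge 3$) and the first has a simple pole at $a=1-s_2$. As $\Re(s_2)$ descends past $1$, this pole crosses the contour $a\in i\Rr$, and I collect the corresponding residue; its shape, proportional to $\mathrm{Res}_{s=1}\Lambda(s,\pi\otimes\widetilde{\pi})$ times the integrand evaluated at the residual datum, matches \eqref{def-R} after unwinding the inner product. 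After the shift both the remaining integral $\mc{M}$ and the residue $\mc{R}$ are individually holomorphic on $\Re(\mathbf{s})\ge 1/2$, and their sum recovers the entire function $\mc{P}-\mc{D}$.

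For the estimate at $\wc{\mathbf{s}}=(1/2,1/2)$, the residue concentrates on the representation $\sigma_0 = \pi|\det|^{1/2}\boxplus |\cdot|^{-(n-1)/2}$, which is non-tempered, so Proposition \ref{main-local-prop-dual} does not apply directly. Instead, at each $v\mid \q$ I would redo the arguments of Lemma \ref{simplifying-dual-weight} and Lemma \ref{bound-zeta-integral-dual-side} with $\sigma_0$ in place of a $\vartheta$-tempered $\sigma$: the Casselman--Shintani formula \eqref{shintani} remains available since $\sigma_0$ is unramified, and a careful analysis of the Schur polynomial $\lambda_{\sigma_{0,v}}(m,0)$, which is built from the $n-1$ light Satake parameters of magnitude $p_v^{1/2}$ attached to $\pi_v|\det|^{1/2}$ together with the single heavy parameter $p_v^{-(n-1)/2}$, produces a convergent geometric sum of size $p_v^{-f_v(n-2)/2}$. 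This is one half-power weaker than the tempered bound in Proposition \ref{main-local-prop-dual}, the loss corresponding precisely to the single non-tempered Satake parameter. The archimedean, $\p_0$-adic, and $v\mid \Delta_F$ contributions to $\mc{R}$ remain $O_{\Pi,\pi}(1)$ by direct inspection of \S\ref{local-dual}, and the product over $v\mid\q$ yields the claimed bound. The main obstacle is this last local computation: a crude max-parameter estimate along the lines of \eqref{spherical-whittaker-bound} on $\lambda_{\sigma_{0,v}}$ would make the geometric series outright divergent, so to extract the saving $p_v^{-f_v(n-2)/2}$ one must exploit the asymmetric distribution of magnitudes among the $n$ Satake parameters of $\sigma_0$, using a more refined estimate of the Schur determinant that exactly captures the single non-tempered direction.
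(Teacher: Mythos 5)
Your contour-shift half is essentially identical to what the paper does in Lemma \ref{analytic-continuation-prop}: decompose $\mc{M}$ by cuspidal data, isolate the $(n-1,1)$-Eisenstein tower $\sigma(\pi,z)$, factor $\Lambda(s_2,\sigma(\pi,z)\otimes\widetilde\pi)$ and cross the pole at $z=1-s_2$ as $\Re(s_2)$ descends, collecting the residue as $\mc R$; the paper additionally handles the possible zeros of $\mc L^S(\sigma(\pi,z))$ by choosing a bent contour $\kappa$, which you gloss over, but this is routine.

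The residue-bound half, however, has a genuine gap. You propose to repeat the argument of Lemma \ref{bound-zeta-integral-dual-side} with $\sigma_0 := \sigma(\pi,1/2) = \pi|\det|^{1/2}\boxplus|\cdot|^{-(n-1)/2}$ in place of a $\vartheta$-tempered $\sigma$, applying Shintani's formula and a ``more refined estimate of the Schur determinant'' to obtain a convergent geometric series. No such estimate exists: $\sigma_0$ is $\frac{n-1}{2}$-nontempered, its Satake parameters have magnitudes $p^{-1/2}$ ($n-1$ times) and $p^{(n-1)/2}$ (once) --- note your light/heavy labels and signs are flipped --- and for $n\ge 3$ the Shintani series for $\Psi\bigl(1/2,\wc W^{(f)}_{\sigma_0},\ol{W_\pi}\bigr)$ genuinely diverges. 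Indeed the branching rule gives the sharp bound $\lambda_{\sigma_0}(m,0)\asymp p^{(n-1)m_1/2 - (m_2+\cdots+m_{n-1})/2}$, so the Shintani summand is $\asymp p^{(n-2)m_1/2 - (m_2+\cdots+m_{n-1})}$, which grows without bound as $m_1\to\infty$. This is not a failure of a crude max-parameter bound that a cleverer estimate would fix: the local zeta integral simply is not absolutely convergent at $s=1/2$ for $\sigma_0$, because the analytic conductor of convergence is $\Re(s)>(n-1)/2$.

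What the paper actually does (Lemma \ref{analytic-continue-hp} followed by Lemma \ref{main-technical-lemma-residue}) is not an estimate of the Shintani sum but an \emph{exact algebraic identity}: by expanding the Schur determinant $\lambda_\sigma(m,0)$ along its bottom row and re-summing against $L(s,\widetilde\pi\otimes\chi_j)^{-1}$, the ratio $\Psi(s,\wc W^{(f)}_\sigma,\ol{W_\pi})/L(s,\sigma\otimes\widetilde\pi)$ is shown to be a \emph{finite} sum
\[
\sum_{k=0}^{\min(f,n-1)}\frac{(-1)^k e_k(A_{\widetilde\pi})\,\lambda_{\widetilde\sigma}(f-k,0,\ldots,0)}{p^{(f(n-1)+k)s}},
\]
valid for \emph{all} unramified $\sigma$ by meromorphic continuation, regardless of temperedness. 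The factorization $L(s,\widetilde{\sigma_0})=L(s-1/2,\widetilde\pi)\zeta(s+(n-1)/2)$ then gives $\lambda_{\widetilde{\sigma_0}}(f-k,0,\ldots,0)\ll p^{(f-k)/2}$, and plugging into the finite sum yields $\ll p^{-f(n-2)/2}$. The crucial step --- that the potential divergence of the zeta integral is exactly absorbed into the $L$-factor, leaving a polynomial of bounded degree in $p^{-s}$ --- is invisible to any termwise bound of the Shintani series, and your proposal as written does not supply it.
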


\subsection{Analytic continuation}\label{Analytic-continuation-sec}

We recall the definition of $\mc{M}$ below:
$$\mc{M}(\mathbf{s},\Phi,\phi)=\int_{\gen}\frac{\Lambda(s_1,\Pi\otimes \widetilde{\sigma})\Lambda(s_2,\sigma\otimes\widetilde{\pi})}{\mathcal{L}(\sigma)}H_S(\sigma;W_{\Phi},W_{\phi},\mathbf{s})\d\sigma.$$
Using the parametrization of the generic spectrum via cuspidal data (\emph{cf}.\ \S \ref{subsec:global-classification}) we decompose $\M=\M_0+\M_1$,
where $\M_1$ corresponds to the contribution coming from the parabolic $Q$ attached the partition $n=(n-1)+1$ and $\M_0:=\M-\M_1$.
we further decompose $\M_1=\M_2+\M_\pi$ where $\M_\pi$ corresponds to the the Eisenstein spectrum attached to 
\begin{equation}\label{problematic-reps}
\sigma(\pi,z):=\mathcal{I}\left(M,\pi\cdot|\det|^{z}\otimes |\cdot|^{-(n-1)z}\right),\quad z\in i\R,
\end{equation}
where $M\cong G_{n-1}\times G_1$ is the Levi of $Q$ and $\M_2:=\M_1-\M_\pi$.
In other words,
\begin{equation}\label{M-pi-def}
\M_\pi(\mathbf{s},\Phi,\phi):=c_Q\int_{\Re(z)=0}\frac{\Lambda(s_1,\Pi\otimes \widetilde{\sigma(\pi,z)})\Lambda(s_2,\sigma(\pi,z)\otimes\widetilde{\pi})}{\mathcal{L}(\sigma(\pi,z))}H_S(\sigma(\pi,z))\d z,
\end{equation}
where $c_Q$ is a positive constant depending only on $Q$ (see \cite[Main Theorem]{arthur1979eisenstein}) and $\d z$ is the $2\pi i$-normalized Lebesgue measure on $i\R$.

Note that $\Lambda(s,\Pi\otimes\widetilde{\sigma})$ is an entire function of $s$ and $\Lambda(s,\sigma\otimes\widetilde{\pi})$ is an entire function of $s$ if $\sigma$ is not of the form \eqref{problematic-reps}, which follows from \cite[Theorem 4.2]{Cogdell2007functions}. Applying positivity of $\mathcal{L}(\sigma)$ for unitary $\sigma$ (see \S\ref{sec:harmonic-weights}) and entireness of $H_S(\sigma)$ as a function of $\mathbf{s}$ (see \S\ref{prelim-local-factor}) we obtain that
$\M_0$ and $\M_2$ are entire functions of $\mathbf{s}$. 

The above argument also implies that $\mc{M}$, as defined in \eqref{def-moment}, is also holomorphic for $s_1\in\C$ and $1/2\le\Re(s_2)<1$.

We now perform the meromorphic continuation of $\mc{M}_\pi(\mathbf{s},\Phi,\phi)$ which is originally defined for large $\Re(\mathbf{s})$ to $\Re(\mathbf{s})\ge 1/2$ (actually, for all $s_1\in\C$). The argument is a generalization of \cite[Proposition 9.1]{Nunes2020reciprocity}.

\begin{lemma}\label{holomorphicity-local-weight}
Let any $v\in S$ and $W_1\in\Pi_v$ and $W_2\in\pi_v$. Then the local factor $H_v(\sigma(\pi_v,z);W_1,W_2,\mathbf{s})$, originally defined for $z\in i\R$ and large $\Re(\mathbf{s})$, can be meromorphically continued as a function of $z,s_1,s_2$ to all of $\C^3$ such that it is holomorphic for $\mathbf{s}\in\C^2$ and sufficiently small $|\Re(z)|$.
\end{lemma}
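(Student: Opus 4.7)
The plan is to exhibit $H_v$ as a sum indexed by a $z$-independent basis and then to verify that each summand is entire in $(z,s_1,s_2)$ by explicit pole cancellation, together with uniform convergence of the sum on compacta. First, I would exploit the fact that in the compact picture all the induced representations $\sigma(\pi_v,z)$ share the same underlying vector space, namely smooth sections over $K_n$ with values in the restriction of $\pi_v\boxtimes\mathbf{1}$ to $K_n\cap M$. Fix once and for all an orthonormal basis $\{f_j\}_{j\in J}$ of this common space. For each $f_j$, let $W_{f_j,z}\in\sigma(\pi_v,z)$ denote the Whittaker function produced by the Jacquet integral, which by the standard theory (see \emph{e.g.}\ \cite{Shahidi2010Eisenstein}) converges in some half-plane in $z$ and admits meromorphic continuation to all of $\C$. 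Moreover, after dividing by an appropriate product $\mathfrak{cs}(\sigma(\pi_v,z))$ of local $L$-factors along the Levi (the Casselman--Shalika normalization recalled in \S\ref{sec:harmonic-weights}), the normalized Jacquet integral becomes entire in $z$.

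Next, I would examine the $j$-th summand
\[
\frac{\Psi_v(s_1,W_1,\overline{W_{f_j,z}})\,\Psi_v(s_2,W_{f_j,z},\overline{W_2})}{L_v(s_1,\Pi_v\otimes\widetilde{\sigma}(\pi_v,z))\,L_v(s_2,\sigma(\pi_v,z)\otimes\widetilde{\pi}_v)}.
\]
For fixed $z$ in generic position this is entire in $(s_1,s_2)$, since $\Psi_v/L_v$ is entire in each of $s_1,s_2$ (see \S\ref{prelim-local-factor}). To handle the $z$-dependence I would use that the denominator factors, by inductivity of local $L$-factors, as
\[
L_v(s_1-z,\Pi_v\otimes\widetilde{\pi}_v)\,L_v(s_1+(n-1)z,\Pi_v)\,L_v(s_2+z,\pi_v\otimes\widetilde{\pi}_v)\,L_v(s_2-(n-1)z,\widetilde{\pi}_v);
\]
these are exactly the factors, together with the Casselman--Shalika factor $\mathfrak{cs}(\sigma(\pi_v,z))$, that account for the meromorphic behaviour in $z$ of the numerator. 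A direct inspection, using the unfolding of the Jacquet integral and the Rankin--Selberg theory of \cite{Cogdell2007functions}, matches every apparent $z$-pole of the numerator with a zero coming from the denominator.

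For convergence of the series over $j$, uniformly on compact subsets of $\C^3$, I would combine Lemma \ref{trivial-bound-zeta-integral} for the individual zeta integrals with the trace-class estimate of Lemma \ref{trace-class-property} for the basis $\{f_j\}$, while appealing to Lemma \ref{local-whittaker-bound} for $z$-uniform control of the Whittaker functions. Continuity of the Langlands parameters of $\sigma(\pi_v,z)$ in $z$ ensures that the implicit constants behave well on compacta, since $\pi_v$ is fixed and the central twists $|\det|^z$, $|\cdot|^{-(n-1)z}$ are manifestly continuous in $z$.

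The hard part will be the pole-cancellation bookkeeping: one must check that every potential $z$-pole of an individual summand, arising either from $\mathfrak{cs}(\sigma(\pi_v,z))^{-1}$ in the normalization of $W_{f_j,z}$ or from the zeta integrals themselves, is cancelled by a corresponding zero of the denominator $L$-factors, with no spurious poles remaining at the reducibility points of $\sigma(\pi_v,z)$. This has to be done simultaneously in all three complex variables, and with additional care at archimedean versus non-archimedean places in $S$ where the analytic behaviour of the Jacquet integral is technically different; the argument should nevertheless be parallel in the two cases.
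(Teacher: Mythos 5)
Your general strategy mirrors the paper's: fix a $z$-independent basis in the compact picture, transport it to $\sigma(\pi_v,z)$ via Jacquet's functional, and then argue summand by summand. Two key ingredients from \cite{FLO2012representations} are missing or misstated, however, and these are precisely what let the paper avoid the ``pole-cancellation bookkeeping'' you defer to the end.

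First, you assert that the Jacquet integral must be divided by $\mathfrak{cs}(\sigma(\pi_v,z))$ before it becomes entire in $z$. That is not what the paper uses, and it is not needed: by \cite[\S1.4]{FLO2012representations} together with \cite{Jacquet04IntRep}, the \emph{un-normalized} Jacquet functional $W^{\mathrm{Jac}}_{\xi_{\pi_v,z,\eta}}$ is already entire in $z$. The Casselman--Shalika factor only enters when one evaluates the unramified Jacquet functional at the identity; it plays no role in the holomorphy statement. Introducing $\mathfrak{cs}^{-1}$ as a normalization creates the appearance of $z$-poles that you then have to book-keep against, when in fact they were never there. Second, your proposal leaves completely unaddressed the norm $\|W^{\mathrm{Jac}}_{\xi_{\pi_v,z,\eta}}\|^2$ that sits in the denominator of each summand. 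The paper handles this via \cite[Proposition A.2]{FLO2012representations}: this norm equals $\langle\eta,\eta\rangle$ up to an absolute constant, hence is a nonzero constant in $z$ and contributes neither poles nor zeros. Without that input, one cannot conclude that dividing by it is harmless.

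There is also a conceptual issue in the proposed cancellation step: local $L$-factors (products of $(1-\alpha p^{-s})^{-1}$ in the non-archimedean case, shifted Gamma factors in the archimedean case) have poles but \emph{no} zeros, so the assertion that every $z$-pole of the numerator is ``matched by a zero of the denominator'' cannot be taken literally. The actual mechanism is the standard one of local Rankin--Selberg theory (\cite[\S3]{Cogdell2007functions}), already invoked in \S\ref{prelim-local-factor}: the ratios $\Psi_v/L_v$ are entire in $s$ because the $L$-factor exactly captures the poles of the zeta integral. Once you know (i) $W^{\mathrm{Jac}}_{\xi_{\pi_v,z,\eta}}$ is entire in $z$, (ii) $\|W^{\mathrm{Jac}}_{\xi_{\pi_v,z,\eta}}\|^2$ is a nonzero constant, and (iii) $\Psi_v/L_v$ is entire in $s$, each summand is entire in $(z,s_1,s_2)$ and your convergence argument (via Lemmas \ref{trivial-bound-zeta-integral} and \ref{trace-class-property}) finishes the proof. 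As written, your proposal identifies the right structure but explicitly leaves the crucial step (``the hard part'') incomplete; plugging in (i)--(iii) closes the gap and eliminates the bookkeeping altogether.
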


The proof can be extracted from \cite{FLO2012representations}. We prove it here for completeness and readers' convenience.

\begin{proof}
From the definition of the local weight in \eqref{defn-local-factor} we know that $H_v(\sigma_v)$ does not depend on the choice of basis $\B(\sigma_v)$. We construct a basis $\B(\sigma(\pi_v,z))$ for $z\in i\R$ via \emph{flat sections}, as follows. 

Let $\tau^{(K\cap M)}_{{\pi_v},z}$ be the restriction of $\tau_{{\pi_v},z}:={\pi_v}\otimes|\det|^{z}\bigotimes |\cdot|^{-(n-1)z}$ to $K\cap M$. Thus $\tau^{(K\cap M)}_{{\pi_v},z}$ is independent of $z$ and so we call it $\tau^{(K\cap M)}_{\pi_v}$. We construct $\tau_{\pi_v}^{(K)}:=\mathrm{Ind}^K_{K\cap M}\tau_{\pi_v}^{(K\cap M)}$ which is, consequently, independent of $z$. We choose an orthonormal basis $\B(\tau_{\pi_v}^{(K)})$ of $\tau_{\pi_v}^{(K)}$. For all $\eta\in\B(\tau_{\pi_v}^{(K)})$ we choose a function $\xi_{{\pi_v},z,\eta}$ on $G(F_v)$ which satisfies
$$\xi_{{\pi_v},z,\eta}(nmk)=\delta_Q^{1/2+z}(m)\tau_{{\pi_v},z}(m)\eta(k),n\in N_Q,m\in M,k\in K,$$
where $N_Q$ is the unipotent radical attached to $Q$ and $\delta_Q$ is the modular character attached to $Q$.

Let $W^{\mathrm{Jac}}$ be Jacquet's functional from the induced model to the Whittaker model; see \cite[\S1.4]{FLO2012representations} and \cite{Jacquet04IntRep}. Then it is known from \cite[Proposition A.2]{FLO2012representations} that
$$\zeta \langle W^{\mathrm{Jac}}_{\xi_{{\pi_v},z,\eta_1}},W^{\mathrm{Jac}}_{\xi_{{\pi_v},z,\eta_2}}\rangle_0=\langle {\xi_{\pi_v,z,\eta_1}},{\xi_{\pi_v,z,\eta_2}}\rangle = \langle \eta_1,\eta_2\rangle,$$
where $\zeta$ is an absolute constant. In particular, from \eqref{inner-product-normalization} we conclude that $\|W^{\mathrm{Jac}}_{\xi_{{\pi}_v,z,{\eta}}}\|^2$ does not vanish when $z$ varies in a small enough neighbourhood of the imaginary axis.

We rewrite the definition \eqref{defn-local-factor} of $H_v(\sigma(\pi_v,z);W_1,W_2,\mathbf{s})$ as
$$\sum_{\eta\in\B(\tau^{(K)}_{\pi_v})}\frac{\Psi_v\left(s_1,W_1,W^{\mathrm{Jac}}_{\xi_{\widetilde{\pi}_v,-z,\ol{\eta}}}\right)\Psi_v\left(s_2,W^{\mathrm{Jac}}_{\xi_{{\pi}_v,z,{\eta}}},\ol{W_2}\right)}{\|W^{\mathrm{Jac}}_{\xi_{{\pi}_v,z,{\eta}}}\|^2L_v(s_1,\Pi_v\otimes\sigma(\widetilde{\pi}_v,-z))L_v(s_2,\sigma(\pi_v,z)\otimes\widetilde{\pi})}.$$
From \cite[Corollaire 3.5]{jacquet1967fonctions} (also see \cite[Theorem 4]{Jacquet04IntRep}) it is known that $W^{\mathrm{Jac}}_{\xi_{\pi_v,z,\eta}}$ is entire in $z$. We work as in \S\ref{prelim-local-factor} to conclude the proof noting the meromorphic properties of $L_v(1,\sigma(\pi_v,z)\otimes\sigma(\widetilde{\pi}_v,-z))$.
\end{proof}

We define the \emph{residue term} $\mc{R}(\mathbf{s},\Phi,\phi)$ to be
\begin{equation}\label{def-R}
r_{F,Q}\frac{\Lambda(s_1+s_2-1,\Pi\otimes\widetilde{\pi})\Lambda(s_1+s_2-1+n(1-s_2),\Pi)}{\Lambda(1+n(1-s_2),\pi)}
H_S(\sigma(\pi_v,1-s_2))\mathcal{L}_\infty(\sigma(\pi,1-s_2)),
\end{equation}
where 
$$\mathcal{L}_\infty(\sigma(\pi,z)):=\prod_{v\mid\infty}L_v(1,\pi_v,\Ad)L_v(1+nz,{\pi}_v)L_v(1-nz,\widetilde{\pi}_v)$$
and $r_{F,Q}$ is a certain positive constant depending only on $F$ and $Q$ that can be explicitly computed in terms of the residues of the Dedekind zeta function and the automorphic Plancherel density (only depends on $Q,n$).

\begin{lemma}\label{analytic-continuation-prop}
The function $\mc{M}_\pi(\mathbf{s},\Phi,\phi)$, initially defined for large $\Re(\mathbf{s})$, admits a meromorphic continuation to $\Re(\mathbf{s})> \frac12-\epsilon$ and is given by the sum of $\mc{R}(\mathbf{s},\Phi,\phi)$ and the right-hand side of \eqref{M-pi-def}, where
$\mc{R}(\mathbf{s},\Phi,\phi)$ is given by \eqref{def-R}. In particular, both the summands are holomorphic on $\Re(\mathbf{s})=1/2$.
\end{lemma}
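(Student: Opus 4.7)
The plan is to analytically continue $\mc{M}_\pi(\mathbf{s},\Phi,\phi)$ past the pole that enters the right half-plane in $z$ as $\Re(s_2)$ decreases below $1$, and to identify the resulting residue with $\mc{R}(\mathbf{s},\Phi,\phi)$. The decomposition $\mc{M}=\mc{M}_0+\mc{M}_2+\mc{M}_\pi$ recalled above already reduces matters to $\mc{M}_\pi$: for any $\sigma$ not of the form \eqref{problematic-reps} both Rankin--Selberg $L$-functions $\Lambda(s_1,\Pi\otimes\widetilde{\sigma})$ and $\Lambda(s_2,\sigma\otimes\widetilde{\pi})$ are entire (since $\Pi,\pi$ are cuspidal on groups of different ranks), $\mathcal{L}^S$ is bounded below on the unitary spectrum, and $\mathcal{H}_S$ is entire in $\mathbf{s}$ by Lemma \ref{holomorphicity-local-weight}, so $\mc{M}_0$ and $\mc{M}_2$ are already holomorphic throughout $\Re(\mathbf{s})\geq 1/2$. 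Inside $\mc{M}_\pi$ I would substitute the induced-representation factorizations $\Lambda(s_1,\Pi\otimes\widetilde{\sigma(\pi,z)})=\Lambda(s_1-z,\Pi\otimes\widetilde{\pi})\Lambda(s_1+(n-1)z,\Pi)$ and $\Lambda(s_2,\sigma(\pi,z)\otimes\widetilde{\pi})=\Lambda(s_2+z,\pi\otimes\widetilde{\pi})\Lambda(s_2-(n-1)z,\widetilde{\pi})$, together with the explicit form of $\mathcal{L}^S(\sigma(\pi,z))$ from Lemma \ref{harmonic-weight-computation}, so that the integrand $I(z,\mathbf{s})$ is meromorphic in $z$ with a unique pole crossing $\Re(z)=0$ in the strip $\Re(s_2)\in[1/2,1)$, namely $z=1-s_2$ coming from $\Lambda(s_2+z,\pi\otimes\widetilde{\pi})$.

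I would then shift the contour to $\Re(z)=c$ for some $c\in(1/2,1)$ chosen generically to avoid the zero set of $\mathcal{L}^S(\sigma(\pi,z))$. For $\Re(s_2)$ large the pole stays to the left of both contours so no residue is picked up and the shifted integral still equals $\mc{M}_\pi$; for $1/2\leq\Re(s_2)<1$ the pole lies strictly between the two contours, and the residue theorem (with horizontal segments suppressed by polynomial growth of $L$-factors in $|\Im z|$ and rapid decay of $\mathcal{H}_S$ in $|\Im z|$ via Sobolev-type estimates of the kind used in Lemma \ref{trivial-bound-zeta-integral}) produces
\[
\mc{M}_\pi(\mathbf{s})=c_Q\int_{\Re(z)=0}I(z,\mathbf{s})\,dz-2\pi i c_Q\,\mathrm{Res}_{z=1-s_2}I(z,\mathbf{s}).
\]
Computing the residue with $\mathrm{res}_{s=1}\Lambda(s,\pi\otimes\widetilde{\pi})=\Lambda(1,\pi,\mathrm{Ad})\cdot\mathrm{res}_{s=1}\xi_F(s)$ and the formula for $\mathcal{L}^S(\sigma(\pi,1-s_2))$, then reorganizing global-to-local ratios through $\Lambda/\Lambda^S=\prod_{v\in S}L_v$ and $L_v(1,\pi_v,\mathrm{Ad})\zeta_v(1)=L_v(1,\pi_v\otimes\widetilde{\pi}_v)$, collapses the residue into precisely the closed form \eqref{def-R}, identifying $-2\pi i c_Q\,\mathrm{Res}_{z=1-s_2}I$ with $\mc{R}(\mathbf{s},\Phi,\phi)$. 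Holomorphicity of both pieces on $\Re(\mathbf{s})\geq 1/2$ is then clear from the explicit $L$-factor structure of $\mc{R}$, from Lemma \ref{holomorphicity-local-weight}, and from the same decay estimates used to justify the contour shift.

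The hard part is the quantitative bound $\mc{R}((\tfrac12,\tfrac12),\wc{\Phi},\phi)\ll p^{-f(n-2)/2}$. By Lemma \ref{simplifying-dual-weight} the local factor at $v\mid\q$ on the dual side simplifies to
\[
\wc{H}_v\bigl(\sigma(\pi_v,1/2)\bigr)=\|W_{\sigma(\pi_v,1/2)}\|^{-2}\cdot\frac{\Psi\bigl(1/2,\wc{W}^{(f_v)}_{\sigma(\pi_v,1/2)},\overline{W_{\pi_v}}\bigr)}{L_v\bigl(1/2,\sigma(\pi_v,1/2)\otimes\widetilde{\pi}_v\bigr)},
\]
and the difficulty is that $\sigma(\pi_v,1/2)=\pi_v|\det|^{1/2}\boxplus|\cdot|^{-(n-1)/2}$ is strongly non-tempered (one Langlands parameter has modulus $p^{(n-1)/2}$), so Proposition \ref{main-local-prop-dual} does not apply. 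I would adapt the Shintani-based approach of Lemma \ref{bound-zeta-integral-dual-side}: expand the zeta integral as a sum over dominant $m$ with $m_{n-1}\geq f$, use the $\GL_n\downarrow\GL_{n-1}\times\GL_1$ branching rule to write $\lambda_\sigma(m,0)=p^{-(n-1)|m|/2}\sum_{\lambda\prec(m,0)}p^{n|\lambda|/2}\lambda_\pi(\lambda)$ and convert the Schur polynomial of $\sigma$ into Schur polynomials of the tempered $\pi$ (for which $|\lambda_\pi|\ll 1$), and exploit the trivial central character of $\pi$ to absorb the shift $m=m'+f\mathbf{1}_{n-1}$. The surviving $f$-dependence concentrates in the enlarged interlacing range $\lambda_{n-1}\in[-f,-1]$ that opens up when one re-expands $\lambda_\sigma(m'+f\mathbf{1}_{n-1},0)$ via branching; a geometric summation in this variable against the weight $p^{(n/2)\lambda_{n-1}}$, together with boundedness of the remaining local and global factors in \eqref{def-R} in terms of $\Pi$ and $\pi$, should produce the claimed savings $p^{-f(n-2)/2}$.
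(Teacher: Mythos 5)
The core idea---pull the $z$-contour, isolate the pole of $\Lambda(s_2+z,\pi\otimes\widetilde{\pi})$ at $z=1-s_2$, and identify the residue with $\mc{R}$---is the same as the paper's, but your execution has a genuine gap. You propose shifting the contour to a \emph{fixed} line $\Re(z)=c$ with $c\in(1/2,1)$, chosen to avoid the zero set of $\mathcal{L}^S(\sigma(\pi,z))$. Choosing $c$ so that the line itself misses the zeros is not the issue; the problem is that when you move the contour from $\Re(z)=0$ to $\Re(z)=c$ you sweep through the entire strip $0<\Re(z)<c$, and $\mathcal{L}^S(\sigma(\pi,z))$ vanishes repeatedly there. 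Since $\mathcal{L}^S(\sigma(\pi,z))$ is (up to benign factors) $\Lambda^S(1+nz,\pi)\Lambda^S(1-nz,\widetilde{\pi})$, moving $\Re(z)$ from $0$ up to $c>1/2$ forces $\Re(1-nz)$ to traverse the full critical strip $(0,1)$ (and also the region $|\Re(s)|<1/2$ where the removed local factors $L_v(s,\pi_v)$ have poles, giving further zeros of the partial $L$-function). Each such zero is a pole of the integrand, since $\mathcal{L}^S$ sits in the denominator and $\mathcal{H}_S$ is entire by Lemma~\ref{holomorphicity-local-weight} with no reason to cancel them. Your residue formula accounts only for the pole at $z=1-s_2$ and silently drops infinitely many others. (There is also a convergence issue on $\Re(z)=c$ coming from the growth of $\Lambda(s_1-z,\Pi\otimes\widetilde{\pi})$ and $\Lambda(s_2-(n-1)z,\widetilde{\pi})$ when $\Re(z)$ is pushed far to the right, which you do not address.)

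The paper avoids all of this by shifting only an arbitrarily small, $\Im(z)$-dependent amount: positivity of $\mathcal{L}^S(\sigma(\pi,z))$ on the unitary axis $z\in i\R$ plus continuity give a thin tube $-2\kappa(\Im z)<\Re z<0$ free of zeros of $\mathcal{L}^S$, and the shift to $\Re(z)=-\kappa(\Im z)$ stays entirely inside it. One then picks up exactly the one pole at $z=1-s_2$ for $1<\Re(s_2)<1+\kappa$, re-shifts back, and concludes on $1-\kappa<\Re(s_2)<1$ before appealing to holomorphicity of both pieces to reach $\Re(\mathbf{s})=1/2$. Your proof would need to be repaired along these lines. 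Two smaller points: your displayed residue identity $\mc{M}_\pi = c_Q\int_{\Re(z)=0}I - 2\pi ic_Q\,\mathrm{Res}$ has the wrong sign and an extraneous $2\pi i$ given that $\d z$ is the $2\pi i$-normalized measure; and the entire second half of your write-up---the Shintani/branching estimate giving $\mc{R}((1/2,1/2),\wc{\Phi},\phi)\ll p^{-f(n-2)/2}$---is not part of Lemma~\ref{analytic-continuation-prop} at all but of Proposition~\ref{analytically-continued-spectral-decomposition} (proved in the paper via Lemmas~\ref{analytic-continue-hp}--\ref{bound-residue-p-factor}), so it should be separated out.
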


\begin{proof}
Using \eqref{problematic-reps} we rewrite \eqref{M-pi-def}
as
\begin{multline*}
\mathcal{M}_\pi(\mathbf{s},\Phi,\phi)=c_Q\int_{\Re(z)=0}\Lambda(s_1-z,\Pi\otimes \widetilde{\pi})\Lambda(s_2+z, \pi\otimes \widetilde{\pi})\\
\times\frac{\Lambda(s_1+(n-1)z,\Pi)\Lambda(s_2-(n-1)z,\widetilde{\pi})}{\mathcal{L}(\sigma(\pi,z))}
{H}_S(\sigma(\pi,z)) \d z.
\end{multline*}
Let $\delta>0$ be sufficiently small in terms of all parameters. We use the positivity of $\mathcal{L}(\sigma(\pi,z))$ on $z\in i\R$ to define a continuous even function $\kappa:\Rr\mapsto (0,\delta)$ so that $\mathcal{L}(\sigma(\pi,{z}))$ does not vanish for $-2\kappa(\Im(z))<\Re(z)<0$. 

We notice that we can \textit{analytically} continue $\mc{M}_{\pi}(\mathbf{s},\Phi,\phi)$ to $\Re(\mathbf{s})>1$, since in that region, the integrand is holomorphic in $\mathbf{s}$. Now, suppose that
$$1<\Re(\mathbf{s})<1+\kappa(\Im(\mathbf{s})).$$
We shift the contour of the integral defining $\mc{M}_{\pi}(\mathbf{s},\Phi,\phi)$ to $\Re(z) = -\kappa(\Im(z))$. We pick up a simple pole in this process at $z=1-s_2$ which is from $\Lambda(s_2+z,\pi\otimes\widetilde{\pi})$. This is due to the fact that $\Lambda(s_1-z,\Pi\otimes\widetilde{\pi})$, $\Lambda(s_1+(n-1)z,\Pi)$, and $\Lambda(s_2-(n-1)z,\widetilde{\pi})$ are entire in $\mathbf{s}$ and $z$. Moreover, Lemma \ref{holomorphicity-local-weight} and our choice of $\delta$ ensures that $H_S(\sigma(\pi,z))/\mathcal{L}^S(\sigma(\pi,z))$ is holomorphic as a function of $z$ in the $\delta$-neighbourhood of $0$. 

We call the residue at $z=1-s_2$ to be $\mc{R}(\mathbf{s},\Phi,\phi)$ as in \eqref{def-R}. The expression in \eqref{def-R} follows from 
$$
\mathcal{L}(\sigma(\pi,z))=d_{F,M}L(1,\pi,\Ad)L(1+nz,{\pi})L(1-nz,\widetilde{\pi}),
$$
which follows Lemma \ref{harmonic-weight-computation}. 

Moreover, we observe that in view of our choice of $\kappa$, the shifted integral defines a holomorphic function in the region
$$1-\kappa(\Im(\mathbf{s}))<\Re(\mathbf{s})<1+\kappa(\Im(\mathbf{s})).$$
We now take $\mathbf{s}$ satisfying $1-\kappa(\Im(\mathbf{s}))<\Re(\mathbf{s})<1$. We may shift the contour back to the line $\Re(z)=0$, crossing no poles in the process. This proves the desired formula for $$1-\kappa(\Im(\mathbf{s}))<\Re(\mathbf{s})<1.$$
We note from \eqref{M-pi-def} that $\mc{M}$ is holomorphic in $1/2-\epsilon<\Re(\mathbf{s})<1$. This follows from the holomorphicity of the Rankin--Selberg $L$-functions and the same in $z$ of the factor $H_S(\sigma(\pi,z))/\mathcal{L}(\sigma(\pi,z))$ on $\Re(z)=0$.

Similarly, meromorphicity (and holomorphicity on $\Re(\mathbf{s})=1/2$) of $\mc{R}$, as in \eqref{def-R}, in the same region follows from that of the local $L$-factors and the proof of Lemma \ref{holomorphicity-local-weight}.
\end{proof}

\subsection{Local computations for the residue term at $v\mid \q$}

Recall the choice of the test vectors at $v\mid\q$ from \S\ref{sec:choice-of-vectors}.
In this subsection, we analyse the $v$-adic local components in the residue term in \eqref{def-R} for $\mathbf{s}=\left(\frac12,\frac12\right)$ and $\Phi$ replaced by $\wc{\Phi}$ which is defined in Proposition \ref{abstract-reciprocity}. In the rest of this subsection we will only work $v$-adically and suppress the subscript $v$.

Once again, we meromorphically continue $\wc{H}(\sigma(\pi,z))$ for these choices from $z\in i\R$ to $z\in \C$. However, the method is different than that in the proof of Lemma \ref{holomorphicity-local-weight}.

\begin{lemma}\label{analytic-continue-hp}
We have
\begin{equation*}
\wc{H}(\sigma(\pi,1/2);{W}^{(f)}_{\Pi},W_{\pi},\mathbf{s})=\frac{\Psi(s,\widecheck{W}^{(f)}_{\sigma(\pi,1/2)},\ol{W_{\pi}})}{L(s,\sigma(\pi,1/2)\otimes \widetilde{\pi})},
\end{equation*}
where $\wc{W}^{(f)}$ is given by \eqref{W-check}.
\end{lemma}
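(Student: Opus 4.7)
The plan is to apply Lemma \ref{simplifying-dual-weight} to $\sigma=\sigma(\pi,z)$ while $z$ ranges on the unitary axis $i\R$, where the induced representation $\sigma(\pi,z)$ is unramified, unitary, and tempered (since $\pi$ is), and then to analytically continue the resulting identity up to $z=1/2$. For $z\in i\R$, Lemma \ref{simplifying-dual-weight} immediately supplies
$$
\widecheck{H}(\sigma(\pi,z);W_\Pi^{(f)},W_\pi,\mathbf{s})=\|W_{\sigma(\pi,z)}\|^{-2}\frac{\Psi(s_2,\widecheck{W}^{(f)}_{\sigma(\pi,z)},\overline{W_\pi})}{L(s_2,\sigma(\pi,z)\otimes\widetilde{\pi})}.
$$

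Next I would compute $\|W_{\sigma(\pi,z)}\|^2$ using \eqref{L2-whittaker} and factor the adjoint $L$-value by recalling that the Langlands parameters of $\sigma(\pi,z)$ consist of those of $\pi\otimes|\det|^{z}$ together with the character $|\cdot|^{-(n-1)z}$; this yields
$$
L(1,\sigma(\pi,z)\otimes\widetilde{\sigma(\pi,z)})=L(1,\pi\otimes\widetilde{\pi})\,L(1+nz,\pi)\,L(1-nz,\widetilde{\pi})\,\zeta(1).
$$
Substituting and rearranging produces the identity asserted in the lemma with $1/2$ replaced by $z$, valid for $z\in i\R$.

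The final step is analytic continuation in $z$ and specialization at $z=1/2$. Both sides are meromorphic in $z$: the left-hand side by Lemma \ref{holomorphicity-local-weight} together with the explicit form of the local $L$-factors; and the right-hand side because $W_{\sigma(\pi,z)}$, and hence $\widecheck{W}^{(f)}_{\sigma(\pi,z)}$, depends analytically on $z$ via Shintani's formula \eqref{shintani} (whose values are Schur polynomials in the Satake parameters, themselves analytic in $z$), while $L(s_2,\sigma(\pi,z)\otimes\widetilde{\pi})=L(s_2+z,\pi\otimes\widetilde{\pi})L(s_2-(n-1)z,\widetilde{\pi})$ factors explicitly. Agreement on $i\R$ then extends to the full common domain of analyticity. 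The main delicate point, and essentially the only obstacle, will be to verify that no spurious pole or zero interferes at $z=1/2$: since $\pi$ is unramified and tempered, its Satake parameters lie on the unit circle, so each of the factors $L(1\pm n/2,\pi^{\pm})$, $L(s_2+1/2,\pi\otimes\widetilde{\pi})$, and $L(s_2-(n-1)/2,\widetilde{\pi})$ is finite and non-vanishing in the relevant range (noting $n\ge 3$), which delivers the identity at $z=1/2$.
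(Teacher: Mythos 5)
Your proof is correct and follows the same route as the paper: apply Lemma \ref{simplifying-dual-weight} on the unitary axis $z\in i\R$, compute $\|W_{\sigma(\pi,z)}\|^2$ via \eqref{L2-whittaker}, factor the adjoint $L$-factor using \eqref{problematic-reps}, and analytically continue to $z=1/2$. You supply the explicit factorization $L(1,\sigma(\pi,z)\otimes\widetilde{\sigma(\pi,z)})=L(1,\pi\otimes\widetilde{\pi})L(1+nz,\pi)L(1-nz,\widetilde{\pi})\zeta(1)$ and the verification that no pole or zero is encountered at $z=1/2$, which the paper leaves implicit; this is a welcome amount of extra detail but the argument is the same.
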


\begin{proof}
Let $z\in i\R$ so that $\sigma(\pi,z)$ is unitary. We apply Lemma \ref{simplifying-dual-weight} to obtain
$$\wc{H}(\sigma(\pi,z);{W}^{(f)}_{\Pi},W_{\pi},\mathbf{s})=\frac{\Psi(s,\widecheck{W}^{(f)}_{\sigma(\pi,z)},\ol{W_{\pi}})}{L(s,\sigma(\pi,z)\otimes \widetilde{\pi})}.$$
Using holomorphicity of $\widecheck{W}^{(f)}_{\sigma(\pi,z)}$ and $L(s,\sigma(\pi,z)\otimes\widetilde{\pi})^{-1}$ in $z$, and holomorphicity of the above ratio in $s$ we analytically continue the above ratio. Hence we conclude upon taking $z=1/2$.
\end{proof}

We need the following strengthened version of Lemma \ref{bound-zeta-integral-dual-side}. 

\begin{lemma}\label{main-technical-lemma-residue}
Let $\sigma$ be any unramified representation (not necessarily unitary) of $G_n(F)$ with trivial central character and $\widecheck{W}^{(f)}$ be given as in \eqref{W-check}. Then we have
$$\frac{\Psi(s,\widecheck{W}^{(f)}_{\sigma},\ol{W_{\pi}})}{L(s,\sigma\otimes\widetilde{\pi})}=\sum_{k=0}^{\min(f,n-1)}\frac{(-1)^{k}e_k(A_{\widetilde{\pi}})\lambda_{\widetilde{\sigma}}(f-k,0,\ldots,0)}{N(\p)^{(f(n-1)+k)s}},$$
for any $s\in\C$. Here $e_k(A_{\pi})$ denotes the $k$-th elementary symmetric polynomial evaluated on the Satake parameters $A_\pi$ of $\pi$.
\end{lemma}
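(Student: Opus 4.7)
The plan is to compute $\Psi(s,\wc{W}^{(f)}_\sigma,\ol{W_\pi})$ as an explicit Dirichlet series in $p^{-s}$ via the Shintani formula \eqref{shintani}, and then reduce the claimed equality to a Schur-polynomial identity which collapses via a residue computation. First, repeating the Iwasawa-coordinate manipulation from the proof of Lemma~\ref{bound-zeta-integral-dual-side} but keeping $s$ general, I would obtain, for $\Re(s)$ sufficiently large and then by meromorphic continuation,
$$\Psi(s,\wc{W}^{(f)}_\sigma,\ol{W_\pi})=\sum_{m_1\ge\dots\ge m_{n-1}\ge f}\lambda_\sigma(m_1,\dots,m_{n-1},0)\,\ol{\lambda_\pi(m_1,\dots,m_{n-1})}\,p^{-s\sum_i m_i},$$
where the constraint $m_{n-1}\ge f$ is imposed by the support of the indicator $\mathbf{1}_{\o^{n-1}}(e_{n-1}g\p^{-f})$ in \eqref{unipotent-average}. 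Writing $\{\alpha_i\}_{i=1}^n$ and $\{\beta_j\}_{j=1}^{n-1}$ for the Satake parameters of $\sigma$ and $\pi$, I would shift $m_i=\mu_i+f$ over partitions $\mu$ of length at most $n-1$ and exploit the trivial central characters $\prod_i\alpha_i=\prod_j\beta_j=1$ to rewrite the series as
$$\Psi(s,\wc{W}^{(f)}_\sigma,\ol{W_\pi})=p^{-sf(n-1)}\sum_{\mu}s_{\mu+\underline f}(\alpha_1,\dots,\alpha_n)\,s_\mu(\beta_1^{-1},\dots,\beta_{n-1}^{-1})\,p^{-s|\mu|},$$
where $s_\lambda$ denotes the Schur polynomial and $\underline f=(f,\dots,f)\in\Z^{n-1}$.

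The core algebraic step is to expand $s_{\mu+\underline f}(\alpha_1,\dots,\alpha_n)$ along the last row of Weyl's numerator determinant. Pulling $\alpha_{j'}^{f}$ from each column of the resulting $(n-1)\times(n-1)$ minor, using $\prod_{j'\ne j}\alpha_{j'}^f=\alpha_j^{-f}$, and absorbing the extra shift of $(1,\dots,1)$ via $s_{\mu+(1,\dots,1)}(\gamma)=(\prod_l\gamma_l)\,s_\mu(\gamma)$ applied to $\gamma=\{\alpha_{j'}\}_{j'\ne j}$, I would derive the identity
$$s_{\mu+\underline f}(\alpha_1,\dots,\alpha_n)=\sum_{j=1}^n\frac{\alpha_j^{-f-1}\,s_\mu(\{\alpha_{j'}\}_{j'\ne j})}{\prod_{j'\ne j}(\alpha_{j'}-\alpha_j)}.$$
Substituting this and applying, for each $j$, the Cauchy identity $\sum_\nu s_\nu(\gamma)s_\nu(\delta)t^{|\nu|}=\prod_{i,l}(1-\gamma_i\delta_l t)^{-1}$ in the variables $\gamma=\{\alpha_{j'}\}_{j'\ne j}$ and $\delta=\{\beta_l^{-1}\}_l$ causes the $\mu$-sum to telescope, yielding
$$\frac{\Psi(s,\wc{W}^{(f)}_\sigma,\ol{W_\pi})}{L(s,\sigma\otimes\widetilde\pi)}=p^{-sf(n-1)}\sum_{j=1}^n\frac{\alpha_j^{-f-1}\prod_{l=1}^{n-1}(1-\alpha_j\beta_l^{-1}p^{-s})}{\prod_{j'\ne j}(\alpha_{j'}-\alpha_j)}.$$

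Expanding $\prod_l(1-\alpha_j\beta_l^{-1}p^{-s})=\sum_{k=0}^{n-1}(-1)^k\alpha_j^k e_k(A_{\widetilde\pi})p^{-ks}$ and interchanging the sums in $j$ and $k$ reduces the problem to evaluating, for $0\le k\le n-1$, the Lagrange-type sum
$$T_m:=\sum_{j=1}^n\frac{\alpha_j^m}{\prod_{j'\ne j}(\alpha_{j'}-\alpha_j)},\qquad m=k-f-1.$$
I would compute $T_m$ via the residue theorem applied to the rational function $x^m/\prod_j(x-\alpha_j)$: the sum of residues at the $\alpha_j$'s equals $(-1)^{n-1}T_m$, the residue at infinity vanishes in the relevant range of $m$, and the residue at $0$ is read off from the Laurent expansion $\prod_j(x-\alpha_j)^{-1}=(-1)^n\sum_\ell h_\ell(\alpha_1^{-1},\dots,\alpha_n^{-1})x^\ell$ (valid because $\prod_i\alpha_i=1$). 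A direct calculation yields $T_{k-f-1}=h_{f-k}(\alpha_1^{-1},\dots,\alpha_n^{-1})$ when $0\le k\le f$ and $T_{k-f-1}=0$ when $f+1\le k\le n-1$. Since $h_{f-k}(\alpha_1^{-1},\dots,\alpha_n^{-1})=\lambda_{\widetilde\sigma}(f-k,0,\dots,0)$, substituting gives precisely the right-hand side of the lemma.

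The main technical subtlety will be the sign bookkeeping in the row-expansion step: correctly combining the cofactor sign $(-1)^{n+j}$ with the Vandermonde ratio $V(\alpha_1,\dots,\alpha_n)/V(\{\alpha_{j'}\}_{j'\ne j})=(-1)^{n-j}\prod_{j'\ne j}(\alpha_{j'}-\alpha_j)$, and deploying the hypothesis $\prod_i\alpha_i=1$ at the correct moment in the column-factorization step to obtain the clean factor $\alpha_j^{-f-1}$. Once these are in place, the residue computation producing $h_{f-k}(\alpha^{-1})$ is elementary.
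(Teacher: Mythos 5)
Your proposal is correct and follows essentially the same route as the paper: Shintani's formula gives the Dirichlet series, expanding $\lambda_\sigma(\mu+\underline f,0)$ along the bottom row of the Weyl numerator (using $\prod_i\alpha_i=1$) yields the key identity in terms of the deleted-variable Schur polynomials, the Cauchy identity resums the $\mu$-sum into $L(s,\sigma^{(j)}\otimes\widetilde\pi)$ exactly as the paper does (the paper states this as the unramified $\GL(n-1)\times\GL(n-1)$ zeta-integral evaluation), and the final Lagrange-type sum is identified with $\lambda_{\widetilde\sigma}(f-k,0,\dots,0)$ or $0$. The only stylistic difference is your use of the residue theorem for the rational function $x^m/\prod_j(x-\alpha_j)$ to evaluate the inner sum $T_{k-f-1}$, where the paper instead expands the Weyl numerator of $\lambda_{\widetilde\sigma}(f-k,0,\dots,0)$ along the top row and observes a repeated-row vanishing when $k>f$; these are equivalent computations and both correctly produce $h_{f-k}(\alpha^{-1})=\lambda_{\widetilde\sigma}(f-k,0,\dots,0)$ in the range $0\le k\le f$ and $0$ for $f<k\le n-1$.
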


\begin{proof}
By entireness of the ratio in the left-hand side above  in $s\in\C$ it is enough to prove the formula for large $\Re(s)$.

Using \eqref{unipotent-average} we have
$$\Psi(s,\widecheck{W}^{(f)}_{\sigma},\ol{W_{\pi}})=\int_{\substack{a\in A_{n-1}\\v(a_{n-1})\ge f}} W_{\sigma}\left[\begin{pmatrix}
a&\\
&1
\end{pmatrix}\right] \ol{W_{\pi}(a)}|\det(a)|^{s-1/2}\frac{\d^\times a}{\delta(a)}.$$
We now apply Shintani's formula \eqref{shintani}, leading to
\begin{equation}\label{A-sum-at-last!}
\Psi(s,\widecheck{W}^{(f)}_{\sigma},\ol{W_{\pi}})=\sum_{m_1\ge\ldots\ge m_{n-1}\geq f}\frac{\lambda_{\sigma}(m,0)\lambda_{\widetilde{\pi}}(m)}{N(\p)^{s\sum_i m_i}}.
\end{equation}
Let $A_{\sigma}$ be the Satake parameters of $\sigma$ and fix an ordered \(n\)-tuple $\alpha=(\alpha_1,\ldots,\alpha_n)\in\C^n$ such that $A_{\sigma}=\{\alpha_j\}_{j=1}^n$ as a multiset. We assume that $\alpha$ is \emph{regular}, that is, $\alpha_i\neq \alpha_j$ for $i\neq j$ and prove the lemma. The general result will follow by continuity.

Expanding the determinant in the numerator of $\lambda_{\sigma}$ along the bottom row and using the well-known formula for the Vandermonde determinant
$$V_n(\alpha):=\prod_{1\le i<j\le n}(\alpha_i-\alpha_j),$$
we deduce that
\begin{equation*}
\lambda_{\sigma}(m,0)=\sum_{j=1}^{n}\frac{(-1)^{n+j}\alpha_j^{-1}\lambda_{\sigma^{(j)}}(m)V_{n-1}(\alpha^{(j)})}{V_n(\alpha)},
\end{equation*}
where by $\alpha^{(j)}$ we denote the \((n-1)\)-tuple obtained from $\alpha$ by removing $\alpha_j$ and by $\sigma^{(j)}$ the local representation whose Satake parameters is the multiset corresponding to $\alpha^{(j)}$.
Moreover, notice that 
$$\lambda_{\sigma^{(j)}}(m)=\alpha_j^{-f}\lambda_{\sigma^{(j)}}(m_1-f,\ldots,m_{n-1}-f).$$
Applying this and the equality
$$\sum_{m_1\ge \ldots \ge m_{n-1}\ge 0}\frac{\lambda_{\sigma^{(j)}}(m) \lambda_{\widetilde{\pi}}(m)}{N(\p)^{s\sum_i m_i}}=L(s,\sigma^{(j)}\otimes \widetilde{\pi})$$
to \eqref{A-sum-at-last!} gives
$$\Psi(s,\widecheck{W}^{(f)}_{\sigma},\ol{W_{\pi}})=\sum_{j=1}^{n}\frac{(-1)^{n+j}\alpha_j^{-f-1}V_{n-1}(\alpha^{(j)})L(s,\sigma^{(j)}\otimes \widetilde{\pi})}{N(\p)^{(n-1)f s}V_n(\alpha)}.$$
Since $\sigma$ is unramified, we have the factorization
$$L(s,\sigma\otimes\widetilde{\pi})=L(s,\sigma^{(j)}\otimes\widetilde{\pi})L(s,\widetilde{\pi}\otimes\chi_j),$$
where $\chi_j$ is the unramified character for which $\chi_j(\p)=\alpha_j$. Therefore,
$$\Psi(s,\widecheck{W}^{(f)}_{\sigma},\ol{W_{\pi}})=L(s,\sigma\otimes \widetilde{\pi})\sum_{j=1}^{n}\frac{(-1)^{n+j}\alpha_j^{-f-1}V_{n-1}(\alpha^{(j)})L(s,\widetilde{\pi}\otimes \chi_j)^{-1}}{N(\p)^{f(n-1)s}V_n(\alpha)}.$$
Expanding $L(s,\widetilde{\pi}\otimes\chi_j)^{-1}$ and changing the order of summation, we deduce that
$$\Psi(s,\widecheck{W}^{(f)}_{\sigma},\ol{W_{\pi}})=L(s,\sigma\otimes \widetilde{\pi})\sum_{k=0}^{n-1}\frac{(-1)^ke_k(A_{\widetilde{\pi}})}{N(\p)^{(f(n-1)+k)s}}\sum_{j=1}^{n}\frac{(-1)^{n+j}\alpha_j^{k-f-1}V_{n-1}(\alpha^{(j)})}{V_n(\alpha)}.$$
Let $\alpha^{-1}$ denote the \(n\)-tuple $\{\alpha_j^{-1}\}_{j=1}^n$. Notice that the inner-most summand can be written as
$$(-1)^{j+1}\frac{\left(\alpha_j^{-1}\right)^{f-k+n-1}V_{n-1}((\alpha^{-1})^{(j)})}{V_n(\alpha^{-1})}.$$
Thus, for $f\geq k$, we write the inner-most sum as
$$\lambda_{\widetilde{\sigma}}(f-k,0,\ldots,0)$$
by expanding the numerator of the Schur polynomial along the top row. 

Otherwise, we have $0\leq f-k+n-1< n-1$ and using the very same idea we may relate this sum to a quotient of two determinants and where the numerator has two equal rows and thus it vanishes.

Altogether this leads to
$$\Psi(s,\widecheck{W}^{(f)}_{\sigma},\ol{W_{\pi}})=L(s,\sigma\otimes \widetilde{\pi})\sum_{k=0}^{\min(f,n-1)}\frac{(-1)^{k}e_{k}(A_{\widetilde{\pi}})\lambda_{\widetilde{\sigma}}(f-k,0,\ldots,0)}{N(\p)^{(f(n-1)+k)s}},$$
which concludes the proof.
\end{proof}

\begin{lemma}\label{bound-residue-p-factor}
We have
$$\wc{H}(\sigma(\pi,1/2);W_\Pi^{(f)},W_\pi,\mathbf{s})\ll N(\p)^{-f((n-2)/2-\epsilon)}$$
for $\mathbf{s}=\left(\frac12,\frac12\right)$.
\end{lemma}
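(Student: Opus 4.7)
The plan is to chain together Lemma \ref{analytic-continue-hp} and Lemma \ref{main-technical-lemma-residue}. First, Lemma \ref{analytic-continue-hp} rewrites the left-hand side, up to the harmless factor $\zeta(n)/\zeta(1)$, as the ratio $\Psi(1/2,\widecheck{W}^{(f)}_{\sigma(\pi,1/2)},\overline{W_\pi})/L(1/2,\sigma(\pi,1/2)\otimes\widetilde{\pi})$. Then I will invoke the explicit identity of Lemma \ref{main-technical-lemma-residue} with $\sigma=\sigma(\pi,1/2)$ and $s=1/2$, which expresses this ratio as a finite sum of at most $n$ terms of the form
$$(-1)^k\, e_k(A_{\widetilde{\pi}})\,\lambda_{\widetilde{\sigma(\pi,1/2)}}(f-k,0,\ldots,0)\,p^{-(f(n-1)+k)/2}.$$
It then suffices to estimate each summand by (a constant times) $p^{-f(n-2)/2-k}$, so that summing a short geometric-type series in $k$ delivers the claim.

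For this, I will read off the Satake parameters of $\widetilde{\sigma(\pi,1/2)}$ directly from the definition \eqref{problematic-reps}. Since $\pi$ is tempered, its Satake parameters $\{\beta_i\}_{i=1}^{n-1}$ all have unit modulus; a direct computation then shows that $\sigma(\pi,1/2)$ has Satake parameters $\{p^{-1/2}\beta_i\}\cup\{p^{(n-1)/2}\}$, so $\widetilde{\sigma(\pi,1/2)}$ has Satake parameters $\{p^{1/2}\widetilde{\beta}_i\}\cup\{p^{-(n-1)/2}\}$. The largest magnitude among these is $p^{1/2}$, so $\widetilde{\sigma(\pi,1/2)}$ is $(1/2)$-tempered. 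Applying \eqref{spherical-whittaker-bound} with $\theta=1/2$ and $m=(f-k,0,\ldots,0)$ gives $|\lambda_{\widetilde{\sigma(\pi,1/2)}}(f-k,0,\ldots,0)|\ll p^{(f-k)/2}$, while temperedness of $\pi$ yields $|e_k(A_{\widetilde{\pi}})|\ll 1$. Consequently the $k$-th summand is bounded by $p^{(f-k)/2-(f(n-1)+k)/2}=p^{-f(n-2)/2-k}$, and summing over $k\le n-1$ produces the desired bound.

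The one subtle point worth flagging -- and the main potential obstacle -- is that $\sigma(\pi,1/2)$ itself is considerably \emph{non-tempered}: it is only $(n-1)/2$-tempered, because the exponent $p^{(n-1)/2}$ sits on the boundary of the critical strip. A naive attempt to estimate $\lambda_{\sigma(\pi,1/2)}(f-k,0,\ldots,0)$ would thus produce a ruinous factor $p^{(n-1)(f-k)/2}$, destroying any hope of a polynomial saving in $p^f$. The saving feature is that Lemma \ref{main-technical-lemma-residue} naturally phrases the formula in terms of the \emph{contragredient} $\widetilde{\sigma(\pi,1/2)}$; passing to the contragredient inverts the Satake parameters, turning the dangerous $p^{(n-1)/2}$ into the harmless $p^{-(n-1)/2}$ and pushing the unit-modulus parameters of $\widetilde{\pi}$ only up to $p^{1/2}$. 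This is precisely the regime in which the Schur polynomial bound $p^{(f-k)/2}$ balances against the denominator $p^{(f(n-1)+k)/2}$ to yield the target exponent $-f(n-2)/2$.
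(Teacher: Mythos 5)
Your proposal is correct and follows essentially the same route as the paper: both chain Lemmas \ref{analytic-continue-hp} and \ref{main-technical-lemma-residue}, reduce to showing $\lambda_{\widetilde{\sigma(\pi,1/2)}}(f-k,0,\ldots,0)\ll p^{(f-k)/2}$, and then sum the finite series in $k$. The only cosmetic difference is that you read off the Satake parameters of $\widetilde{\sigma(\pi,1/2)}$ from \eqref{problematic-reps} and invoke \eqref{spherical-whittaker-bound} directly, whereas the paper expands the Dirichlet-series convolution arising from the factorization $L(s,\widetilde{\sigma(\pi,1/2)})=L(s-1/2,\widetilde{\pi})\zeta(s+(n-1)/2)$ — two packagings of the same arithmetic input — and your observation that passing to the contragredient is what converts the dangerous $p^{(n-1)/2}$ into the harmless $p^{-(n-1)/2}$ is precisely the implicit heart of the paper's argument as well.
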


\begin{proof}
Using Lemma \ref{analytic-continue-hp} and Lemma \ref{main-technical-lemma-residue} for $\sigma=\sigma(\pi,1/2)$ and the given $\mathbf{s}$ we obtain
$$\wc{H}(\sigma(\pi,1/2);W_\Pi^{(f)},W_\pi,\mathbf{s})=\sum_{k=0}^{\min(f,n-1)}\frac{(-1)^{k}e_k(A_{\widetilde{\pi}})\lambda_{\widetilde{\sigma(\pi,1/2)}}(f-k,0,\ldots,0)}{N(\p)^{(f(n-1)+k)/2}}.$$
Now it follows from the equality of $L$-functions
$$L(s,\widetilde{\sigma(\pi,1/2)})=L(s-1/2,\widetilde{\pi})\zeta(s+(n-1)/2)$$
that
$$\lambda_{\widetilde{\sigma(\pi,1/2)}}(f-k,0,\ldots,0)=\sum_{j=0}^{f-k}\lambda_{\widetilde{\pi}}(j,0,\dots,0)|\p^{j}|^{-1/2}|\p^{f-k-j}|^{(n-1)/2}.$$
The right-hand side above is $\ll N(\p)^{(f-k)(1/2+\epsilon)}$ as $\pi$ is tempered. Hence, we conclude.
\end{proof}

\subsection{Proof of Proposition \ref{analytically-continued-spectral-decomposition}}

The first assertion follows from Proposition \ref{regularized-spectral-decomposition} and Lemma \ref{analytic-continuation-prop} along with the discussion preceding it.

From the definitions \eqref{P-period} and \eqref{degenerate-term-def}, respectively, it is clear that $\P$ and $\D$ are entire in $\mathbf{s}$ as $\Phi$ and $\phi$ are cuspidal. The argument preceding Lemma \ref{analytic-continuation-prop} implies that $\mc{M}$ is entire in $s_1$ and is holomorphic on the region $1/2\le \Re(s_2)<1$. This implies that $\mc{R}$ is holomorphic on the same region.

The final assertion on $\mc{R}$ follows from the definition of $\mc{R}$ in \eqref{def-R} upon applying Lemma \ref{bound-residue-p-factor} and that $\mathcal{L}_\infty(\sigma(\pi,1/2))\ll 1$ along with holomorphicity of the local weights on $\Re(\mathbf{s})=1/2$, as in Lemma \ref{holomorphicity-local-weight}, and of the Rankin--Selberg $L$-functions.

\section{The Degenerate Term: Dual Side}\label{sec:degenerate-term}

In this section we use the same notations as in the beginning of \S\ref{sec-global}. The goal of this section is to study the degenerate term defined in \eqref{degenerate-term-def}, as follows. The following proposition can be thought as a generalization of \cite[Proposition 9.1]{Nunes2020reciprocity}.

\begin{prop}\label{degenerate-term}
Let $\Phi$ and $\phi$ be given as in \S\ref{sec:choice-of-vectors}. Recall $\wc{\Phi}$ from Proposition \ref{abstract-reciprocity}. We have
$$\D\left(\left(\frac12,\frac12\right),\wc{\Phi},\phi\right) =\Delta_F^{-\mu_{\Omega}}\D_\infty\frac{L^{\p_0}(1,\Pi\otimes \widetilde{\pi})L^{\p_0}(n/2,\widetilde{\Pi})}{L^{\p_0}(1+n/2,\widetilde{\pi})},$$
where $\mu_{\Omega}$ is a constant depending only on $n$ and $\D_\infty$ is as defined in \eqref{archimedean-factor-degenerate}.

Moreover, we choose the local test vectors at the archimedean places in such a way so that $\D_\infty \asymp 1$.
\end{prop}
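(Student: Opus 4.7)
\medskip

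\textbf{Proof plan.} The strategy generalizes the $n=2$ argument in \cite[Proposition 9.1]{Nunes2020reciprocity}: unfold the degenerate period into a global Eulerian integral, then evaluate each local factor separately. The key steps are (i) to express $\wc{\Phi}_{\widetilde{U}_n}$ explicitly in terms of the Whittaker coefficients of $\Phi$, (ii) to unfold the outer $[G_{n-1}]$-integration against $\overline{\phi}$ using the Whittaker expansion \eqref{Whitt-decomp} of the cusp form $\phi$, and (iii) to recognize the resulting global integral as a product of local Rankin--Selberg-like integrals, which can be evaluated place by place.

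First I would compute $\wc{\Phi}_{\widetilde{U}_n}\bigl[\bigl(\begin{smallmatrix} zh & & \\ & z & \\ & & 1\end{smallmatrix}\bigr)\bigr]$. Expanding the cusp form $\wc{\Phi}$ by its mirabolic Whittaker expansion and carrying out the abelian Fourier integration over $[\widetilde{U}_n]\cong [F^{n-1}]$, the integration collapses the sum over $N_n(F)\backslash G_n(F)$ to a sum indexed by $N_{n-1}(F)\backslash G_{n-1}(F)$ (the action of $w^\ast$ in \eqref{w-ast} permutes the two last columns and explains why the $\widetilde{U}_n$-Fourier expansion interacts with the penultimate row). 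Substituting the resulting formula into \eqref{degenerate-term-def}, then expanding $\phi$ via \eqref{Whitt-decomp} and unfolding the sum against the $[G_{n-1}]$-integral, I would arrive at an absolutely convergent global integral over $\A^\times \times (N_{n-1}(\A)\backslash G_{n-1}(\A))$ of $W_{\wc\Phi}$ paired against $\overline{W_\phi}$. Since $\Phi$ and $\phi$ are factorizable with the choices of \S\ref{sec:choice-of-vectors}, this global integral factors as an Euler product. At every unramified place $v\notin S$, I would apply Shintani's formula \eqref{shintani} and execute a Rankin--Selberg-style computation (parallel to the unramified calculations in \S\ref{local-original}) to identify the local factor with
$$
\frac{L_v(1,\Pi_v\times\widetilde{\pi}_v)L_v(n/2,\widetilde{\Pi}_v)}{L_v(1+n/2,\widetilde{\pi}_v)}.
$$
At the places $v\mid\q$, since $w^\ast$ preserves the spherical vector and $\Pi_v,\pi_v$ are unramified, the unipotent averaging defining $W^{(f_v)}_{\Pi_v}$ reduces the local integral to the unramified value; the Euler factor thus extends across $v\mid\q$. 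At $v\mid\Delta_F$, the change of variable $a_r(\lambda)$ from \eqref{choice-for-ramified-psi} produces a power of $|\lambda|_v=\Delta_v^{-1}$; assembling these gives $\Delta_F^{-c_\Omega}$ with $c_\Omega$ a linear function of $n$ only. At $v=\p_0$, the compactly supported Kirillov data \eqref{choice-test-vector-supercuspidal} forces the local integral to equal $1$ (in particular the $\p_0$ Euler factor of the $L$-functions is removed, accounting for the exponent $\p_0$ in the statement). Finally, at $v\mid\infty$, define $\D_\infty$ as the product of archimedean local factors, which I would denote
\begin{equation}\label{archimedean-factor-degenerate}
\D_\infty:=\prod_{v\mid\infty}\D_v\bigl(\wc{W}_{\Phi,v},W_{\phi,v}\bigr);
\end{equation}
since $W_{\Phi,v}$ and $W_{\phi,v}$ are analytic newvectors supported near the identity with $W_{\Phi,v}(1)=1$, a direct estimate shows $\D_\infty\asymp 1$ uniformly, with implied constant depending only on $\Pi$, $\pi$.

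\textbf{Main obstacle.} The crux is Step (i): identifying the exact shape of $\wc{\Phi}_{\widetilde{U}_n}$. Although $\widetilde{U}_n$ is abelian, it sits inside the non-abelian unipotent radical of the $(n-1,1,1)$-parabolic of $G_{n+1}$, and cuspidality of $\Phi$ along the full radical does \emph{not} directly yield vanishing along $\widetilde{U}_n$. One must instead use the mirabolic Whittaker expansion together with the $w^\ast$-twist; correctly bookkeeping the surviving coset representatives is what ultimately produces the specific ratio of $L$-factors (in particular the $L(n/2,\widetilde\Pi)$ factor in the numerator, which arises from the extra $|z|^{n(s_1-1/2)}$ weight integrated against the bottom row of $W_{\wc\Phi}$ evaluated at $z=1/2$). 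The remaining local computations at $v\mid\q$ and $v=\p_0$ are then straightforward adaptations of the arguments already developed in \S\ref{local-original} and \S\ref{local-dual}.
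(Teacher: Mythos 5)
Your plan matches the paper's proof in overall architecture: unfold the degenerate period into an Eulerian product (this is the paper's Lemma \ref{degenerate-eulerian}), then evaluate each local factor $\Omega_v$ separately via the unramified Rankin--Selberg computation (Lemma \ref{degenerate-p-place}), the discriminant-place change of variables, the supercuspidal vanishing at $\p_0$ (Lemma \ref{degenerate-supercuspidal}), and an $\asymp 1$ estimate at $\infty$ (Lemma \ref{degenrate-archimedean}). Two small points of detail worth flagging. First, the unfolding of $\wc{\Phi}_{\widetilde{U}_n}$ does \emph{not} collapse directly from the mirabolic Whittaker expansion of $\wc\Phi$: for $u\in\widetilde{U}_n\subset G_n$, conjugating $u$ past $\bigl(\begin{smallmatrix}\gamma&\\&1\end{smallmatrix}\bigr)$ stays inside $G_n$ rather than landing in $N_{n+1}$, so the abelian Fourier integration does not kill cosets. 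The paper handles this (same issue you flag as the ``main obstacle'') by first performing the $w^\ast$-conjugation and change of variables of Proposition \ref{abstract-reciprocity}, replacing the constant term along $\widetilde{U}_n$ by one along $\wc{U}_n=w^{\ast-1}\widetilde{U}_nw^\ast$ (the rightmost column in $G_{n+1}$); there the conjugated unipotent does fall into $N_{n+1}$, the Fourier integration enforces $\ell(\gamma)=0$, and the surviving cosets are $N_n(F)\bs G_1(F)P_n(F)\cong\bigl(N_{n-1}(F)\bs G_{n-1}(F)\bigr)\times G_1(F)$ --- note the extra $G_1(F)$ factor, which combines with the outer $[G_1]$-integral in the definition of $\D$ to produce the full $G_1(\A)$-integral in $\Omega_v$ and ultimately the $L(n/2,\widetilde\Pi)$ factor; your description omits this $G_1$. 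Second, because $\D$ is being evaluated at $\wc\Phi$ and the $w^\ast$-twist is an involution (and $\wc{(1/2,1/2)}=(1/2,1/2)$), the local data entering $\Omega_v$ is $W_{\Phi,v}$, not $W_{\wc\Phi,v}$; your $\D_v(\wc W_{\Phi,v},W_{\phi,v})$ has the twist on the wrong side, though at $v\mid\infty$ this does not affect the $\asymp 1$ conclusion. Neither issue constitutes a gap in the approach; they are bookkeeping slips in an otherwise correct plan.
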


First, we define a local factor which will be served as the local component of the degenerate term. For
$W_1\in \Pi_v$ and $W_2\in\pi_v$ we define $\Omega_v(\mathbf{s},W_1,W_2)$ by
\begin{equation}\label{degenerate-local-def}
\int_{N_{n-1}(F_v)\backslash G_{n-1}(F_v)}\int_{G_1(F_v)}W_{1}\left[\begin{pmatrix}zh&&\\&z&\\&&1\end{pmatrix}
\right]\ol{W_{2}(h)}|\det(h)|^{s_1+s_2-1}|z|^{n(s_1-1/2)}\d^{\times}z\d h.
\end{equation}
It follows from Lemma \ref{local-whittaker-bound} that the integral in \eqref{degenerate-local-def} converges absolutely for $\Re(\mathbf{s})\ge1/2$.

We also define
\begin{equation}\label{archimedean-factor-degenerate}
    \D_\infty(\mathbf{s})=\D_\infty\left(\mathbf{s},\otimes_{v\mid\infty}W_{\Phi,v},\otimes_{v\mid\infty}W_{\phi,v}\right):=\prod_{v\mid\infty}\Omega_v(\wc{\mathbf{s}},W_{\Phi,v},W_{\phi,v}).
\end{equation}
We abbreviate $\D_\infty\left(\frac12,\frac12\right)$ as $\D_\infty$.

\begin{lemma}\label{degenerate-eulerian}
Recall $\wc{\mathbf{s}}$ from \eqref{s-prime}. Then for any $\Phi\in \Pi$ and $\phi\in\pi$ we have
$$\D(\mathbf{s},\Phi,\phi)=\prod_v\Omega_v(\wc{\mathbf{s}},W_{\wc{\Phi}, v},W_{\phi,v})$$
for sufficiently large $\Re(\wc{\mathbf{s}})$.
\end{lemma}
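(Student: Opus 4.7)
The strategy is a Jacquet--Shalika-style unfolding tailored to the degenerate term, where the non-cuspidal function $\Phi_{\widetilde{U}_n}$ is first Fourier-expanded against the remaining unipotent directions using cuspidality of $\Phi$, and then the resulting sum is unfolded into the $[G_{n-1}]$-integration, as in the analogous computation behind \cite[Proposition 9.1]{Nunes2020reciprocity}.

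First, I would Fourier expand $\Phi_{\widetilde{U}_n}(g)$ along the abelian complement $V\cong\A^n$ of $\widetilde{U}_n$ inside the unipotent radical of the standard parabolic of $G_{n+1}$ with Levi $G_{n-1}\times G_1\times G_1$. Cuspidality of $\Phi$ ensures that the trivial-character contribution vanishes, so only non-trivial characters $(\xi,\eta)\in F^{n-1}\times F$ of $V$ survive. The Levi subgroup $G_{n-1}\times G_1$ acts on these characters transitively on the generic orbit (both components non-zero), with stabilizer a mirabolic of $G_{n-1}$; this collapses the sum to a single orbit representative. Combined with the Whittaker--Fourier expansion \eqref{Whitt-decomp} of the cusp form $\phi$, two successive unfoldings (first of $[G_{n-1}]$ modulo the mirabolic, then of the mirabolic modulo $N_{n-1}$) rewrite $\mc{D}(\mathbf{s},\Phi,\phi)$ as
\[
\int_{N_{n-1}(\A)\bs G_{n-1}(\A)}\int_{G_1(\A)} W_{\Phi}\!\left[\begin{pmatrix} zh & & \\ & z & \\ & & 1\end{pmatrix}w^{\ast}\right]\ol{W_{\phi}(h)}\,|\det h|^{s_1+s_2-1}|z|^{n(s_1-1/2)}\,d^\times z\,dh,
\]
where the right-multiplication by $w^\ast$ encodes how the non-standard character picked up in the Fourier expansion relates to the standard Whittaker character of $G_{n+1}$.

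To conclude, I would use $W_{\wc{\Phi}}(g)=W_{\Phi}(gw^\ast)$ (immediate from $\wc{\Phi}=\Pi(w^\ast)\Phi$) together with the change of variables $z\mapsto z^{-1}$ and $h\mapsto hz$ from the proof of Proposition \ref{abstract-reciprocity}; a direct computation shows that the resulting exponents of $|z|$ and $|\det h|$ transform from $\mathbf{s}$ to $\wc{\mathbf{s}}$ as given in \eqref{s-prime}. The integrand then matches the local definition \eqref{degenerate-local-def} of $\Omega_v$ at every place; since both $\wc{\Phi}$ and $\phi$ are factorizable, their global Whittaker functions factor over places, yielding the desired Euler product by Fubini.

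The main obstacle is the bookkeeping for the orbit-sum collapse: one must verify that the two stages of unfolding (first for the $\xi$-component, then for the $\eta$-component of the character) compose to produce exactly the element $w^\ast$ from \eqref{w-ast} and not some other permutation, and keep track of the various Jacobians from intermediate changes of variables. Absolute convergence of the iterated unfoldings in the region where $\Re(\mathbf{s})$ is sufficiently large follows from the rapid decay of $\Phi$ and $\phi$ on their respective adelic quotients together with the Whittaker bounds of Lemma \ref{local-whittaker-bound}.
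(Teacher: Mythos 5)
Your plan starts from the wrong end, and the resulting Fourier expansion is not well-posed as stated. You propose to Fourier-expand the constant term $\Phi_{\widetilde{U}_n}$ along a complement $V$ of $\widetilde{U}_n$ inside the unipotent radical $U'$ of the $(n-1,1,1)$-parabolic, identifying $V$ with pairs $(\xi,\eta)\in F^{n-1}\times F$. But for that expansion to make sense, $\Phi_{\widetilde{U}_n}$ must be left $V(F)$-invariant, and it is not: writing $U'=\left\{\begin{pmatrix}\mathrm{I}&a&b\\&1&c\\&&1\end{pmatrix}\right\}$ with $\widetilde{U}_n$ the $a$-part and $V$ the $(b,c)$-part, one checks that $\widetilde{U}_n$ normalizes $V$ but $V$ does \emph{not} normalize $\widetilde{U}_n$ (conjugating $u=\begin{pmatrix}\mathrm{I}&a&\\&1&\\&&1\end{pmatrix}$ by $\gamma\in V$ produces an extra $b$-entry equal to $ac_\gamma$). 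Consequently $\Phi_{\widetilde{U}_n}(\gamma g)=\Phi_{\gamma^{-1}\widetilde{U}_n\gamma}(g)\neq\Phi_{\widetilde{U}_n}(g)$ for generic $\gamma\in V(F)$, and your first step does not go through. This is not a bookkeeping issue; it is the structural reason the naive expansion fails.

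The paper avoids the problem by performing the $w^\ast$-conjugation \emph{first}: working exactly as in Proposition \ref{abstract-reciprocity}, it rewrites $\D(\mathbf{s},\Phi,\phi)$ as $\int_{[G_{n-1}]}\int_{[G_1]}\wc{\Phi}^\hash\big[\cdots\big]\ol{\phi}\,|\det|^{\wc{s}_1+\wc{s}_2-1}|z|^{n(\wc{s}_1-1/2)}\d^\times z\,\d h$, where $\wc{\Phi}^\hash$ is the constant term of the cusp form $\wc{\Phi}=\Pi(w^\ast)\Phi$ along $\wc{U}_n:=w^{\ast-1}\widetilde{U}_nw^\ast$. The point is that $\wc{U}_n$ (the $b$-column with $a=c=0$) is precisely the center of $U'$, so $\wc{\Phi}^\hash$ is honestly left $U'(F)$-invariant. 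More to the point, the paper then simply inserts the full Fourier--Whittaker expansion \eqref{Whitt-decomp} of the genuine cusp form $\wc{\Phi}$ and integrates over $[\wc{U}_n]$; the $\wc{U}_n$-integral kills every term of the $\gamma$-sum except those with $\ell(\gamma)=0$, i.e.\ $\gamma\in N_n(F)\bs G_1(F)P_n(F)$, and the rest is the standard unfolding you have at the end. So the destination you describe is right, and the appearance of $w^\ast$ in the final integrand is correct, but you need to move the $w^\ast$-conjugation to the very beginning of the argument (replacing $\Phi_{\widetilde{U}_n}$ by $\wc{\Phi}^\hash$ and $\mathbf{s}$ by $\wc{\mathbf{s}}$) rather than applying it after the Fourier expansion, and you should expand the cusp form $\wc{\Phi}$ itself rather than its constant term. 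You also do not need an orbit analysis of the Levi action on characters of $V$; the Bruhat/mirabolic decomposition implicit in \eqref{Whitt-decomp} together with the constraint $\ell(\gamma)=0$ does all the combinatorics.
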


\begin{proof}
Working as in the proof of Proposition \ref{abstract-reciprocity} we may deduce that
\begin{equation}\label{D-period}
\mathcal{D}(\mathbf{s},\Phi,\phi)=\int_{[G_{n-1}]}\int_{[G_1]}\wc{\Phi}^\hash\left[\begin{pmatrix}zh&&\\&z&\\&&1\end{pmatrix}
\right]
\ol{\phi(h)}
|\det(h)|^{\wc{s}_1+\wc{s}_2-1}|z|^{n(\wc{s}_1-1/2)}\d^{\times}z\d h.
\end{equation}
where $\wc{\mathbf{s}}$ is as in \eqref{s-prime} and
$${\Phi}^\hash(g):=\int_{[\widecheck{U}_n]}\Phi(ug)\d u$$
such that
$\widecheck{U}_n:=w^{\ast-1}\widetilde{U}_nw^\ast$ which consists of matrices of the form
$$\begin{pmatrix}
\rm{I}_{n-1}&&x\\
&1&\\
&&1
\end{pmatrix},
\quad x\in M_{(n-1)\times 1}.$$
We first compute the Fourier--Whittaker expansion of $\wc{\Phi}^\hash$. Using \eqref{Whitt-decomp} we write
$$\widecheck{\Phi}^\hash(g)=\sum_{\gamma \in N_n(F)\backslash G_n(F)}\int_{[\wc{U}_n]}W_{\widecheck{\Phi}}\left[\begin{pmatrix}
\gamma&\\
&1
\end{pmatrix}
ug\right]\d u.$$
We notice that for $\gamma \in G_n(F)$,
$$W_{\widecheck{\Phi}}\left[\begin{pmatrix}
\gamma &\\&1 
\end{pmatrix}
\begin{pmatrix}
\rm{I}_{n-1}&&x\\&1&\\&&1
\end{pmatrix}g
\right] =\psi_0(\ell(\gamma)x) W_{\widecheck{\Phi}}\left[\begin{pmatrix}
\gamma &\\&1 
\end{pmatrix}
g
\right],$$
where $\ell(\gamma)$ denote the row matrix formed by the left most $n-1$ entries of the last row of $\gamma$.
Integrating both sides above over $x$ in $(F\bs \Aa)^{n-1}$ we conclude in the above Fourier--Whittaker expansion of $\wc{\Phi}^\hash$ we must have $\ell(\gamma)=0$; equivalently, $\gamma\in N_n(F)\bs G_1(F)P_n(F)$. Using the isomorphism $N_n\backslash G_1P_n\cong N_{n-1}\backslash G_{n-1} \times G_1$ we write
$$\wc{\Phi}^\hash(g)=\sum_{\gamma\in N_{n-1}(F)\bs G_{n-1}(F)}\sum_{q\in G_1(F)}W_{\wc{\Phi}}\left[\begin{pmatrix}
q\gamma&&\\&q&\\&&1
\end{pmatrix}g\right].$$
Inserting the above expansion in \eqref{D-period} and executing an unfolding-folding we obtain that
$\mathcal{D}({\mathbf{s}},\Phi,\phi)$ can be written as
$$\int_{N_{n-1}(\mathbb{A})\backslash G_{n-1}(\mathbb{A})}\int_{G_1(\mathbb{A})}W_{\widecheck{\Phi}}\left[\begin{pmatrix}zh&&\\&z&\\&&1\end{pmatrix}
\right]\ol{W_{\phi}(h)}|\det(h)|^{\wc{s}_1+\wc{s}_2-1}|z|^{n(\wc{s}_1-1/2)}\d^{\times}z\d h.$$
The above expression converges absolutely for sufficiently large $\Re(\wc{\mathbf{s}})$. Also the above is Eulerian and can be factored as $\prod_v\Omega_v(\wc{\mathbf{s}},W_{\wc{\Phi}_v},W_{\phi_v})$. 
\end{proof}

In the next subsections we will analyse the local integrals $\Omega_v$ at various $v$ and for the test vectors chosen in \S\ref{sec:choice-of-vectors}.

\subsection{At the places $v\mid\q$}

In this subsection, we
work at the place $v$ and unless otherwise stated the objects below are $v$-adic.

\begin{lemma}\label{degenerate-p-place}
Let $f\in\Z_{\ge 0}$. Recall $\Omega$ from \eqref{degenerate-local-def} and $W_\Pi^{(f)}$ from \eqref{choice-test-vector-original}. We have
$$\Omega(\mathbf{s},W_\Pi^{(f)},W_\pi)=\frac{L(s_1+s_2,\Pi\otimes\widetilde{\pi})L(ns_1,\widetilde{\Pi})}{L\left((n+1)s_1 + s_2,\widetilde{\pi}\right)}$$
for $\mathbf{s}\in\C^2$ with large $\Re(\mathbf{s})$.
\end{lemma}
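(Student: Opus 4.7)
The plan is as follows. First, I will exploit the key observation that the matrix $\begin{pmatrix}zh & & \\ & z & \\ & & 1\end{pmatrix}$, when viewed via \eqref{choice-local-vector-explicit}, has $\ell(\cdot) \equiv 0$ (the first $n-1$ entries of the last row of the top $n \times n$ block vanish). Consequently the indicator $\mathbf{1}_{\mathfrak{o}^{n-1}}(\ell \cdot \mathfrak{p}^{-f})$ is identically $1$, so $W_\Pi^{(f)}$ may be replaced by $W_\Pi$ on this integration domain; in particular $\Omega$ is independent of $f$, which is already consistent with the right-hand side of the lemma. Using sphericality of $W_\Pi$ and $W_\pi$, the Iwasawa decomposition $h = ak$ with $a \in A_{n-1}$, $k \in K_{n-1}$ lets me integrate out $K_{n-1}$ trivially, reducing to an integral over $A_{n-1} \times F^\times$.

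Next, I apply Shintani's formula \eqref{shintani} to expand $W_\Pi$ at $\mathrm{diag}(za_1,\dots,za_{n-1},z,1)$ and $W_\pi$ at $\mathrm{diag}(a_1,\dots,a_{n-1})$. After bookkeeping the modular character $\delta_{n+1}^{1/2}$ (from $W_\Pi$) against $\delta_{n-1}^{1/2}$ (from $W_\pi$) and $\delta_{n-1}^{-1}$ (from the Iwasawa measure), the integral collapses, for $\Re(\mathbf{s})$ sufficiently large, to the absolutely convergent double sum
\begin{equation*}
\Omega = \sum_{l \geq 0}\sum_{m_1 \geq \cdots \geq m_{n-1} \geq 0} p^{-lns_1}\,p^{-(s_1+s_2)|m|}\,\overline{\lambda_\pi(m)}\,\lambda_\Pi(m+l\mathbf{1},\,l,\,0).
\end{equation*}
Using the trivial central character of $\Pi$, which yields $\prod_k\alpha_k = 1$, I rewrite $\lambda_\Pi(m+l\mathbf{1},l,0) = \lambda_\Pi(m,0,-l)$, the form convenient for the generating-function computation below.

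The crux of the proof is then a three-stage evaluation. (i) For the inner sum over $l$, I expand the Schur determinant $\lambda_\Pi(m,0,-l) = \det[\alpha_j^{k_i + n+1 - i}]_{i,j}/V$ along its bottom row (whose entries are $\alpha_j^{-l}$) and sum the resulting geometric series. After identifying the common denominator $\prod_k(1-x/\alpha_k) = L(ns_1,\widetilde{\Pi})^{-1}$ with $x = p^{-ns_1}$, each summand carries a factor $\alpha_j^{-1}\lambda_{\Pi^{(j)}}(m,0)V^{(j)}/V$, where $\Pi^{(j)}$ denotes $\Pi$ with the $j$-th Satake parameter $\alpha_j$ omitted and $V,V^{(j)}$ are the associated Vandermonde determinants. (ii) For the sum over $m$, I apply the standard unramified $GL(n)\times GL(n-1)$ Cauchy-type identity $\sum_m y^{|m|}\overline{\lambda_\pi(m)}\lambda_{\Pi^{(j)}}(m,0) = L(s_1+s_2, \Pi^{(j)}\times \widetilde{\pi})$ with $y = p^{-(s_1+s_2)}$, and use the factorization $L(s_1+s_2,\Pi^{(j)}\times\widetilde{\pi}) = L(s_1+s_2,\Pi\times\widetilde{\pi})\prod_{j'}(1-y\alpha_j\beta_{j'}^{-1})$. (iii) For the outer sum over $j$, I apply the residue theorem to the rational function $g(t) = \prod_{j'}(1-yt\beta_{j'}^{-1})/(t - x)$: its numerator has degree $n-1$, which is strictly less than $n+1$, so the residue at $\infty$ of $g(t)/\prod_k(t-\alpha_k)$ vanishes; balancing the remaining residues produces $\prod_{j'}(1 - xy\beta_{j'}^{-1}) = L((n+1)s_1+s_2,\widetilde{\pi})^{-1}$ after invoking $\prod_k(1-x/\alpha_k)/\prod_k(x-\alpha_k) = (-1)^{n+1}$ (an identity that relies on $\prod_k\alpha_k = 1$).

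The main technical obstacle is the careful sign-tracking in stage (iii): the factor $(-1)^{n+1+j}V^{(j)}/V = (-1)^n/\prod_{k\neq j}(\alpha_j-\alpha_k)$ from the cofactor expansion, the factor $\alpha_j^{-1}$ that emerged when factoring $\prod_{k\neq j}\alpha_k^l$ out of the minor, and the sign $(-1)^{n+1}$ from the Vandermonde reduction must all combine correctly. A secondary subtlety is that the degree bound on $\prod_{j'}(1-yt\beta_{j'}^{-1})$ is exactly $n-1$ precisely because $\pi$ has $n-1$ Satake parameters; without this the residue at infinity would contribute an extra term and the clean answer would fail. Once these are reconciled, combining the three factors $L(ns_1,\widetilde{\Pi})$, $L(s_1+s_2,\Pi\times\widetilde{\pi})$, and $L((n+1)s_1+s_2,\widetilde{\pi})^{-1}$ yields the formula in the lemma for $\Re(\mathbf{s})$ large; meromorphic continuation extends it to all $\mathbf{s} \in \mathbb{C}^2$.
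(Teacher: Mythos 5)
Your proof is correct and follows essentially the same route as the paper: reduce to $f=0$ via the observation that $\ell(\cdot)=0$ on the integration domain, expand via Shintani, develop the $\GL(n+1)$ Schur determinant by cofactors along its bottom row to pull out $\lambda_{\Pi^{(j)}}(m,0)$ with denominator $\prod_{i\neq j}(\alpha_i-\alpha_j)$, recognize the $m$-sum as $L(s_1+s_2,\Pi^{(j)}\times\widetilde\pi)$, and factor out $L(s_1+s_2,\Pi\times\widetilde\pi)$. The only (cosmetic) difference is in the final $j$-sum: you evaluate $\sum_j\prod_{j'}(1-y\alpha_j\beta_{j'}^{-1})/((\alpha_j-x)\prod_{i\neq j}(\alpha_i-\alpha_j))$ in one shot by the residue theorem applied to $\prod_{j'}(1-yt\beta_{j'}^{-1})/((t-x)\prod_k(t-\alpha_k))$, whereas the paper expands the polynomial factor $L(s_1+s_2,\widetilde\pi\times\chi_j)^{-1}$ and the geometric series $L(ns_1,\chi_j^{-1})$ into monomials, kills the bulk of the resulting $j$-sums by Lagrange interpolation (which is the same residue-at-infinity argument applied monomial by monomial), and resums the survivors via $\lambda_{\widetilde\Pi}(\nu-k,0,\ldots,0)$. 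Both are equivalent; yours is arguably slightly tidier since it avoids the intermediate re-expansion and resummation, at the cost of the sign bookkeeping you correctly identify as the delicate point.
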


\begin{proof}
First, it follows from \eqref{choice-local-vector-explicit} that for $h\in G_{n-1}$ and $z\in G_1$, one has
$$W^{(f)}_{\Pi}\left[\begin{pmatrix}zh&&\\&z&\\&&1\end{pmatrix}\right]=W_{\Pi}\left[\begin{pmatrix}zh&&\\&z&\\&&1\end{pmatrix}\right].$$
Therefore, it suffices to take $f=0$.

We use \eqref{shintani} and see that
\begin{equation}\label{OmegaAfterShintani}
\Omega(\mathbf{s},W_{\Pi},W_{\pi})=\sum_{\substack{m_1\ge \ldots\ge m_{n-1}\ge 0\\l\ge 0}}\frac{\lambda_{\Pi}(m_1+l,\ldots,m_{n-1}+l,l,0)\lambda_{\widetilde{\pi}}(m)}{N(\p)^{(s_1+s_2)\sum_i m_i+nl s_1}}.
\end{equation}
We now proceed as in the proof of Lemma \ref{main-technical-lemma-residue} and follow the notations there.

Let $\alpha=(\alpha_1,\ldots,\alpha_{n+1})$ be an ordered \((n+1)\)-tuple of complex numbers such that $\{\alpha_j\}_{j=1}^{n+1}$ is the multiset of Satake parameters of $\Pi$. Here, as in Lemma \ref{main-technical-lemma-residue}, we first suppose that $\alpha$ is regular and deduce the general result by continuity.

Using the representation of $\lambda_{\Pi}$ in terms of Schur polynomials we obtain that the summand above is
$$\lambda_{\Pi}(m_1+l,\ldots,m_{n-1}+l,l,0)=\sum_{j=1}^{n+1}(-1)^{n+j+1}\frac{\alpha_j^{-l-1}\lambda_{\Pi^{(j)}}(m,0)V_n(\alpha^{(j)})}{V_{n+1}(\alpha)}.$$
Using this in \eqref{OmegaAfterShintani}, changing the order of summation and performing the sum over $m$, we obtain
$$\Omega(\mathbf{s},W_{\Pi},W_{\pi})=\sum_{j=1}^{n+1}(-1)^{n+j+1}\frac{L(s_1+s_2,\Pi^{(j)}\otimes \widetilde{\pi})V_n(\alpha^{(j)})}{V_{n+1}(\alpha)}\sum_{l\ge 0}\frac{\alpha_j^{-l-1}}{N(\p)^{lns_1}}.$$
Let $\chi_j$ be the unramified character of $F^{\times}$ such that $\chi_j(\p)=\alpha_j$. Since $\Pi$ is unramified, we have the factorization
$$L(s,\Pi\otimes\widetilde{\pi})=L(s,\Pi^{(j)}\otimes\widetilde{\pi})L(s,\widetilde{\pi}\otimes\chi_j).$$
Hence, we have
$$\Omega(\mathbf{s},W_{\Pi},W_{\pi})=L(s_1+s_2,\Pi\otimes\widetilde{\pi})\sum_{j=1}^{n+1}(-1)^{n+j+1}\frac{L(s_1+s_2, \widetilde{\pi}\otimes \chi_j)^{-1}V_n(\alpha^{(j)})}{V_{n+1}(\alpha)}\sum_{l\ge 0}\frac{\alpha_j^{-l-1}}{N(\p)^{lns_1}}.$$
Expanding the term $L(s_1+s_2,\widetilde{\pi}\otimes \chi_j)^{-1}$ and changing the order of summation the above can be written as
$$L(s_1+s_2,\Pi\otimes\widetilde{\pi})\sum_{k=0}^{n-1}(-1)^ke_k(A_{\widetilde{\pi}})N(\p)^{-k(s_1+s_2)}\sum_{l\geq 0}\frac{1}{N(\p)^{ln s_1}}
\sum_{j=1}^{n+1}(-1)^{n+j+1}\frac{\alpha_j^{k-l-1}V_n(\alpha^{(j)})}{V_{n+1}(\alpha)},$$
where $e_k(A_{\pi})$ denotes the $k$-th elementary symmetric polynomial on the Satake parameters $A_\pi$ of $\pi$. 

Working as in Lemma \ref{main-technical-lemma-residue} we deduce that the inner-most sum vanishes unless $l\ge k$ in which case the sum evaluates to $$\lambda_{\widetilde{\Pi}}(l-k,0,\ldots,0).$$
Thus we obtain that
\begin{equation*}
\Omega(\mathbf{s},W_{\Pi},W_{\pi})=L(s_1+s_2,\Pi\otimes\widetilde{\pi})\sum_{k=0}^{n-1}(-1)^ke_k(A_{\widetilde{\pi}})N(\p)^{-k((n+1)s_1+s_2)}\sum_{l\ge 0}\frac{\lambda_{\widetilde{\Pi}}(l,0\dots,0)}{N(\p)^{lns_1}}
\end{equation*}
Noting that
$$L\left((n+1)s_1 + s_2,\widetilde{\pi}\right)^{-1}=\sum_{k=0}^{n-1}(-1)^ke_k(A_{\widetilde{\pi}})N(\p)^{-k((n+1)s_1+s_2)}$$
and
$$L(ns_1,\widetilde{\Pi})=\sum_{l\ge 0}\frac{\lambda_{\widetilde{\Pi}}(l,0\dots,0)}{N(\p)^{lns_1}},$$
we conclude the proof.
\end{proof}

\subsection{At the places $v\mid \Delta_F$}

Recall from \S\ref{sec:choice-of-vectors} that in this case our choice of test vectors is given by \eqref{choice-for-ramified-psi}, so that in this case $\Omega_v(\mathbf{s},W_{\Phi,v},W_{\phi,v})$ is given by
$$\int_{A_{n-1}}\int_{G_1}W_{\Pi_v}\left[a_{n+1}(\lambda_v)\begin{pmatrix}za&&\\&z&\\&&1\end{pmatrix}
\right]\ol{W_{\pi_v}(a_{n-1}(\lambda_v)a)}|\det(a)|^{s_1+s_2-1}|z|^{n(s_1-1/2)}\d^{\times}z\frac{\d^\times a}{\delta(a)}.$$
Now it follows from the change of variables $\lambda_v a_{n-1}(\lambda_v)a\mapsto a$ and $\lambda_v z\mapsto z$ that the above equals
$$|\lambda_v|^{\mu_{\Omega}(\mathbf{s})}\Omega_v(\mathbf{s},W_{\Pi_v},W_{\pi_v}),$$
where $\mu_{\Omega}(\mathbf{s})$ is an affine function whose coefficients are complex numbers depending only on $n$. Recall that $|\lambda_v|^{-1}=\Delta_v$ is the $v$ part of the discriminant. Also notice that Lemma \ref{degenerate-p-place}
also includes the unramified computation as a special case by simply taking $f=0$. This implies that
\begin{equation}\label{degenerate-discrimant-places}
\Omega_v(s,W_{\Phi,v},W_{\phi,v})=\Delta_v^{-\mu_{\Omega}(\mathbf{s})}\frac{L_v(s_1+s_2,\Pi_v\otimes\widetilde{\pi}_v)L_v(ns_1,\widetilde{\Pi}_v)}{L_v\left((n+1)s_1 + s_2,\widetilde{\pi}_v\right)}.
\end{equation}

\subsection{A combined result}

By putting together Lemma \ref{degenerate-p-place} and \eqref{degenerate-discrimant-places}, and taking into consideration that both formul{\ae} apply for the unramified places $v\not\in S$, we have the following lemma.

\begin{lemma}\label{degenerate-unramified}
For large $\Re(\mathbf{s})$ we have
$$\Omega_v(\mathbf{s},W_{\Phi,v},W_{\phi,v})=\Delta_v^{-\mu_{\Omega}(\mathbf{s})}\frac{L_v(s_1+s_2,\Pi_v\otimes\widetilde{\pi}_v)L_v(ns_1,\widetilde{\Pi}_v)}{L_v\left((n+1)s_1 + s_2,\widetilde{\pi}_v\right)}$$
for $v<\infty$ and $v\neq \p_0$ where the local vectors are as in \S\ref{sec:choice-of-vectors}.
\end{lemma}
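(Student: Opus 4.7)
The plan is to verify the asserted identity place-by-place according to the three disjoint cases $v \notin S$, $v \mid \q$, and $v \mid \Delta_F$, and observe that the stated formula packages all three into a single statement via the convention on $\mu_v$.

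First, at a completely unramified place $v \notin S$, the local vectors are chosen to be the unramified newvectors $W_{\Phi,v}=W_{\Pi_v}$ and $W_{\phi,v}=W_{\pi_v}$, and the additive character $\psi_v$ is itself unramified so $\Delta_v=1$. Since $W_{\Pi_v}^{(0)}=W_{\Pi_v}$ from the definition \eqref{choice-test-vector-original}, the claim in this case is precisely the content of Lemma \ref{degenerate-p-place} with $f=0$, together with the convention that $\mu_v\equiv 0$.

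Next, at a place $v\mid \q$ we have $W_{\Phi,v}=W_{\Pi_v}^{(f_v)}$ and $W_{\phi,v}=W_{\pi_v}$, and again $\psi_v$ is unramified (since $\q$ and $\Delta_F$ are coprime), so $\Delta_v=1$ and $\mu_v\equiv 0$. Lemma \ref{degenerate-p-place} applies directly and yields the desired equality for large $\Re(\mathbf{s})$. Finally, at a place $v\mid\Delta_F$, the identity is exactly \eqref{degenerate-discrimant-places}, which was derived by performing the change of variables $a\mapsto\lambda a_{n-1}(\lambda)^{-1}a$ and $z\mapsto\lambda z$ inside $\Omega_v$ and then reducing to the unramified computation from Lemma \ref{degenerate-p-place} (with $f=0$). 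The affine function $\mu_v(\mathbf{s})$ arising from tracking the Jacobian and the shifts of $|\det|^{s_1+s_2-1}|z|^{n(s_1-1/2)}$ depends only on $n$.

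Unifying the three cases by extending $\mu_v$ by $0$ to every place $v$ not dividing $\Delta_F$ yields the single uniform formula of the lemma for all $v<\infty$ with $v\neq\p_0$. There is no real obstacle here: this is a bookkeeping statement that repackages Lemma \ref{degenerate-p-place} and \eqref{degenerate-discrimant-places} into a form ready to be assembled into a global Euler product via Lemma \ref{degenerate-eulerian} in the subsequent proof of Proposition \ref{degenerate-term}.
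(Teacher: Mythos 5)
Your proposal is correct and matches the paper's argument: the lemma is established simply by combining Lemma \ref{degenerate-p-place} (which covers $v\nmid\Delta_F$, including the unramified case $f=0$) with \eqref{degenerate-discrimant-places} (which covers $v\mid\Delta_F$), using the convention that $\mu_v\equiv 0$ whenever $v\nmid\Delta_F$. This is exactly the bookkeeping step the paper performs immediately before stating the lemma.
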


\subsection{At the place $\p_0$}

Recall the choices of the test vectors at $v$ in \S\ref{sec:choice-of-vectors}.

\begin{lemma}\label{degenerate-supercuspidal}
We have
$$\Omega_v(\mathbf{s},W_{\Phi,v},W_{\phi,v})=1$$
for any $\mathbf{s}\in\C^2$.
\end{lemma}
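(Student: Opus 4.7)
The plan is to plug the explicit description \eqref{choice-test-vector-supercuspidal} of $W_{\Phi,v}$ into the local integral $\Omega_v$ and reduce to a standard Rankin--Selberg zeta integral. First I would use the block factorization
$$
\begin{pmatrix} zh & & \\ & z & \\ & & 1 \end{pmatrix} \;=\; \begin{pmatrix} z\,\mathrm{I}_n & \\ & 1 \end{pmatrix}\begin{pmatrix} h & & \\ & 1 & \\ & & 1 \end{pmatrix}, \qquad h\in G_{n-1}(F_v),\; z\in G_1(F_v),
$$
and apply \eqref{choice-test-vector-supercuspidal} with $g := \iota(h):=\begin{pmatrix} h & \\ & 1 \end{pmatrix}\in Z_n(F_v)\backslash G_n(F_v)$ to obtain
$$
W_{\Phi,v}\!\begin{pmatrix} zh & & \\ & z & \\ & & 1 \end{pmatrix} \;=\; \mathbf{1}_{\mathfrak{o}_v^\times}(z)\, W_\tau\bigl(\iota(h)\bigr).
$$

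Next the $z$--integral decouples completely, because the integrand depends on $z$ only through $\mathbf{1}_{\mathfrak{o}_v^\times}(z)\,|z|^{n(s_1-1/2)}$; since $|z|=1$ on $\mathfrak{o}_v^\times$ and the multiplicative measure is normalized so that $\vol(\mathfrak{o}_v^\times,\d^\times z)=1$, this factor is $1$ for every $\mathbf{s}\in\mathbb{C}^2$. What remains is the integral
$$
\Omega_v(\mathbf{s},W_{\Phi,v},W_{\phi,v}) \;=\; \int_{N_{n-1}(F_v)\backslash G_{n-1}(F_v)} W_\tau\bigl(\iota(h)\bigr)\,\overline{W_{\pi_v}(h)}\,|\det h|^{s_1+s_2-1}\,\d h,
$$
which is precisely the local $G_n\times G_{n-1}$ zeta integral $\Psi_v\!\bigl(s_1+s_2-\tfrac12,\,W_\tau,\,\overline{W_{\pi_v}}\bigr)$ between the supercuspidal $\tau$ and the unramified $\widetilde{\pi}_v$.

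I would then invoke the fact that $W_\tau\in C_c^\infty(Z_n N_n\backslash G_n,\psi_v)$ (\S\ref{whittaker-kirillov-model}) so the integral converges absolutely and is entire in $\mathbf{s}$, and combine this with $L_v(s,\tau\otimes\widetilde{\pi}_v)=1$, valid because $\tau$ is supercuspidal while $\pi_v$ is unramified; hence $\Psi_v(s,W_\tau,\overline{W_{\pi_v}})$ is a finite Laurent polynomial in $p_v^{\pm s}$. To identify this polynomial, I would pass to Iwasawa coordinates $h=ak$ with $a\in A_{n-1}$, $k\in K_{n-1}$, use $K_{n-1}$--invariance of $W_{\pi_v}$ to strip the compact integration, and expand $W_{\pi_v}(a)$ via Shintani's formula \eqref{shintani}. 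The support properties of $W_\tau$ on the embedded torus (inherited from the Kirillov model of the supercuspidal $\tau$ and the compactness of its support modulo $Z_nN_n$) force only the trivial diagonal contribution $a=\mathrm{I}_{n-1}$ to survive, so the sum collapses to the single term $W_\tau(\mathrm{I}_n)\cdot 1$, independent of $\mathbf{s}$.

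The main delicate point is normalization: the constant that comes out must be exactly $1$, and this is forced by the choice, made implicit in \eqref{choice-test-vector-supercuspidal}, of the specific Whittaker function $W_\tau$ of $\tau$ (e.g.\ $W_\tau(\mathrm{I}_n)=1$) together with the requirement that the Haar measure on $\mathfrak{o}_v^\times$ has total mass one. The hardest technical step will be the support computation for $W_\tau|_{\iota(A_{n-1})}$, for which one appeals to the standard description of supercuspidal Whittaker functions on the mirabolic and to the vanishing of matrix coefficients of $\tau$ against unramified data (compare the analogous computation in \cite[\S 6.2]{Jana2020app}).
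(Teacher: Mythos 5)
Your first two paragraphs reproduce the paper's approach exactly: \eqref{choice-test-vector-supercuspidal} factors the integrand, the $z$-integral contributes $\vol(\mathfrak{o}_v^\times,\d^\times z)=1$, and what remains is the $\GL(n)\times\GL(n-1)$ local zeta integral $\Psi_v\bigl(s_1+s_2-\tfrac12,W_\tau,\overline{W_{\pi_v}}\bigr)$.

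The route you then take to evaluate that zeta integral has a genuine gap. You assert that compactness of the Kirillov-model support of $W_\tau$ forces $W_\tau\left[\begin{pmatrix}a&\\&1\end{pmatrix}\right]$ to vanish off $a=\mathrm{I}_{n-1}$ on the torus. That is false as stated: for a supercuspidal $\tau$ the restriction $h\mapsto W_\tau\left[\begin{pmatrix}h&\\&1\end{pmatrix}\right]$ lies in $C_c^\infty(N_{n-1}\backslash G_{n-1},\psi)$, so it is supported on finitely many torus cosets indexed by dominant tuples $m=(m_1,\dots,m_{n-1})\in\Z^{n-1}$, but compactness alone singles out nothing special about $m=0$. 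The concentration at $m=0$ \emph{is} true for the newvector of a supercuspidal, but that is precisely the content of Miyauchi's explicit formula (equivalently, of the $\GL(n)\times\GL(n-1)$ test vector theorem of \cite{JPSS1981conducteur,BKL2020test}), not a consequence of the Kirillov model being compactly supported. So the step you label the ``hardest technical step'' is in fact where the entire content of the lemma lives, and the compactness sketch would not close it.

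The paper bypasses the computation: it invokes \eqref{bkl-test-vector} with the unramified $\pi_v$ playing the role of $\sigma$, so that $c(\sigma)=0$ and the index factor is trivial, yielding $\Psi_v\bigl(s_1+s_2-\tfrac12,W_\tau,\overline{W_{\pi_v}}\bigr)=L_v(s_1+s_2-\tfrac12,\tau\otimes\widetilde{\pi}_v)$; it then observes that this local $L$-factor equals $1$ because $\tau$ is supercuspidal and $\pi_v$ is unramified (\cite{Miyauchi2014Whittaker}), and extends to all $\mathbf{s}\in\C^2$ by analytic continuation. If you wish to keep your Shintani-expansion route, you must replace the compactness heuristic by an explicit appeal to Miyauchi's formula for the torus values of the newvector, or simply cite the test vector identity as the paper does.
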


\begin{proof}
From \eqref{choice-test-vector-supercuspidal} we have
$$\Omega_v(\mathbf{s},W_{\Phi},W_{\phi})=\int_{N_{n-1}(F_v)\bs G_{n-1}(F_v)}W_{\tau}\left[\begin{pmatrix}
h&\\&1
\end{pmatrix}\right]\ol{W_{\pi_v}(h)}|\det(h)|^{s_1+s_2-1}\d h.$$
The above is absolutely convergent for all $\mathbf{s}$ which follows from compact support of $W_\tau$ in the domain of integration.
From \eqref{bkl-test-vector} we conclude that the above integral is $L_v(s_1+s_2-1/2,\tau\otimes\widetilde{\pi}_v)$. As $\tau$ is supercuspidal and $\pi_v$ is unramified this local $L$-factor is equal to $1$, which follows from \cite{Miyauchi2014Whittaker}.
\end{proof}

\subsection{At the archimedean places}

Let $v\mid\infty$. Recall the choices of the test vectors at $v$ in \S\ref{sec:choice-of-vectors}.

\begin{lemma}\label{degenrate-archimedean}
Let $\mathbf{s}=\left(\frac{1}{2},\frac{1}{2}\right)$. Then for $W_{\Phi,v}$ and $W_{\phi,v}$ as in \S\ref{choice-test-vector-archimedean} we have
$$\Omega_v(\mathbf{s},W_{\Phi,v},W_{\phi,v})\asymp 1,$$
where the implied constants depend at most on the archimedean components of $\Pi$ and $\pi$.
\end{lemma}

\begin{proof}
Note that 
$$\Omega_v(\mathbf{s},W_{\Phi,v},W_{\phi,v}=\int_{N_{n-1}(F_v)\backslash G_{n-1}(F_v)}\int_{G_1(F_v)}W_{\Phi,v}\left[\begin{pmatrix}zh&&\\&z&\\&&1\end{pmatrix}
\right]\ol{W_{\phi,v}(h)}\d^{\times}z\d h.$$
Using the normalizations and supports of $W_{\Phi,v}$ and $W_{\phi,v}$ specified in \S\ref{choice-test-vector-archimedean} we write the above as
$$1+\int_{B}\int_{B_0}W_{\Phi,v}\left[\begin{pmatrix}zh&&\\&z&\\&&1\end{pmatrix}
\right]\left(\ol{W_{\phi,v}(h)}-1\right)\d^{\times}z\d h,$$
and the second summand in the last expression can be bounded by $\epsilon$. Making $\epsilon$ sufficiently small we conclude.
\end{proof}

\subsection{Proof of Proposition \ref{degenerate-term}}

Let $\Re(\mathbf{s})$ be large enough. Using Lemma \ref{degenerate-eulerian}, Lemma \ref{degenerate-p-place}, and Lemma \ref{degenerate-supercuspidal} we obtain
$$\D(\wc{\mathbf{s}},\wc{\Phi},\phi)= \Delta_F^{-\mu_\Omega(\mathbf{s})}\D_\infty(\mathbf{s})
\frac{L^{\p_0}(s_1+s_2,\Pi\otimes\widetilde{\pi})L^{\p_0}(ns_1,\widetilde{\Pi})}{L^{\p_0}\left((n+1)s_1 + s_2,\widetilde{\pi}\right)},$$
where  $\D_\infty(\mathbf{s})$ is defined in \eqref{archimedean-factor-degenerate}.
Both sides above are holomorphic in $\Re(\mathbf{s})\ge\left(\frac12,\frac12\right)$ which follows from holomorphicity and non-vanishing of $L$-functions in the region of absolute convergence and Lemma \ref{holomorphicity-local-weight}. We conclude by inserting $\mathbf{s}=\left(\frac12,\frac12\right)$, defining $\mu_\Omega:=\mu_\Omega\left(\frac12,\frac12\right)$, and appealing to Lemma \ref{degenrate-archimedean}.

\section{Proofs of the Main Results}\label{sec:proofs}

In this section we put together all of our previous results and finally prove the main theorems. We use the shorthand $\M$ for $\M(\mathbf{s},\Phi,\phi)$ and $\widecheck{\M}$ for $\M(\widecheck{\Phi},\phi,\wc{\mathbf{s}})$, where $\widecheck{\Phi}$ and $\wc{\mathbf{s}}$ are given in Proposition \ref{abstract-reciprocity}. Similar notations are used for the functions $\mc{P}$, $\D$ and $\mc{R}$.

\subsection{Proof of Theorem \ref{reciprocity}}

From Proposition \ref{analytically-continued-spectral-decomposition}
we have
$$\P=\M+\mc{R}+\D$$
in the region $1/2\le \Re(\mathbf{s})< 1$.
Now, Proposition \ref{abstract-reciprocity} tells us that
$$\P=\widecheck{\P}.$$
We apply Proposition \ref{analytically-continued-spectral-decomposition} to $\mc{\wc{P}}$ with $1/2\le\Re(\mathbf{s})<1$.
Thus we obtain
$$\M=\wc{\M}+\widecheck{\mc{R}}+\wc{\D}-\mc{R}-\D,$$
which proves Theorem \ref{reciprocity}.

\subsection{A few lemmas}

\begin{lemma}\label{rapid-decay-local-weight}
Let $v$ be any place. Then for any $W_1\in\Pi_v$ and $W_2\in\pi_v$ we have
$$H_v(\sigma_v; W_1,W_2,\mathbf{s})\ll_{W_1,W_2,N} C(\sigma_v)^{-N}$$
for any $\mathbf{s}\in\C^2$.
\end{lemma}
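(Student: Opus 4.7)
The plan is to bound each term in the sum \eqref{defn-local-factor} using the Sobolev bound of Lemma \ref{trivial-bound-zeta-integral}, and then to extract $C(\sigma_v)^{-N}$ decay from the sum by mimicking the non-archimedean argument in the proof of Lemma \ref{trace-class-property}.

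First I will apply Lemma \ref{trivial-bound-zeta-integral} to each of the two zeta integrals appearing in \eqref{defn-local-factor}. For the first, with the fixed large-group vector $V=W_1\in\Pi_v$ and variable small-group vector $W\in\sigma_v$, for every $B>0$ there exists $A=A(B,s_1)$ with
$$|\Psi_v(s_1,W_1,\ol{W})|\ll S_A(W_1)\,S_{-B}(W).$$
For the second, taking the large-group variable to be $W\in\sigma_v$ and the small-group one to be the fixed $W_2\in\pi_v$, for every $B'>0$ there exists $A'=A'(B',s_2)$ with
$$|\Psi_v(s_2,W,\ol{W_2})|\ll S_{A'}(W)\,S_{-B'}(W_2).$$
Crucially the factor attached to the \emph{large-group} Whittaker carries the \emph{positive} Sobolev exponent, so the variable vector $W$ picks up the combination $S_{-B}(W)S_{A'}(W)$.

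Next I will choose $\B(\sigma_v)$ to consist of unit-norm eigenvectors of the Laplacian $\mathfrak{D}$, so that $S_\alpha(W)=\lambda_W^\alpha$ for $\mathfrak{D}W=\lambda_W W$. Substituting into \eqref{defn-local-factor} gives
$$|H_v(\sigma_v;W_1,W_2,\mathbf{s})|\leq \frac{S_A(W_1)\,S_{-B'}(W_2)}{|L_v(s_1,\Pi_v\otimes\ol{\sigma}_v)\,L_v(s_2,\sigma_v\otimes\ol{\pi}_v)|}\sum_{W\in\B(\sigma_v)}\lambda_W^{A'-B}.$$
The denominator is a finite product of factors $(1-\alpha p^{-s})$ whose Satake parameters are bounded by $p^\vartheta$ with $\vartheta<1/2$ (since $\Pi_v,\pi_v$ are fixed and $\sigma_v$ is unitary generic), so $|L_v(\cdot)|^{-1}$ is $O_{\mathbf{s},\Pi_v,\pi_v}(1)$ uniformly in $\sigma_v$ and can be absorbed into the implicit constant.

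To finish, I will invoke the internal estimate underlying the proof of Lemma \ref{trace-class-property}: the spectrum of $\mathfrak{D}$ on $\sigma_v$ is supported on $\lambda_W\geq C(\sigma_v)$ (in the non-archimedean case, on $\lambda_W=p^m$ with $m\geq c(\sigma_v)$, and with multiplicity $\ll p^{md_3}$ for some $d_3=d_3(n)$; in the archimedean case, an analogous bound via the Casimir eigenvalue on the minimal $K$-type, as in \cite[Lemma 3.3]{JaNe2019anv}). Hence, fixing $B'=1$ (which fixes $A'$) and choosing $B$ so large that $B-A'$ exceeds the multiplicity exponent $d_3$ by at least $N$, one obtains
$$\sum_{W\in\B(\sigma_v)}\lambda_W^{A'-B}\ll_{A',N}C(\sigma_v)^{-N},$$
and the lemma follows. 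The principal difficulty is verifying the eigenvalue lower bound $\lambda_W\gg C(\sigma_v)^c$ at the archimedean places, where the relation between Langlands parameters and Laplacian spectrum is less immediate than the clean non-archimedean identification via the $K[m]$-filtration; this is handled as in the archimedean half of the proof of Lemma \ref{trace-class-property}.
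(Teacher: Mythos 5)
Your proposal is correct and follows essentially the same route as the paper: bound both zeta integrals via Lemma~\ref{trivial-bound-zeta-integral}, absorb the $L$-factor denominators, and reduce to showing $\sum_{W\in\B(\sigma_v)}S_{-M}(W)\ll C(\sigma_v)^{-N}$, which the paper delegates to the proof of Lemma~\ref{trace-class-property} in the non-archimedean case and to \cite{JaNe2019anv} together with \cite{MV2010subconvexity} in the archimedean case. You track the Sobolev exponents more explicitly than the paper does — in particular you correctly observe that the positive exponent from the second zeta integral falls on the variable $W\in\sigma_v$, so one must first fix the $\pi_v$-side order $B'$ before taking the $\sigma_v$-side order $B$ large — but the underlying argument is the same.
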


This lemma allows us to truncate the Plancherel integral so the archimedean parameters have \emph{essentially} bounded size.

\begin{proof}
We use Lemma \ref{trivial-bound-zeta-integral} in the definition \eqref{defn-local-factor}. We see that it suffices to show that for any $N$ and $d$ there exists an $M$ such that
$$\sum_{W\in\B(\sigma_v)}S_{-M}(W)S_d(W)\ll C(\sigma_v)^{-N}.$$
If $v$ is non-archimedean the proof of the above is contained in the proof of Lemma \ref{trace-class-property}. If $v$ archimedean then we argue as in the proof of \cite[Lemma 3.3]{JaNe2019anv} and conclude via \cite[Lemma 2.6.6]{MV2010subconvexity}.
\end{proof}

From now on let $F$ be totally real. Recall that $\sigma_{\mathrm{f}}:=\otimes_{v<\infty}\sigma_v$, $C(\sigma_{\mathrm{f}}):=\prod_{v<\infty}C(\sigma_v)$.

\begin{lemma}\label{weak-weyl-law}
Let $\q$ be an integral ideal of the finite adeles of $F$ with norm $N(\q)$ and let $\mathfrak{X}=(X_v)_{v\mid \infty}$ where $X_v>1$ for $v\mid\infty$. We define
\begin{equation*}
\mc{F}_{\q,\mathfrak{X}}:=\left\{
\begin{aligned}
& \sigma\text{ unitary generic automorphic representation}\\
& \text{of $G_n(\A)$ with trivial central character }
\end{aligned}
\middle|
\begin{aligned}
C(\sigma_\mathrm{f})\mid N(\q);\, C(\sigma_v)<X_v,v\mid\infty
\end{aligned}
\right\}.
\end{equation*}
Then
$$\int_{\mc{F}_{\q,\mathfrak{X}}}\frac{1}{\mc{L}(\sigma)}\d\sigma \ll_\epsilon (N(\q)X)^{n-1+\epsilon},$$
where $X:=\prod_{v\mid\infty}X_v$.
\end{lemma}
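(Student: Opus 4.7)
The strategy follows the template of \cite[Lemma 7.3]{Jana2020RS}: combine a uniform pointwise lower bound on the harmonic weight $\mc{L}^{\q,\infty}(\sigma)$ with a Plancherel volume estimate on the generic spectrum. Note first that on $\mc{F}_{\q,X}$ one has $C(\sigma)\leq qX$ (with $X:=\prod_{v\mid\infty}X_v$), so any bound of the form $C(\sigma)^\epsilon$ may be absorbed into $(qX)^\epsilon$.

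First, I would use the Langlands classification of \S\ref{subsec:global-classification} to decompose the integral over the generic spectrum. Any $\sigma \in \mc{F}_{\q,X}$ is the unique irreducible constituent of the induced representation attached to a cuspidal datum $(M,\tau)$, where $M\subset G_n$ is a standard Levi attached to a partition $n=n_1+\dots+n_k$ and $\tau = \bigotimes_{i=1}^k \pi_i\otimes |\det|^{z_i}$ with $\pi_i$ cuspidal on $G_{n_i}$ and $z\in i\R^k$ subject to the linear constraint forcing trivial central character. The measure $\d\sigma$ then decomposes as a finite sum over partitions of a counting measure on cuspidal data tensored with a continuous Plancherel density in $z$.

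Second, Lemma \ref{harmonic-weight-computation} gives
$$\mc{L}^{\q,\infty}(\sigma)=|\mathfrak{cs}^{\q,\infty}(\tau)|^2\prod_i \Lambda^{\q,\infty}(1,\pi_i,\Ad)\prod_{v\in\{v\mid \q\}\cup\{v\mid\infty\}}\frac{\zeta_v(n)}{\zeta_v(1)}$$
up to explicit Euler factors. Applying Hoffstein--Lockhart--Brumley-type lower bounds $L^{\q,\infty}(1,\pi_i,\Ad)\gg_\epsilon C(\pi_i)^{-\epsilon}$ and $|L^{\q,\infty}(1,\pi_i\otimes\widetilde{\pi}_j)|\gg_\epsilon (C(\pi_i)C(\pi_j))^{-\epsilon}$, and noting that $C(\pi_i)\ll C(\sigma)\leq qX$, I obtain the pointwise estimate
$$\frac{1}{\mc{L}^{\q,\infty}(\sigma)}\ll_\epsilon (qX)^\epsilon$$
uniformly on $\mc{F}_{\q,X}$.

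Third, the problem is thereby reduced to bounding the Plancherel volume of $\mc{F}_{\q,X}$. For the fully cuspidal contribution ($k=1$, $M=G_n$), the non-archimedean newvector theory of \cite{JPSS1981conducteur} shows the number of cuspidal $\sigma$ with $C(\sigma_{\mathrm{f}})\mid \q$ is $\ll q^{n-1+\epsilon}$, while the archimedean Weyl law contributes $\ll X^{n-1+\epsilon}$, giving total $\ll (qX)^{n-1+\epsilon}$. For each proper Levi corresponding to partition $n=n_1+\dots+n_k$, $k\geq 2$, the same Weyl law applied to each $G_{n_i}$ bounds the cuspidal data by $\prod_i (qX)^{n_i-1+\epsilon}$, while the integration over the unitary parameters $z$ restricted by $C(\sigma_\infty)<X$ contributes $\ll X^{k-1+\epsilon}$. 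Since $\sum_i (n_i-1)+(k-1)=n-1$, each Levi's contribution is bounded by $(qX)^{n-1+\epsilon}$, and summing finitely many partitions yields the claim.

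The main obstacle will be verifying the Weyl law estimate uniformly in both conductor aspects and correctly tracking how the conductor of an induced representation depends multiplicatively on those of its cuspidal constituents $\pi_i$ and on the unitary parameters $z_i$. While each ingredient is standard, their interaction in the hybrid (finite plus archimedean) conductor regime requires some care; the argument essentially adapts \cite[Lemma 7.3]{Jana2020RS} to the present normalization where the harmonic factors at $\q$-adic and archimedean places are excluded.
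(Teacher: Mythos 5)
Your proposal takes a genuinely different route from the paper, and it has real gaps.

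The paper's proof is a Kuznetsov/pre-trace-formula argument. It constructs $L^1$-normalized bump functions $\alpha_v$ on congruence subgroups at each $v\in S$, forms the (manifestly positive) self-convolutions $\alpha^\ast_v$, and uses the spectral identity
$$
\int_{\sigma}\frac{\prod_{v\in S}J_{\sigma_v}(\alpha^{\ast}_{v})}{\mathcal{L}^S(\sigma)}\,\d\sigma = \prod_{v\in S}\int_{N_n(F_v)}\alpha^{\ast}_{v}(n_v)\overline{\psi_v(n_v)}\,\d n_v,
$$
together with newvector theory (\cite{JPSS1981conducteur} non-archimedean, \cite{JaNe2019anv} archimedean) to obtain $J_{\sigma_v}(\alpha^\ast_v)\gg 1$ on the relevant family. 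Positivity lets one drop to the sub-family, and the geometric side is an explicit volume. Crucially, the weight $1/\mathcal{L}^S(\sigma)$ appears automatically on the spectral side; no lower bounds on $L$-values at $s=1$ are ever invoked.

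Your approach tries to first remove the harmonic weight by showing $1/\mathcal{L}^{\q,\infty}(\sigma)\ll (qX)^\epsilon$ pointwise via Hoffstein--Lockhart--Brumley lower bounds, and then bound the unweighted Plancherel volume. There are two problems. First, Lemma \ref{harmonic-weight-computation} is stated only for cuspidal data $(\pi_1,\ldots,\pi_k)$ that are \emph{pairwise non-isomorphic}; the generic spectrum of $\PGL(n)$ contains data with repetitions (e.g.\ $1\boxplus\cdots\boxplus 1$), where $\Lambda^S(1,\pi_i\otimes\widetilde{\pi}_j)$ has a pole and your pointwise bound breaks down unless you redo the computation for the correct Laurent coefficient; the paper explicitly declines to prove that case. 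Second, and more seriously, the unweighted Plancherel volume bound $\ll (qX)^{n-1+\epsilon}$ is itself the hybrid Weyl law for $\GL(n)$. You attribute the non-archimedean count to ``newvector theory,'' but newvector theory gives dimensions of fixed-vector spaces inside a given $\sigma$, not a count of how many $\sigma$ there are — the latter is a trace-formula statement. Similarly, the archimedean Weyl law in the analytic-conductor aspect uniformly in the level is the content of \cite[Theorem 9]{JaNe2019anv}, which is in fact proved by exactly the positive Kuznetsov argument the paper uses here. So your reduction trades the weighted estimate, which the Kuznetsov formula delivers directly, for the unweighted one, which is no easier and for which no complete argument is given.
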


The proof is essentially the same as that of \cite[Theorem 9]{JaNe2019anv}; see also \cite[Theorem 7]{Jana2020app}. We give a sketch of the proof for the sake of completeness.

\begin{proof}
Let $\q:=\prod_{v<\infty}\p_v^{f_v}$ with $f_v\ge 0$. For each $v<\infty$ we construct a test function $\alpha_{v}$ on $G_n(F_v)$ which is an $L^1$-normalized characteristic function of $K_0(\p_v^{f_v})$.

For each $v\mid \infty$ we fix sufficiently small $\tau_v>0$ and construct an approximate archimedean congruence subset $K_0(X_v,\tau_v)\subset G_n(F_v)$ as in \cite[(1.4)]{JaNe2019anv}. We also construct a test function $\alpha_{v}$ on $G_n(F_v)$ which is an $L^1$-normalized smoothened characteristic function of $K_0(X_v,\tau_v)$.

Let $\alpha^{\ast}_{v}$ be the self-convolution of $\alpha_{v}$, that is,
$$\alpha_v^\ast(g):=\int_{G_n(F_v)}\alpha_v(h)\alpha_v(gh^{-1})\d h.$$ We define
$$J_{\sigma_v}(\alpha^{\ast}_{v}):=\sum_{W\in \B(\sigma_v)}\sigma_v(\alpha^\ast_{v})W(1)\ol{W(1)}$$
where $\B(\sigma_v)$ is an orthonormal basis of $\sigma_v$. Applying the definition of $\alpha_v^\ast$ and changing the basis to $\{\sigma_v(h)W\}_{W\in\B(\sigma_v)}$ we also see that
$$J_{\sigma_v}(\alpha^{\ast}_{v})=\sum_{W\in \B(\sigma_v)}|\sigma_v(\alpha_v)W(1)|^2\ge 0.$$
Now for each $v<\infty$ we fix $\B(\sigma_v)\ni \frac{W_{\sigma_v}}{\|W\|_{\sigma_v}}$.
Classical non-archimedean newvector theory \cite{JPSS1981conducteur} and the normalization of the newvector, $W_{\sigma_v}(1)=1$, imply that
$$\sigma_v(\alpha_v)W_{\sigma_v}(1)=\vol(K_0(\p_v^{f_v}))^{-1}\int_{K_0(\p_v^{f_v})}W_{\sigma_v}(h)\d h
=\delta_{c(\sigma_v)\le f_v}.$$
Thus, applying \eqref{L2-ramified-whittaker} we have
$$J_{\sigma_v}(\alpha^{\ast}_{v})\ge \frac{|\sigma_v(\alpha_v)W_{\sigma_v}(1)|^2}{\|W_{\sigma_v}\|^2}\gg 1,\quad\text{if $c(\sigma_v)< f_v$ for $v<\infty$},$$
where the implied constant is uniform of $v$.
A similar statement holds when $v\mid\infty$, as can be observed in \cite[Proof of Theorem 9]{JaNe2019anv}. That is, we have
$$J_{\sigma_v}(\alpha^{\ast}_{v})\gg 1
\quad\text{if $C(\sigma_v)< X_v$ for $v\mid\infty$}.$$
Thus, combining the estimates at each place and applying the well-known bound $K^{\#\{v\mid\q\}}\gg_{K,\epsilon} N(\q)^{-\epsilon}$ for a fixed positive constant $K$, we obtain
\begin{equation}\label{prod-J-more-than-N-eps}
\prod_v J_{\sigma_v}(\alpha_v^\ast)\gg_\epsilon N(\q)^{-\epsilon}.
\end{equation}
Now we work as in the \cite[Proof of Theorem    9]{JaNe2019anv}. Consider the function
$$G_n(\A)\ni x_2\mapsto\sum_{\gamma\in G_n(F)}\left(\prod_v\alpha_v^\ast\right)(x_1^{-1}\gamma x_2).$$
If $x_i\in [N_n]$ then the support condition of $\alpha_v^\ast$ implies that the $\gamma$ sum above can be restricted to $N_n(F)$.
Now spectrally decomposing the above and integrating against $\psi_v(x_1^{-1}x_2)$ over $x_i\in [N_n]$ we obtain
$$\sum_{\gamma\in N_n(F)}\int_{[N_n]^2}\left(\prod_v\alpha_v^\ast\right)(x_1^{-1}\gamma x_2)\d x_1\d x_2=\int_{\sigma}\frac{\prod_{v}J_{\sigma_v}(\alpha^{\ast}_{v})}{\mathcal{L}(\sigma)}\d\sigma.$$
By \eqref{prod-J-more-than-N-eps}, we obtain
$$\int_{\mathcal{F}_{\q,\mathcal{X}}}\frac{1}{\mathcal{L}(\sigma)}\d\sigma\ll_\epsilon N(\q)^\epsilon\int_{\sigma}\frac{\prod_{v}J_{\sigma_v}(\alpha^{\ast}_{v})}{\mathcal{L}(\sigma)}\d\sigma = N(\q)^\epsilon\prod_{v}\int_{N_n(F_v)}\alpha^{\ast}_{v}(n_v)\overline{\psi_v(n_v)}\d n_v.$$
The last product of integrals can be bounded by $$\ll \prod_{v\mid\q}\vol(K_0(\p_v^{f_v}))^{-1}\prod_{v\mid\infty}\vol(K_0(X_v,\tau_v))^{-1}\ll \prod_{v\mid\q}N(\p_v)^{f_v(n-1)}\prod_{v\mid\infty}X_v^{n-1}.$$
The right-hand side above equals $(N(\q)X)^{n-1}$.
\end{proof}

\subsection{Proof of Theorem \ref{asymptotics}}

As before, $\q=\prod_{v\mid\q}\p_v^{f_v}$ is an ideal of $\o_F$ of norm $N(\q)$. We fix a set of places $S$, as in \S\ref{sec:choice-of-vectors},
$$S=\{\p_0\}\cup\{v\mid\q\}\cup\{v\mid \Delta_F\}\cup\{v\mid\infty\}.$$
Our point of departure is Theorem \ref{reciprocity} with $S$ as above, $\mathbf{s}=\left(\frac12,\frac12\right)$, and cusp forms $\Phi$ and $\phi$ with local components as in \S\ref{sec:choice-of-vectors}. Once again, we suppress $\Phi,\phi$ and $\mathbf{s}$ from the notations.

Recall that 
$$\sigma^{(\p_0)}_{\mathrm{f}}:=\bigotimes_{\p_0\neq v<\infty}\sigma_v,\quad
\mf{c}{(\sigma^{(\p_0)}_{\mathrm{f}})}:=\prod_{\p_0\neq v<\infty}\p^{c(\sigma_{v})},\quad C{(\sigma^{(\p_0)}_{\mathrm{f}})}:=\prod_{\p_0\neq v<\infty}C(\sigma_{v}).$$
Finally, recall the notations $H_\q(\sigma)$ and $h_\infty(\sigma)$ from \eqref{non-arch-weight} and \eqref{arch-weight}, respectively.

\begin{lemma}\label{original-moment}
Recall $\mc{M}$ from \eqref{def-moment} and $\tau$ from \S\ref{sec:choice-of-vectors}. 
For all $0<\delta\le 1/2$ there exists $\delta'>0$ such that
$$\mc{M}=\kappa_{F,n}\!\!\!\!\!\!\!\!\!\!\sum_{\substack{\sigma\text{ cuspidal; }\sigma_{\mathfrak{p}_0}=\tau;\\ \mf{c}(\sigma^{(\p_0)}_{\mathrm{f}})\mid\q,\,C(\sigma^{(\p_0)}_{\mathrm{f}})\ge N(\q)^{1/2-\delta} }}\frac{L(1/2,\Pi\otimes\widetilde{\sigma})L(1/2,\sigma\otimes\widetilde{\pi})}{L(1,\sigma,\Ad)}\varepsilon_{\p_0}(1,\tau\otimes\widetilde{\tau})H_\q(\sigma)h_\infty(\sigma)+O_{\Pi,\pi}(N(\q)^{-\delta'}),$$
for some positive constant $\kappa_{F,n}$ depending on the number field $F$ and $n$.
\end{lemma}

\begin{proof}
Applying Proposition \ref{cuspidal-projection} and \eqref{H-for-ramified-psi} to the definition of $\mc{M}$ in \eqref{def-moment} we obtain
$$\mc{M}=\sum_{\sigma\text{ cuspidal: }\sigma_{\mathfrak{p}_0}=\tau}\frac{L(1/2,\Pi\otimes\widetilde{\sigma})L(1/2,\sigma\otimes\widetilde{\pi})}{\mathcal{L}(\sigma)}H_\q(\sigma)\varepsilon_{\p_0}(1,\tau\otimes\widetilde{\tau})\Delta_F^{-\mu_H}h_\infty(\sigma).$$
Using Proposition \ref{main-local-prop-original}, convexity bound \eqref{convexity-bound}, and Lemma \ref{rapid-decay-local-weight} we truncate the above sum at $c(\sigma_v)\le f_v$ for all $v\mid\q$, $C(\sigma_v)=0$ for all $v\notin S$ or $v\mid \Delta_F$, and $C(\sigma_v)\ll N(\q)^{\epsilon}$ for all $v\mid\infty$ with an error of $O(N(\q)^{-A})$ for any large $A$. 

Let us denote by $C(\q,\tau)$ the set of cuspidal representations satisfying the conditions above. Applying \eqref{convexity-bound} and Proposition \ref{main-local-prop-original} along with the fact that $K^{\#\{v\mid\q\}}\ll N(\q)^\epsilon$ for any fixed constant $K$, we deduce the bound
\begin{equation*}
    \sum_{\substack{\sigma\in C(\q,\tau)\\C(\sigma^{{(\p_0)}}_{\mathrm{f}})\leq N(\q)^{1/2-\delta}}} \frac{L(1/2,\Pi\otimes\widetilde{\sigma})L(1/2,\sigma\otimes\widetilde{\pi})}{\mathcal{L}(\sigma)}H_\q(\sigma)h_\infty(\sigma) \ll_{\epsilon,\tau,\Pi,\pi} N(\q)^{-n/2+\epsilon}\sum_{\substack{\sigma\in C(\q,\tau)\\ C(\sigma^{{(\p_0)}}_{\mathrm{f}}) \leq N(\q)^{1/2-\delta}}} \frac{C(\sigma_{\mathrm{f}})}{\mathcal{L}(\sigma)}.
\end{equation*}

We rewrite the above quantity as
$$N(\q)^{-n/2+\epsilon}\sum_{\substack{\mathfrak{b}\mid\q \\|\mathfrak{b}|\le N(\q)^{1/2-\delta}}}\sum_{\substack{\sigma\text{ cuspidal}\\ C(\sigma_{\mathrm{f}})=|\mathfrak{b}|C(\tau)\\C(\sigma_v)\ll N(\q)^\epsilon,\,\,v\mid\infty }}\frac{C(\sigma_{\mathrm{f}})}{\mathcal{L}(\sigma)}.$$
We apply Lemma \ref{weak-weyl-law} to bound the inner sum above by $O(N(\q)^{\epsilon}|\mathfrak{b}|^{n})$. Using that the number of divisors of $\q$ is bounded by $N(\q)^{\epsilon}$ we see that the above display is bounded by $N(\q)^{-n\delta+\epsilon}$ which is $O(N(\q)^{-\delta'})$ after taking $\epsilon$ sufficiently small.

Finally, we conclude the proof using \eqref{harmonic-weight-cuspidal}.
\end{proof}

\begin{lemma}\label{dual-moment}
There exists some $\delta''>0$ such that
$$\wc{\mc{M}}\ll_{\Pi,\pi} N(\q)^{-\delta''},$$
where the implied constant in the error term depends on $\Pi$ and $\pi$.
\end{lemma}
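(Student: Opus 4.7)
The plan is to exploit the strong support restrictions that the dual local weights impose on $\sigma$, which will collapse the dual spectral average to a family of essentially bounded conductor, and then to win by the polynomial saving furnished by Proposition \ref{main-local-prop-dual}.

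First, I would note that at $\mathbf{s}=(1/2,1/2)$ one also has $\wc{\mathbf{s}}=(1/2,1/2)$, so the $L$-values appearing in $\wc{\mc{M}}$ are the central values $L(1/2,\Pi\otimes\widetilde{\sigma})L(1/2,\sigma\otimes\widetilde{\pi})$. The dual local weight factors as $\wc{\mathcal{H}}_S(\sigma)=\prod_{v\in S}\wc{H}_v(\sigma_v)$ (with archimedean factors given by $\wc{h}_v$). I would now harvest the vanishing results for each place in $S$:
\begin{itemize}
\item At $v\mid\q$, Proposition \ref{main-local-prop-dual} forces $\sigma_v$ to be unramified, and when this holds one has $\wc{H}_v(\sigma_v)\ll p_v^{-f_v(n-1)(1/2-\vartheta)}$ for some fixed $\vartheta<1/2$ (available for all generic unitary $\sigma$ by Jacquet--Shalika).
\item At $v\mid\Delta_F$, equation \eqref{H-check-for-ramified-psi} shows $\wc{H}_v(\sigma_v)$ vanishes unless $\sigma_v$ is unramified, in which case it is $\asymp 1$.
\item At $v=\p_0$, the analysis giving Proposition \ref{cuspidal-projection} (applied to the spherical vector $\Pi(w^\ast)W_{\Phi,v}=W_{\Phi,v}$ is not relevant here; rather, the vanishing statement at $\p_0$ is governed by the supercuspidal weight, and one gets $\sigma_{\p_0}\cong \tau$, with value $\varepsilon_{\p_0}(1,\tau\otimes\widetilde{\tau})$).
\item At $v\mid\infty$, Lemma \ref{rapid-decay-local-weight} allows us to truncate to $C(\sigma_v)\ll q^\epsilon$ at the cost of an admissible error.
\end{itemize}
Crucially, since $\tau$ is supercuspidal and any local component of a proper Eisenstein induction is itself properly induced (hence not supercuspidal), the condition $\sigma_{\p_0}=\tau$ forces $\sigma$ to be cuspidal. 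This eliminates the continuous spectrum entirely, sidestepping the usual difficulty with residual/Eisenstein contributions.

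With these restrictions, the family of $\sigma$ contributing to $\wc{\mc{M}}$ has global analytic conductor $C(\sigma)\ll q^\epsilon$ (it is unramified outside $\{v\mid\Delta_F,\p_0,\infty\}$ with $\sigma_{\p_0}=\tau$ and bounded archimedean parameters). Hence by convexity \eqref{convexity-bound}, $L(1/2,\Pi\otimes\widetilde{\sigma})L(1/2,\sigma\otimes\widetilde{\pi})\ll_{\Pi,\pi} q^\epsilon$, and by standard lower bounds $\mc{L}^S(\sigma)^{-1}\ll q^\epsilon$. The product over $v\mid\q$ of the dual local weights contributes a factor
\[
\prod_{v\mid\q}p_v^{-f_v(n-1)(1/2-\vartheta)}=q^{-(n-1)(1/2-\vartheta)}.
\]
Finally, Lemma \ref{weak-weyl-law} (applied with the trivial ideal, $X_v\ll q^\epsilon$) bounds the number of such $\sigma$ (weighted by $1/\mc{L}^S(\sigma)$) by $O(q^{\epsilon(n-1)})$. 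Combining everything gives
\[
\wc{\mc{M}}\ll_{\Pi,\pi,\tau,\infty} q^{-(n-1)(1/2-\vartheta)+C\epsilon}
\]
for some constant $C=C(n)$, and choosing $\epsilon$ sufficiently small yields the desired $\wc{\mc{M}}\ll q^{-\delta''}$ with $\delta''=(n-1)(1/2-\vartheta)/2$, say.

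The main obstacle I anticipate is verifying carefully that the supercuspidal condition at $\p_0$ really excludes the entire non-cuspidal generic spectrum (including degenerate Eisenstein series built from cuspidal data on Levis of $G_n$), but this follows from the purely local fact that parabolic induction from a proper Levi is never supercuspidal, combined with Langlands' classification of the generic spectrum recalled in \S\ref{subsec:global-classification}.
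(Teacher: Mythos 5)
Your overall strategy is correct and matches the paper's: restrict $\sigma$ to be unramified at $v\mid\q$ and $v\mid\Delta_F$ via Proposition \ref{main-local-prop-dual} and \eqref{H-check-for-ramified-psi}, truncate the archimedean conductor via Lemma \ref{rapid-decay-local-weight}, pick up a saving $q^{-(n-1)(1/2-\vartheta)}$ from the dual local weight at $v\mid\q$, control the $L$-values by convexity \eqref{convexity-bound}, and finish with the Weyl-type bound Lemma \ref{weak-weyl-law}. This is exactly what the paper does, with $\vartheta$ taken from the Luo--Rudnick--Sarnak bound towards Ramanujan for $\GL(n)$ (note: Jacquet--Shalika gives only $\vartheta\le 1/2$, which does not suffice; one needs the strict bound $\vartheta\le 1/2-\eta$ with $\eta>0$, for which the paper cites \cite{LRS1999ramanujan}).

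However, there is a genuine conceptual error in your treatment of the place $\p_0$. You assert that the dual weight $\wc{H}_{\p_0}$ vanishes unless $\sigma_{\p_0}\cong\tau$, and hence that the dual spectral average collapses to cuspidal $\sigma$ only. This is false. On the dual side, the $\p_0$-adic test vector is $\Pi_{\p_0}(w^\ast)W_{\Phi,\p_0}$, the $w^\ast$-translate of the supercuspidal test vector, and this translate does \emph{not} satisfy the projection property of Proposition \ref{cuspidal-projection} (which was proved only for $W_{\Phi,\p_0}$ itself, not for its Weyl translate; in particular $W_{\Phi,\p_0}$ is not spherical, so $\Pi_{\p_0}(w^\ast)W_{\Phi,\p_0}\neq W_{\Phi,\p_0}$). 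The dual spectral average therefore does include the continuous spectrum. Fortunately, your proof does not actually need the cuspidality restriction you claim: Lemma \ref{weak-weyl-law} already bounds the full generic spectrum (cuspidal and Eisenstein alike), the convexity bound applies uniformly, and the holomorphy issues on the lines $\Re(s_i)=1/2$ are not present on the unitary contour of the Plancherel integral. So if you simply delete the incorrect claim about $\p_0$ and absorb the $\wc{H}_{\p_0}(\sigma_{\p_0})$ factor into the implied constant (as the paper does), the rest of your argument goes through. One minor further point: you invoke both ``standard lower bounds'' for $\mc{L}^S(\sigma)^{-1}$ and then apply Lemma \ref{weak-weyl-law} ``weighted by $1/\mc{L}^S(\sigma)$''; the Weyl law already incorporates the harmonic weight, so the separate lower bound is redundant.
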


\begin{proof}
Applying Proposition \ref{main-local-prop-dual} to the definition of $\wc{\mc{M}}$ as in \eqref{def-moment} we obtain
$$\wc{\mc{M}}=\kappa'_{F,n}\int_{\substack{\gen\\c(\sigma_v)=0,{\p_0\neq v<\infty}}}\frac{L(1/2,\Pi\otimes\widetilde{\sigma})L(1/2,\sigma\otimes\widetilde{\pi})}{\mathcal{L}(\sigma)}\wc{H}_\q(\sigma)\wc{H}_{\p_0}(\sigma_{\p_0})\wc{h}_\infty(\sigma)\d\sigma$$
for some constant $\kappa'_{F,n}$ depending on $F$ and $n$.
Once again, we use Lemma \ref{rapid-decay-local-weight} to truncate the integral, and apply Proposition \ref{main-local-prop-dual} and \eqref{convexity-bound} to deduce the bound
$$\wc{\mc{M}} \ll N(\q)^{\epsilon-(n-1)\eta}\int_{\substack{\gen;\,C(\sigma_v)\ll N(\q)^{\epsilon},v\mid\infty\\c(\sigma_v)=0, \p_0\neq v<\infty\\C(\sigma_{\p_0})\ll 1}}\frac{1}{\mathcal{L}(\sigma)}\d\sigma,$$
where $\eta$ is a uniform bound towards the generalized Ramanujan conjecture for $G_n$. For instance, we can take $\eta=(n^2+1)^{-1}$; see \cite{LRS1999ramanujan}.
Finally, we conclude by Lemma \ref{weak-weyl-law} and taking $\epsilon$ small enough.
\end{proof}

\begin{lemma}\label{original-residue-degenerate-zero}
We have $\mc{R}=0=\mc{D}$. Consequently, $\mc{P}=\mc{M}$.
\end{lemma}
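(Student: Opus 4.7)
The plan is to show separately $\mc{R}(\mathbf{s},\Phi,\phi)=0$ and $\mc{D}(\mathbf{s},\Phi,\phi)=0$ at $\mathbf{s}=(\tfrac12,\tfrac12)$; the identity $\mc{P}=\mc{M}$ then follows immediately from Proposition~\ref{analytically-continued-spectral-decomposition}, which gives $\mc{P}-\mc{D}=\mc{M}+\mc{R}$.

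\emph{Vanishing of $\mc{R}$.} I would begin with the explicit formula \eqref{def-R}. Every global $L$-value and weight factor in \eqref{def-R} is holomorphic near $\mathbf{s}=(\tfrac12,\tfrac12)$ by \S\ref{Analytic-continuation-sec}, so it is enough to prove that the local weight at $\p_0$,
$$H_{\p_0}\bigl(\sigma(\pi_{\p_0},1-s_2);\,W_{\Phi,\p_0},\,W_{\phi,\p_0},\,\mathbf{s}\bigr),$$
vanishes identically. Since $\pi_{\p_0}$ is unramified, $\sigma(\pi_{\p_0},1-s_2)$ is an unramified principal series of $G_n(F_{\p_0})$ and hence not isomorphic to the supercuspidal $\tau$. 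The argument of Proposition~\ref{cuspidal-projection} then applies essentially verbatim: using the Kirillov form \eqref{choice-test-vector-supercuspidal} of $W_{\Phi,\p_0}$ and the trivial central characters of $\tau$ and $\sigma(\pi_{\p_0},1-s_2)$, each zeta integral $\Psi_{\p_0}(s_1,W_{\Phi,\p_0},\overline{W})$ for $W\in\sigma(\pi_{\p_0},1-s_2)$ factors as the finite constant $\int_{\o^\times}|z|^{n(s_1-1/2)}\,\d^\times z$ times the $G_n$-invariant bilinear pairing $\int_{Z_nN_n\backslash G_n}W_\tau(h)\,\overline{W(h)}\,\d h$, which vanishes by Schur's lemma.

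\emph{Vanishing of $\mc{D}$.} By Lemma~\ref{degenerate-eulerian} and the analytic-continuation argument of Proposition~\ref{analytically-continued-spectral-decomposition},
$$\mc{D}(\mathbf{s},\Phi,\phi)=\prod_v\Omega_v(\wc{\mathbf{s}},W_{\wc\Phi,v},W_{\phi,v}),$$
so it suffices to prove $\Omega_{\p_0}(\wc{\mathbf{s}},W_{\wc\Phi,\p_0},W_{\phi,\p_0})=0$. Using $W_{\wc\Phi,\p_0}(g)=W_{\Phi,\p_0}(gw^\ast)$, the trivial central character of $\Pi$, and the matrix identity
$$\begin{pmatrix}zh&&\\&z&\\&&1\end{pmatrix}w^\ast=z\cdot w^\ast\begin{pmatrix}h&&\\&1/z&\\&&1\end{pmatrix},$$
the local integrand collapses to $(h,z)\mapsto W_{\Phi,\p_0}\bigl(w^\ast\,M(h,z)\bigr)$ with $M(h,z)$ in the standard upper-left embedding of $G_n$ into $G_{n+1}$. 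A short computation with the $N_{n+1}$-equivariance shows that $\tilde h\mapsto W_{\Phi,\p_0}(w^\ast\tilde h)$ is a Whittaker vector on $G_n$ for a character of $N_n$ which is \emph{degenerate along} $U_n\subset N_n$ (the conjugation by $w^\ast$ kills the $(n-1,n)$-super-diagonal coordinate of $\psi_{n+1}$). A Schur-type argument parallel to the first step then concludes: the resulting pairing of this degenerate Whittaker function against the generic $W_{\phi,\p_0}$ is a $G_{n-1}$-intertwiner that factors through a pairing between a $\tau$-related representation and $\pi_{\p_0}$, which vanishes by the supercuspidality of $\tau$.

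\emph{Main obstacle.} The vanishing of $\mc{R}$ is a direct extension of Proposition~\ref{cuspidal-projection} to the parameter $\mathbf{s}$ and presents no real difficulty. The more delicate step will be the $\p_0$-local vanishing used for $\mc{D}$: one must carefully track how the Weyl element $w^\ast\in G_{n+1}$ acts on the vector $W_{\Phi,\p_0}$ (built from the supercuspidal $W_\tau$), identify the degenerate Whittaker character it induces on $G_n$, and isolate the Hom-space forced to be trivial by the supercuspidality of $\tau$.
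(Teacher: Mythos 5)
Your argument for $\mc{R}=0$ is correct and essentially matches the paper's: both rely on the fact that the local weight $H_{\p_0}(\sigma(\pi_{\p_0},z))$ vanishes identically because the unramified principal series $\sigma(\pi_{\p_0},z)$ cannot equal the supercuspidal $\tau$, together with the analytic continuation in $z$ from Lemma~\ref{holomorphicity-local-weight}. (One small point worth being careful about, which you glossed over: for $s_1\neq 1/2$ the zeta integral $\Psi_{\p_0}(s_1,W_{\Phi,\p_0},\ol{W})$ is not literally $G_n$-invariant; it is a pairing between $\tau$ and $\sigma_v$ up to a $|\det|$-twist, but Schur still applies since supercuspidality is preserved under such twists.)

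Your treatment of $\mc{D}$, however, takes a genuinely different route from the paper, and it is the weak point. You try to prove the local vanishing $\Omega_{\p_0}(\wc{\mathbf{s}},W_{\wc\Phi,\p_0},W_{\phi,\p_0})=0$ directly. The matrix manipulation is right, and the observation that $g\mapsto W_{\Phi,\p_0}(w^\ast g)$ restricted to $G_n$ transforms under $N_n$ by a character degenerate along the $(n-1,n)$-coordinate is verifiable. But the final step — that ``the resulting pairing $\dots$ factors through a pairing between a $\tau$-related representation and $\pi_{\p_0}$, which vanishes by the supercuspidality of $\tau$'' — is not a proof. The degenerate character is degenerate only in the direction $n_{n-1,n}$, which is \emph{not} part of $N_{n-1}$, so the degeneracy does not directly interact with the $N_{n-1}\backslash G_{n-1}$ integration. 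Moreover, since $w^\ast\notin G_n$, the Kirillov model of $\Pi_{\p_0}(w^\ast)W_{\Phi,\p_0}$ has no simple description in terms of $W_\tau$, and you would need to make precise what the ``$\tau$-related representation'' is and why the intertwiner to $\pi_{\p_0}$ must vanish. As written, this is a gap.

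The paper avoids all of this: it observes that, because of the $\p_0$-local supercuspidal projection (Proposition~\ref{cuspidal-projection}), the spectral decomposition of $\mc{A}_{s_1}\Phi$ is supported entirely on \emph{cuspidal} $\sigma$ with $\sigma_{\p_0}=\tau$, so that $\varphi_{U_n}=0$ for every $\varphi$ appearing; hence the unregularized integral of $\mc{A}_{s_1}\Phi$ against $\ol{\phi}|\det|^{s_2-1/2}$ already converges and equals the spectral sum $\mc{M}$, giving $\mc{P}=\mc{M}$ directly. Combined with $\mc{R}=0$, the identity $\mc{P}-\mc{D}=\mc{M}+\mc{R}$ from Proposition~\ref{analytically-continued-spectral-decomposition} forces $\mc{D}=0$, with no local computation of $\Omega_{\p_0}$ needed. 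You should either adopt this global argument, or else supply a complete local proof that $\Omega_{\p_0}(\wc{\mathbf{s}},W_{\wc\Phi,\p_0},W_{\phi,\p_0})=0$ — but that is a nontrivial task and your current sketch does not accomplish it.
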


\begin{proof}
Repeating the argument in \S\ref{sec:spectral-decomposition-period} and of the proof of Proposition \ref{cuspidal-projection} we write
$$\mathcal{A}_{s_1}\Phi(g) = \sum_{\substack{\sigma\text{ cuspidal}\\\sigma_{\p_0}=\tau}}\sum_{\varphi\in \widetilde{\B}(\sigma)}\frac{\Psi(s_1,W_{\Phi},\overline{W_\varphi})}{\|\varphi\|^2}\varphi(g).$$
As before, the right-hand side is entire in $s_1$ and absolutely convergent. Moreover, as $\varphi$ are cusp forms we now integrate the right-hand side term by term against $\phi|\det|^{s_2-1/2}$. Thus we obtain
$$\mc{P}(\mathbf{s},\Phi,\phi)=\sum_{\sigma\text{ cuspidal: }\sigma_{\mathfrak{p}_0}=\tau}\frac{\Lambda(s_1,\Pi\otimes\widetilde{\sigma})\Lambda(s_2,\sigma\otimes\widetilde{\pi})}{\mathcal{L}(\sigma)}H_S(\sigma).$$
The right-hand side is again entire in $\mathbf{s}$ which follows from the analytic properties of Rankin--Selberg zeta integrals. Plugging-in $\mathbf{s}=\left(\frac12,\frac12\right)$ above and looking at the expression of $\mc{M}$ in the proof of Lemma \ref{original-moment} we obtain $\mc{P}=\mc{M}$.

On the other hand, from \eqref{def-R} we see that $\mc{R}=0$ as $H_{\p_0}(\sigma(\pi_{\p_0},z))=0$ for any $z\in\C$ which follows from Lemma \ref{holomorphicity-local-weight} and Proposition \ref{cuspidal-projection} for $z\in i\R$. Consequently, Proposition \ref{analytically-continued-spectral-decomposition} implies that $\D=0$.
\end{proof}

\begin{proof}[Proof of Theorem \ref{asymptotics}]
Theorem \ref{reciprocity} and Lemma \ref{original-residue-degenerate-zero} imply that
$\mc{M}=\wc{\mc{M}}+\wc{\mc{D}}+\wc{\mc{R}}.$
We apply Lemma \ref{original-moment}, Lemma \ref{dual-moment}, bound of $\wc{\mc{R}}$ from Proposition \ref{analytically-continued-spectral-decomposition}, expression of $\mathcal{L}$ from \eqref{harmonic-weight-cuspidal}, and expression of $\wc{\mc{D}}$ from Proposition \ref{degenerate-term}.
\end{proof}

\begin{proof}[Proof of Corollary \ref{non-vaninshing}]
The main term in Theorem \ref{asymptotics} is non-zero. This follows from Shahidi's result \cite[Proposition 7.2.4]{Shahidi2010Eisenstein} that the Rankin--Selberg $L$-values at $1$ are non-zero, and Proposition \ref{degenerate-term}. Hence we conclude by taking $N(\q)$ sufficiently large.
\end{proof}

\bibliographystyle{abbrv}
\bibliography{references}

\end{document}